\title{Infinitesimal structure of the pluricanonical double ramification locus}
\numberwithin{equation}{subsection}
\let\oref\ref
\DeclareMathOperator{\coker}{coker}
\newcommand*{\doublerightarrow}[2]{\mathrel{
  \settowidth{\@tempdima}{$\scriptstyle#1$}
  \settowidth{\@tempdimb}{$\scriptstyle#2$}
  \ifdim\@tempdimb>\@tempdima \@tempdima=\@tempdimb\fi
  \mathop{\vcenter{
    \offinterlineskip\ialign{\hbox to\dimexpr\@tempdima+1em{##}\cr
    \rightarrowfill\cr\noalign{\kern.5ex}
    \rightarrowfill\cr}}}\limits^{\!#1}_{\!#2}}}
\newcommand*{\triplerightarrow}[1]{\mathrel{
  \settowidth{\@tempdima}{$\scriptstyle#1$}
  \mathop{\vcenter{
    \offinterlineskip\ialign{\hbox to\dimexpr\@tempdima+1em{##}\cr
    \rightarrowfill\cr\noalign{\kern.5ex}
    \rightarrowfill\cr\noalign{\kern.5ex}
    \rightarrowfill\cr}}}\limits^{\!#1}}}
\newcommand{\on}[1]{\operatorname{#1}}
\newcommand{\bb}[1]{{\mathbb{#1}}}
\newcommand{\ca}[1]{{\mathcal{#1}}}
\newcommand{\bd}[1]{{\mathbf{#1}}}
\newcommand{\ul}[1]{{\underline{#1}}}
\newcommand{\hra}{\hookrightarrow}
\newcommand{\sub}{\subseteq}
\newcommand{\tra}{\rightarrowtail}
\theoremstyle{definition}
\newtheorem{definition}{Definition}[section]
\newtheorem{situation}[definition]{Situation}
\theoremstyle{plain}
\newtheorem{proposition}[definition]{Proposition}
\newtheorem{lemma}[definition]{Lemma}
\newtheorem{theorem}[definition]{Theorem}
\newtheorem{corollary}[definition]{Corollary}
\theoremstyle{remark}
\newtheorem{remark}[definition]{Remark}
\newtheorem{example}[definition]{Example}
\LetLtxMacro{\phiorig}{\phi}
\renewcommand{\phi}{\varphi}
\newcommand{\cecH}{\check{\on H}}
\newcommand{\loz}{\lozenge}
\newcommand{\barsigma}{\bar{\sigma}}
\newcommand{\tanmap}{\psi}
\author{David Holmes}
\email{holmesdst@math.leidenuniv.nl}
\address{Mathematisch Instituut, Universiteit Leiden, Postbus 9512, 2300 RA Leiden, Netherlands}
\author{Johannes Schmitt }
\email{schmitt@math.uni-bonn.de}
\address{Mathematical Institute, University of Bonn, Endenicher Allee 60, 53115 Bonn, Germany}
\keywords{double ramification cycle, deformation theory, Abel-Jacobi, differentials}
\thanks{The first-named author was partially supported by NWO grant 613.009.103/2380. The second author was
supported by the grant SNF-200020162928 and has received funding from the European Research Council (ERC)
under the European Union Horizon 2020 research and innovation programme
(grant agreement No 786580). During the last phase of the project, the second author profited from the SNF Early Postdoc.Mobility grant 184245 and also wants to thank the Max Planck Institute for Mathematics in Bonn for its hospitality. }
\date{\today}
\newcounter{nootje}
\newcommand{\beq}{\begin{equation}}
\newcommand{\eeq}{\end{equation}}
\newcommand{\beqs}{\begin{equation*}}
\newcommand{\eeqs}{\end{equation*}}
\renewcommand{\k}{k}
\tikzset{
  symbol/.style={
    draw=none,
    every to/.append style={
      edge node={node [sloped, allow upside down, auto=false]{$#1$}}}
  }
}
\begin{document}

\begin{abstract} 
We prove that a formula for the `pluricanonical' double ramification cycle proposed by Janda, Pandharipande, Pixton, Zvonkine, and the second-named author is in fact the class of a cycle constructed geometrically by the first-named author. Our proof proceeds by a detailed explicit analysis of the deformation theory of the double ramification cycle, both to first and to higher order. 
\end{abstract}

\maketitle


\tableofcontents

\newcommand{\Mtildes}{ \widetilde{\ca M}^\Sigma}
\newcommand{\sch}[1]{\textcolor{blue}{#1}}

\newcommand{\Mbar}{\overline{\ca M}}
\newcommand{\MD}{\ca M^\blacklozenge}
\newcommand{\Md}{\ca M^\lozenge}
\newcommand{\DRL}{\operatorname{DRL}}
\newcommand{\DR}{\operatorname{DR}}
\newcommand{\DRC}{\operatorname{DRC}}
\newcommand{\isom}{\stackrel{\sim}{\longrightarrow}}
\newcommand{\Ann}[1]{\on{Ann}(#1)}
\newcommand{\fm}{\mathfrak m}
\newcommand{\Mdk}{\Mbar^{\m, 1/\k}}
\newcommand{\field}{K}
\newcommand{\Mdm}{\Mbar^\m}
\newcommand{\m}{{\bd m}}

\section{Introduction}
Inside the moduli space $\ca M_{g,n}$ of smooth pointed curves $(C,p_1, \ldots, p_n)$ there are natural closed subsets 
\begin{equation} \label{eqn:strataofdiff} \ca H_g^\k(\m) = \left\{(C,p_1, \ldots, p_n) : \omega_C^{\otimes \k} \cong \mathcal{O}_C\left(\sum_{i=1}^n m_i p_i\right)\right\} \subset \ca M_{g,n},\end{equation}
where $\m=(m_1, \ldots, m_n) \in \mathbb{Z}^n$ is a vector of integers summing to $\k(2g-2)$. Since the above isomorphism of line bundles is equivalent to the  existence of a meromorphic $\k$-differential on $C$ with zeros and poles at the points $p_i$ with specified orders $m_i$, these subsets are called \emph{strata of meromorphic $\k$-differentials}. These strata appear naturally in algebraic geometry, the theory of flat surfaces and Teichm\"uller dynamics and have been studied intensively in the past, see the surveys \cite{ZorichFlat, wright, chentmdynamics} and the references therein. Motivated by problems in symplectic geometry, Eliashberg asked whether there was a natural way to extend these strata and their fundamental classes to the Deligne-Mumford-Knudsen compactification $\Mbar_{g,n}$ and how to compute the resulting cycle class. 

For $\k=0$ there are two geometric avenues to defining such an extension. The first is via relative Gromov-Witten theory and the space of rubber maps to $\mathbb{P}^1$ (\cite{liruan, Li2002A-degeneration-, Li2001Stable-morphism,Graber2005Relative-virtua}). This is based on the observation that for a smooth curve $C$, a meromorphic $0$-differential on $C$ as above corresponds to a morphism $C \to \mathbb{P}^1$ with given ramification profiles over $0,\infty$. The second series of approaches, viable for any $\k \geq 0$, uses that $\ca H_g^\k(\m)$ can be obtained by pulling back the zero section $e$ of the universal Jacobian $\ca J \to \ca M_{g,n}$ via the Abel-Jacobi section
\[\sigma : \ca M_{g,n} \to \ca J, (C,p_1, \ldots, p_n) \mapsto \omega^\k \left(-\sum_{i=1}^n m_i p_i \right).\] 
The map $\sigma$ does not extend naturally to $\Mbar_{g,n}$, but various geometric extensions of its domain and target have been proposed that yield cycles on $\Mbar_{g,n}$ (\cite{Kass2017The-stability-s, Holmes2017Jacobian-extens, Marcus2017Logarithmic-com, Holmes2017Extending-the-d, abreupacini}). These constructions all produce the same cycle class on $\Mbar_{g,n}$, which we denote $\overline{\DRC}$; an overview of one construction is given in \ref{sec:extending_AJ}. 

Pixton \cite{Pixton} defined a class $P_g^{g,\k}(\tilde \m)$ in the tautological ring of $\Mbar_{g,n}$. The equality 
\begin{equation} \label{eqn:Hconjecture2017}
\overline{\DRC} = 2^{-g} P_g^{g,k}(\tilde \m)
\end{equation}
was conjectured by Pixton for $k=0$, and in \cite{Holmes2017Extending-the-d} for all $k$. An introduction to Pixton's formula in the case $k=0$ can be found in \cite[Section 6.4]{calculusmodcur}, and in the general case in \cite{Janda2016Double-ramifica}. The conjectured equality $2^{-g}P_g^{g,\k}(\tilde \m) = \overline{\DRC}$ for $k=0$ was proven in \cite{Janda2016Double-ramifica}. Since this preprint was posted, the equality $2^{-g}P_g^{g,\k}(\tilde \m) = \overline{\DRC}$ for all $k$ has been established in \cite{bae2020pixtons}. 


A new geometric approach to extending the cycle appears for $k \geq 1$: assuming that one of the integers $m_i$ is negative or not divisible by $\k$, the papers \cite{Farkas2016The-moduli-spac, Schmitt2016Dimension-theor} define a cycle $H_{g,\m}^\k$ obtained as a weighted\footnote{This means the fundamental classes of the components of $\widetilde{\mathcal H}_g^\k(\m)$ are summed with explicit positive integer weights, see \ref{eq:weighted_fun_formula}.} fundamental class of an explicit closed subset $\widetilde{\mathcal H}_g^\k(\m) \subset \Mbar_{g,n}$ extending $\ca H_g^\k(\m)$, and propose

\vspace{0.3 cm}
\noindent \textbf{Conjecture A (\cite{Farkas2016The-moduli-spac, Schmitt2016Dimension-theor})} Let $k \geq 1$ and $\m=(m_1, \ldots, m_n) \in \mathbb{Z}^n$ with $m_1 + \ldots + m_n = \k(2g-2)$. Assume that one of the $m_i$ is negative or not divisible by $\k$ and let $\tilde \m = (m_1 +\k, \ldots, m_n +\k)$. Then
 \[H_{g,\m}^\k = 2^{-g} P_{g}^{g,\k}(\tilde \m) \in A^g(\Mbar_{g,n}).\] 
 At the time these papers were written the geometric class $\overline{\DRC}$ had not been defined for $k>0$; from our current perspective it seems most natural simply to conjecture that all three classes ($\overline{\DRC}$, $2^{-g}P_g^{g,\k}(\tilde \m)$ and $H_{g,\m}^\k$) are equal whenever they are defined. 
 
The main result of our paper is the following.
\begin{theorem} \label{thm:main_intro_basic}
For $\k \geq 1$ and at least one of the $m_i$ either negative or not divisible by $\k$, the equality
\begin{equation}\label{eq:thm_main_basic}
 \overline{\DRC} = H_{g,\m}^\k
\end{equation}
holds in the Chow ring of $\Mbar_{g,n}$. 
\end{theorem}
Combined with the recent proof of $2^{-g}P_g^{g,\k}(\tilde \m) = \overline{\DRC}$ in \cite{bae2020pixtons}, this yields a proof of Conjecture A.

In fact, not only do we prove the equality \ref{eq:thm_main_basic} of cycle classes, but as a byproduct of our proof we demonstrate how the weights in the weighted fundamental class $H_{g,\m}^\k$ arise from intersection multiplicities of the Abel-Jacobi section with the zero section in the construction of \cite{Holmes2017Extending-the-d}. Further, with a little extra work our method allows us to compute not only the multiplicities of the cycle, but even give a presentation for the Artin local rings at generic points of the double ramification locus (\ref{thm:local_ring_description}). 

In the remainder of this introduction, we recall the definition of $H_{g,\m}^\k$ and the construction of $\overline{\DRC}$ from \cite{Holmes2017Extending-the-d} before stating a more refined version of our main result in \ref{Sec:mainresult}. We give a sketch of the proof in \ref{Sect:introproofsketch} and discuss some future research directions in \ref{Sect:relandoutlook}. We finish by giving a more detailed overview of the relations between the various approaches for defining the extended cycles that we discussed before.

\subsection{\texorpdfstring{The moduli space of twisted $\k$-differentials}{The moduli space of twisted k-differentials}}
A first idea for extending the stratum $\ca H_g^\k(\m)$ of $\k$-differentials is to consider its closure $\overline{\ca H}_g^\k(\m) \subset \Mbar_{g,n}$. Stable curves $(C,p_1, \ldots, p_n)$ in this closure have been characterized in  \cite{Bainbridge2016Strata-of--k--d, BCCGM} in terms of existence of $\k$-differentials on the components of $C$ satisfying certain residue conditions. For $\k=1$ and all $m_i\geq 0$, the closure $\overline{\mathcal{H}}_g^1(\m)$ is of pure codimension $g-1$, and \cite{Pandharipande2019Tautological-re} gives a conjectural relation of the fundamental class of this closure to Witten’s $r$-spin classes. 

A larger compactification containing the closure $\overline{\mathcal{H}}_g^1(\m)$, the \emph{moduli space of twisted $\k$-differentials} $\widetilde {\ca H}_g^\k(\m)$, has been proposed by Farkas and Pandharipande in \cite{Farkas2016The-moduli-spac}. The idea here is that as the curve $C$ becomes reducible, it is no longer reasonable to ask for an isomorphism of line bundles $\omega_C^{\otimes \k} \cong \mathcal{O}_C(\sum_{i=1}^n m_i p_i)$, since these line bundles will have different degrees on the various components of $C$. However, these multidegrees can be balanced out by twisting the line bundles by (preimages of) the nodes of $C$.

The way this balancing happens is encoded in a \emph{twist} on the stable graph $\Gamma$ of $C$. This is a map $I$ from the set of half-edges of $\Gamma$ to the integers, satisfying $I(h)=-I(h')$ if $(h,h')$ form an edge, together with a further combinatorial condition (see \ref{def:twist} for details). Given a twist $I$ on the dual graph $\Gamma$ of a stable curve $C$, let $\nu_I: C_I \to C$ be the map normalizing the nodes $q \in C$ belonging to edges $(h,h')$ with $I(h) \neq 0$. Let $q_h, q_{h'} \in C_I$ be the corresponding preimages of $q$ under $\nu_I$. Then the curve $(C,p_1, \ldots, p_n)$ is contained in  $\widetilde {\ca H}_g^\k(\m)$ if and only if there exists a twist $I$ on its stable graph, such that we have an isomorphism of line bundles
\begin{equation} \label{eqn:twistdiffcond} \omega_{C_I}^{\otimes k} \cong \mathcal{O}_{C_I}\left(\sum_{i=1}^n m_i p_i + \sum_{\substack{(h,h')\in E(\Gamma)\\I(h)\neq 0}} (I(h)-\k) q_h + (I(h')-\k) q_{h'}\right)\end{equation}
on $C_I$. This corresponds to requiring the existence of a $\k$-differential on the components of the partial normalization $C_I$ of $C$ with zeros and poles at markings and preimages of nodes, where the multiplicities at the node preimages are dictated by the twist $I$.

The space $\widetilde {\ca H}_g^\k(\m)$ is a closed subset of $\Mbar_{g,n}$  containing $\overline{\ca H}_g^\k(\m)$ but possibly having additional components supported in the boundary of $\Mbar_{g,n}$. It turns out that these extra components are essential when trying to associate a natural cycle class to the extension of the strata of $\k$-differentials. Assume we are in the case that $\k \geq 1$ and that at least one of the $m_i$ is either negative or not divisible by $\k$. Then it is shown in \cite{Farkas2016The-moduli-spac} (for $\k=1$) and \cite{Schmitt2016Dimension-theor} (for $\k>1$) that $\widetilde {\ca H}_g^\k(\m) \subset \Mbar_{g,n}$ has pure codimension $g$. In this situation, instead of studying the fundamental class of $\widetilde {\ca H}_g^\k(\m)$ (as a reduced substack), the papers \cite{Farkas2016The-moduli-spac, Schmitt2016Dimension-theor} consider a certain weighted fundamental class $H_{g,\m}^\k \in A^g(\Mbar_{g,n})$ of $\widetilde {\ca H}_g^\k(\m)$. 

To define this weighted class, let $Z$ be an irreducible component of $\widetilde {\ca H}_g^\k(\m)$. Denote by $\Gamma$ the generic dual graph of a curve $C$ in $Z$ and let $I$ be the\footnote{It is not a priori clear that there could not be two different twists on $\Gamma$ which both ensure that \ref{eqn:twistdiffcond} holds for the generic point $C$ of $Z$, but we show in \ref{pro:uniquegentwist} that this cannot happen.} generic twist on $\Gamma$. 
Then it is shown in \cite{Farkas2016The-moduli-spac, Schmitt2016Dimension-theor} that $\Gamma$ and $I$ must be of a particular form. Indeed, the graph $\Gamma$ is a so-called \emph{simple star graph}, having a distinguished \emph{central vertex} $v_0$ such that every edge has exactly one endpoint at the central vertex. The remaining vertices are called the \emph{outlying vertices}. All markings $i$ with $m_i$ negative or not divisible by $\k$ must be on the central vertex. Moreover, the twist $I$ on $\Gamma$ has the property that for all edges $e=(h,h')$, with $h$ incident to $v_0$ and $h'$ incident to an outlying vertex, we have that $I(h')$ is positive and divisible by $\k$. By slight abuse of notation we write $I(e)=I(h')$ in this case. 

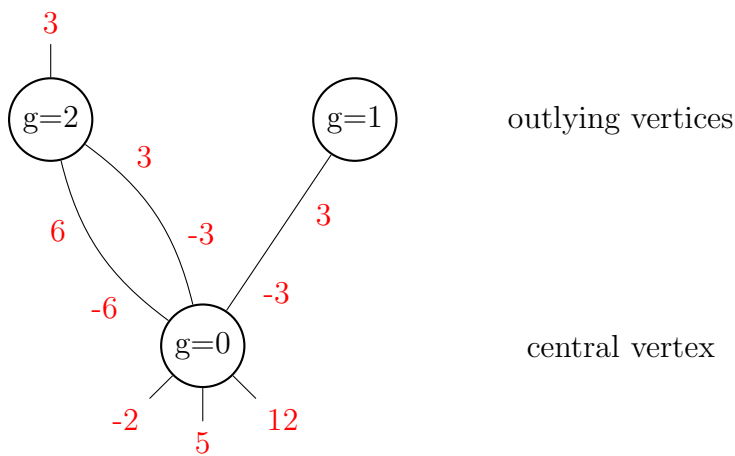
\begin{figure}[ht]
\begin{center}
 \begin{tikzpicture}
  
  \node[circle,draw=black,fill=white, inner sep=3pt, thick, minimum size=5pt] (g0) at (0,0) {g=0};
  \node[circle,draw=black,fill=white, inner sep=3pt, thick, minimum size=5pt] (g1) at (2,3) {g=1};
  \node[circle,draw=black,fill=white, inner sep=3pt, thick, minimum size=5pt] (g2) at (-2,3) {g=2};
%


  \draw (g0) --node[near start,below right, text=gray]{-3}  node[near end,below right, text=gray]{3} (g1);
  \draw (g0) to [out=103.7, in=-36.3,looseness=1.0] node[near start,above right, text=gray]{-3}  node[near end,above right, text=gray]{3} (g2);
  \draw (g0) to [out=143.7, in=-76.3,looseness=1.0] node[near start,below left, text=gray]{-6}  node[near end,below left, text=gray]{6} (g2);
  \draw (g2) -- (-2,4) node[above, text=gray]{3};
  \draw (g0) -- (-0.7,-0.7) node[below left, text=gray]{-2};
  \draw (g0) -- (0,-1) node[below, text=gray]{5};
  \draw (g0) -- (0.7,-0.7) node[below right, text=gray]{12};
  
  \node at (5.5,0){\text{central vertex}};
  \node at (5.5,3){\text{outlying vertices}};
 \end{tikzpicture}
\end{center}
\caption{Example of a simple star graph for $g=4$, $\k=3$ and $\m=(-2,5,3,12)$ with the twists $I$ of the half-edges and the weights $m_i$ of the marked points indicated in gray. \label{Fig:simplestar}}
\end{figure}

With this notation in place, we can define\footnote{The papers \cite{Farkas2016The-moduli-spac, Schmitt2016Dimension-theor} give a slightly different definition of the weighted fundamental class. We recall this definition in \ref{sec:FP_formula} and comment why it is equivalent to the formula above.} the weighted fundamental class $H_{g,\m}^\k$ of $\widetilde {\ca H}_g^\k(\m)$ as
\begin{equation}\label{eq:weighted_fun_formula}
 H_{g,\m}^\k = \sum_{(Z,\Gamma,I)} \frac{\prod_{e \in E(\Gamma)} I(e)}{\k^{\# V(\Gamma)-1}} [Z] \in A^g(\Mbar_{g,n}),
\end{equation}
where $Z$ runs over the components of $\widetilde {\ca H}_g^\k(\m)$ and as above $\Gamma, I$ are the generic dual graph and twist on $Z$. 


Conjecture A above then relates these weighted fundamental classes to the explicit tautological cycles $P_g^{g,\k}(\tilde \m)$ proposed by Aaron Pixton in \cite{Pixton}. In our paper, we show how both the twisted differential space $\widetilde {\ca H}_g^\k(\m)$ and its weighted fundamental class $H_{g,\m}^\k$ naturally arise from a construction presented by the first-named author in \cite{Holmes2017Extending-the-d}. 

%
 

\subsection{Extending the Abel-Jacobi map}\label{sec:extending_AJ}
Let $\ca J$ be the universal semi-abelian jacobian over $\Mbar_{g,n}$, often written $\on{Pic}^{\ul 0}_{\ca C/\Mbar_{g,n}}$. It has connected fibres, and parametrizes line bundles of multidegree zero on the fibres of the universal curve $\ca C \to \Mbar_{g,n}$. Inside the open set $\ca M_{g,n} \subset \Mbar_{g,n}$ the strata $\ca H_g^\k(\m)$ of $\k$-differentials can be obtained as the pullback of the zero-section $e$ of $\ca J$ via the Abel-Jacobi section 
\[\sigma : \ca M_{g,n} \to \ca J, (C,p_1, \ldots, p_n) \mapsto \omega^\k \left(-\sum_{i=1}^n m_i p_i \right)=:\omega^\k \left(-\m P \right).\] 
While $\sigma$ does not in general extend over $\Mbar_{g,n}$, in \cite{Holmes2017Extending-the-d} the first author defines a ``universal'' stack $\Md \to \Mbar_{g,n}$, birational over $\Mbar_{g,n}$, on which $\sigma$ does extend to a morphism $\sigma^\loz: \Md \to \ca J_{\Md}$, where $\ca J_{\Md}$ is the pullback  of $\ca J$ to $\Md$. 
Moreover, the scheme-theoretic pullback $\DRL^\loz$ of the unit section $e$ of $\ca J_{\Md}$ along $\sigma^\loz$ is proper over $\Mbar_{g,n}$. 
Denote $\DRC^\loz$ the cycle-theoretic pullback of the class $[e]$ under $\sigma^\loz$, supported on $\DRL^\loz$, and by $\overline{\DRC} \in A^g(\Mbar_{g,n})$ its pushforward under the proper map $\DRL^\loz \to \Mbar_{g,n}$.

\subsection{Main result} \label{Sec:mainresult}
Refining \ref{thm:main_intro_basic}, the main result of our paper is the following.
\begin{theorem} \label{thm:main_intro}
 The image of the double ramification locus $\DRL^\loz$ under the map $\Md \to \Mbar_{g,n}$ is the moduli space $\widetilde{\ca H}_g^\k(\m) \subset \Mbar_{g,n}$ of twisted $\k$-differentials. Moreover, for $\k \geq 1$ and at least one of the $m_i$ either negative or not divisible by $\k$, we have that 
\begin{equation}\label{eq:thm_main}
 \overline{\DRC} = H_{g,\m}^\k\in A^g(\Mbar_{g,n}). 
\end{equation}
In fact, this is true in the strong sense that these two cycles supported on $\widetilde{\ca H}_g^\k(\m)$ have the same weight at each irreducible component (they are equal as cycles, not just cycle classes). 
\end{theorem}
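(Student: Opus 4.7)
The plan is to establish \ref{thm:main_intro} in two stages: first the set-theoretic equality of the image of $\DRL^\loz$ with $\widetilde{\ca H}_g^\k(\m)$, and then the cycle-theoretic equality via an explicit multiplicity computation at generic points of the irreducible components of $\widetilde{\ca H}_g^\k(\m)$. For the set-theoretic step, I would unpack the construction of $\Md$ and $\sigma^\loz$ from \cite{Holmes2017Extending-the-d}: \'etale-locally on $\Mbar_{g,n}$ at a boundary point with stable graph $\Gamma$, the stack $\Md$ is a toric-type blowup determined by a subdivision of the cone $\mathbb{R}_{\geq 0}^{E(\Gamma)}$, and the extended section $\sigma^\loz$ is given by a rational piecewise-linear function on this subdivision that encodes the twist to be applied to the universal curve. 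A stable curve $(C,p_\bullet)$ then lies in the image of $\DRL^\loz$ precisely when some such twist makes the pulled-back $\omega^\k(-\m P)$ isomorphic to $\ca O$ on the partial normalization, which is exactly the defining condition \ref{eqn:twistdiffcond} of $\widetilde {\ca H}_g^\k(\m)$. Uniqueness of the generic twist on each component (\ref{pro:uniquegentwist}) ensures that the twist produced by the geometric construction at the generic point of a component $Z$ coincides with the one used in \ref{eq:weighted_fun_formula}.

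For the multiplicity step, fix a component $Z$ with generic simple star graph $\Gamma$, generic twist $I$, and a generic point $\xi$ of $Z$. In a formal neighbourhood of $\xi$ the smooth part of the universal curve can be trivialized, so that $\Mbar_{g,n}$ is controlled by the smoothing parameters $t_e$ for $e \in E(\Gamma)$, and the fibre of $\ca J$ degenerates to a semi-abelian variety whose character lattice is computed from $H_1(\Gamma,\mathbb{Z})$. In this local model the extended section $\sigma^\loz$ is entirely explicit, and I would compute the length of the Artin local ring of $\DRL^\loz$ at a lift of $\xi$ — equivalently, the intersection multiplicity of $\sigma^\loz$ with the zero section — by presenting it as the cokernel of an explicit integer matrix built from the combinatorics of $\Gamma$, the twist $I$, and the vector $\m$. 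This is precisely the content of the local ring description advertised in \ref{thm:local_ring_description}.

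The main obstacle is to match this lattice index to the combinatorial weight $\prod_{e \in E(\Gamma)} I(e)/\k^{\#V(\Gamma)-1}$. The numerator should arise transparently as a product of local edge contributions, each $I(e)$ measuring how many times $\sigma^\loz$ wraps around the zero section in the direction corresponding to the edge $e$. The denominator $\k^{\#V(\Gamma)-1}$ is subtler and, I expect, reflects the following: to trivialize $\omega^\k(-\m P)$ on each of the $\#V(\Gamma)-1$ outlying components there is a $\mu_\k$-ambiguity (the choice of a $\k$-th root of a fixed line bundle), and passing from the naive twisted-differentials locus to the double ramification locus inside the connected-component Jacobian $\ca J$ quotients out this ambiguity once per outlying vertex. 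Turning this heuristic into a precise length calculation — including checking the $\k$-divisibility on outlying edges that makes the stated weight integral — will be the technical core of the argument. Combined with the set-theoretic equality it yields \ref{eq:thm_main} as an equality of cycles, not merely of cycle classes.
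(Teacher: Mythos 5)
Your set-theoretic step is essentially the right idea (and agrees with \ref{lem:set_theoretic_image_H} and \ref{lem:surjective_generic_points}), but the multiplicity step has a genuine gap: computing the length of the Artin local ring of $\DRL^\loz$ at a lift of $\xi$ is \emph{not} a lattice-index or integer-matrix-cokernel computation, and the heuristic "$I(e)$ measures how many times $\sigma^\loz$ wraps around the zero section in the $e$-direction" cannot be turned into a purely combinatorial proof. The extended Abel--Jacobi section $\sigma^\loz$ lands in a semi-abelian jacobian $\ca J$ whose abelian part depends on the curve, not merely on $\Gamma$, and the equations cutting out $\DRL^\loz$ inside $\Md$ involve genuinely geometric data: residues of the $\k$-differential at nodes of the central component. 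The paper's local ring presentation $\kappa(\eta)[[\ell_e]]/(\ell_e^{I'(e)})$ (\ref{thm:structure_of_DR}) is indeed monomial in the end, but the \emph{proof} that the equations $\ell_e^{I'(e)}=0$ suffice (and that the tangent space is no larger than claimed) hinges on the fact that at a generic point of $Z$ certain sums of $\k$-th roots of $\k$-residues of $\xi_\eta$ on the central component do not vanish (\ref{thm:non_vanishing_root_sum}, relying on Sauvaget). If those residue sums degenerated, the local ring would be different; your plan gives no mechanism to rule this out, so the step "present the local ring as cokernel of an explicit integer matrix" is not merely unexecuted but cannot work as stated.

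Your heuristic for the denominator $\k^{\#V(\Gamma)-1}$ is also not how the count actually arises. The paper introduces a $\k$-th-root partial normalization $\Mdk \to \Mdm_{\k\mid I}$ and shows (i) that each generic point $\overline p \in Z$ has exactly $\k^{b_1(\Gamma)} = \k^{\#E-\#V+1}$ preimages in $\DRL^{1/\k}$ (\ref{pro:uniquegentwist}), and (ii) that each such preimage has length $\prod_e I'(e) = \prod_e I(e)/\k^{\#E}$. The product of (i) and (ii) is $\prod_e I(e)/\k^{\#V-1}$. So the $\k$-th-root ambiguity on outlying components that you invoke is a real phenomenon (it is precisely why $\Mdk$ is needed), but it manifests as multiple preimages in a finite cover, not as a quotient, and it is \emph{not} indexed by outlying vertices; the bookkeeping only works out after multiplying the fibre cardinality $\k^{\#E-\#V+1}$ against the per-point multiplicity $\k^{-\#E}\prod_e I(e)$. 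If you insist on working directly on the normal $\Md$ instead of $\Mdk$, the count is still more complicated, with the $\gcd$s and $\lcm$s of the twists appearing, as the paper itself warns in \ref{Sect:introproofsketch}.

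To repair the proposal you would need to (a) replace the integer-matrix computation by a genuine deformation-theoretic calculation of the tangent space and higher obstructions to $\DRL$ — the paper does this by decomposing $T_p\Mdk$ into four summands and computing the kernels of the duals $b_\Gamma^\vee, b_\Omega^\vee, b_{>1}^\vee, b_{L_v}^\vee$ — and (b) insert the generic non-vanishing result for $\k$-residues as the key analytic input, since it is what guarantees the local ring has the claimed monomial presentation. Without (b) the argument fails even for $\k=1$.
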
 
For the last point of the theorem, the equality of $\overline{\DRC}$ and $H_{g,\m}^\k$ on the cycle level, observe that the formula \ref{eq:weighted_fun_formula} allows us to define $H^\k_{g, \m}$ as a cycle, not just a cycle class. And, under the assumptions of the theorem, the locus $\DRL^\loz$ has the expected codimension, and so $\overline{\DRC}$ makes sense as a cycle, not just a cycle class. Then in fact the equality \ref{eq:thm_main} holds as an equality of cycles, not only up to rational equivalence (in contrast to Conjecture A above, which only makes sense up to rational equivalence). 

We give an outline of the proof in \ref{Sect:introproofsketch}, where we will also discuss in more detail how the multiplicities in the formula \ref{eq:weighted_fun_formula} come up for the cycle $\overline{\DRC}$. Our method of proof actually yields more precise information than required for the conjecture; we can compute not only the multiplicities of the cycle, but even give a presentation for the Artin local rings at generic points of the double ramification locus (see \ref{thm:local_ring_description}). 

%
%
The above theorem gives a concrete interpretation for the weights appearing in the definition of $H_{g,\m}^\k$. It is also a crucial component of the proof of Conjecture A.

\begin{corollary} \label{cor:main_intro}
Conjecture A is true. 
\end{corollary}
\begin{proof}
The equality $\overline{\DRC} = 2^{-g}P_g^{g,k}(\tilde \m)$ is proven in \cite{bae2020pixtons}, so this follows from \ref{thm:main_intro}. 
\end{proof}

At the time this preprint was posted to the arXiv the equality $\overline{\DRC} = 2^{-g}P_g^{g,k}(\tilde \m)$ was known over the locus of compact-type curves by previous work \cite{Holmes2017Multiplicativit} with Pixton, showing Conjecture A to be true when restricted to the locus $\Mbar_{g,n}^{ct} \subset \Mbar_{g,n}$ of compact type curves.

\subsection{Sketch of the proof} \label{Sect:introproofsketch}
The main difficulty in the proof of \ref{thm:main_intro} is to compute the intersection multiplicity of the Abel-Jacobi map $\sigma^\loz: \Md \to \ca J_{\Md}$ with the unit section $e$ of $\ca J_{\Md}$ along the different components of $\DRL^\loz$. For this, we use classical deformation theory to first compute the Zariski tangent space at a general point and then show how to extend this study to higher order deformations. 

To set up the deformation theory, we first need to choose local coordinates on $\Md$. Here, it turns out that it is more convenient to work with a slight variant $\Mdk \to \Mbar_{g,n}$ of $\Md$, for which it is easier to write down local charts around the general points of $\DRL$. 
The precise construction of $\Mdk$ is given in \ref{sec:DR_construction} (where we also make more concrete the relationship with the construction of Marcus and Wise \cite{Marcus2017Logarithmic-com}), but for us the two key properties are
\begin{enumerate}
\item The map $\Mdk \to \Mbar_{g,n}$ is log \'etale and birational and
the map $\sigma\colon \ca M \to \ca J$ sending $(C, P)$ to $[\omega^\k(-\m P)]$ extends uniquely to a map $\bar\sigma\colon\Mdk \to \ca J$.
\item We can compute the tangent space to $\Mdk$ explicitly. 
\end{enumerate}
The double ramification locus is then $\DRL = \bar\sigma^* e$, where $e$ is the unit section in $\ca J$. The concrete local charts for $\Mdk$ can be used to show that the image of $\DRL$ in $\Mbar_{g,n}$ is exactly the twisted differential space $\widetilde{\ca H}_g^\k(\m)$.

With this setup established, the equality of weights in \ref{thm:main_intro} comes about in an interesting way: let $Z$ be an irreducible component of $\widetilde{\ca H}_g^\k(\m)$ with generic stable graph $\Gamma$ and twist $I$. Then a general point $p \in Z$ has exactly
  \[\# \{p' \in \DRL \text{ over } p\} = \k^{\#E(\Gamma) - \#V(\Gamma)+1} = \k^{b_1(\Gamma)}\]
  preimages $p'$ in $\DRL$. This is something that can be easily checked in the local charts of $\Mdk$. 
 In \ref{def:DRL_DR_1_over_k}, we define a cycle $\DRC$ supported on $\DRL$. At each preimage, its multiplicity is
  \[\on{mult}_{p'} \DRC = \prod_{e \in E(\Gamma)} \frac{I(e)}{\k} = \frac{\prod_{e \in E(\Gamma)} I(e)}{\k^{\#E(\Gamma)}}.\]
 Hence, the pushforward $\overline{\DRC}$ of $\DRC$ has multiplicity
 \[\on{mult}_p \overline{\DRC} = \k^{\#E(\Gamma) - \#V(\Gamma)+1} \frac{\prod_{e \in E(\Gamma)} I(e)}{\k^{\#E(\Gamma)}} = \frac{\prod_{e \in E(\Gamma)} I(e)}{\k^{\# V(\Gamma)-1}},\]
 which is exactly the weight of $[Z]$ in the class $H_{g,\m}^k$. It is also easy to see that the cycle $\DRC$ on $\DRL$ equals the fundamental class of (the possibly nonreduced) $\DRL$ (see \ref{lem:length_equals_multiplicity}), so we are left with studying the multiplicity of $\DRL$ at its generic points.


\Cref{sec:TpMd} is concerned with the computation of the tangent space to $\DRL$.
 Suppose we are given a point $p \in \DRL \sub \Mdk$, which is a general point of some irreducible component of $\DRL$. Let $\Gamma$ be the generic stable graph and $I$ be the generic twist on this component.

The maps $e$ and $\bar\sigma$ induce maps on tangent spaces
\begin{equation*}
\begin{tikzcd}
 T_p\Mdk \arrow[r,yshift=0.15cm,"T_p\bar\sigma"] \arrow[r,yshift=-0.15cm,swap,"T_pe"] & T_{e(p)} \ca J
\end{tikzcd}
\end{equation*}
and the difference $b=T_p\bar\sigma- T_pe$ factors via the tangent space $T_e \ca J_p$ to the fibre $\ca J_p$ of $\ca J$ over $p$. This induces an exact sequence
\begin{equation*}
0 \to T_p\DRL \to  T_p\Mdk \stackrel{b}{\to} T_e\ca J_p; 
\end{equation*}
it thus remains to analyse carefully the map $b$. For $\ca C_p$ the stable curve corresponding to the point $p$, the domain and target of $b$ are easily identified in terms of cohomology groups of sheaves on $\ca C_p$. 
Instead of studying the cokernel of $b$, it will be more convenient to use Serre duality and compute the kernel of the linear dual $b^\vee$, which is dual to $\coker(b)$. In \ref{thm:ker_b_v} we show that $\ker(b^\vee)$ has a natural basis, with one element for each outlying vertex $v$ of $\Gamma$ connected to the central vertex only by edges with twists $I>\k$. In \ref{thm:dim_T_p_DR} we conclude that  
\[\dim T_p \DRL = \dim_p \DRL + \# \{e \in E(\Gamma) : I(e)>\k\},\]
so we have one ``direction of nonreducedness'' for each edge $e$ with $I(e)>\k$, corresponding to an infinitesimal deformation smoothing the corresponding node. 

While this description is quite simple, the deformation-theoretic computation that derives it is fairly long and involved. We decompose the tangent space $T_p\Mdk$ into a direct sum of four pieces, corresponding to different types of deformations. Then the dual $b^\vee$ decomposes in four summands accordingly and we compute the intersection of their kernels. In the course of these computations,  we need to show that for the $\k$-differential on the central component of $\ca C_p$, we have that sums of $\k$th roots of its $\k$-residues\footnote{A generalization of the residue of a $1$-differential, see \ref{sec:generic_non-vanishing_residue} for a definition.} at (subsets of the) nodes of $\ca C_p$ are generically non-vanishing. We show a corresponding general result, which might be of independent interest, in \ref{sec:generic_non-vanishing_residue}.

That the tangent space to the double ramification locus can be computed via first-order deformation theory is unsurprising, but in order to prove \ref{thm:main_intro}, we need to compute the local rings of the double ramification locus, which is much more involved. It is not hard to show that an Artin local ring is determined by its functor of deformations, but reconstructing the Artin ring from the deformations is in practise often difficult. 

Write $E$ for the set of edges of the dual graph of the tautological stable curve $\ca C_p$ over $p$. The universal deformation of $\ca C_p$ comes with a natural projection map to $\on{Spec} \field[[x_e:e \in E]]$, which we can see as the space of deformations which smooth the nodes. Here $\field$ is our base field, which we assume to be of characteristic zero. We slice $\DRL$ with a generic subvariety 
 of codimension equal to the dimension of $\DRL$, obtaining a space $\DRL'$ whose tangent space has dimension equal to the number of edges $e$ with twist $I(e)>\k$. We use our tangent space computation to show that the natural map $\DRL'\to \on{Spec} \field[[x_e:e \in E]]$ is a closed immersion; it remains to identify the image. From the explanation above, one can reasonably guess that the image might be cut out by the ideal $$(x_e^{I(e)/\k} : e \in E) \sub \field[[x_e:e \in E]].$$ 

We conclude the proof by showing that for an Artin ring $A'$, a map $\on{Spec} A' \to \on{Spec} \field[[x_e:e \in E]]$ lifts along $\DRL'\to \on{Spec} \field[[x_e:e \in E]]$ if and only if the elements $x_e^{I(e)/\k}$ are sent to zero under the corresponding ring map $\field[[x_e:e \in E]] \to A'$. The proof works by writing $A'$ as an iterated extension of Artin rings and lifting the map one step at a time. That is, we have Artin rings $A_0=\field, A_1, \ldots, A_M=A'$ and short exact sequences
\[0 \to J_i \to A_{i} \to A_{i-1} \to 0\]
of $\field$-vector spaces, such that $A_i \to A_{i-1}$ is a morphism of $\field$-algebras with kernel $J_i \subset A_i$ satisfying $J_i \frak m_{A_i}=0$ for the maximal ideal $\frak m_{A_i}$ of $A_i$. Then we show that for each $i$, the obstruction of lifting an $A_i$-point of $\DRL'$ to an $A_{i+1}$-point of $\DRL'$ over  $\on{Spec} \field[[x_e:e \in E]]$ is exactly that all elements $x_e^{I(e)/\k}$ are sent to zero in $A_i$.

\begin{remark}
If we had worked $\Md$ instead of $\Mdk$, a similar description would be possible, but both the multiplicities and the cardinalities of fibres of the double ramification locus over the twisted differential space would have to be expressed in terms of the gcd/lcm of the twists (though in the end everything would of course cancel to give the same answer). This would have made the deformation-theoretic calculation more complicated, and seemed to us better avoided. 
\end{remark}
%
%

Once again, the key input is our result in \ref{sec:generic_non-vanishing_residue} on the generic-non-vanishing of $\k$-residues. 

\subsection{Relation to previous work and outlook} \label{Sect:relandoutlook}
\subsubsection*{Compactification via log geometry}
In the paper \cite{Guere2016A-generalizatio}, Gu\'er\'e uses logarithmic geometry to construct a moduli space of $\k$-log canonical divisors sitting over $\widetilde{\ca H}_g^\k(\m)$ and carrying a natural perfect obstruction theory and virtual fundamental class. For  $\k=1$ and one of the $m_i$ negative, the pushforward of this virtual class equals the weighted fundamental class $H_{g,\m}^k$. However, for general $\k$ the multiplicity of this pushforward at a component with stable graph $\Gamma$ and twist $I$ is equal to $\prod_{e \in E(\Gamma)} I(e)$, and thus different from the multiplicities obtained here and conjectured in \cite{Schmitt2016Dimension-theor}. This could indicate that for $\k>1$ the definition of the space in \cite{Guere2016A-generalizatio} needs to be adapted. We hope that the computations in the present paper may shed some light on the necessary modifications.

\subsubsection*{The cases of excess dimension}
Until now, our paper has focused on the case $\k \geq 1$ and one of the $m_i$ negative or not divisible by $\k$, in which case $\widetilde{\ca H}_g^\k(\m)$ was of pure codimension $g$. In general, by \cite[Theorem 21]{Farkas2016The-moduli-spac} all components of the space $\widetilde{\ca H}_g^\k(\m)$ have \emph{at most} codimension $g$. In these remaining cases, the behaviour is as follows:
\begin{itemize}
 \item for $\k=0$, the principal component $\overline{\ca H}_g^\k(\m)$ is of codimension exactly $g$ (unless all $m_i = 0$), but there are components in the boundary of $\Mbar_{g,n}$ of various excess dimensions;
 \item for $\k=1$ and all $m_i \geq 0$, the principal component $\overline{\ca H}_g^1(\m)$ is of pure codimension $g-1$, with all other components supported in the boundary and of codimension $g$,
 \item for $\k>1$ and all $m_i = \k m_i' \geq 0$ divisible by $\k$, the space $\mathcal{H}_{g}^k(\m)$ decomposes as a disjoint union
 \[\mathcal{H}_{g}^k(\m) = {\ca H}_g^1(\m') \cup \mathcal{H}_{g}^k(\m)';\]
 where ${\ca H}_g^1(\m')$ is the locus where the $\k$-differential is a $\k$-th power of a $1$-differential, and $\mathcal{H}_{g}^k(\m)'$ is the complement. Then $\overline{\ca H}_g^1(\m') \subset \widetilde{\ca H}_g^\k(\m)$ is a union of components of codimension $g-1$, with all other components (i.e. $\overline{\mathcal{H}}_{g}^k(\m)'$ and those supported in the boundary) having codimension $g$.
\end{itemize}
In all of these cases, the cycle $\overline{\DRC}$ still makes sense and by \ref{thm:main_intro} it is indeed supported on the locus $\widetilde{\ca H}_g^\k(\m) \subset \Mbar_{g,n}$. Similarly, the formula of Pixton's cycle $P_g^{g,\k}(\tilde \m)$ makes sense in these cases, and in \cite{Holmes2017Extending-the-d}, the first author shows that for $\k=0$ we have $\overline{\DRC}=2^{-g} P_g^{g,k}(\tilde \m)$. 

We expect that in the cases $\k \geq 1$ and $\m=\k \m' \geq 0$ the cycle $\overline{\DRC}$ should behave as follows:
\begin{itemize}
 \item on a component $Z$ of $\widetilde{\ca H}_g^\k(\m)$ of codimension equal to $g$, it should be \[\frac{\prod_{e \in E(\Gamma))} I(e)}{\k^{\#V(\Gamma)-1}} [Z]\] as before (where $\Gamma,I$ are the generic twist and dual graph),
 \item on the components $\overline{\ca H}_g^1(\m')$ of codimension $g-1$ it should be given by the first Chern class of an appropriate excess bundle (for the Abel-Jacobi section meeting the unit section) times the fundamental class of $\overline{\ca H}_g^1(\m')$.
\end{itemize}
It seems likely, that the deformation-theoretic tools in the present paper can be applied to prove these expectations, and explicitly identify the excess bundle.

The perspective above could also help shed further light on a second conjecture made in \cite{Schmitt2016Dimension-theor}. There, for a nonnegative partition $\m'$ of $2g-2$, a class $[\overline{\ca H}_g^1(\m')]^{\text{vir}}$ was defined by the formula
\[[\overline{\ca H}_g^1(\m')]^{\text{vir}} + \sum_{(Z,\Gamma,I)} \left({\prod_{e \in E(\Gamma))} I(e)} \right) [Z]  = 2^{-g} P_g^{g,1}(\tilde \m'), \]
where $Z$ runs through the boundary components of $\widetilde{\ca H}_g^1(\m')$ and $\tilde \m'=(m_1'+1, \ldots, m_n'+1)$. 
The idea was that $[\overline{\ca H}_g^1(\m')]^{\text{vir}}$ should be a contribution to the Double ramification cycle of the partition $\m'$, supported on $\overline{\ca H}_g^1(\m')$. From our perspective, this should just be the contribution of $\overline{\DRC}$ supported there. Then, since the locus $\overline{\ca H}_g^1(\m')$ appears as a component of $\overline{\ca H}_g^\k(\k \m')$ for any $\k>1$, the following conjecture was made.

\vspace{0.3 cm}
\noindent \textbf{Conjecture A' (\cite{Schmitt2016Dimension-theor})} Let $\k \geq 1$ and $\m=\k \m'$ for a nonnegative partition $\m'$ of $2g-2$. Then we have 
\[[\overline{\ca H}_g^1(\m')]^{\text{vir}} + [\overline{\mathcal{H}}_{g}^k(\m)'] + \sum_{(Z,\Gamma,I)} \frac{\prod_{e \in E(\Gamma))} I(e)}{\k^{\#V(\Gamma)-1}} [Z] = 2^{-g} P_g^{g,\k}(\tilde \m),\]
where $Z$ runs through the boundary components of $\widetilde{\ca H}_g^\k(\m)$.
\vspace{0.3 cm}

From the perspective of defining the Double ramification cycle via an extension of the Abel-Jacobi map, this behaviour is expected: the space $\Md$ for the partition $\k \m'$ of $\k(2g-2)$ agrees with the space for the partition $\m'$ of $2g-2$, and the Abel-Jacobi section for $\k \m'$ is simply the composition of the section for $\m'$ with the \'etale morphism
\[\ca J \to \ca J, (C,\mathcal{L}) \mapsto (C,\mathcal{L}^{\otimes \k}).\]
Thus, over the locus $\overline{\ca H}_g^1(\m')$, the intersection of the Abel-Jacobi section with the unit section should produce the same contribution to the cycle $\overline{\DRC}$.

\subsubsection*{Smoothing differentials}
The papers \cite{BCCGM, Bainbridge2016Strata-of--k--d} give criteria for a nodal curve $(C,p_1, \ldots, p_n)$ to lie in the locus $\overline{\mathcal{H}}_g^\k(\m)$. Being contained in this closure is equivalent to having some one-parameter deformation $(C_t, p_{1,t}, \ldots, p_{n,t})_{t \in \Delta}$ with the general curve being contained in ${\mathcal{H}}_g^\k(\m)$. The criteria of \cite{BCCGM, Bainbridge2016Strata-of--k--d} are phrased in terms of the existence of $\k$-differentials on the components of $C$ satisfying some vanishing conditions for sums of $\k$-th roots of their $\k$-residues at nodes of $C$. On the other hand, in our deformation-theoretic computations in \ref{sec:computing_length} we see that for a point in a boundary component of the Double ramification locus, the obstruction to smoothing the nodes while remaining in the Double ramification locus is exactly related to a nonvanishing of such sums of $\k$-th roots of $\k$-residues. While these computations are not directly applicable to the problem of classifying $\overline{\mathcal{H}}_g^\k(\m)$, it seems plausible that the methods of our paper can be applied in this direction. We thank Adrien Sauvaget for pointing out this connection and plan to pursue this in forthcoming work.

In a related direction, the recent paper \cite{BCGGM3} constructs a smooth compactification of the closure $\overline{\mathcal{H}}_g^\k(\m)$ and gives a modular interpretation for this new compactification. Here, it is an interesting question how this relates to the compactification 
obtained by taking the closure of ${\mathcal{H}}_g^\k(\m) \subset \DRL^\loz$ inside the Double ramification locus of $\Md$.
%
%

\subsection{An overview of different definitions of Double ramification cycles}\label{sec:overview_of_DRs}
In this section, we want to summarize the existing definitions of Double ramification cycles in the literature and the known equivalences between them.

Several authors gave elementary geometric constructions of the DR class on partial compactifications of $\ca M_{g,n}$ inside $\Mbar_{g,n}$ (for example, the compact-type locus), and computed them it in the tautological ring. Examples include \cite{Hain2013Normal-function}, \cite{Grushevsky2012The-zero-sectio}, \cite{Grushevsky2012The-double-rami}, and \cite{Dudin2015Compactified-un}.

The following are the different constructions of a DR cycle on all of $\Mbar_{g,n}$:
\begin{itemize}
 \item In the case $\k=0$, Li, Graber and Vakil gave a construction as the pushforward of a virtual fundamental class on  spaces of \textbf{rubber maps} (\cite{Li2002A-degeneration-}, \cite{Li2001Stable-morphism},\cite{Graber2005Relative-virtua}, see also \cite{liruan}).
 \item Pixton (\cite{Pixton}) proposed the formula $2^{-g} P_g^{g,\k}(\tilde \m)$ for the DR class as an explicit tautological class, defined via a \textbf{graph sum}.
 \item Kass and Pagani proposed extending the cycle as the pullback of a universal Brill-Noether class on a \textbf{compactified Jacobian} via the Abel-Jacobi section (\cite{Kass2017The-stability-s} and \cite[\S 2.4]{Holmes2017Jacobian-extens}).
 \item Marcus and Wise used techniques from \textbf{logarithmic and tropical geometry} (\cite{Marcus2017Logarithmic-com}) to construct a space on which the Abel-Jacobi map extends. 
 \item The first-named author gave a definition using a universal \textbf{extension of the Abel-Jacobi map} as described above (\cite{Holmes2017Extending-the-d}).
   \item Abreu and Pacini gave an explicit ``\textbf{tropical blowup}'' of $\Mbar_{g,n}$ (i.e. a blowup dictated by an explicit refinement of $\Mbar_{g,n}^\textup{trop}$) resolving the Abel-Jacobi map to the Esteves' compactified Jacobian over $\Mbar_{g,n}$ and use this to define a Double ramification cycle (\cite{abreupacini}).
 \item Finally, for $\k \geq 1$ and one of the $m_i$ is negative or not divisible by $\k$, there is the definition of the DR cycle as the \textbf{weighted fundamental class} $H_{g,\m}^\k$, proposed by Janda, Pandharipande, Pixton, and Zvonkine for $\k=1$ (\cite{Farkas2016The-moduli-spac}) and the second-named author for $\k > 1$ (\cite{Schmitt2016Dimension-theor}).
\end{itemize}
In \ref{Fig:DRequivalences} we illustrate the known equivalences between these definitions. In particular, \cite{bae2020pixtons} (which came out after the first appearance of this paper) completes the proof that they are all in fact equivalent. 

\tikzset{
between/.style args={#1 and #2}{
at=($(#1)!0.5!(#2)$)
}
}

\begin{figure}[ht]
\begin{center}
 \begin{tikzpicture}
  \tikzstyle{every node}=[fill=white, inner sep=5pt, ultra thick]
  \tikzstyle{allk}=[]
  \node[draw=black, minimum width = 3cm] (AP) at (0,9) {\begin{tabular}{c} Tropical\\blowup  \end{tabular}};
  \node[draw=black, minimum width = 3cm] (MW) at (0,0) {\begin{tabular}{c} Log/Tropical\\geometry  \end{tabular}};
  \node[draw=black, minimum width = 3cm] (H) at (0,3) {\begin{tabular}{c} Extension\\of AJ map  \end{tabular}};
  \node[draw=black, minimum width = 3cm] (KP) at (0,6) {\begin{tabular}{c} Compactified\\Jacobian  \end{tabular}};
  \node[draw=black, minimum width = 3cm] (JPPZS) at (5,3) {\begin{tabular}{c} Weighted\\fund. class  \end{tabular}};
  \node[draw=black, minimum width = 3cm] (P) at (10,3) {\begin{tabular}{c} Graph\\sum  \end{tabular}};
  \node[draw=black, minimum width = 3cm] (LGV) at (10,0) {\begin{tabular}{c} Rubber \\maps  \end{tabular}};
  
  \draw (JPPZS) node[below, inner sep = 0cm,  outer sep = 0.9cm]{\tiny{for  $\k \geq 1$ and some $m_i<0$ or $\k \nmid m_i$}};
  \draw (LGV) node[below, inner sep = 0cm, outer sep = 0.9cm]{\tiny{for  $\k =0$}};

  \draw (H) to [out=45, in=-45,looseness=1.45] node[midway,right]{\cite{abreupacini}} (AP);
  \draw (MW) --node[midway,left]{\cite{Holmes2017Extending-the-d}} (H);
  \draw (KP) --node[midway,left]{\cite{Holmes2017Jacobian-extens}} (H);
  \draw (MW) --node[midway,above]{\cite{Marcus2017Logarithmic-com}} (LGV);
  \draw (P) --node[midway,left]{\cite{Janda2016Double-ramifica}} (LGV);
  \draw[dotted] (P) --node[midway,above]{\tiny{Conj. A}} (JPPZS);
  \draw (H) --node[midway,above]{[HS19]$^*$} (JPPZS);
  \draw (H)  to [out=30, in=150] node[midway,above]{\cite{bae2020pixtons}} (P);
 \end{tikzpicture}
\end{center}
\caption{Equivalences between different definitions of Double ramification cycles; 
[HS19]$^*$ denotes the present article. \label{Fig:DRequivalences}}
\end{figure}
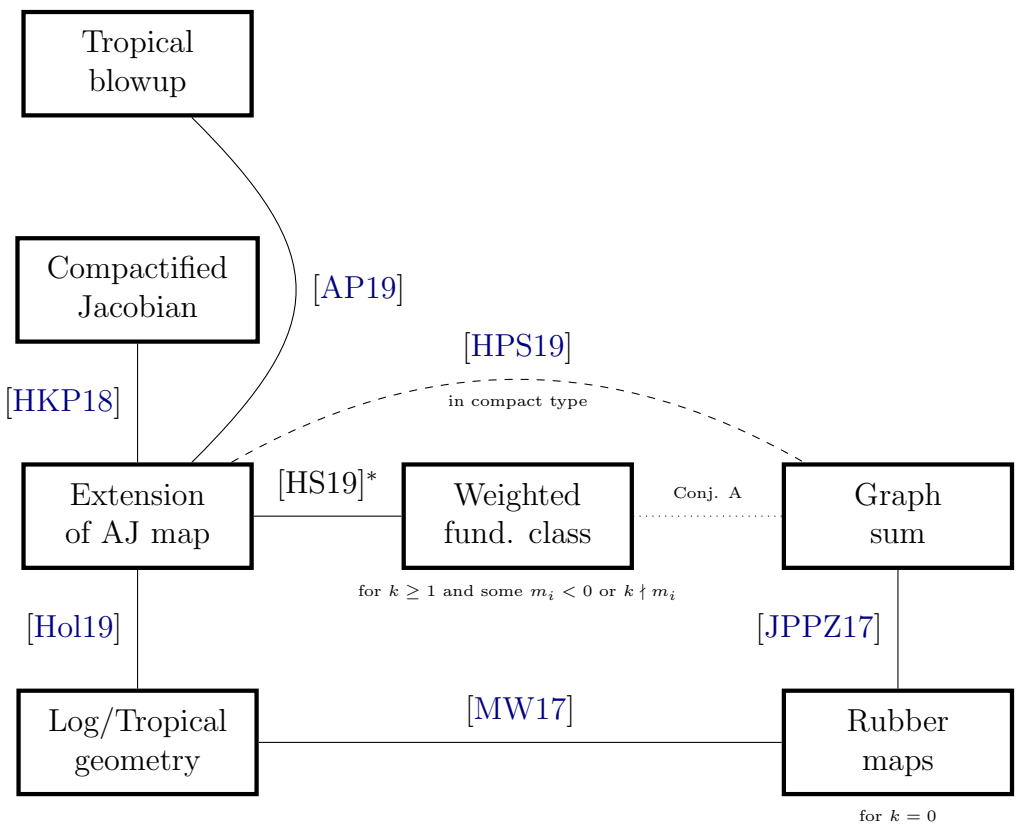

\subsection{Outline of the paper}

The main purpose of this paper is to analyse very carefully the infinitesimal structure of the double ramification locus, eventually enabling us to compute the multiplicities of its components and thus compare it to the cycle of twisted differentials. In \ref{sec:DR_construction} we describe the construction of the space $\Mdk$, the variant of $\Md$ on which we perform our computations (see \ref{Sect:introproofsketch} above). We also make more concrete the relationship with the construction of Marcus and Wise \cite{Marcus2017Logarithmic-com}.

%

\Cref{sec:TpMd,sec:coker_of_AJ_map} are devoted to the computation to the tangent space to the double ramification locus. In the brief \ref{sec:TpMd} we compute the tangent space of the space $\Mdk$, in which the double ramification locus naturally lives. \Cref{sec:coker_of_AJ_map} is much more substantial, and contains the computation of the tangent space of the double ramification locus itself. A key technical lemma on the non-vanishing of certain residues is postponed until \ref{sec:generic_non-vanishing_residue}, as it may be of independent interest and we wished to keep its exposition self-contained. 

Once we understand the tangent space to the double ramification locus, in \ref{sec:computing_length} we can compute explicitly its local ring, and in particular the length of the local ring. In \ref{sec:concluding_the_proof} we use this to deduce the desired formula of the Double ramification cycle as a weighted fundamental class. 

Finally, in the \ref{sec:serre_duality,sec:explicit_def_via_cech} we recall some standard results on Serre duality and deformation theory via \v Cech cocycles that are used in several places in the proof. This material is well-known, but we include it to fix notation, and because the very explicit forms of these results that we need are somewhat scattered about in the literature. 

\subsection*{Acknowledgements}
The first author would like to thank Martin Bright, Bas Edixhoven, and Robin de Jong for helpful conversations during the preparation of this article. He is very grateful to Bart de Smit for a discussion about the structure of Gorenstein Artin local rings, without which he would have wasted a lot of time on a dead-end. The second author would like to thank Felix Janda and J\'er\'emy Gu\'er\'e for valuable conversations. Both authors want to thank Adrien Sauvaget for interesting discussions and helpful comments.

We are very grateful to Gabriele Mondello for sharing with us the unpublished note \cite{MondelloPluricanonical-}, which gave an alternative proof of some of the results in \cite{Schmitt2016Dimension-theor} and contained a number of very helpful ideas. 

We also want to thank Alex Abreu, Bas Edixhoven, Quentin Gendron, Martin M\"oller, Marco Pacini, Rahul Pandharipande, Nicola Pagani and Adrien Sauvaget for useful comments on a preliminary version of the paper.


\subsection{Notation and conventions}
\subsubsection*{List of notations}
\begin{longtable}{ p{.30\textwidth}  p{.70\textwidth} } 
$\field$ & the ground field, assumed to be of characteristic zero\\
$\ca M$ & the moduli space $\ca M_{g,n}$ of smooth curves of genus $g$ with $n$ marked points\\
$\Mbar$ & the moduli space $\Mbar_{g,n}$ of stable curves of genus $g$ with $n$ marked points\\
$\ca J \to \Mbar$ & the universal semi-abelian jacobian over $\Mbar_{g,n}$\\
$\omega^\k \left(-\m P \right)$ & the twisted $\k$-canonical line bundle $\omega_C^\k \left(-\sum_{i=1}^n m_i p_i \right)$ of a curve $C$\\
$I$ & a twist, given by a function $I$ on the half-edges of a stable graph $\Gamma$ as described in \ref{def:twist}\\
$V^{out}$ & the outlying vertices (i.e. those that are not the central vertex) in a simple star graph, as described in \ref{def:simple_star}\\
$\Mbar \stackrel{f}{\longleftarrow} U \stackrel{g}{\longrightarrow} \bb A^E$ & a combinatorial chart of $\Mbar$, as described in \ref{sec:combinatorial_charts}\\
$\Md$ & the universal stack $\Md/\Mbar$ on which the Abel-Jacobi map $\sigma : \ca M \to \ca J$ extends, constructed in \cite{Holmes2017Extending-the-d}\\
$\sigma^\loz  \colon \Md \to \ca J$ & the extension of $\sigma : \ca M \to \ca J$ to $\Md$\\
$\DRL^\loz$ & the scheme theoretic pullback $\DRL^\loz \subset \Md$ of the unit section $e \subset \ca J$ under $\sigma^\loz$\\
$\DRC^\loz$ & the double ramification cycle on $\DRL^\loz$\\
$\overline{\DRC}$ & the cycle in $\Mbar$ obtained by the pushforward of $\DRC^\loz$ via the proper morphism $\DRL^\loz \to \Mbar$\\
$\bb A^E_I$ & an affine toric variety $\bb A^E_I \to \bb A^E$ associated to a fixed combinatorial chart $\Mbar \stackrel{f}{\longleftarrow} U \stackrel{g}{\longrightarrow} \bb A^E$ and twist $I$ on $\Gamma$, with equations as given in \ref{sect:affinepatches}\\
$\Mdm_{I,U}$ & the pullback of $\bb A^E_I \to \bb A^E$ under the map $g:U \to \bb A^E$ from the combinatorial chart\\
$\Mdm$ & the stack $\Mdm \to \Mbar$ obtained by gluing the patches $\Mdm_{I,U}$ for a cover of $\Mbar$ by combinatorial charts\\
$\sigma^\m \colon \Mdm \to \ca J$ & the extension of $\sigma : \ca M \to \ca J$ to $\Mdm$\\
$\DRL^\m, \DRC^\m$ & the double ramification locus and cycle constructed inside $\Mdm$\\
$\Mdm_{\k | I}$ & the open substack in $\Mdm$ where the twist $I$ is divisible by $\k$ \\ 
$\Mbar^{\m, 1/\k}$ & the partial normalization of $\Mdm_{\k | I}$ obtained by taking $\k$th roots in the defining equations of $\bb A^E_I$, pulling back to the combinatorial charts $U$ and gluing\\
$I'$ & the divided twist $I'(e)=I(e)/\k$ for the case that $I$ is divisible by $\k$\\
$\barsigma  \colon \Mbar^{\m, 1/\k} \to \ca J$ & the extension of $\sigma : \ca M \to \ca J$ to $\Mbar^{\m, 1/\k}$ \\
$\Mbar^{\m, \k, ''}$ & the normalization of $\Mdm_{\k | I}$, an open substack of the normalization $\Md$ of $\Mdm$\\
$\DRL^{1/\k}, \DRC^{1/\k}$ & the double ramification locus and cycle constructed inside $\Mbar^{\m, 1/\k}$\\
$U_u$ & the spectrum $\on{Spec} \ca O_{U,u}$ of the local ring of a point $u \in U$ inside a combinatorial chart $\Mbar \stackrel{f}{\longleftarrow} U \stackrel{g}{\longrightarrow} \bb A^E$\\
$z_h, z_{h'}$ & coordinates in a singular coordinate chart of the curve $\ca C_{U_u}/U_u$, for an edge $e={h,h'}$ of the stable graph $\Gamma$ of $\ca C_{U,u}$, where $z_h$ vanishes on the component of $\ca C_{U,u}$ incident to $h$\\
$\ca T$ & the `correction' line bundle on the universal curve $\ca C_I / \Mbar_{I,U}^{\m,1/\k}$ described in \ref{sec:universal_section} 
\\
$H^1(\Gamma,\field)$ & the first cohomology group of the graph $\Gamma$ with coefficients in $\field$, isomorphic to $\field^{b_1(\Gamma)}$\\
$\gamma(e,e')$ & cycle of length $2$ given by the composition of the directed edge $e$ with the inverse of the directed edge $e'$, assuming $e,e':v \to v'$ have same source and same target\\
$L_v$ & sub-vector space of $T_u \Mbar$ of dimension at most $1$ defined in \ref{sec:TpMd}\\
$\ca J_p$ & the fibre of the universal jacobian $\ca J \to \Mbar^{\m, 1/\k}$ over the point $p: \on{Spec} \field \to \Mbar^{\m, 1/\k}$\\
$b: T_p \Mdk \to T_e \ca J_p$ & the difference $T_p \barsigma - T_p e$ of the differential of the Abel-Jacobi map $\barsigma$ and the unit section $e$ of $\ca J$\\
$b_\Omega, b_\Gamma, b_{>1}, b_{L_v}$ & the restrictions of $b$ to the direct summands $H^1(\ca C_p, \Omega^\vee(-P)), H^1(\Gamma, \field), \bigoplus_{e:I'(e)>1} \field, L_v$ of $T_p \Mdk$ in \ref{sec:AJ_on_T_p}\\
$\Omega$ &  the sheaf of relative differentials\\
$\omega$ & the relative dualising sheaf
\end{longtable}

\subsubsection*{Generalities}
We have fixed integers $g \ge 0$, $n >0$, $\k >0$ with $2g-2 + n>0$, and integers $m_1, \dots, m_n$ summing to $\k(2g-2)$ with at least one $m_i <0$ or not divisible by $\k$. We will write $\ca M$ for $\ca M_{g,n}$, $\Mbar$ for $\Mbar_{g,n}$ etc. We write $\ca J$ for the universal semi-abelian jacobian over $\Mbar$, often written $\on{Pic}^{\ul 0}_{\ca C/\Mbar}$. Then the section 
\[\sigma : \ca M \to \ca J, (C,p_1, \ldots, p_n) \mapsto \omega^\k \left(-\sum_{i=1}^n m_i p_i \right)=:\omega^\k \left(-\m P \right)\] 
lives naturally in $\ca J(\ca M)$, but in general does \emph{not} extend to the whole of $\Mbar$. 

We work throughout over a fixed field $\field$, which we assume to have characteristic zero. Our proof is entirely algebraic, except for the crucial application of a result of Sauvaget \cite[Corollary 3.8]{Sauvaget2017Cohomology-clas} in \ref{sec:generic_non-vanishing_residue}, which we expect to admit an algebraic proof. When $\k >1$ we use very often the characteristic-zero assumption, but for $\k=1$ it can often be avoided; its main purpose is in allowing us to apply Sauvaget's result mentioned above, and in \ref{lem:kernel_description} where we use that a function with vanishing differential is locally constant. As such it may well be possible with the methods here to determine what happens in small characteristic; it seems very likely that the multiplicities of the twisted differential space will be different in this case. 

\begin{remark}
Our results do not require that the ground field $K$ be algebraically closed. When we talk about the graph of a curve over a field, we are implicitly saying that the irreducible components are geometrically irreducible, and the preimages of the nodes in the normalisation are all rational points. At later points we will assert that various $\k$-differentials locally have $\k$-th roots; this should be interpreted over a suitable finite extension, (our characteristic-zero assumption ensures that adjoining $\k$-th roots yields an etale extension, and thus does not affect the deformation theory. Alternatively, because the computations of the tangent spaces and lengths of local rings are invariant under etale extensions, the reader may assume without loss of generality that the ground field $\k$ is algebraically closed throughout \ref{sec:coker_of_AJ_map,sec:computing_length}. 

We expect that most readers will be mainly interested in the case of algebraically closed fields, so to minimise clutter we do not explicitly discuss these field extensions, but allow the interested reader to insert them when necessary. 
\end{remark}

\subsubsection{Graphs and twists}
A graph $\Gamma$ consists of a finite set $V$ of vertices, a finite set $H$ of half-edges, a map `$\on{end}$' from the half-edges to the vertices, an involution $i$ on the half-edges, and a \emph{genus} $g\colon V \to \bb Z_{\ge 0}$. Graphs are connected, and the genus $g(\Gamma$) is the first Betti number plus the sum of the genera of the vertices. 

\emph{Self-loops} are when two distinct half-edges have the same associated vertex and are swapped by $i$. \emph{Edges} are sets $\{h, h'\}$ (of cardinality 2) with $i(h) = h'$. \emph{Legs} are fixed points of $i$, and $L$ denotes the set of legs. A \emph{directed edge} $h$ is a half-edge that is not a leg; we call $\on{end}(h)$ its \emph{source} and $\on{end}(i(h))$ its \emph{target}, and sometimes write it as $h\colon \on{end}(h) \to \on{end}(i(h))$. We write $E = E(\Gamma)$ for the set of edges. 

The \emph{valence} $\on{val}(v)$ of a vertex is the number of non-leg half-edges incident to it, and we define the \emph{canonical degree} $\mathrm{can}(v) = 2g(v) - 2 + \on{val}(v)$, so that 
\begin{equation*}
2g(\Gamma) - 2 = \sum_v \mathrm{can}(v). 
\end{equation*}


 A \emph{closed walk} in $\Gamma$ is a sequence of directed edges so that the target of one is the source of the next, and which begins and ends at the same vertex. We call it a \emph{cycle} if it does not repeat any vertices or (undirected) edges. 

A \emph{leg-weighted graph} is a graph $\Gamma$ together with a function $\m$ from the set $L$ of legs to $\bb Z$ such that $\sum_{l \in L}\m(l) = \k(2g(\Gamma)-2)$. 
\begin{definition}\label{def:twist}
A \emph{twist} of a leg-weighted graph is a function $I$ from the half-edges to $\bb Z$ such that:
\begin{enumerate}
\item for all legs $l \in L$, we have $\m(l) = I(l)$. 
\item If $i(h) = h'$ and $h \neq h'$ then $I(h) +I(h') =0$;
\item For all vertices $v$, $\sum_{\on{end}(h) = v} I(h) -  \k \cdot \mathrm{can}(v)$ = 0. 
\end{enumerate}
\end{definition}
We write $\on{Tw}(\Gamma)$ for the (non-empty) set of twists of a leg-weighted graph $\Gamma$.


\begin{remark}
In \cite{Holmes2017Extending-the-d} these twists were called `weightings', and were denoted $w$. The present notation is much closer to that used by \cite{Farkas2016The-moduli-spac}; we have made this change to facilitate comparison to \cite{Farkas2016The-moduli-spac}, and because the letter $w$ was already over-loaded. 
\end{remark}

\begin{remark}
Farkas and Pandharipande impose two additional conditions (which they call `vanishing' and `sign'), which together state that $\Gamma$ cannot contain any directed cycle for which every directed edge $h$ has $I(h) \ge 0$, and at least one $h$ has $I(h)>0$. 

We do not need to impose this condition as it will drop out automatically from our geometric setup; more precisely, the fibre of a chart $\Mdm_{I,U}$ of $\Mdm$ over the origin in $\bb A^E$ (see \ref{sec:charts_of_Md} for this notation) is easily seen to be empty if either of these conditions is not satisfied. 

If one forgets the values of the integers $I(h)$ and remembers only their signs and whether they vanish, the above condition is exactly equivalent to `Suzumura consistency', a condition arising in decision theory \cite{Bossert2008}. 
\end{remark}

\begin{definition}\label{def:simple_star}
We say a leg-weighted graph $\Gamma$ is a \emph{simple star graph} if all legs with negative weight or weight not divisible by $\k$ are attached to the same vertex (which we call the \emph{central vertex}), and every edge has exactly one half-edge attached to the central vertex (in particular, there are no self-loops). We call the non-central vertices the \emph{outlying vertices}, and the set of them is $V^{out}$. 
\end{definition}

\subsubsection{The weighted fundamental class of the space of twisted differentials}\label{sec:FP_formula}


In this section we recall the definition of the class $H_{g,\m}^\k \in A^{g}(\Mbar)$ given in \cite[\S A.4]{Farkas2016The-moduli-spac} (for $\k=1$) and \cite[Section 3.1]{Schmitt2016Dimension-theor} (for $\k>1$) and explain why it is equivalent to the definition as a weighted fundamental class of $\widetilde{\mathcal{H}}_g^\k(\m)$ presented in the introduction. 

First, recall that given any integer $\k \geq 1$ and a partition $\m$ of $\k(2g-2)$ of length $n$, we have
\[\ca H_{g}^{\k}(\m) = \left\{(C,p_1, \ldots, p_{n}) : \omega_{C}^{\k}\left(-\sum_{i} m_i p_i\right) \cong \mathcal{O}_{C}\right\} \subset \ca M_{g,n},\]
the corresponding stratum of $\k$-differentials. This closed, reduced substack has pure codimension $g-1$ if $\k=1$ and all $m_i \geq 0$, and pure codimension $g$ if there exists $i$ such that $m_i$ is negative or not divisible by $\k$. As before we denote by $\overline{\ca H}_{g}^{\k}(\m)$ its closure in $\Mbar_{g,n}$.



Write $S$ for the set of simple star graphs of genus $g$ (see \ref{def:simple_star}). We say a twist $I$ of a simple star graph is \emph{positive} (writing $\on{Tw}^+(\Gamma)$ for the set of positive twists) if $I(h)>0$ and $\k$ divides $I(h)$ for every half-edge $h$ attached to an outlying vertex. In this case, by slight abuse of notation, we write $I(e)=I(h)$ for the edge $e=(h,h')$ to which $h$ belongs.


With this notation, Janda, Pandharipande, Pixton, Zvonkine (for $\k =1$) and the second author (for $k > 1$) define
\begin{align*}
H_{g,\m}^\k = \sum_{\Gamma \in S}\sum_{I \in \on{Tw}^+(\Gamma)}\frac{\prod_{e \in E(\Gamma)}I(e)}{|\on{Aut}(\Gamma)|\k^{\# V^{out}}}\xi_{\Gamma *}\big[ \left[\overline{\ca H}_{g(v_0)}^{\k}(\m|_{v_0}, -I|_{v_0} - \k)\right]\cdot \\
\prod_{v \in V^{out}(\Gamma)}\left[ \overline{\ca H}_{g(v)}^1(\frac{\m}{\k}|_v, \frac{I}{\k}|_v - 1)\right] \big]. 
\end{align*}
Here $\ca H_{g(v_0)}^{\k}(\m|_{v_0}, -I|_{v_0} - \k)$ denotes the cycle in $\ca M_{g(v_0), n(v_0)}$ with $n(v_0)$ the number of half-edges attached to $v_0$, and with weighting given by restricting the weighting $\m$ to those legs attached to $v_0$, and given by $-I(e)-\k$ at the half-edge belonging to the edge $e$ of $\Gamma$. The cycles $\ca H_{g(v)}^1(\frac{\m}{\k}|_v, \frac{I}{\k}|_v - 1)$ on the outlying vertices $v$ are defined analogously, where we use that all markings on them have weights $m_i$ divisible by $\k$ and all twists $I$ are likewise divisible by $\k$ (again, see \cite{Schmitt2016Dimension-theor} for details). 

Now we comment why this is a weighted fundamental class of the space $\widetilde{\mathcal{H}}_g^\k(\m)$. Given a boundary component $Z$ of this space, let $\Gamma$ be the generic dual graph of a curve $C$ in $Z$ and let $I$ be the twist on $\Gamma$ such that the condition \ref{eqn:twistdiffcond} is satisfied for this generic curve $C$. By \cite[Proposition A.1.] {Schmitt2016Dimension-theor} every node of $C$ such that the corresponding edge has twist $I=0$ can be smoothed while staying in  $\widetilde{\mathcal{H}}_g^\k(\m)$. Thus since $Z$ is assumed a generic point of $\widetilde{\mathcal{H}}_g^\k(\m)$, all edges of $\Gamma$ must have nonzero twist. Then this condition tells us that the various components $C_v$ of $C$ vary within appropriate strata of $\k$-differentials. But the codimension of $Z$ is at most $g$ by  \cite[Theorem 21]{Farkas2016The-moduli-spac}. A short computation shows that this is only possible if at all but one of the vertices $v$, the curve $C_v$ varies in a stratum of $\k$-th powers of holomorphic $1$-differentials (which is the case of excess-dimension). This implies that all twists must be divisible by $\k$ and that there is exactly one vertex carrying all the negatively twisted half-edges as well as markings $i$ with $m_i<0$ or not divisible by $\k$. This easily implies that the generic dual graph $\Gamma$ of $Z$ is a simple star graph and that the twist $I$ on $\Gamma$ is positive.

Conversely, one checks that condition \ref{eqn:twistdiffcond} is satisfied on all the loci on which the cycle $H_{g,\m}^\k$ above is supported. This shows that it is indeed a weighted fundamental class of $\widetilde{\mathcal{H}}_g^\k(\m)$. On the other hand, the weights agree with those given in the introduction: the closures of strata of differentials (which are pushed forward via $\xi_\Gamma$) are generically reduced and thus all have multiplicity $1$. The factor $1/|\on{Aut}(\Gamma)|$ exactly accounts for the fact that the gluing morphism $\Gamma$ has degree $|\on{Aut}(\Gamma)|$.

Thus the definition of $H_{g,\m}^\k$ given in the introduction coincides with the definitions from \cite{Farkas2016The-moduli-spac, Schmitt2016Dimension-theor}.



\subsubsection{Combinatorial charts}

%
%
%
%

If $p \colon \on{Spec} \field \to \Mbar$ is a geometric point corresponding to a curve $C$, the associated graph $\Gamma_C$ comes with a leg-weighing from the integers $m_i$. If a node of the curve over $p$ has local equation $xy - r$ for some $r \in \ca O^{et}_{\Mbar, p}$, then the image of $r$ in the monoid $\ca O^{et}_{\Mbar, p}/(\ca O^{et}_{\Mbar, p})^\times$ is independent of the choice of local equation. In this way, for each edge $e \in E(\Gamma_p)$ we obtain an element $\ell_e \in \ca O^{et}_{\Mbar, p}/(\ca O^{et}_{\Mbar, p})^\times$, recalling that edges of the graph correspond to nodes of the curve.


Given a leg-weighted graph $\Gamma$ with edge set $E$, define\footnote{In \cite{Holmes2017Extending-the-d} this was denoted $\Mbar_\Gamma$. } $$\bb A^E = \on{Spec}\field[a_e:e \in E].$$ To any point $a$ in $\bb A^E$ we associate the graph $\Gamma_a$ obtained from $\Gamma$ by contracting exactly those edges $e$ such that $a_e$ is a unit at $a$. Denote by $[a_e] \in \ca O^{et}_{\bb A^E, p}$ the image of the function $a_e$ on $\bb A^E$ in $\ca O^{et}_{\bb A^E, p}$.

\begin{definition}\label{sec:combinatorial_charts}
A \emph{combinatorial chart of $\Mbar$} consists of a leg-weighted graph $\Gamma$ and a diagram of stacks
\begin{equation*}
\Mbar \stackrel{f}{\longleftarrow} U \stackrel{g}{\longrightarrow} \bb A^E
\end{equation*}
satisfying the following six conditions:
\begin{enumerate}
\item $U$ is a connected scheme
\item $g\colon U \to \bb A^E$ is smooth
\item $f\colon U \to \Mbar$ is \'etale
\item the pullbacks of the boundary divisors in $\Mbar$ and $\bb A^E$ to $U$ coincide
\item $0 \in \bb A^E$ is in the image of $g$. 
\end{enumerate}
Let $p\colon \on{Spec}\field \to U$ be any geometric point, yielding natural maps 
\begin{equation*}
\ca O^{et}_{\Mbar, f \circ p} \stackrel{f^\flat}{\to} \ca O^{et}_{U, p} \stackrel{g^\flat}{\leftarrow} \ca O^{et}_{\bb A^E, g \circ p}. 
\end{equation*}
\begin{enumerate}[resume*]
\item 
Let $C=f(p)$ and $a=g(p)$, then we require an isomorphism
\begin{equation*}
\phi_p\colon \Gamma_C \to \Gamma_a
\end{equation*}
such that
$f^\flat(\ell_e) = g^\flat ([a_{\phi_p(e)}])$ up to units in $\ca O^{et}_{U, p}$ for every edge $e$ (which necessarily makes this $\phi_p$ unique if it exists). Moreover, the map $\phi_p$ sends the leg-weighting on $\Gamma_{f \circ p}$ coming from the $-m_i$ to the leg-weighting on $\Gamma_{g \circ p}$ coming from that on $\Gamma$. 
\end{enumerate}
\end{definition}



This definition is as in \cite{Holmes2017Extending-the-d} but with the logarithmic structures excised (since we do not need them). We see in \cite{Holmes2017Extending-the-d} that $\Mbar$ can be covered by combinatorial charts. 

\section{Constructing suitable moduli spaces}\label{sec:DR_construction}

\subsection{Recalling the construction of \texorpdfstring{$\DR$}{DR}}\label{sec:DR_construction_prev}

We begin by recalling the basic construction of the cycle $\DR$ from \cite{Holmes2017Extending-the-d}. First one constructs a certain stack $\Md/\Mbar$ such that the rational map $\sigma \colon \ca M \to \ca J$ extends to a morphism $\sigma^\loz  \colon \Md \to \ca J$. Writing $e$ for the unit section of $\ca J$ (viewed as a closed subscheme) and $[e]$ for its Chow class, it is shown in \cite{Holmes2017Extending-the-d} that the scheme-theoretic pullback $\DRL^\loz$ of $e$ along $\sigma^\loz$ is proper over $\Mbar$. We would like to now take the cycle-theoretic pullback of the class of $e$ along $\sigma^\loz$, but the latter is not (known to be) a regular closed immersion, so we do not know how to make sense of this pullback. Instead, we consider the induced section $ \Md \to \ca J_{\Md} =  \ca J \times_{\Mbar} \Md$, and pull back the class of the unit section along this section (using that the latter is a regular closed immersion as $\ca J$ is smooth over $\Mbar$) to obtain a cycle $\DRC^\loz$ on $\Md$. This cycle $\DRC^\loz$ is naturally supported on $\DRL^\loz$, and so by properness can be pushed down to a cycle on $\Mbar$, which we denote $\overline{\DRC}$, the compactified double ramification cycle. Many more details and properties of the construction, and a comparison to other constructions in the literature, can be found in \cite{Holmes2017Extending-the-d}, \cite{Holmes2017Multiplicativit} and \cite{Holmes2017Jacobian-extens}. 

In this article we will work with a slight variant of the stack $\Md$ of \cite{Holmes2017Extending-the-d}; this is only for convenience, but the intricacy of the calculations we have to carry out make every available bit of notational efficiency worth using. We also note that $\Md$ depends not only on $g$ and $n$, but also on the $m_i$ and $\k$, hence the notation is not good --- we will take the opportunity to correct this.

The stack $\Md$ is built by glueing together normal toric varieties, in particular it is normal. We will begin by introducing a `non-normal' analogue $\Mdm$ of $\Md$ which is close to (but not yet quite) what we want. The resulting double ramification cycle will be unchanged, by compatibility of the refined gysin pullback with the proper pushforward, see \ref{sec:comparing_DRs} for more details. 


%
%

\subsection{Construction of \texorpdfstring{$\Mdm$}{Mbar\textasciicircum m}}\label{sec:charts_of_Md}

Fix a combinatorial chart 
\begin{equation*}
\Mbar \stackrel{f}{\longleftarrow} U \stackrel{g}{\longrightarrow} \bb A^E
\end{equation*}
and a twist $I$ on $\Gamma$. If $e = \{ h, h'\}$ is an edge of $\Gamma$, and $\gamma$ is a cycle in $\Gamma$, we define 
\begin{equation}\label{eq:I_gamma_e}
I_\gamma(e) = \left\{ 
\begin{array}{ccl}
0 & \text{if } & h \notin \gamma \text{ and } h' \notin \gamma \\
I(h) & \text{if }& h \in \gamma\\
I(h') & \text{if }& h' \in \gamma. \\
\end{array}
 \right.
\end{equation}
In the free abelian group on symbols $a_e: e \in E$ we consider the submonoid generated by the $a_e$ and by the expressions
\begin{equation}\label{eq:generators_non_normal}
\prod_{e \in E} a_e^{I_\gamma(e)}
\end{equation}
as $\gamma$ runs over cycles in $\Gamma$, and we denote the spectrum of the associated monoid ring by $\bb A^E_I$. Equivalently, $\bb A^E_I$ is the spectrum of the subring of $\field[a_e^{\pm 1}:e \in E]$ generated by the $a_e$ and by the expressions in \ref{eq:generators_non_normal}. 
%
%
Note that this is slightly different from the monoid rings constructed in \cite{Holmes2017Extending-the-d}, where we worked with 
sub-polyhedral cones of $\mathbb{Q}_{\geq 0}^E$, cut out by equations: monoids coming from cones are always saturated, and so yield normal varieties, whereas here we want to work with not-necessarily-saturated monoids. In \ref{sect:affinepatches} we give explicit equations for (a slight variant on) the $\bb A^E_I$. 

%

We write $\Mdm_{I,U}$ for the pullback of $\bb A^E_I$ to $U$. We want to argue that these $\Mdm_{I,U}$ naturally glue together to form a stack $\Mdm$ over $\Mbar$. The first part of the gluing can even be done over $\bb A^E$. Indeed, fixing a  graph $\Gamma$, as $I$ runs over twists of $\Gamma$ the $\bb A^E_I$ naturally glue together as $I$, c.f. \cite[\S 3]{Holmes2017Extending-the-d}\footnote{Note that the hard thing in that reference is proving quasi-compactness of the resulting object, but since it is clear that the normalisation of the object constructed here is that built in \cite{Holmes2017Extending-the-d} the quasi-compactness comes for free here.}. We denote the glued object by $\tilde {\bb A}^E \to \bb A^E$. 

\begin{example}
In the case $k=0$, suppose the graph $\Gamma$ has two edges and two (non-loop) vertices $u$ and $v$. Suppose the leg weighting is $+n$ at $u$ and $-n$ at $v$. Twists consist of a flow of $a$ along edge $e$ from $u$ to $v$, and $n-a$ along the other edge $e'$ (again from $u$ to $v$), for $a \in \bb Z$:
\begin{equation}
 \begin{tikzcd}
\arrow[r, "+n"] &   u \arrow[rr, bend left, "I(e)= i"] \arrow[rr, bend right, swap, "I(e') = n-i"]&  & v \arrow[r, "+n"] & {}\\
\end{tikzcd}
\end{equation}

In this setting $\bb A^E = \on{Spec} K[a_e, a_{e'}]$. There are two directed cycles, and the expression \ref{eq:generators_non_normal} yields $a_e^i a_{e'}^{i-n}$ and $a_e^{-i} a_{e'}^{n-i}$. The form of $\bb A^E_I$ then depends on $I$: we have 
\begin{equation}
\begin{split}
i<0: &  \hspace{1cm} \bb A^E_I = \on{Spec} K[a_e^{\pm 1}, a_{e'}^{\pm 1}]\\
i=0: &  \hspace{1cm} \bb A^E_I = \on{Spec} K[a_e, a_{e'}^{\pm 1}]\\
0 < i < n: &  \hspace{1cm} \bb A^E_I = \on{Spec} \frac{K[a_e, a_{e'}, s^{\pm 1}]}{(a_{e'}^{n-i}s - a_e^i)} = \on{Spec} \frac{K[a_e, a_{e'}, t^{\pm 1}]}{(a_{e}^{i}t - a_{e'}^{n-i})}\\
i=n: &  \hspace{1cm} \bb A^E_I = \on{Spec} K[a_e^{\pm 1}, a_{e'}] \\
i>n: &  \hspace{1cm} \bb A^E_I = \on{Spec} K[a_e^{\pm 1}, a_{e'}^{\pm 1}].\\
\end{split}
\end{equation}
A more detailed explanation of these equations can be found in \ref{eqn:toriccoordinates} below. These patches are then all glued together along the torus $ \on{Spec} K[a_e^{\pm 1}, a_{e'}^{\pm 1}]$ to form $\tilde {\bb A}^E$. Note that (in the case where $n$ is not prime) this differs slightly from the example in \cite[remark 3.4]{Holmes2017Extending-the-d} (where a toric interpretation is given) as the rings above are not normal for $0 < i < n$ whenever $n$ and $i$ have a common factor. 

These patches can naturally be seen as charts of a (non-normal) toric blowup. In more involved examples (e.g. \cite[remark 3.5]{Holmes2017Extending-the-d}) there is no canonical way to embed the patches in a blowup, though see also \cite{abreupacini} for a general approach to compactifying. 

While there are infinitely many charts glued together, only those for $0 \le i \le n$ are relevant, the others do not enlarge the space. This is how we glue infinitely many patches to obtain a quasi-compact space. 
\end{example}

%
%
%
%

We now return to the general construction. For a fixed combinatorial chart $U$, pulling these $\tilde {\bb A}^E$ back to $U$ we obtain a stack covered by patches $\Mdm_{I,U}$. Then running over a cover of $\Mbar$ by combinatorial charts, yields a collection of stacks over $\Mbar$ which are easily upgraded to a descent datum. We denote the resulted `descended' object by $\Mdm$. Comparing with the construction of $\Md$ in \cite{Holmes2017Extending-the-d}, one sees the normalisation of $\Mdm$ is $\Md$. Imitating the proof of \cite[Theorem 3.5]{Holmes2017Extending-the-d} shows that the map $\Mdm \to \Mbar$ is separated, of finite presentation, relatively representable by algebraic spaces, and an isomorphism over $\ca M$. If we equip the above objects with their natural log structures, it is also log \'etale. From separatedness and the implication $(1) \implies (2)$ of \cite[lemma 4.3]{Holmes2017Extending-the-d}, we see that the map $\sigma\colon \ca M \to \ca J$ extends (uniquely) to a morphism $\sigma^\m\colon \Mdm \to \ca J$. 

\begin{definition}
We define the \emph{double ramification locus} $\DRL^\m \tra \Mdm$ to be the schematic pullback of the unit section of $\ca J$ along $\sigma^\m$.
\end{definition}  Now $\Md \to \Mdm$ is proper, and by \cite[prop 5.2]{Holmes2017Extending-the-d} the map $\DRL^\loz \to \Mbar$ is proper. But since $\DRL^\loz \to \DRL^\m$ is surjective, by \cite[\href{https://stacks.math.columbia.edu/tag/03GN}{Tag 03GN}]{stacks-project} also $\DRL^\m \to \Mbar$ is proper.
\begin{definition}\label{def:DRprime}
We define the double ramification cycle $\DRC^\m$ to be the cycle-theoretic pullback of the unit section of $\ca J\times_{\Mbar} \Mdm$ along the section induced by $\sigma^\m$, yielding a cycle on $\DRL^\m$. 
\end{definition}The pushforward of $\DRC^\m$ to $\Mbar$ makes sense by properness of $\DRL^\m \to \Mbar$, and the compatibility of the refined gysin pullback with the proper pushforward (see \ref{lem:gysin_compatibility}) implies that the pushforward of $\DRC^\m$ to $\Mbar$ coincides with pushforward of $\DRC^\loz$ to $\Mbar$. See \ref{sec:comparing_DRs} for further details. 

\subsection{A partial normalisation of \texorpdfstring{$\Mdm$}{Mbar\textasciicircum m}}\label{sect:affinepatches}

As discussed in \ref{sec:FP_formula}, on the components of the Double ramification locus supported in the boundary, the twist $I$ is generically divisible by $\k$. 
If we restrict the construction in \ref{sec:charts_of_Md} to twists $I$ taking values in $\k \bb Z$, we obtain an open substack $\Mdm_{\k | I}$ of $\Mdm$. We can define a finite surjective map $\Mdk \to \Mdm_{\k | I}$ by replacing the generators in (\oref{eq:generators_non_normal}) by
\begin{equation}\label{eq:generators_normal}
\prod_{e \in \gamma} a_e^{\pm I(e)/\k}. 
\end{equation}
More concretely, we obtain $\Mdk$ by gluing together patches $\Mdk_{I,U}$ similarly to the procedure in \ref{sec:charts_of_Md}. But now, the patch $\Mdk_{I,U}$ is the pullback of the space $\bb A^E_{I'} \to \bb A^E$ to $U$, where $I'(e) = I(e)/\k$. For clarity and later use, we now give explicit equations for the $\bb A^E_{I'}$ (and hence implicitly for $\Mdk_{I,U}$ since it arises by pulling back $\bb A^E_{I'}$ to $U$). 

Let $\Upsilon$ be the set of cycles $\gamma$ in $\Gamma$ and recall that $E$ is the set of edges in $\Gamma$. Then naturally we can see $\bb A^E_{I'}$ as a subscheme of $\mathbb{A}^{\Upsilon} \times \mathbb{A}^{E}$ cut out by explicit equations. Let $((a_\gamma)_{\gamma \in \Upsilon}, (a_e)_{e \in E})$ be coordinates on $\mathbb{A}^{\Upsilon} \times \mathbb{A}^{E}$, then the generators (\oref{eq:generators_normal}) translate into a system of equations in the $a_\gamma, a_e$. Indeed, given $f \in \mathbb{Z}^\Upsilon$ and $e \in E$ define the integer
\begin{equation}
M_{e, f} = \sum_{\gamma} f_\gamma I'_\gamma(e), 
\end{equation}
where $I'_\gamma = I_\gamma/\k$ (c.f. \ref{eq:I_gamma_e}). 
Then a set of equations cutting out $\bb A^E_{I'} \subset \mathbb{A}^{\Upsilon} \times \mathbb{A}^{E}$ is given by the vanishing of the
\begin{equation} \label{eqn:toriccoordinates}
\Psi_f =   \left(\prod_{\gamma \in \Upsilon : f_\gamma >0} a_\gamma^{f_\gamma} \right) \cdot \prod_{e \in E: M_{e, f}<0} a_e^{-M_{e, f}} -  \left(\prod_{\gamma \in \Upsilon : f_\gamma <0} a_\gamma^{-f_\gamma} \right) \prod_{e \in E: M_{e, f}>0} a_e^{M_{e, f}}
\end{equation}
as $f$ runs through $\mathbb{Z}^{\Upsilon}$. In particular, for any cycle $\gamma$ we have for the inverted cycle $i(\gamma)$, walking in opposite direction, that $a_\gamma a_{i(\gamma)}=1$, which forces $a_\gamma \neq 0$. Apart from that, the most simple equations in the system above are of the form
\begin{equation} \label{eqn:small_toriccoordinates}
a_\gamma \prod_{e \in \gamma:I(e)<0}a_e^{-I'(e)} = \prod_{e \in \gamma:I(e)>0}a_e^{I'(e)}. 
\end{equation}
We will see later that these are the only equations that matter for computing the tangent space to $\Mdk$.

To get the description of $\Mdk_{I',U}$ over $U$ one inserts for the variables $a_e$ the components of the function $g : U \to \bb A^E$ from our combinatorial chart, and obtains equations for $\Mdk_{I',U} \subset U \times \mathbb{A}^{\Upsilon}$.

\begin{remark}\label{rem:ref}
A shorter but less explicit description of the polynomials $\Psi_f$ of \ref{eqn:toriccoordinates} can be obtained by saturating an ideal obtained from the equations \ref{eqn:small_toriccoordinates}. Let $R \coloneqq \field[a_\gamma : \gamma \in \Gamma][a_e: e \in E]$, and let $A$ be the $R$-algebra obtained by formally adjoining inverses to the $a_e$. Let $I$ be the ideal of $A$ generated by 
\begin{equation}
a_\gamma - \prod_{e \in \gamma} a_e^{I'(e)}, 
\end{equation}
and let $I_R$ be the intersection of $I$ with $R$. Then $I_R$ is exactly the ideal generated by the $\Psi_f$ of \ref{eqn:toriccoordinates}. Note that this is \emph{not} in general equal to the ideal generated by polynomials coming from expressions in the form \ref{eqn:small_toriccoordinates}. 
\end{remark}




The map $\Mdk \to \Mdm_{\k | I}$ is finite birational, but in general neither the source nor the target is normal, thus the map does not need to be an isomorphism. Indeed, we have
\begin{lemma}\label{lem:deg_of_normalisation}
Let $p \in \Mdm_{\k | I}$ lie over a simple star graph $\Gamma$ with outlying vertex set $V^{out}$. Then the fibre over $p$ of the map $\Mdk \to \Mdm_{\k | I}$ contains exactly $\k^{\#E(\Gamma)-\#V^{out}} = \k^{b_1(\Gamma)}$ points. 
\end{lemma}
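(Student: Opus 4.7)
The plan is to reduce to a local computation on a single affine patch of the form described in \ref{sec:charts_of_Md,sect:affinepatches}. I would choose a combinatorial chart $\Mbar \longleftarrow U \longrightarrow \bb A^E$ whose graph is $\Gamma$ itself and such that $p$ lies over the origin in $\bb A^E$. Since $U \to \bb A^E$ is smooth, the fibre of $\Mdk \to \Mdm_{\k | I}$ over $p$ is a smooth base-change of the fibre of the toric morphism $\bb A^E_{I'} \to \bb A^E_I$ (where $I'=I/\k$) over the image $\bar p$ of $p$; in particular the two have the same number of geometric points, so it suffices to count geometric points of this toric fibre over a point $\bar p$ at which every $a_e$ vanishes.

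Write $R_I$ and $R_{I'}$ for the coordinate rings of $\bb A^E_I$ and $\bb A^E_{I'}$. The inclusion $R_I \subseteq R_{I'}$ arises by adjoining the monomials $b_\gamma = \prod_e a_e^{I'_\gamma(e)}$ for cycles $\gamma$, subject to the binomial relations $\Psi_f = 0$ from \ref{eqn:toriccoordinates}; these satisfy $b_\gamma^\k = a_\gamma$ where $a_\gamma = \prod_e a_e^{I_\gamma(e)} \in R_I$. The relations $a_\gamma \cdot a_{-\gamma} = 1$ (instances of \ref{eqn:toriccoordinates}) show that each $a_\gamma$, and hence each $b_\gamma$, is a unit, so that $a_\gamma(\bar p) \neq 0$ in any residue field of a geometric point of the fibre.

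The central claim would be that, modulo the ideal $(a_e : e \in E)$, each relation $\Psi_f$ degenerates to $0=0$ unless it is purely multiplicative in the $b_\gamma$, i.e.\ unless $M_{e,f} = 0$ for every $e$. Indeed, if some $M_{e,f}>0$ the right-hand side of $\Psi_f$ vanishes modulo $(a_e)$, if some $M_{e,f}<0$ the left-hand side does, and the only way a non-trivial constraint could survive would require $\sum_\gamma f_\gamma I'_\gamma \in \bb Z^E$ to be non-zero and of constant sign. For a simple star graph with positive outlying twists this is impossible: orienting each edge $e$ from its central to its outlying half-edge exhibits $\sum_\gamma f_\gamma I'_\gamma$ as $-I'(\cdot)$ times an integer flow on $\Gamma$ whose entries sum to zero at every outlying vertex, and a flow of constant sign satisfying such balance conditions must vanish identically. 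Consequently, after evaluation at $a_e=0$ the only surviving constraints are multiplicativity $b_{\gamma_1+\gamma_2}=b_{\gamma_1}b_{\gamma_2}$ together with $b_\gamma^\k = a_\gamma(\bar p)$; choosing a $\bb Z$-basis $\gamma_1,\ldots,\gamma_{b_1(\Gamma)}$ of $H_1(\Gamma,\bb Z)$ yields
\begin{equation*}
R_{I'}\otimes_{R_I}\kappa(\bar p) \;\cong\; \kappa(\bar p)\bigl[b_{\gamma_1},\ldots,b_{\gamma_{b_1(\Gamma)}}\bigr]\Big/\bigl(b_{\gamma_i}^\k-a_{\gamma_i}(\bar p)\bigr)_{i=1}^{b_1(\Gamma)}.
\end{equation*}

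Since $\field$ has characteristic zero and each $a_{\gamma_i}(\bar p)$ is a unit, each factor $\kappa(\bar p)[b_i]/(b_i^\k-a_{\gamma_i}(\bar p))$ is \'etale of rank $\k$, so after base change to an algebraic closure of $\kappa(\bar p)$ the fibre splits as a product of $\k^{b_1(\Gamma)}$ copies of the field. This gives exactly $\k^{b_1(\Gamma)}$ geometric points, and combined with $b_1(\Gamma)=\#E(\Gamma)-\#V(\Gamma)+1=\#E(\Gamma)-\#V^{out}$ (using $V(\Gamma)=V^{out}\sqcup\{v_0\}$ for a connected simple star graph) the lemma follows. The step I expect to demand the most care is the identification of the fibre ring: one must rule out any hidden binomial relation that could cut down the number of points, and this rests precisely on the absence of one-sided flows on simple star graphs discussed above.
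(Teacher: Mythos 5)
Your proof is correct and follows the same strategy as the paper's (reduce to the fibre of the explicit $\k$-th power map $\bb A^E_{I'}\to\bb A^E_I$ over $a_e=0$ and show it is a torsor under $\mu_\k^{b_1(\Gamma)}$), but you carry out the key step in substantially more detail: the paper simply asserts that once the $a_\gamma$ are fixed on a basis of $H^1(\Gamma,\bb Z)$ the rest are determined, whereas you actually analyse the relations $\Psi_f$ modulo $(a_e)$, rule out the one-sided ``degenerate'' relations $\prod a_\gamma^{f_\gamma}=0$ via the absence of one-sided flows on a simple star (balance at each outlying vertex forces them to vanish), and identify the residual fibre ring with a split rank-$\k^{b_1(\Gamma)}$ \'etale algebra. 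This is a genuine improvement in rigour; the only cosmetic issue is that your invocation of smoothness of $U\to\bb A^E$ in the reduction step is a red herring — the identification of the fibres follows from pure base change (the fibre of $\Mdk_{I',U}\to\Mdm_{I,U}$ over $p$ is literally the fibre of $\bb A^E_{I'}\to\bb A^E_I$ over the image of $p$, base-changed to $\kappa(p)$), independently of any smoothness.
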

\begin{proof}
Let $\Mdm_{I,\Gamma}$ be a chart containing $p$ with $I=\k \cdot I'$ (here we use $p \in \Mdm_{\k | I}$). Then the preimage of $\Mdm_{I,\Gamma}$ in $\Mdk$ is $\Mdk_{I',\Gamma}$ and the map $\Mdk_{I',\Gamma} \to \Mdm_{I,\Gamma}$ is a base change of the map 
\begin{equation} \label{eqn:kthpowermap}
\bb A^E_{I'} \to \bb A^E_I, ((a_\gamma)_\gamma, (a_e)_e) \mapsto (((a_\gamma)^\k)_\gamma, (a_e)_e).
\end{equation}
Now $p$ corresponds to a point where all the $a_e = 0$, and the values of $a_\gamma$ for $\gamma$ in a basis of $H^1(\Gamma,\mathbb{Z})$ can be chosen freely. 
Once these values of $a_\gamma$ are fixed, all other $a_{\gamma'}$ are determined by \ref{eqn:toriccoordinates}. Thus the number of preimage points under the map \ref{eqn:kthpowermap} is exactly $\k^{b_1(\Gamma)}$, and so the same is true for the pullback $\Mdk_{I',\Gamma} \to \Mdm_{I,\Gamma}$.
%
%
\end{proof}


\begin{remark}
The reader only interested in the case $\k = 1$ will note that in this case the maps 
\begin{equation*}
\Mdk \to \Mdm_{\k | I} \to \Mdm
\end{equation*}
are all isomorphisms, and $I'(e) = I(e)$. 
\end{remark}

\begin{lemma}\label{lem:lci}
Suppose that $\Gamma$ is a simple star graph. Then $\Mdk_{I', U}$ is a local complete intersection over $\field$. 
\end{lemma}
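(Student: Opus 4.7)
The plan is to show that $\bb A^E_{I'}$ is a local complete intersection over $\field$ and then transfer this property to $\Mdk_{I',U} = U \times_{\bb A^E} \bb A^E_{I'}$ by flat base change along the smooth map $g\colon U \to \bb A^E$. The combinatorial input is that in a simple star graph every cycle must be a $2$-cycle $\gamma(e,e')$ through $v_0$ and a single outlying vertex $v$: since every edge has exactly one endpoint at $v_0$ and cycles do not repeat vertices, any walk leaving $v_0$ must immediately return along a parallel edge. No cycle then spans two distinct outlying vertices, so the monoid $M \subset \bb Z^E$ defining $\bb A^E_{I'}$ decomposes along $E = \bigsqcup_v E_v$ as $M = \bigoplus_{v \in V^{out}} M_v$, giving $\bb A^E_{I'} \cong \prod_v V_v$ with $V_v := \on{Spec}\field[M_v]$. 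Thus it suffices to show each $V_v$ is a complete intersection over $\field$.

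Fix an outlying vertex $v$ with edges $e_1, \ldots, e_{d_v}$ and positive twists $n_i := I'(e_i)$. The monoid $M_v$ is generated by the $a_i$ together with the cycle elements $a_{\gamma_{ij}} := a_i^{-n_i} a_j^{n_j}$ for $i \neq j$, each of which is a unit in $\field[M_v]$ since $a_{\gamma_{ij}} a_{\gamma_{ji}} = 1$ already in the ambient abelian group. Fixing $e_{d_v}$ as base edge, the identities $a_{\gamma_{ij}} = a_{\gamma_{d_v,i}}^{-1} a_{\gamma_{d_v,j}}$ reduce the cycle generators to the $d_v - 1$ units $a_{\gamma_j} := a_{\gamma_{d_v,j}}$, producing a natural surjection
\begin{equation*}
R_v := \field\bigl[a_1, \ldots, a_{d_v},\, a_{\gamma_j}^{\pm 1} : 1 \le j \le d_v - 1\bigr] \big/ \bigl(a_{\gamma_j}\, a_{d_v}^{n_{d_v}} - a_j^{n_j}\bigr)_{j} \twoheadrightarrow \field[M_v].
\end{equation*}
The main claim is that this map is an isomorphism.

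To establish that, I would show $R_v$ is a domain of Krull dimension $d_v$, matching $\field[M_v]$. The dimension claim follows from Krull's Hauptidealsatz (ambient dimension $2d_v - 1$, with $d_v - 1$ equations, so $\dim R_v \ge d_v$) combined with the algebraic independence of $a_1, \ldots, a_{d_v}$ in the image $\field[M_v]$ (forcing $\dim R_v \le d_v$). For the domain property, I would realize the unlocalized version of $R_v$ as an iterated Kummer-type tower over the polynomial ring $\field[a_{d_v}, a_{\gamma_1}, \ldots, a_{\gamma_{d_v-1}}]$, adjoining $a_j$ at step $j$ as an $n_j$-th root of $c_j := a_{\gamma_j} a_{d_v}^{n_{d_v}}$. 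Irreducibility of the defining polynomial at each stage follows from a Capelli-type argument: using the $a_{\gamma_j}$-adic valuation on the fraction field (which extends unramified along the tower since all the earlier $c_k$ have valuation $0$), one has $v(c_j) = 1$, so $c_j$ is not a nontrivial perfect power (and the $4 \mid n_j$ exceptional case is ruled out for the same valuation reason). Localizing at the $a_{\gamma_j}$ preserves the domain property. A surjection of domains of the same Krull dimension is an isomorphism, so $R_v \cong \field[M_v]$, exhibiting $V_v$ as a complete intersection of codimension $d_v - 1$ in the smooth scheme $\bb A^{d_v} \times \bb G_m^{d_v - 1}$.

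Assembling across outlying vertices, and using $\sum_v(d_v - 1) = |E| - |V^{out}| = b_1(\Gamma)$, this displays $\bb A^E_{I'}$ as the vanishing of a regular sequence of length $b_1(\Gamma)$ in the smooth scheme $\bb A^E \times \bb G_m^{b_1(\Gamma)}$. Since $g\colon U \to \bb A^E$ is smooth, hence flat, the pullback of this regular sequence remains regular, so $\Mdk_{I',U}$ is cut out by a regular sequence inside the smooth scheme $U \times \bb G_m^{b_1(\Gamma)}$, proving the desired l.c.i.\ property. The only delicate step is the domain-and-dimension analysis of $R_v$; the rest --- the combinatorial decomposition over outlying vertices and the closing flat base change --- is formal once that is in place.
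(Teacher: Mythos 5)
Your argument is correct in substance but takes a genuinely different route from the paper. You exploit the key combinatorial fact that in a simple star graph every cycle is a $2$-cycle confined to a single outlying vertex $v$, so the monoid defining $\bb A^E_{I'}$ splits as $\bigoplus_v M_v$ and the scheme factors as $\prod_v \on{Spec}\field[M_v]$; you then exhibit each factor as an explicit complete intersection via a Kummer tower and a domain-dimension count. The paper instead works globally: it picks a spanning tree giving a cycle basis $\Upsilon'$, shows the equations $\Psi_{f_\gamma-\delta_\gamma}$ cut out a smooth $Z$ of dimension $\#\Upsilon'+\#E$, and then performs a direct ideal-membership verification — using the same observation that every cycle $\gamma\notin\Upsilon'$ is a $2$-cycle, followed by a downward induction on the $L^1$-length of flows — to show that $\bb A^E_{I'}$ is cut out of $Z$ by the $\Psi_{\delta_{\gamma'}}$, $\gamma'\in\Upsilon'$. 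Your decomposition makes the structure of the problem more transparent and replaces the flow induction by a concrete commutative-algebra question about a single outlying vertex; the paper's approach avoids arguing about irreducibility or integrality altogether by doing ideal membership directly.

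One small imprecision worth flagging: the step ``algebraic independence of $a_1,\ldots,a_{d_v}$ in $\field[M_v]$ forces $\dim R_v\le d_v$'' is not valid as stated — a surjection onto a $d_v$-dimensional ring gives no upper bound on the source. You should instead derive $\dim R_v=d_v$ \emph{after} establishing that $R_v$ is a domain (then $R_v\hookrightarrow R_v[1/a_{d_v}]\cong\field[a_1^{\pm},\ldots,a_{d_v}^{\pm}]$ forces the fraction field to have transcendence degree $d_v$), or argue directly that $V(a_{d_v})=V(a_1,\ldots,a_{d_v})$ has dimension $d_v-1$, so every component meets $\{a_{d_v}\ne0\}$ and hence has dimension exactly $d_v$ — which also gives $R_v$ Cohen–Macaulay and hence $a_{d_v}$ a nonzerodivisor, yielding integrality without the Kummer tower. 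The tower argument itself is fine, but the unramifiedness of the $a_{\gamma_j}$-adic valuation along $K_0\subset K_{j-1}$ should be spelled out as the observation that each $T^{n_k}-c_k$ ($k<j$) has separable reduction modulo the valuation ideal, so each step of the tower is étale at that valuation.
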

As in \cite{Holmes2017Extending-the-d}, the stack $\Mdk_{I', U}$ can be defined relative to $\bb Z$, in which generality the same lemma holds, with the same proof. The requirement that $\Gamma$ be a simple star seems necessary; the graph 
\begin{equation}
 \begin{tikzcd}
  \circ \arrow[r] \arrow[rr, bend left]\arrow[rrr, bend right] & \circ \arrow[rr, bend left]  \arrow[r]& \circ \arrow[r] & \circ \\
\end{tikzcd}
\end{equation}
seems to give a counterexample in general, though we have not checked all details. 
\begin{proof}
Recall the notion of a syntomic morphism (\cite[\href{https://stacks.math.columbia.edu/tag/01UB}{Tag 01UB}]{stacks-project}) generalizing the definition of being a local complete intersection over a field. In particular, the stack $\Mdk_{I', U}$ is a local complete intersection over $\field$ if and only if $\Mdk_{I', U} \to \on{Spec} \field$ is syntomic. Now the class of syntomic morphisms is closed under composition and base-change, and the morphism $\Mdk_{I', U} \to \on{Spec} \field$ factors as
\[\Mdk_{I', U} \to \bb A_{I'}^E \to \on{Spec} \field;\]
moreover the first morphism is smooth (and hence syntomic) as a base change of the smooth morphism $U \to \bb A^E$. Thus it suffices to check that $\bb A_{I'}^E \to \on{Spec} \field$ is syntomic, i.e. that $\bb A_{I'}^E$ is a local complete intersection.


\textbf{Step 1:} Choosing a spanning tree in $\Gamma$ induces a collection $\Upsilon' \sub \Upsilon$ of cycles in $\Gamma$ forming a basis of $H^1(\Gamma, \bb Z)$. Given a cycle $\gamma\in \Upsilon$, writing $\gamma$ as an integral linear combination of elements of $\Upsilon'$ induces an element of $\bb Z^{\Upsilon'}$ (with all coefficients in $\{ -1, 0, 1\}$), whose image in $\bb Z^\Upsilon$ under the natural inclusion $\bb Z^{\Upsilon'} \to \bb Z^\Upsilon$ we denote $f_\gamma$. We denote by $\delta_\gamma \in \bb Z^\Upsilon$ the indicator function for $\gamma$. Then the corresponding expression $\Psi_{f_\gamma- \delta_\gamma}$ (as defined in \ref{eqn:toriccoordinates}) contains no terms $a_e$ with non-zero exponents. 

\textbf{Step 2:} Consider the collection of polynomials consisting of the $\Psi_{f_\gamma- \delta_\gamma}$ for $\gamma \in \Upsilon$. We then claim that the subscheme $Z$ of $\mathbb{A}^{\Upsilon} \times \mathbb{A}^{E}$ cut out by these polynomials is smooth over $\field$ of dimension $\#\Upsilon' + \# E$. First, for $\gamma \in \Upsilon'$ the equation $\Psi_{f_\gamma- \delta_\gamma} = 0$ can be re-written as $1=1$, so can be ignored. Then if $i(\gamma) \in \Upsilon'$, the equation $\Psi_{f_\gamma- \delta_\gamma} = 0$ yields $a_\gamma a_{i(\gamma)}=1$, so $a_{i(\gamma)}$ is inverted. For all other $\gamma \in \Upsilon'$ we can move all the $a_{\gamma'}$ with $\gamma' \in \Upsilon'$ to the left side of the equation (perhaps inverting them), thus writing $a_{\gamma}$ as a product of $a_{\gamma'}^{\pm 1}$. Thus in fact $Z$ a graph of a suitable function $(\bb A^1 \setminus \{0\})^{\Upsilon'} \times \bb A^E \to \bb A^{\Upsilon \setminus \Upsilon'}$, obtained by solving for those coordinates $a_\gamma$ for $\gamma \in \Upsilon \setminus \Upsilon'$. In particular, $Z$ is smooth of the claimed dimension.

\textbf{Step 3:} Since $\bb A_{I'}^E$ has dimension $\#E$, it suffices to show that it is cut out from $Z$ by the $\Psi_{\delta_{\gamma'}}$ as $\gamma'$ runs over $\Upsilon'$. First, given $\gamma \in \Upsilon$, we claim that $\Psi_{\delta_\gamma}$ is contained in the ideal of $\Gamma(Z, \ca O_Z)$ generated by the $\Psi_{\delta_{\gamma'}} : \gamma' \in \Upsilon'$. 
We may assume neither $\gamma$ nor $i(\gamma)$ lies in $\Upsilon'$. Then $\gamma$ consists of two directed edges, say $\gamma = e_1 \circ i(e_2)$, with the $e_i$ going from the central vertex to an outlying vertex. Suppose that the spanning tree contains the edge $e_0$ to that outlying vertex. Then $\gamma$ can be written as a difference of two cycles in $\Upsilon'$: $\gamma = \gamma_1 - \gamma_2$ with $\gamma_1 = e_1 \circ i(e_0)$ and $\gamma_2 = e_2 \circ i(e_0)$. Then 
\begin{equation*}
\Psi_{\delta_{\gamma}} = a_\gamma a_{e_2}^{I'(e_2)} - a_{e_1}^{I'(e_1)},  
\end{equation*}
which is evidently contained in the ideal generated by
\begin{equation*}
\Psi_{f_\gamma - \delta_\gamma} = a_{\gamma_1} - a_{\gamma_2} a_\gamma , \Psi_{\delta_{\gamma_1}} = a_{\gamma_1}a_{e_0}^{I'(e_0)} - a_{e_1}^{I'(e_1)}, \Psi_{\delta_{\gamma_2}} = a_{\gamma_2}a_{e_0}^{I'(e_0)} - a_{e_2}^{I'(e_2)},
\end{equation*}
using that all $a_\gamma$ are invertible on $Z$. 


\textbf{Step 4:} It remains to treat the case of a $\Psi_f$ coming from an arbitrary element $f\in\bb Z^\Upsilon$. The element $f$ induces an element of $H^1(\Gamma, \bb Z)$, which we can view as a subset of $\bb Z^E$; write $F$ for the image of $f$ in $\bb Z^E$. If $F$ is zero then the equation $\Psi_f = 0$ already holds on $Z$. If $F$ is non-zero then there exist a cycle $\gamma \in \Upsilon$ satisfying the assumptions of \ref{lem:cancellation}, and $\Psi_{\delta_\gamma}$ is in our ideal by Step 3, so we may replace $f$ by $f - \delta_\gamma$. Now the sum of the absolute values of the coefficients of $F$ is a positive integer strictly greater than the corresponding term for $f - \delta_\gamma$, so this process must terminate. 
%
%
%
\end{proof}

\begin{lemma}\label{lem:cancellation}
Let $f \in \bb Z^\Upsilon$, and let $\gamma \in \Upsilon$; write $\gamma = (h_1, h_2)$, and set $e_j = \{h_j, i(h_j)\}$. Assume that $M_{e_1, f} \ge 1$ and $M_{e_2, f} \le -1$. Then $\Psi_f$ is contained in the ideal generated by $\Psi_{\delta_\gamma}$ and $\Psi_{f - \delta_\gamma}$. 
\end{lemma}
\begin{proof}
A small calculation with the expressions \ref{eqn:toriccoordinates}. 
\end{proof}

\subsection{Charts and coordinates on the universal curve}
\label{sec:curve_coordinates}

For the deformation theoretic computations later, it will be necessary to fix a system of coordinates on the universal curve  in the neighbourhood of a given point. Suppose we have a combinatorial chart 
\begin{equation*}
\Mbar \stackrel{f}{\longleftarrow} U \stackrel{g}{\longrightarrow} \bb A^E
\end{equation*}
and a point $u \in U(\field)$ mapping to the origin in $\bb A^E$ (if desired we make a finite separable extension of $\field$ so that this exists). Write $U_u = \on{Spec}\ca O_{U,u}$. A \emph{smooth coordinate chart} of the tautological stable curve $\ca C_{U_u}/U_u$ consists of an open subscheme $V \hra \ca C_{U_u}^{sm}$ of the smooth locus of $\ca C_{U_u}$ over $U_u$ with connected fibre over $u$. 
 A \emph{singular coordinate chart} of $\ca C_{U_u}/U_u$ consists of an isomorphism from the strict henselisation\footnote{The strict henselisation is the local ring for the \'etale topology; intuitively, it can be thought of as playing a similar role to that of an $\epsilon$-neighbourhood in the complex analytic world. } of $\ca C_{U_u}$ at a non-smooth point (corresponding to an edge $e = \{h, h'\}$ of $\Gamma$) to the strict henselisation of $\ca O_{U, u}[z_h,z_{h'}]/(z_hz_{h'} - \ell_e)$ at the non-smooth point over $u$, where $z_h$ (resp. $z_{h'}$) vanishes on the component to which $h$ (resp. $h'$) connects. 

We will repeatedly make use of the following

\begin{situation}\label{situation:coordinates}
We fix
\begin{itemize}
\item
 a combinatorial chart $\Mbar \stackrel{f}{\longleftarrow} U \stackrel{g}{\longrightarrow} \bb A^E$ with $\Gamma$ a simple star;
\item 
a twist $I$ of $\Gamma$;
\item  
a $\field$-point $u$ of $U$ lying over the origin of $\bb A^E$;
\item 
a $\field$-point $p$ of $\Mdk_{I,U}$ lying over $u$;
\item
an fpqc cover of the universal stable curve $\ca C_{U_u}/U^u$ consisting of a finite collection of smooth and non-smooth charts as described above. 
\end{itemize}
\end{situation}

\subsection{The universal section \texorpdfstring{$\barsigma$}{sigmabar}}\label{sec:universal_section}

Composing with the map $\sigma^\m\colon \Mdm \to \ca J$ yields a map $\barsigma\colon \Mdk \to \ca J$ extending the section $\sigma\in \ca J(\ca M)$; this is the same as the abel-jacobi map of \cite[\S 4]{Marcus2017Logarithmic-com}, c.f. \ref{sec:MW_connection}. Here we make this map explicit. We fix a combinatorial chart $\Mbar \stackrel{f}{\longleftarrow} U \stackrel{g}{\longrightarrow} \bb A^E$ and a twist $I$, and we work on the chart $\Mdk_{I,U}$ of $\Mdk$. 

Write $\ca C_{I}$ for the universal stable curve over $\Mdk_{I,U}$. On $\ca C_I$ we have the line bundle $\omega^\k(-\m P)$ which has total degree zero on every fibre, but need not have multidegree $\ul 0$ (the zero vector) if the fibres are not irreducible, and so we cannot define $\sigma = [\omega^\k(-\m P)]$. The idea behind the definition of $\Mdk_{I,U}$ is that the multidegree of $\omega^\k(-\m P)$ can be `corrected' to $\ul 0$ by adding on vertical divisors supported over the boundary (`twistors'); details can be found in \cite{Holmes2017Extending-the-d}. 

Later we will need an explicit description of the pullback of this `corrected' bundle to the tautological curve over a (connected) scheme $T \to \Mdk_{I,U}$ such that the composite $T \to \Mdk_{I,U} \to U$ factors via the strict henselisation at the point $u$ (this is to ensure that the local coordinates $z_h$, $z_{h'}$ below make sense on $T$). We will describe this line bundle by giving its pullback to the fpqc cover chosen above, together with transition functions. We first define a `correction' line bundle $\ca T$. Begin by choosing a function 
\begin{equation*}
\lambda\colon \{ \text{directed  edges of }\Gamma\} \to \ca O_T(T)^\times
\end{equation*}
such that $\lambda(-e) = \lambda(e)^{-1}$, and such that for every loop $\gamma$ in $\Gamma$ we have 
\begin{equation*}
\prod_{e \in \gamma} \lambda(e) = a_\gamma. 
\end{equation*}
This is possible: choose a spanning tree $\Gamma' \subset \Gamma$. Then the edges $e$ in $\Gamma$ but not $\Gamma'$ correspond to a basis $\gamma_e$ of the space of cycles (where $\gamma_e$ first takes the edge $e$ and then takes the unique path inside $\Gamma'$ closing the loop). Given this, a possible choice of $\lambda$ is to set  $\lambda=1$ on all edges of $\Gamma'$ and $\lambda(e)=a_{\gamma_e}$ on the remaining edges.

Choose also an orientation on each non-loop edge. The bundle $\ca T$ will then be trivial on each chart of the cover, and we will choose a generating section $\bd 1$ on the smooth charts, and $\tau_e$ on the non-smooth chart corresponding to an edge $e$ of $\Gamma$. The transition function on an intersection of smooth charts sends $\bd 1$ to $\bd 1$. When a non-smooth chart corresponding to an oriented edge $e = \{h, h'\}$ meets a smooth chart, the connected components of the intersection will be contained in $V(z_h)$ or $V(z_{h'})$. On a connected component contained in $V(z_{h'})$ the transition function on the overlap is given by $\tau = \lambda(e)^\k z_h^{I(h)}\bd 1 $, and on a connected component contained in $V(z_{h})$ the transition function is given by $\tau = z_{h'}^{I(h')}\bd 1$.

When two non-smooth charts meet their intersection is necessarily contained in a smooth chart, and so the transition functions are uniquely determined by the previous cases. Then the section $\barsigma$ is defined by the line bundle $\omega^\k(-\m P) \otimes \ca T$; the reader can check that it has multidegree $\ul 0$, or can find the details in \cite[\S5]{Holmes2017Extending-the-d}. Moreover, one verifies that for a family with generically smooth fibre, the bundle $\ca T$ restricts to the trivial bundle on this smooth fibre. This means that on the smooth fibre, $\omega^\k(-\m P) \otimes \ca T$ is just the Abel-Jacobi section and from the separatedness of $\ca J$ it follows that $\omega^\k(-\m P) \otimes \ca T$ is indeed the unique extension to the whole family.

Later on we will want to make some of these choices in a `natural' way on a simple star graph. Suppose thus that $\Gamma$ is a simple star (see \ref{def:simple_star}), and for each outlying vertex $v$ choose one edge $e_v$ to $v$. We take the orientation to be the `outgoing' one from the centre to the outlying vertices. And we uniquely determine $\lambda$ by requiring it to take the value $1$ on $e_v$. A basis of cycles is given by going out along $e_v$ and back along a different edge. If a cycle $\gamma$ is given by $e_v$ and $e'$ then the glueing at the node corresponding to $e'$ gets `adjusted' by exactly $a_\gamma$. Because of the choice of orientation, it is only the glueing on the outlying vertices that gets adjusted by the $a_\gamma$.

\begin{remark}\label{rem:q_root_T}
Because we work on this particular normalisation $\Mdk$, we can also define canonically a $\k$-th root of $\ca T$. The construction is similar to that of $\ca T$; we choose generating sections on the smooth and non-smooth charts (denoted $\bd 1^{1/\k}$ and $\tau_e^{1/\k}$ respectively), then glue on overlaps by the formulae $\tau^{1/\k} = \lambda(e) z_h^{I'(h)}\bd 1^{1/\k} $ and $\tau^{1/\k} = z_{h'}^{I'(h')}\bd 1^{1/\k}$. We denote this new line bundle by $\ca T^{1/\k}$; there is then a unique isomorphism $(\ca T^{1/\k})^{\otimes \k} \to \ca T$ sending $(\bd 1^{1/\k})^\k \mapsto \bd 1$ and $(\tau_e^{1/\k})^\k \mapsto \tau_e$. 
\end{remark}

\begin{definition}\label{def:DRL_DR_1_over_k}
We define the \emph{double ramification locus} $\DRL^{1/\k}$ to be the schematic pullback of the unit section of the universal jacobian along the map $\barsigma$. We define $\DRC = \DRC^{1/\k}$ to be the cycle-theoretic pullback of the unit section of the base change $\ca J \times_{\Mbar} \Mdk$ along the section $\Mdk \to \ca J \times_{\Mbar} \Mdk$ induced by $\barsigma$, as a cycle class on $\DRL^{1/\k}$.
\end{definition} By \cite{Schmitt2016Dimension-theor} we know that all the generic points of $\widetilde {\ca H}_g^\k(\m)$ lie in the locus $\Mdm_{\k | I}$ of $\Mdm$ where the twists are divisible by $\k$. We will use this to show in \ref{sec:comparing_DRs} that to compute $\DRC^\m$ it suffices to compute the multiplicities of $\DRC^{1/\k}$. For most of the rest of this paper, we will be working to compute the multiplicities of $\DRC^{1/\k}$.

\subsection{Comparing the various double ramification cycles and loci}\label{sec:comparing_DRs}

Recall from \ref{sec:DR_construction} that we have various moduli spaces and double ramification loci (with associated cycles), which we summarise in the following diagram:
\[
\begin{tikzcd}
  &\DRL^\loz \arrow[d,symbol=\subset] \arrow[rdd] & &\\
  & \Md \arrow[rdd,very near start, sloped, "\text{proper}" ]& &\\
  \DRL^{1/\k} \arrow[rr] \arrow[d,symbol=\subset] & & \DRL^\m \arrow[dr, near start, sloped, "\text{proper}"]\arrow[d,symbol=\subset] & \widetilde{\mathcal{H}}_g^k(\m) \arrow[d,symbol=\subset]\\
  \Mdk \arrow[r, "\text{proper}"] & \Mdm_{\k | I} \arrow[r, hook, "\text{open imm.}"] & \Mdm \arrow[r] & \Mbar
\end{tikzcd}
\]

In \ref{lem:set_theoretic_image_H} below we will show that that $\DRL^\m \to \Mbar$ factors \emph{set-theoretically} through $\widetilde{\mathcal{H}}_g^k(\m)$. 

Note that (unless $\k = 1$) we do \emph{not} have a map $\Md \to \Mdk$, since $\Mdk$ is a partial normalisation of an open subscheme of $\Mdm$. In \ref{sec:computing_length} we compute the lengths of the local rings of the subscheme $\DRL^{1/\k} \tra \Mdk$. Ultimately we want to show an equality of cycles $\overline{\DRC} = H_{g,\m}^\k$ on $\Mbar$ (recalling that $\overline{\DRC}$ is by definition the pushforward of $\DRC^\loz$ to $\Mbar$), so we need to compare the cycles on these various spaces, and to compare the length with the intersection multiplicity. We begin with a general lemma.

\begin{lemma}\label{lem:gysin_compatibility}
Let $X \stackrel{f}{\to} Y \to \Mbar$ be birational representable morphisms of reduced stacks, with $f$ proper (here `birational' means inducing isomorphisms between some dense open substacks). Suppose that the morphism $\sigma$ extends to $\sigma_X\colon X \to \ca J$ and $\sigma_Y \colon Y \to \ca J$ (necessarily unique, by reducedness). Define $\DRL_X \tra X$ and $\DRL_Y \tra Y$ by pulling back the unit section of $\ca J$ along $\sigma_X$, resp. $\sigma_Y$, and assume that they have the expected codimension $g$. 

Define $\DRC_X$ and $\DRC_Y$ as cycles supported on $\DRL_X$ resp. $\DRL_Y$ as in \ref{def:DRprime}. Then $f_*\DRC_X = \DRC_Y$, an equality of cycles on $\DRL_Y$. 
\end{lemma}
\begin{proof}
 We proceed as in the proof of \cite[theorem 6.7]{Holmes2017Extending-the-d}. Namely, we have a commutative diagram
\begin{center}
\begin{tikzcd}[sep = huge]
\ca J_{X} \arrow[r, "f_J"] \arrow[d] & \ca J_{Y} \arrow[d]\\
X \arrow[r, "f"] \arrow[u, bend left, "\sigma_X"] \arrow[u, bend right, swap, "e_X"] & Y \arrow[u, bend left, "\sigma_{Y}"] \arrow[u, bend right, swap, "e_{Y}"]  \\
\end{tikzcd} 
\end{center}
(here the upward-pointing arrows are closed immersions, by separatedness of $\ca J$, so we can also see them as cycles). Since $f$ is proper and birational, we see that ${f_J}_*[e_X] = [e_{Y}]$. By the commutativity of proper pushforward and the refined Gysin homomorphism\footnote{Here we were not able to find the precise compatibility result we require in the literature (for example, \cite[theorem 3.12]{Vistoli1989Intersection-th} would require that $\ca J_X$ and $\ca J_Y$ be schemes). However, since our $\DRL$ loci have the expected codimension, the cycle $\DRC$ make sense as actual cycles, not just rational equivalence classes. The stated equality can thus be checked locally on $\Mbar$, so we may reduce to the case where all objects in sight are schemes, whereupon we can simply apply \cite[theorem 6.2(a)]{Fulton1984Intersection-th}. }, we see that 
\begin{equation*}
\sigma_{Y}^!{f_J}_*[e_X] = f_* \sigma_X^![e_{X}]
\end{equation*}
(an equality of cycles on $\DRL_Y$), hence 
\begin{equation*}
\DRC_Y = \sigma_{Y}^![e_{Y}] =\sigma_{Y}^!{f_J}_*[e_X] = f_* \sigma_X^![e_{X}] = f_*\DRC_X. \qedhere
\end{equation*}
\end{proof}

Unfortunately, since the map $\Mdm_{\k | I} \to \Mdm$ is not proper, we cannot apply this lemma to compare the double ramification cycles on $\Mdm_{\k | I}$ and on $\Mdm$. 

Recall that by the discussion of \ref{sec:FP_formula}, the underlying (reduced) substack of $\Mbar$ corresponding to $H_{g,\m}^\k$ is the twisted differential space $\widetilde{\mathcal{H}}_{g}^\k(\m)$.
\begin{lemma}\label{lem:set_theoretic_image_H}
The maps $\DRL^\m \to \Mbar$ and $\DRL^{1/\k} \to \Mbar$ factor set-theoretically via $\widetilde{\mathcal{H}}_{g}^\k(\m) \tra \Mbar$. 
\end{lemma}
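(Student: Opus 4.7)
The plan is to prove the statement for $\DRL^\m$ first; the claim for $\DRL^{1/\k}$ will then follow, since $\barsigma$ factors as $\Mdk \to \Mdm_{\k | I} \sub \Mdm \stackrel{\sigma^\m}{\to} \ca J$, so $\DRL^{1/\k}$ maps into $\DRL^\m$, and both loci have the same image in $\Mbar$.

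Fix a geometric point $p \in \DRL^\m$; after passing to a chart, we may assume that $p$ lies in some $\Mdm_{I,U}$ with associated dual graph $\Gamma$ and twist $I$. By the explicit formula for $\sigma^\m$ from \ref{sec:universal_section}, membership of $p$ in $\DRL^\m$ is equivalent to the triviality of the corrected line bundle $L := \omega_{C_p}^\k(-\m P) \otimes \ca T|_{C_p}$ on the fibre curve $C_p$. The stable graph $\Gamma_{C_p}$ of $C_p$ is obtained from $\Gamma$ by contracting those edges whose parameters $a_e$ are units at $p$; restricting $I$ to the surviving half-edges gives a function $I_p$. A short verification, using that $\mathrm{can}(v)$ is additive under vertex contraction and that the two half-edges of any contracted edge have opposite $I$-values, shows that $I_p$ is a valid twist on $\Gamma_{C_p}$ in the sense of \ref{def:twist}.

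Next, let $\nu \colon C_p' \to C_p$ denote the partial normalization at the nodes corresponding to edges with $I_p \neq 0$. The main step is to identify $\nu^* \ca T|_{C_p}$ explicitly: unpacking the transition functions $\tau_e = \lambda(e)^\k x_h^{I(h)} \bd 1$ from \ref{sec:universal_section} on each of the two disks replacing a normalized node, the smooth-chart generator $\bd 1$ is seen to have order $-I_p(h)$ at each preimage $q_h$, yielding
\begin{equation*}
\nu^* \ca T|_{C_p} \;\cong\; \mathcal{O}_{C_p'}\Bigl(-\sum \bigl(I_p(h)\, q_h + I_p(h')\, q_{h'}\bigr)\Bigr),
\end{equation*}
where the sum runs over normalized nodes. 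Combining this with the adjunction identity $\nu^* \omega_{C_p}^\k = \omega_{C_p'}^\k\bigl(\k\sum (q_h + q_{h'})\bigr)$, triviality of $\nu^* L$ on $C_p'$ translates directly into the isomorphism
\begin{equation*}
\omega_{C_p'}^{\otimes \k} \;\cong\; \mathcal{O}_{C_p'}\Bigl(\sum_i m_i p_i + \sum (I_p(h) - \k)\, q_h + (I_p(h') - \k)\, q_{h'}\Bigr),
\end{equation*}
which is precisely \ref{eqn:twistdiffcond} for the twist $I_p$ on $\Gamma_{C_p}$; hence $C_p \in \widetilde{\ca H}_g^\k(\m)$.

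The main obstacle is really just the local identification of $\nu^* \ca T|_{C_p}$: although elementary, it requires careful bookkeeping of the transition data defining $\ca T$ and of the sign conventions on the two sides of each node (in particular, that the sign works out so that no additional flip of $I_p$ is needed to match \ref{eqn:twistdiffcond}). All other ingredients — the reduction from $\DRL^{1/\k}$ to $\DRL^\m$, the check that $I_p$ is a valid twist, and the final adjunction computation — are essentially routine.
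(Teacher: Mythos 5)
Your proof is correct and takes the same approach as the paper, which simply asserts that the claim ``is clear from the description of the universal bundle in \ref{sec:universal_section}.'' You have filled in exactly the verification the paper leaves implicit: that triviality of $\omega^\k(-\m P)\otimes\ca T$ on $\ca C_p$, when pulled back to the partial normalization at the twisted nodes, becomes condition \ref{eqn:twistdiffcond}, so the paper would consider this ``clear'' while you actually unwind it. One sanity check worth recording explicitly for the sign issue you flag at the end: your formula $\nu^*\ca T\cong\ca O_{C_p'}(-\sum(I_p(h)q_h+I_p(h')q_{h'}))$ is the one consistent with the requirement that $\omega^\k(-\m P)\otimes\ca T$ have multidegree $\ul 0$ (so that $\barsigma$ lands in $\ca J$), since the degree of $\omega^\k(-\m P)$ on the component $C_v$ is $\sum_{\on{end}(h)=v}I(h)$; this confirms the sign independent of how one parses the transition-function convention in \ref{sec:universal_section}, which is slightly ambiguous as written.
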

\begin{proof}
This is clear from the description of the universal bundle in \ref{sec:universal_section} (noting that the same construction of the latter works on $\Mdm$ as on $\Mdk$). 
\end{proof}

\begin{lemma}\label{lem:quasi_finite}
The maps $\DRL^{1/\k} \to \Mbar$ and $\DRL^{\m} \to \Mbar$ are quasi-finite. 
\end{lemma}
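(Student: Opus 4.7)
The plan is to reduce quasi-finiteness of $\DRL^{1/\k}\to\Mbar$ to that of $\DRL^\m\to\Mbar$, then establish the latter by a fibrewise analysis in the explicit charts $\Mdm_{I,U}$ constructed in \ref{sect:affinepatches}. For the reduction, I would factor the first map as $\DRL^{1/\k}\to\DRL^\m\to\Mbar$. By \ref{lem:deg_of_normalisation} the map $\Mdk\to\Mdm_{\k|I}$ has finite fibres of cardinality $\k^{b_1(\Gamma)}$, and $\Mdm_{\k|I}\hookrightarrow\Mdm$ is an open immersion sending $\DRL^{1/\k}$ into $\DRL^\m$. Consequently the first map in the factorisation is quasi-finite (in fact finite), so quasi-finiteness of $\DRL^\m\to\Mbar$ yields that of $\DRL^{1/\k}\to\Mbar$.

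To prove $\DRL^\m\to\Mbar$ is quasi-finite, fix a geometric point $[C]\in\Mbar$ and work in a combinatorial chart $U\to\Mbar$ containing it, with $u\in U$ the image of $[C]$ and $\Gamma_u$ the dual graph at $u$. Recall from \ref{sec:charts_of_Md} that $\Mdm\to\Mbar$ is of finite presentation, so its fibre over $u$ is a quasi-compact algebraic space covered by finitely many charts $\Mdm_{I,U}$ indexed by pairs $(\Gamma,I)$ of a stable graph $\Gamma$ contracting to $\Gamma_u$ and a twist $I$ on $\Gamma$. It therefore suffices to show that for each such $(\Gamma,I)$, the fibre $(\DRL^\m\cap\Mdm_{I,U})_u$ is zero-dimensional.

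By \ref{eqn:toriccoordinates}, together with the observation recorded just after it that every $a_\gamma$ is forced to be invertible, the fibre $\Mdm_{I,U}|_u$ embeds as a locally closed subscheme of the torus $\on{Spec}\field[a_\gamma^{\pm1}:\gamma\in\Upsilon]$; after enforcing the relations that arise from expressing arbitrary cycles in terms of a chosen basis of $H^1(\Gamma_u,\bb Z)$, the ambient torus has dimension $b_1(\Gamma_u)$. The $\DRL^\m$ condition requires triviality of $\omega^\k(-\m P)\otimes\ca T$ on the fibre curve $C$; by the explicit transition functions of \ref{sec:universal_section}, this requirement translates into a system of multiplicative monomial equations on the $a_\gamma$. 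Since the twist $I$ has been designed precisely to balance the multidegree of $\omega^\k(-\m P)$ on $C$, these equations amount to $b_1(\Gamma_u)$ genuine multiplicative conditions modulo the relations above, and so cut the torus down to a finite subscheme.

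The main obstacle I anticipate is verifying that the monomial equations arising from \ref{sec:universal_section} are genuinely independent, so as to give a zero-dimensional slice of the torus rather than a higher-dimensional one. This can be done either by a direct rank calculation on the matrix of exponents of the transition functions in \ref{sec:universal_section}, or by invoking the Marcus--Wise perspective of \ref{sec:MW_connection}: on $\bd{Div}_{g,\bd m}$ a fibre of the Abel--Jacobi map over a fixed element of the Jacobian is manifestly cut out in codimension $b_1$ from the tropical moduli space of divisors, which gives the required independence for free. Combining quasi-finiteness with the properness of $\DRL^\m\to\Mbar$ and $\DRL^{1/\k}\to\DRL^\m$ established in \ref{sec:DR_construction_prev} and \ref{sec:DR_construction}, one in fact concludes that both maps are finite.
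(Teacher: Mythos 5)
Your reduction to $\DRL^\m$ via \ref{lem:deg_of_normalisation} and the chart-by-chart analysis of fibres both parallel the paper's proof; the divergence appears in how you establish that a fixed chart $\Mdm_{I,U}$ contributes only finitely many points of $\DRL^\m$ over a given $u\in\Mbar$. You do this by a dimension count: identify the fibre of the chart over $u$ with a torus of dimension $b_1(\Gamma_u)$, translate the DR condition into monomial equations, and assert that these impose $b_1(\Gamma_u)$ genuine conditions. As you yourself flag, the independence of those equations is exactly the step that is not proved, so as written the argument has a gap; you offer two ways to close it (an exponent-matrix rank computation, or appeal to the Marcus--Wise tropical description from \ref{sec:MW_connection}), either of which could plausibly work but neither of which is carried out.

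The paper takes a sharper route at this point, bypassing the independence question entirely. Instead of counting equations it observes that the section $\sigma^\m$ restricted to the fibre of $\Mdm_{I,U}$ over $u$ is \emph{injective}: the additional coordinates $a_\gamma$ parametrise precisely the distinct ways of gluing the correction bundle $\ca T$ at the nodes (equivalently, via the discussion in \ref{sec:bGamma}, the torus $\on{Hom}(H^1(\Gamma,\bb Z),\bb G_m)$ acts freely on the fibre and the Abel--Jacobi map is equivariant for this action composed with $t\mapsto t^\k$), so distinct points of the chart fibre hit distinct points of $\ca J_u$. At most one of those can be the unit, hence at most one point of each chart fibre lies in $\DRL^\m$, and finite presentation gives quasi-finiteness. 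Your conclusion about finiteness (quasi-finite together with proper) is also in the paper, but in later statements rather than inside this lemma. In short: same decomposition and same source of finiteness (finitely many charts), but where you set up a rank problem that would still have to be verified, the paper gets "at most one preimage per chart" for free from injectivity of $\sigma^\m$ on chart fibres. Recognising that injectivity statement would shrink your step~3 to a sentence and eliminate the gap.
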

\begin{proof}
The map $\Mdk \to \Mdm$ is quasi-finite, so it suffices to check this for $\DRL^\m\to \Mbar$. The map is finitely presented (since there are only finitely many combinatorial charts to consider, by \cite[lemma 3.8]{Holmes2017Extending-the-d}). For a given chart over a given point in $\Mbar$, moving in the fibre of the chart corresponds to shifting the glueing map of the line bundle $\ca T$ at the nodes. 
In particular, it is clear that at most one point of the fibre of the chart can lie in $\DRL^\m$. 
\end{proof}

%
%

\begin{lemma}\label{lem:pure_codim}
The subscheme $\DRL^{1/\k}$ has pure codimension $g$ in $\Mdk$, and $\DRL^\m$ has pure codimension $g$ in $\Mdm$. 
\end{lemma}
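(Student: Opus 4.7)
The plan is to establish matching upper and lower bounds on the codimension of every irreducible component, using that everything fits above the codimension-$g$ locus $\widetilde{\mathcal{H}}_{g}^\k(\m)$ in $\Mbar$.

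For the upper bound, observe that the unit section $e \hookrightarrow \ca J$ is a regular closed immersion of codimension $g$, since $\ca J \to \Mbar$ is smooth of relative dimension $g$. Pulling back a regular sequence of length $g$ that cuts out $e$ along $\barsigma \colon \Mdk \to \ca J$ (resp.\ $\sigma^\m \colon \Mdm \to \ca J$) exhibits $\DRL^{1/\k}$ (resp.\ $\DRL^\m$) as the zero locus of $g$ functions. By Krull's Hauptidealsatz, every irreducible component of $\DRL^{1/\k}$ (resp.\ $\DRL^\m$) has codimension at most $g$ at each of its points.

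For the lower bound, recall that the maps $\Mdk \to \Mbar$ and $\Mdm \to \Mbar$ are log étale and birational, hence both spaces are equidimensional of dimension $\dim\Mbar$. By \ref{lem:quasi_finite} both $\DRL^{1/\k} \to \Mbar$ and $\DRL^\m \to \Mbar$ are quasi-finite, and by \ref{lem:set_theoretic_image_H} they factor set-theoretically through the twisted differential locus $\widetilde{\mathcal{H}}_{g}^\k(\m) \subset \Mbar$. Under our running assumption that $\k \geq 1$ and some $m_i$ is negative or not divisible by $\k$, the locus $\widetilde{\mathcal{H}}_{g}^\k(\m)$ has pure codimension $g$ in $\Mbar$, by \cite{Farkas2016The-moduli-spac} for $\k=1$ and \cite{Schmitt2016Dimension-theor} for $\k>1$. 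Since a quasi-finite morphism does not raise dimension, each component of $\DRL^{1/\k}$ (resp.\ $\DRL^\m$) has dimension at most $\dim\Mbar - g$, i.e.\ codimension at least $g$ in $\Mdk$ (resp.\ $\Mdm$).

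Combining the two bounds, every irreducible component of $\DRL^{1/\k}$ and of $\DRL^\m$ has codimension exactly $g$, which is the claim. The only real content is the matching pair of inequalities; no genuine obstacle arises here because the hard work (quasi-finiteness over $\Mbar$, set-theoretic factorization, and the codimension of the twisted differential locus) has already been carried out in the cited lemmas and references.
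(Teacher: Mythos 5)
Your proof is correct and follows essentially the same two-step strategy as the paper's: an upper bound on codimension from cutting out $\DRL$ locally by $g$ equations (via the section of the smooth group scheme $\ca J$), and a lower bound from quasi-finiteness over $\widetilde{\mathcal{H}}_{g}^\k(\m)$, which is pure of codimension $g$. You merely make the equidimensionality of $\Mdk$ explicit, a point the paper leaves implicit.
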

\begin{proof}
We give the proof for $\DRL^{1/\k}$; the other case is almost identical. 
Recall that $\DRL^{1/\k}$ is constructed by intersecting two sections in the universal jacobian over $\Mdk$, and the latter is smooth over $\Mdk$ of relative dimension $g$. As such, every generic point of $\DRL^{1/\k}$ has codimension at most $g$, since $\DRL^{1/\k}$ can be cut out locally by $g$ equations. The substack $\widetilde{\mathcal{H}}_{g}^\k(\m)$ has pure codimension $g$ in $\Mbar$ by construction, so we are done by combining \ref{lem:set_theoretic_image_H,lem:quasi_finite}. 
%
\end{proof}

Combining \ref{lem:set_theoretic_image_H,lem:quasi_finite,lem:pure_codim} also yields
\begin{lemma}\label{lem:generic_points_lie_over}
Every generic point of $\DRL^{1/\k}$ and of $\DRL^\m$ lies over a generic point of $\widetilde{\mathcal{H}}_{g}^\k(\m)$. 
\end{lemma}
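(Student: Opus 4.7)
The statement is essentially a corollary of the three preceding lemmas combined with a dimension count, so my plan is to organise the argument as follows.

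First, I will take a generic point $\eta$ of an irreducible component of $\DRL^{1/\k}$ (the argument for $\DRL^\m$ is identical, so I will treat them in parallel). By \ref{lem:pure_codim} the subscheme $\DRL^{1/\k}$ has pure codimension $g$ in $\Mdk$, so the closure $\overline{\{\eta\}}$ has dimension $\dim \Mdk - g$. Since the map $\Mdk \to \Mbar$ is birational (log étale, as recorded at the start of \ref{sec:DR_construction_prev} and used implicitly throughout the construction), we have $\dim \Mdk = \dim \Mbar$, hence $\dim \overline{\{\eta\}} = \dim \Mbar - g$.

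Next, let $\bar\eta$ denote the image of $\eta$ in $\Mbar$. By \ref{lem:quasi_finite} the map $\DRL^{1/\k} \to \Mbar$ is quasi-finite, so it preserves dimension on closed substacks; in particular, $\dim \overline{\{\bar\eta\}} = \dim \overline{\{\eta\}} = \dim \Mbar - g$. On the other hand, by \ref{lem:set_theoretic_image_H} the point $\bar\eta$ lies in $\widetilde{\mathcal{H}}_{g}^\k(\m)$, and by hypothesis on the $m_i$ this twisted differential space has pure codimension $g$ in $\Mbar$, so each of its irreducible components has dimension exactly $\dim \Mbar - g$.

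Combining these two facts, the closure $\overline{\{\bar\eta\}}$ is contained in $\widetilde{\mathcal{H}}_{g}^\k(\m)$ and has the maximal possible dimension, so it must coincide with an irreducible component of $\widetilde{\mathcal{H}}_{g}^\k(\m)$. Equivalently, $\bar\eta$ is a generic point of $\widetilde{\mathcal{H}}_{g}^\k(\m)$. Since no step in this argument uses anything specific to $\DRL^{1/\k}$ beyond the three preceding lemmas, the same conclusion follows verbatim for $\DRL^\m$.

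There is no real obstacle here: the proof is purely a bookkeeping of dimensions, and the only place one has to be slightly careful is in asserting that quasi-finite representable morphisms of algebraic stacks preserve dimensions of closed substacks, which is standard. Everything substantive has already been done in \ref{lem:set_theoretic_image_H,lem:quasi_finite,lem:pure_codim}.
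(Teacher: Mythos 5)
Your argument is correct and is exactly the intended one: the paper simply states that the lemma follows by "combining \ref{lem:set_theoretic_image_H,lem:quasi_finite,lem:pure_codim}", and your write-up is precisely a careful unwinding of that combination via the dimension count.
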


\begin{lemma}\label{lem:compare_k_k_inverse}\label{lem:length_equals_multiplicity}
Let $\overline p$ be a generic point in $\widetilde{\mathcal{H}}_{g}^\k(\m)$. Then the multiplicity of the cycle $\overline \DRC$ at $\overline p$ is equal to the sum of the lengths of the Artin local rings of $\DRL^{1/\k}$ at (necessarily generic) points $p$ in $\DRL^{1/\k}$ lying over $\overline p$. 
\end{lemma}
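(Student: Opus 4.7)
My plan is to reduce the statement to computing the cycle-pushforward of $\DRC^{1/\k}$ along $\DRL^{1/\k} \to \Mbar$, then identify $\DRC^{1/\k}$ with the fundamental cycle of $\DRL^{1/\k}$ using that $\DRL^{1/\k}$ has the expected codimension, and finally apply the standard cycle-pushforward formula.

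First, I would chain together two applications of \ref{lem:gysin_compatibility} to compare cycles on the various spaces. Applied to the proper birational representable morphism $\Md \to \Mdm$ (noting that $\DRL^\loz$ and $\DRL^\m$ have the expected codimension $g$ by \ref{lem:pure_codim}), it gives $(\Md \to \Mdm)_*\DRC^\loz = \DRC^\m$. Composing with $\DRL^\m \to \Mbar$, this identifies $\overline{\DRC}$ with the pushforward of $\DRC^\m$ to $\Mbar$. By \ref{lem:generic_points_lie_over}, every generic point of $\DRL^\m$ over $\bar p$ lies in the open substack $\Mdm_{\k|I}$, on which the finite birational morphism $\Mdk \to \Mdm_{\k|I}$ is defined; a second application of \ref{lem:gysin_compatibility} gives $(\Mdk \to \Mdm_{\k|I})_*\DRC^{1/\k} = \DRC^\m|_{\Mdm_{\k|I}}$. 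Together, and using \ref{lem:quasi_finite}, the multiplicity of $\overline{\DRC}$ at $\bar p$ equals that of the pushforward of $\DRC^{1/\k}$ along the quasi-finite map $\DRL^{1/\k} \to \Mbar$ at $\bar p$.

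Next I would identify $\DRC^{1/\k}$ with the fundamental cycle of the subscheme $\DRL^{1/\k}$. The cycle $\DRC^{1/\k}$ is defined as the refined Gysin pullback of the unit section of $\ca J \times_{\Mbar}\Mdk$ along the section induced by $\bar\sigma$. Since $\ca J \to \Mbar$ is smooth of relative dimension $g$, the unit section is a regular closed immersion of codimension $g$. Combined with the fact (\ref{lem:pure_codim}) that $\DRL^{1/\k}$ has pure codimension $g$ in $\Mdk$, the refined Gysin pullback coincides with ordinary pullback (see e.g.\ \cite[Chapter~6]{Fulton1984Intersection-th}), so that the multiplicity of $\DRC^{1/\k}$ at each generic point $p$ is precisely $\on{length}(\ca O_{\DRL^{1/\k}, p})$.

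The final step is the cycle-pushforward formula: at $\bar p$, the multiplicity of $(\DRL^{1/\k} \to \Mbar)_*\DRC^{1/\k}$ equals $\sum_{p \mapsto \bar p} [\kappa(p):\kappa(\bar p)] \cdot \on{length}(\ca O_{\DRL^{1/\k}, p})$, summed over (finitely many) generic points $p$ of $\DRL^{1/\k}$ lying over $\bar p$. The main---and only---subtlety is the presence of the residue-field degrees, which I would handle by base-changing to an algebraic closure of $\field$: both sides of the asserted equality transform in the same way under this base change (generic multiplicities of cycles are preserved, while the local rings split into products whose total length adds up to the original length times the corresponding degree factor), and over the algebraic closure every $[\kappa(p):\kappa(\bar p)] = 1$, giving the claimed identity.
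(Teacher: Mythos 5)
Your overall strategy parallels the paper's own proof: chain $\DRC^\loz \rightsquigarrow \DRC^\m \rightsquigarrow \DRC^{1/\k}$ via \ref{lem:gysin_compatibility}, restrict to $\Mdm_{\k|I}$ using \ref{lem:generic_points_lie_over}, and then reduce to the comparison of intersection multiplicity with length at generic points of $\DRL^{1/\k}$. The chaining and restriction steps are fine (the paper formalizes your remark that only the local structure near points over $\bar p$ matters via a ``naive pushforward'' along the open immersion $\Mdm_{\k|I}\hra\Mdm$, but your pointwise reasoning amounts to the same thing).

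However, there is a genuine gap in the central step. You assert that ``the refined Gysin pullback coincides with ordinary pullback'' and hence that the multiplicity of $\DRC^{1/\k}$ at a generic point $p$ equals $\on{length}(\ca O_{\DRL^{1/\k},p})$, appealing only to the fact that $\DRL^{1/\k}$ has the expected codimension $g$. This is false in general: proper intersection dimension alone gives only the \emph{inequality} (intersection multiplicity $\ge$ length), with equality requiring the ambient scheme to be Cohen--Macaulay at the relevant point. This is exactly the content of \cite[Proposition 7.1]{Fulton1984Intersection-th}, and Fulton gives examples where strict inequality occurs without the CM hypothesis. In the present situation, the role of the ambient is played by $\Mdk$ (via the section $\bar\sigma$), and $\Mdk$ is built from not-necessarily-saturated monoid rings, which are not CM in general. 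The paper closes this gap by invoking \ref{lem:lci}, a non-trivial result showing $\Mdk_{I',U}$ is a local complete intersection when $\Gamma$ is a simple star graph; combining this with \ref{lem:generic_points_lie_over} (generic points of $\DRL^{1/\k}$ lie over generic points of $\widetilde{\ca H}_g^\k(\m)$, hence over simple stars) verifies Cohen--Macaulayness precisely where it is needed. Your proof should be amended to cite these two lemmas before applying Fulton's Proposition 7.1.
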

This lemma is almost obvious from the definition of the proper pushforward, but we must take a little care as the map $\Mdk \to \Mdm$ is not in general proper, and we must compare the cycle-theoretic multiplicity with the length. 
\begin{proof}
First, \ref{lem:gysin_compatibility} implies that pushforwards of $\DRC^\loz$ and $\DRC^\m$ coincide, so we are reduced to showing the same statement where we replace $\overline{\DRC}$ by the pushforward of $\DRC^\m$ to $\Mbar$. By \ref{lem:generic_points_lie_over}, every generic point of $\DRL^\m$ lies over a generic point of $\widetilde{\mathcal{H}}_{g}^\k(\m)$, so by the discussion of \ref{sec:FP_formula} we know that every generic point of $\DRL^\m$ is contained in $\Mdm_{\k | I}$. 

Now $\Mdm_{\k | I} \hra \Mdm$ is an open immersion and not (in general) proper. But we can still pushforward cycles (\emph{not} cycle classes) along it, simply taking the closure of the image of a prime cycle, and equipping it with the same multiplicity (thus, the same formula as used for proper pushforward). Since $\DRL_{\k | I}^\m = \DRL^\m \cap \Mdm_{\k | I}$ is of pure codimension $g$, we see that $\DRC_{\k | I}^\m$ on it makes sense as a cycle, not just a cycle class. Since every generic point of $\DRL^\m$ is contained in $\Mdm_{\k | I}$, we see that this `naive pushforward' of $\DRC_{\k | I}^\m$ to $\DRL^\m$ coincides with the cycle $\DRC^\m$. 

Now we apply \ref{lem:gysin_compatibility} to the proper morphism $\Mdk \to \Mdm_{\k | I}$, to see that the pushforward of $\DRC^{1/\k}$ to $\DRL_{\k | I}^\m$ yields $\DRC_{\k | I}^\m$. Thus, we see that the composite of maps on \emph{cycles} (not just classes): 
\begin{itemize}
\item proper pushforward $\Mdk \to \Mdm_{\k | I}$;
 \item naive pushforward $\Mdm_{\k | I} \to \Mdm$;
 \item proper pushforward $\Mdm \to \Mbar$
\end{itemize}
sends $\DRC^{1/\k}$ to $\overline{\DRC}$'. 

To conclude the proof, we just need to check that the cycle-theoretic multiplicity of $\DRC^{1/\k}$ at a generic point $p$ of $\DRL^{1/\k}$ coincides with the length of $\DRL^{1/\k}$ at $p$. This holds by \cite[proposition 7.1]{Fulton1984Intersection-th}, if we can show that $\Mdk$ is Cohen-Macaulay at the generic point of each component of $\DRL^{1/\k}$. Since generic points of $\DRL^{1/\k}$ lie over generic points of $\widetilde{\mathcal{H}}_{g}^\k(\m)$ (\ref{lem:generic_points_lie_over}), and the graphs at the latter are simple stars, the local rings of $\Mdk$ are local complete intersections (and hence Cohen-Macaulay) by \ref{lem:lci}. 
\end{proof}

\begin{lemma}\label{lem:surjective_generic_points}
Let $\overline p$ be a generic point of $\widetilde{\mathcal{H}}_{g}^\k(\m)$. Then there exists a generic point of $\DRL^{1/\k}$ mapping to $p$. 
\end{lemma}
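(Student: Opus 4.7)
The plan is to exhibit explicitly a point $p \in \DRL^{1/\k}$ lying over $\overline{p}$ and then use a dimension count to deduce that $p$ is generic. First I will fix a combinatorial chart $\Mbar \stackrel{f}{\longleftarrow} U \stackrel{g}{\longrightarrow} \bb A^E$ around $\overline p$ with a lift $u \in U$ of $\overline p$ mapping to the origin of $\bb A^E$. By the uniqueness of the generic twist together with the description of generic components in \ref{sec:FP_formula}, the stable graph $\Gamma$ at $u$ is a simple star with a distinguished generic twist $I$ that is divisible by $\k$ and nowhere zero on edges. All of the following action will take place in the corresponding chart $\Mdk_{I,U}$ of $\Mdk$.

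The core step is to produce, from the data of a twisted $\k$-differential at $\overline p$, a point in the fibre of $\Mdk_{I,U}$ over $u$ at which the Abel-Jacobi section $\barsigma$ lands in the unit section of $\ca J$. The defining property of $\widetilde{\ca H}_g^\k(\m)$ gives a trivialisation of $\omega^\k(-\m P)$ on the full normalisation $C_I$, twisted by the explicit boundary divisor of \ref{eqn:twistdiffcond}. Unpacking the construction of the correction bundle $\ca T$ in \ref{sec:universal_section}, this is precisely the statement that $\omega^\k(-\m P) \otimes \ca T$ restricts to a trivial bundle on every component of $C_I$, independently of the glueing parameters $a_\gamma$. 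The residual question --- whether these component-wise trivialisations assemble to a global trivialisation on $C$ --- reduces to a system of equations of the form $a_\gamma = c_\gamma$ for certain $c_\gamma \in \field^\times$ determined by the chosen differential. Since the fibre of $\Mdk_{I,U}$ over $u$ is parametrised by $b_1(\Gamma)$ torus coordinates $a_\gamma$ (subject to the constraints of \ref{eqn:toriccoordinates} with all $a_e$ set to zero), such a solution exists.

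To conclude, I will argue that the resulting point $p \in \Mdk$ is a generic point of $\DRL^{1/\k}$. This follows by a clean dimension count: by \ref{lem:quasi_finite} the map $\DRL^{1/\k} \to \Mbar$ is quasi-finite, $\DRL^{1/\k}$ has pure codimension $g$ in $\Mdk$ by \ref{lem:pure_codim}, and $\widetilde{\ca H}_g^\k(\m)$ has pure codimension $g$ in $\Mbar$ by construction. Any point of $\DRL^{1/\k}$ whose image in $\Mbar$ is a generic point of a component of $\widetilde{\ca H}_g^\k(\m)$ is therefore forced to be a generic point of a component of $\DRL^{1/\k}$.

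The main technical obstacle I anticipate is the careful verification that, within the chart $\Mdk_{I,U}$, varying the $a_\gamma$ parameters does hit all possible glueings of a given component-wise-trivial bundle on $C$ --- equivalently, that $\ca T$ realises a surjection from the fibre of $\Mdk_{I,U}$ over $u$ onto the toric part of $\on{Pic}^{\ul 0}(C)$. This is a bookkeeping exercise using the spanning-tree description of $\lambda$ at the end of \ref{sec:universal_section}, but one must be careful to distinguish $\Mdk$ from $\Mdm$: passing from the latter to the former introduces a $\k^{b_1(\Gamma)}$-fold cover (\ref{lem:deg_of_normalisation}), and my construction must yield a point of $\DRL^{1/\k}$ rather than only of $\DRL^\m$ (the existence of the $\k$-th roots required for this is automatic, since solving $a_\gamma = c_\gamma$ in $\Mdk$ amounts to extracting a $\k$-th root of the analogous $c$ in $\Mdm$, which is possible after a finite separable extension of the residue field).
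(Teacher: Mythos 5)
Your proposal matches the paper's proof in both its key steps: first, show by quasi-finiteness (\ref{lem:quasi_finite}) and the pure codimension-$g$ statement (\ref{lem:pure_codim}) that any point of $\DRL^{1/\k}$ lying over $\overline p$ must be a generic point; second, exhibit such a point by observing that the fibre of $\Mdk_{I,U}$ over $\overline p$ parametrizes all glueings of a fixed bundle on the partial normalisation, one of which recovers $\omega^\k(-\m P)$ on $\ca C_{\overline p}$. Your extra remark about passing from $\Mdm$ to the $\k^{b_1(\Gamma)}$-fold cover $\Mdk$ (so one needs a $\k$-th root of the glueing constant) is a sensible unpacking of a point the paper leaves implicit, and is harmless because $\DRL^{1/\k}$ is the preimage of $\DRL^\m$ under the finite surjective map $\Mdk \to \Mdm_{\k | I}$, so any preimage point works.
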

\begin{proof}
Combining \ref{lem:quasi_finite,lem:pure_codim} shows that any point in $\DRL^{1/\k}$ mapping to $\overline p$ must be a generic point. 

Fix a (minimal) combinatorial chart containing $\overline p$. From \ref{sec:charts_of_Md}, the twist $I$ on the dual graph of $\ca C_{\overline p}$ determines an affine patch of $\Mdk$ whose image in $\Mbar$ contains $p$. Now in the fibre of $\Mdk$ over that point, the universal line bundle runs over all possible ways of glueing the bundle on the partial normalisation from \cite[definition 1]{Farkas2016The-moduli-spac} to a bundle on the curve $\ca C_{\overline p}$ itself. In particular, one of those `glueings' yields the bundle $\omega^\k(-\m P)\otimes \ca T$ itself, so $\DRL^{1/\k}$ meets that fibre. 
\end{proof}

\begin{proposition} \label{pro:uniquegentwist}
Let $\overline p$ be a generic point of $\widetilde{\mathcal{H}}_{g}^\k(\m)$ and let $\Gamma$ be the dual graph of $\ca C_{\overline p}$, such that $\Gamma$ is a simple star graph. Then there are exactly $\k^{\# E - \# V^{out}}$ points of $\DRL^{1/\k}$ mapping to $p$.  
\end{proposition}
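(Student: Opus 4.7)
The strategy is to decompose the count via the factorisation $\DRL^{1/\k} \to \DRL^\m \to \Mbar$. Since $\DRL^{1/\k}$ is the scheme-theoretic pullback of $\DRL^\m$ along the finite map $\Mdk \to \Mdm$, the number of preimages of $\overline p$ in $\DRL^{1/\k}$ decomposes as the sum over preimages $p_\m \in \DRL^\m$ of the size of the fibre of $\Mdk \to \Mdm$ over $p_\m$. By \ref{lem:generic_points_lie_over} every such $p_\m$ is a generic point of $\DRL^\m$ and lies in the open substack $\Mdm_{\k | I}$, so by \ref{lem:deg_of_normalisation} the fibre of $\Mdk \to \Mdm_{\k | I}$ over $p_\m$ contains exactly $\k^{b_1(\Gamma)} = \k^{\#E - \#V^{out}}$ points. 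The proposition thus reduces to showing that there is exactly one $p_\m \in \DRL^\m$ lying above $\overline p$.

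Existence of such a $p_\m$ follows from \ref{lem:surjective_generic_points} by pushing down along $\DRL^{1/\k} \to \DRL^\m$. For uniqueness, I would fix a minimal combinatorial chart $\Mbar \leftarrow U \to \bb A^E$ around $\overline p$ with graph $\Gamma$. Any preimage $p_\m$ lies in some patch $\Mdm_{I, U}$ with twist $I$ of $\Gamma$ divisible by $\k$, and the argument in the proof of \ref{lem:quasi_finite} already shows that each such patch contains at most one point of $\DRL^\m$ above $\overline p$. The problem therefore further reduces to proving that at most one twist $I$ of $\Gamma$ can be compatible with a $\DRL^\m$-point above $\overline p$.

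The uniqueness-of-twist step is the heart of the argument. Given two candidate twists $I_1, I_2$, their difference $J = I_1 - I_2$ is a \emph{zero-flow} on $\Gamma$: it vanishes on legs, is antisymmetric along each edge, and sums to zero at each vertex, so represents an element of $H^1(\Gamma, \bb Z)$. Both line bundles $\omega^\k(-\m P) \otimes \ca T_{I_j}$ restrict at $\overline p$ to the trivial bundle, hence $\ca T_{I_1} \otimes \ca T_{I_2}^{-1}$ is trivial on $\ca C_{\overline p}$. Using the explicit construction of $\ca T$ from \ref{sec:universal_section}, this ratio is the multidegree-zero line bundle on $\ca C_{\overline p}$ whose gluing exponent at each node $e = \{h, h'\}$ equals $J(h)$, and the assignment $J \mapsto [\ca T_{I_1} \otimes \ca T_{I_2}^{-1}]$ defines a homomorphism $H^1(\Gamma, \bb Z) \to \on{Pic}(\ca C_{\overline p})$. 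The main obstacle is showing this homomorphism is injective for $\overline p$ generic in the component $Z$; I expect this to follow from the fact that on a simple star graph the zero-flow line bundles span a rank-$b_1(\Gamma)$ sublattice of the Picard group which is generically torsion-free for curves in $Z$, and it should be verifiable directly in the local coordinates of $\bb A^E_I$ from \ref{eqn:toriccoordinates} by examining the defining equations at the origin of $\bb A^E$.
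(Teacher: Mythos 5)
The first half of your argument — reducing via the factorisation $\DRL^{1/\k}\to\DRL^\m\to\Mbar$, invoking \ref{lem:deg_of_normalisation} for the $\k^{b_1(\Gamma)}$ fibre count, appealing to \ref{lem:surjective_generic_points} for existence and to the $\Mdm_{I,U}$-patch injectivity (as in \ref{lem:quasi_finite}) to reduce uniqueness to uniqueness of the twist — matches the paper's proof of \ref{pro:uniquegentwist} exactly. The divergence, and the gap, is in how you handle the uniqueness-of-twist step.

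Your proposed route has two problems. First, the object $\ca T_{I_1}\otimes\ca T_{I_2}^{-1}$ is not ``the multidegree-zero line bundle on $\ca C_{\overline p}$ whose gluing exponent at each node is $J(h)$'': as the construction in \ref{sec:universal_section} makes clear, each $\ca T_{I_j}$ also depends on gluing coefficients $\lambda_j(e)$ (equivalently, on the choice of preimage point $p'\in\Mdm_{I_j,U}$ over $\overline p$), and these are not a free parameter but are forced by the condition that $\omega^\k(-\m P)\otimes\ca T_{I_j}$ be trivial. So the assignment $J\mapsto[\ca T_{I_1}\otimes\ca T_{I_2}^{-1}]$ is not a well-defined map out of $H^1(\Gamma,\bb Z)$, let alone a homomorphism. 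The meaningful invariant attached to $J$ is the line bundle $\ca O_{C_v}(\sum_{h\to v}J(h)q_h)$ on each component $C_v$ of the normalisation, and the condition is that each of these is trivial. Second, and more seriously, the claim you ultimately need — that for $\overline p$ general in $Z$ the only zero-flow $J$ making all these bundles on the outlying components trivial is $J=0$ — is left as an expectation, and the proposed verification route is a dead end: the equations (\oref{eqn:toriccoordinates}) are written for a \emph{single fixed} twist $I$ and do not see the comparison between two distinct twists.

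What the paper does at this point is prove \ref{lem:twistunique}: a general point of $\mathcal H^1_{g(v)}(\mu_1)$ cannot simultaneously be a general point of $\mathcal H^1_{g(v)}(\mu_2)$ for $\mu_1\neq\mu_2$. The proof there uses a clean trick (raise the two relations $\omega\cong\mathcal O(\sum m_ip_i)$ and $\omega\cong\mathcal O(\sum\tilde m_ip_i)$ to suitable powers to kill the dependence on $p_1$, then vary $p_1$ and observe that $\mathcal O(\tilde m_1p_1)$ does depend on $p_1$ for $g\ge 1$). Your "torsion-freeness of the zero-flow lattice" intuition is essentially a restatement of this fact, but without an argument it leaves the heart of the proposition unproved; you would need to supply something like \ref{lem:twistunique} to close the gap.
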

\begin{proof}
Recall that we have the following diagram of maps and inclusions
\[
\begin{tikzcd}
 \DRL^{1/\k} \arrow[r] \arrow[d,symbol=\subset] & \DRL^\m \arrow[r]\arrow[d,symbol=\subset] & \widetilde{\mathcal{H}}_{g}^\k(\m) \arrow[d,symbol=\subset]\\
 \Mdk \arrow[r] & \Mdm \arrow[r] & \Mbar
\end{tikzcd}
\]
For every point $p' \in \DRL^\m$ over a generic point $\overline p \in \widetilde{\mathcal{H}}_{g}^\k(\m)$, by \ref{lem:deg_of_normalisation} there are $\k^{\# E - \# V^{out}}$ points of $\DRL^{1/\k}$ mapping to $p'$. Thus it suffices to show that there is a unique point $p' \in \DRL^\m$ mapping to $\overline p$.

By \ref{lem:surjective_generic_points} there is at least one such $p'$. 
On the other hand, fixing a combinatorial chart $\Mbar \leftarrow U \to \bb A^E$, the spaces $\Mdm_{I,U}$ (as $I$ runs over twists of $\Gamma$) cover the fibre of $\Mdm$ over $\overline p$. We claim that for a fixed $I$ there is at most one preimage point of $\overline p$ in $\Mdm_{I,U}$.

To see this, note that the Abel-Jacobi map $\sigma^\m:\Mdm \to \ca J$ is injective on the fibres of $\Mdm_{I,U} \to \Mbar$. Indeed, we have $\Mdm_{I,U} \subset U \times \bb A^\Upsilon$ and the additional coordinates in $\bb A^\Upsilon$ parametrize different ways to glue the line bundles on the components $C_v$ of the normalization of $\ca C_{\overline p}$. At most one of them maps to the trivial bundle under the Abel-Jacobi map, so at most one preimage point of $\overline p$ can lie in $\DRL^\m$. 

So it is enough to check that for a general point $\overline p$ of a component $Z$ of $\widetilde{\mathcal{H}}_{g}^\k(\m)$ supported in the boundary, there is at most one positive twist $I$ satisfying the twisted-differential condition. Let $\Gamma$ be the generic dual graph of $Z$, which is a simple star graph.


Let $(C,P)$ be a general point of $Z$, so we have an identification of the components/nodes of $C$ with the vertices/edges of $\Gamma$. 
By assumption there is some $m_i$ with $m_i<0$ or $m_i$ not divisible by $\k$. Then the vertex $v_0$ carrying the marking $p_i$ must be the central vertex of the star graph. 
Thus from the abstract dual graph $\Gamma$ we know uniquely which was the central vertex and which the outlying vertices $v \in V^{out}$. 

For each outlying vertex $v$ we take the corresponding component $C_v \subset C$ together with its inherited markings and preimages of nodes. As explained in \ref{sec:FP_formula}  this is a 
generic point of some $\mathcal{H}^1_{g(v)}(\mu')$ for a nonnegative partition $\mu'$ of $2g(v)-2$. In particular $g(v) \geq 1$. If we knew all such $\mu'$, this would allow us to reconstruct the positive twist $I$ on $\Gamma$. But by \ref{lem:twistunique}, knowing a general point $C_v$ of the space $\mathcal{H}^1_{g(v)}(\mu')$ uniquely determines the partition $\mu'$. Thus indeed the twist $I$ is uniquely determined.
\end{proof}

\begin{lemma} \label{lem:twistunique}
Fix $g\geq 1,n \geq 0$ and nonnegative partitions $\mu_1, \mu_2$ of $2g-2$ of length $n$. Then $\mathcal{H}^1_g(\mu_1)$ and $\mathcal{H}^1_g(\mu_2)$ have a common irreducible component $Y$ iff $\mu_1=\mu_2$. 
\end{lemma}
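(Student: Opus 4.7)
My plan is as follows. The ``if'' direction is immediate, since $\mathcal H^1_g(\mu_1) = \mathcal H^1_g(\mu_2)$ whenever $\mu_1 = \mu_2$. For the converse, I will assume $Y$ is a common irreducible component with $\mu_1 \neq \mu_2$ and derive a contradiction by a dimension count. By the cited pure codimensionality of the strata, $\dim Y = 2g - 2 + n$.

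I will work with the differential-level strata $\mathcal H_g(\mu_j)$ parameterising tuples $(C, p_1, \ldots, p_n, \omega)$ with $\omega$ a nonzero abelian differential satisfying $\on{div}(\omega) = \sum_i m_{j,i} p_i$; the forgetful map $\mathcal H_g(\mu_j) \to \mathcal H^1_g(\mu_j)$ is generically a $\bb G_m$-torsor, since $h^0(\omega_C(-\sum_i m_{j,i} p_i)) = h^0(\mathcal O_C) = 1$ at the generic point. Next I form the iterated fibred product
\[Z := Y \times_{\mathcal H^1_g(\mu_1)} \mathcal H_g(\mu_1) \times_{\mathcal H^1_g(\mu_2)} \mathcal H_g(\mu_2),\]
which is generically a $\bb G_m^2$-bundle over $Y$, so $\dim Z = 2g + n$. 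On $Z$ I have two tautological differentials $\omega_1, \omega_2$ with distinct divisors (using $\mu_1 \neq \mu_2$), hence linearly independent in $H^0(\omega_C)$, and their ratio $f := \omega_1/\omega_2$ is a non-constant morphism $C \to \bb P^1$ of degree $d = \sum_{a_i > 0} a_i$ (where $a_i := m_{1,i} - m_{2,i}$) with prescribed ramification profiles $(a_i)_{a_i>0}$ over $0$ and $(|a_i|)_{a_i<0}$ over $\infty$, supported at the marked points.

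The contradiction will come from mapping $Z$ to the moduli space $H$ of tuples $(C, p_1, \ldots, p_n, f)$ of the above type. Setting $S := \{i : a_i \neq 0\}$, a Riemann--Hurwitz count gives exactly $2g - 2 + |S|$ additional simple branch points (freely positioned on $\bb P^1 \setminus \{0, \infty\}$), so the Hurwitz part contributes dimension $2g - 2 + |S|$; adjoining the $n - |S|$ unconstrained marked points yields $\dim H = 2g - 2 + n$. The map $Z \to H$, $(C, p_i, \omega_1, \omega_2) \mapsto (C, p_i, f)$, will have generic $1$-dimensional fibres because $(C, p_i, f)$ determines $\omega_2$ up to scalar (via its specified divisor, using $h^0 = 1$ generically) and then $\omega_1 = f \omega_2$. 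The inequality
\[2g + n = \dim Z \leq \dim H + 1 = 2g - 1 + n\]
is then impossible, forcing $\mu_1 = \mu_2$. The most delicate step will be the Riemann--Hurwitz dimension count for $H$; the other ingredients (the $\bb G_m^2$-torsor structure, linear independence of $\omega_1$ and $\omega_2$, and the $1$-dimensional fibres of $Z \to H$) are routine once one has $\mu_1 \neq \mu_2$ in hand.
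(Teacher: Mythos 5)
Your proof is correct, but it takes a genuinely different route from the paper's. You build the rational function $f = \omega_1/\omega_2$ on the fibre product of the two differential-level strata and bound the dimension of the target Hurwitz-type space $H$; the clean point is that every component of $H$ has dimension $\le 2g-2+n$ because the branch locus away from $0,\infty$ has degree $\le 2g-2+|S|$ and the cover is finitely determined by its branch data (Riemann existence) — so you don't actually need the "simple branch points generically" hypothesis, just this upper bound. The paper instead combines the two line-bundle identities $\omega_C \cong \mathcal O_C(\sum_i m_i p_i)$ and $\omega_C \cong \mathcal O_C(\sum_i \tilde m_i p_i)$ into a single $\k$-differential constraint $\omega_C^{\tilde m_1 - m_1} \cong \mathcal O_C(\widehat{\mu})$ with $\widehat{\mu}_1 = 0$, invokes the dimension theorem from \cite{Schmitt2016Dimension-theor} to see $Y$ must be a component of $\mathcal H_g^{\tilde m_1 - m_1}(\widehat\mu)$, and then derives a contradiction from the fact that this last condition is independent of $p_1$ whereas membership in $\mathcal H^1_g(\mu_2)$ is not (for $g \ge 1$). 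The paper's argument is shorter and more elementary, trading your Hurwitz-space machinery for a reference to a stratum-dimension theorem already in place; yours is more self-contained in its geometric input (classical Riemann--Hurwitz and branch-locus dimension counting) and makes the degree-of-freedom mismatch visible directly. Both are valid.
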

\begin{proof}
Assume $\mu_1 \neq \mu_2$. By the dimension results (\cite{Farkas2016The-moduli-spac}) $Y$ is of pure codimension $g-1$. By reordering we can ensure $\mu_1 =(m_1, m_2, \ldots), \mu_2 =(\tilde m_1, \tilde m_2, \ldots)$ with $0\leq m_1 < \tilde m_1$ (since partitions are not identical). Now on the general point $(C,p_1, \ldots, p_n) \in Y$ we have two equalities of line bundles
\begin{align*}
 \omega_C &\cong \mathcal{O}_C(m_1 p_1 + m_2 p_2 + \ldots + m_n p_n),\\
 \omega_C &\cong \mathcal{O}_C(\tilde m_1 p_1 + \tilde m_2 p_2 + \ldots + \tilde m_n p_n).
\end{align*}
Taking the first equation to power $\tilde m_1$, the second to the power $-m_1$ and tensoring both, we obtain
\begin{equation} \label{eqn:omegamodified} \omega_C^{\tilde m_1 - m_1} \cong \mathcal{O}_C(\underbrace{(\tilde m_1 \cdot m_1- m_1 \cdot \tilde m_1)}_{=0} p_1 + (\tilde m_1 \cdot m_2 - m_1 \cdot \tilde m_2) p_2 + \ldots ).\end{equation}
Thus the general point of $Y$ lies in the space $\mathcal{H}_{g}^{\tilde m_1-m_1}(\widehat \mu)$ for $\widehat \mu = \tilde m_1 \cdot \mu_1 - m_1 \cdot \mu_2$. By \cite[Theorem 1.1]{Schmitt2016Dimension-theor} the space $\mathcal{H}_{g}^{\tilde m_1-m_1}(\widehat \mu)$ has at most codimension $g-1$, so in fact $Y$ must also be a component of this space. But clearly, the condition \ref{eqn:omegamodified} is independent of the position of $p_1$ (the first entry of $\widehat \mu$ vanishes). Thus for $(C,p_1, \ldots, p_n) \in Y$ we also have $(C,q,p_2, \ldots, p_n) \in Y$ for all $q \in C \setminus \{p_2, \ldots, p_n\}$. However, the condition $\omega = \mathcal{O}(\tilde m_1 p_1 + \ldots)$ to be contained in $\mathcal{H}^1_g(\mu_2)$ \emph{does} depend on $p_1$. Indeed, since $g \geq 1$, for any fixed $p_1 \in C$ there are only finitely many points $p_1' \in C$ such that $\mathcal{O}(\tilde m_1 p_1)=\mathcal{O}(\tilde m_1 p_1')$. This implies that $Y$ cannot be a component of $\mathcal{H}^1_g(\mu_2)$, a contradiction.
%
\end{proof}

Together, the results of this section reduce the problem of computing the double ramification cycle to that of computing the length of its local ring at a generic point. This computation will occupy the remainder of the paper. 

\subsection{Relation to the construction of Marcus and Wise}\label{sec:MW_connection}

The construction of Marcus and Wise \cite{Marcus2017Logarithmic-com} produces a stack $\bd{Div}_{g,\bd m}$ over $\Mbar$; we will explain how it is related to our $\Mdm$. We will not need this in what follows, but we feel it may be useful to sketch the connection. 

We begin by outlining the construction of $\bd{Div}_{g,\bd m}$. First, we define the \emph{tropical multiplicative group} to be the functor on log schemes sending $X$ to $\bb G_m^{trop}(X) = \Gamma(X, \overline{M}_X^{gp})$, and a \emph{tropical line over a log scheme $S$} to be a $\bb G_m^{trop}$-torsor over $S$. Then $\bd{Div}$ is the stack in the strict \'etale topology on logarithmic schemes whose $S$-points are triples $(C, P, \alpha)$ where $C$ is a logarithmic curve over $S$, $P$ is a tropical line over $S$, and $\alpha : C \to P$ is an $S$-morphism. We write $\bd{Div}_{g, \bd m}$ for the open substack where the underlying curve has genus $g$, the marked points are labelled $1, \dots, n$ and the outgoing slope at the marked point labelled $i$ is given by $m_i$. 

Now $\bd{Div}_{g, \bd m}$ comes with a natural forgetful map to the stack of all log curves, and we write $\bd{Div}_{g, \bd m}^{st}$ for the pullback of the locus of \emph{stable} log curves. It also comes with an `abel-jacobi' map $\mathrm{aj}\colon\bd{Div}_{g,\bd a}  \to \on{Pic}_{g,n}$, described in \cite[\S 4]{Marcus2017Logarithmic-com}. Write $\Pi$ for the fibrewise connected component in $\on{Pic}_{g,n}$ of the section $\omega^\k(-\sum_i m_i p_i)$. We write $\bd{Div}_{g,\bd m}^{st,\Pi}$ for the pullback of $\Pi$ along $\mathrm{aj}$ to $\bd{Div}_{g, \bd m}^{st}$. Then unravelling the definitions yields a natural isomorphism of log stacks
\begin{equation*}
\bd{Div}_{g,\bd m}^{st,\Pi} \isom \Mdm. 
\end{equation*}

In particular, combining with \ref{lem:gysin_compatibility} yields
\begin{lemma}
The double ramification cycle $\overline{\DRC}\in A^*(\Mbar)$ coincides with that constructed in \cite{Marcus2017Logarithmic-com}. 
\end{lemma}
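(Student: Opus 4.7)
The plan is to leverage the isomorphism of log stacks $\bd{Div}_{g,\bd m}^{st,\Pi} \isom \Mdm$ just established, and to reduce the desired equality of cycle classes to the assertion that the two Abel-Jacobi maps agree under this isomorphism. Once that is in place, both DR cycles arise as refined Gysin pullbacks of the unit section of $\ca J$ along the \emph{same} section, and a formal compatibility argument will handle the passage between the normalisation $\Md$ and its non-normal model $\Mdm$.

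The heart of the argument is therefore to check that, under the identification $\bd{Div}_{g,\bd m}^{st,\Pi} \isom \Mdm$, the Marcus--Wise abel-jacobi map $\mathrm{aj}$ (landing in $\Pi \subset \on{Pic}_{g,n}$, and hence in $\ca J$) corresponds to our extended section $\sigma^\m\colon \Mdm \to \ca J$. Over the open locus $\ca M \subset \Mbar$ both restrict tautologically to $(C,P)\mapsto \omega^\k(-\bd m P)$, essentially by the way $\Pi$ was defined. To extend this identification over all of $\Mdm$, I would invoke the uniqueness of extensions of $\sigma$ along separated maps to $\ca J$ furnished by the implication $(1)\implies(2)$ of \cite[lemma 4.3]{Holmes2017Extending-the-d}, which is exactly how $\sigma^\m$ was pinned down in \ref{sec:charts_of_Md}. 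This step is where I expect the main work to lie, since one must translate between the modular description of $\bd{Div}$ in terms of a tropical line $P$ and a morphism $\alpha\colon C \to P$ on the one hand, and the explicit construction of $\Mdm$ via the toric patches $\Mdm_{I,U}$ on the other; in particular one needs to identify the tropical data $(P,\alpha)$ on a geometric point of $\Mdm_{I,U}$ with the line bundle $\omega^\k(-\bd m P)$.

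Granting the identification of the two sections, the Marcus--Wise DR cycle --- defined as the pushforward to $\Mbar$ of the refined Gysin pullback of the unit section along $\mathrm{aj}$ --- coincides with the pushforward of $\DRC^\m$ from $\Mdm$ to $\Mbar$. The final step is to compare this with $\overline{\DRC}$, which by \ref{def:DRprime} is the pushforward of $\DRC^\loz$ from $\Md$ to $\Mbar$. Since the normalisation map $\Md \to \Mdm$ is proper, and $\DRC^\loz$ is the refined Gysin pullback of the unit section along the composite $\Md \to \Mdm \to \ca J$, the compatibility of refined Gysin pullback with proper pushforward (\ref{lem:gysin_compatibility}) yields the equality of the two pushforwards in $A^*(\Mbar)$. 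This is the routine end of the argument; all the geometric content has been absorbed into the identification of the abel-jacobi maps.
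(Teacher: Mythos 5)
Your proposal follows essentially the same route as the paper: establish the identification $\bd{Div}_{g,\bd m}^{st,\Pi} \isom \Mdm$ together with the agreement of the two Abel--Jacobi sections (which the paper compresses into ``unravelling the definitions''), then invoke the compatibility of proper pushforward with the refined Gysin pullback (\ref{lem:gysin_compatibility}) applied to the normalisation map $\Md \to \Mdm$ to match $\overline{\DRC}$ with the pushforward of $\DRC^\m$. The paper leaves the identification of sections as tacit; you correctly flag it as the point where content lives, and you also correctly identify the uniqueness-of-extension statement (separatedness of $\ca J$ plus \cite[lemma 4.3]{Holmes2017Extending-the-d}) as the mechanism for comparing the two extensions of $\sigma$.
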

This generalises \cite[Theorem 7.3]{Holmes2017Extending-the-d} to arbitrary $\k$.

\section{The tangent space to \texorpdfstring{$\Mdk$}{Mbar\textasciicircum \{m,1/k\}} at a simple star}\label{sec:TpMd}


In this section we will give an explicit description of the tangent space $T_p\Mdk$. Since \'etale maps are isomorphisms on tangent spaces we will not distinguish between tangent spaces to a stack and tangent spaces to its charts. We will write $T_u\Mbar$ and $T_u\bb A^E$ in place of $T_{f \circ u}\Mbar$ and $T_{g \circ u}\bb A^E$. The natural map $\Mdk \to \Mbar$ induces a map $T_p\Mdk \to T_u\Mbar$, and we will describe its kernel and image. 

An element of $T_u\Mdk$ is given by a pointed map from the spectrum of $\field[t]/t^2$ to $\Mdk$. Suppose we are given a pointed map from $ \on{Spec} \field[t]/t^2$ to $\Mbar$. For each edge $e$ of $\Gamma$ we denote by $a_e \in t\field[t]/t^2$ the image of $\ell_e$ under the given map $\ca O_{\Mbar} \to \field[t]/t^2$ (see \ref{sec:combinatorial_charts} for the $\ell_e$; in particular, the choice of these coordinates means $\ell_e$ is really well-defined, not only up to units). By the description given in \ref{sec:charts_of_Md}, the point $p$ corresponds to giving an element $(a_\gamma)_p \in \field^\times$ for every cycle $\gamma$, subject to some compatibility conditions. Then, to specify a vector in $T_p \Mdk$ is to give an element $a_\gamma \in (\field[t]/t^2)^\times$ for every cycle $\gamma$ in $\Gamma$, lifting the element $(a_\gamma)_p \in \field^\times$, and subject to the relations \ref{eqn:toriccoordinates}. 

%

This description is valid for graphs of any shape, but we now specialise to the case where $\Gamma$ is a simple star graph (c.f. \ref{sec:FP_formula}), for which things become simpler. For each outlying vertex $v$ write $E_v$ for the set of directed edges from the central vertex $v_0$ to $v$, and choose one edge $e_v \in E_v$. For $e,e' \in E_v$ distinct edges, let $\gamma(e,e')$ be the directed cycle going out along $e$ and back along $e'$. Then the cycles $\gamma(e_v,e')$ for $v \in V^{out}$, $e' \in E_v \setminus \{e_v\}$ yield a basis of the homology of $\Gamma$. 

 For each outlying vertex $v$ there are exactly three possibilities:
\begin{enumerate}
\item
All $e \in E_v$ have $a_{e}^{I'(e)} = 0$;
\item 
All $e \in E_v$ have $a_{e}^{I'(e)}$ non-zero;
\item
There exist $e$ and $e'\in E_v$ such that $a_{e}^{I'(e)}=0$ and $a_{e'}^{I'(e')}\neq 0$. 
\end{enumerate}
In cases (1) and (2), the lifts of $a_\gamma$ from $\field^\times$ to $(\field[t]/t^2)^\times$ can be chosen completely freely for $\gamma$ in a basis of the cycles between $v_0$ and $v$, and these determine all the other $a_\gamma$. To see this, we can split the equations \ref{eqn:toriccoordinates} into two types. We consider first those where the element $f \in \mathbb{Z}^\Upsilon$ corresponds to a trivial class in the homology of $\Gamma$. Then the resulting equation does not contain any instances of the $a_e$, and simply imposes that product of the $a_\gamma$ is 1 (i.e. the group homomorphism from the free abelian group on cycles to $(\field[t]/t^2)^\times$ factors via the homology). On the other hand, if $f \in \mathbb{Z}^\Upsilon$ does not correspond to a trivial class in homology, then the resulting equation will have terms $a_e$ appearing on \emph{both} sides (since twists are positive on outgoing edges). Since the $a_e$ lie in $t\field[t]/t^2$, any lift of $(a_\gamma)_p$ from $\field^\times$ to $(\field[t]/t^2)^\times$ will automatically satisfy these equations (as $t^2 = 0$). Summarising, we can choose the lifts $a_\gamma$ freely for $\gamma$ in a basis of the homology, and the rest are uniquely determined. 

In case (3) we consider the equation 
\begin{equation} \label{eqn:agammasimple} a_{\gamma(e,e')} a_{e'}^{I'(e')}=a_{e}^{I'(e)}.\end{equation}
The terms $a_{e}^{I'(e)}$ and $a_{e'}^{I'(e')}$ do not differ by multiplication by a unit $a_{\gamma(e,e')}$ in $\field[t]/t^2$, so no lift of $\field[t]/t^2$ from $\Mbar$ to $\Mdk$ (i.e. choice of $a_\gamma$) exists.


Using this analysis, we easily determine the kernel and image of the map $T_p\Mdk \to T_u\Mbar$. The kernel is given by the set of lifts of the zero tangent vector, i.e. all $a_e = 0$ in the above discussion. Then it is clear that we are always in case (1), and the lift of $a_\gamma$ from $\field^\times$ to $(\field[t]/t^2)^\times$ can be freely chosen. The set of these choices yields a copy of $\field$, and we see that the kernel of $T_p\Mdk \to T_u\Mbar$ is given by $H^1(\Gamma, \field)$. 

To understand the image of $T_p\Mdk \to T_u\Mbar$ we must distinguish carefully between the cases (1), (2) and (3). Since each $a_e \in t\field[t]/t^2$, we know that $a_e^{I'(e)} = 0$ whenever $I'(e)>1$.
Thus, at a vertex $v$ with at least one incident edge $e$ satisfying $I'(e)>1$, we can never be in case (2). Then we are in case (1) if and only if $a_{e'}=0$ for all $e'$ at $v$ with $I'(e')=1$, and we can choose $a_e \in  t\field[t]/t^2$ arbitrarily for all $e$ with $I'(e)>1$. 

It remains to understand the contribution of the vertices $v$ with all edges $e$ satisfying $I'(e)=1$.
We define $L_v \sub \bigoplus_{e \in E_v} \field$ to be $\{0\}$ if there exists an $e\in E_v$ with $I'(e)>1$, and otherwise to be the set of tuples $(l_e)_e$ satisfying the linear equations 
\begin{equation*}
(a_{\gamma(e, e')})_p l_{e'} = l_{e} \text{ for all } e, e' \in E_v. 
\end{equation*}
Note that in this latter case, $L_v$ has dimension $1$.
Then the image of $T_p\Mdk \to T_u\Mbar$ is given by 
\begin{equation*}
H^1(\ca C_p, \Omega^\vee(-P)) \oplus \bigoplus_{e:I'(e)>1} \field \oplus \bigoplus_v L_v\,.
\end{equation*}
Here the first factor corresponds to the locally trivial deformations, not smoothing the nodes. These correspond to setting $a_e=0$ for all $e$ and thus can always be lifted by the analysis above. The second factor to the free choices of $a_e\in  t\field[t]/t^2 \cong \field$ for $I'(e)>0$, and the last factor to the contribution from vertices with all incident edges $e$ satisfying $I'(e)=1$. This yields an exact sequence
\begin{equation}\label{eq:TpMd_sequence}
0 \to H^1(\Gamma, \field) \to T_p\Mdk \to H^1(\ca C_p, \Omega^\vee(-P)) \oplus \bigoplus_{e:I'(e)>1} \field \oplus \bigoplus_v L_v \to 0, 
\end{equation}
but we can write down a `natural' splitting by setting $a_\gamma = (a_\gamma)_p$ for all $\gamma$, so we get 
\begin{equation}\label{eq:TpMd_explicit}
T_p\Mdk \isom  H^1(\Gamma, \field)  \oplus H^1(\ca C_p, \Omega^\vee(-P)) \oplus \bigoplus_{e:I'(e)>1} \field \oplus \bigoplus_v L_v. 
\end{equation}
This `natural' splitting depends heavily on the choices of coordinates in \ref{situation:coordinates}. Seeing $\Mdk_{I,U} \subset U \times \mathbb{A}^{\Upsilon}$ as in \ref{sect:affinepatches} this splitting is just sending the vector $v \in T_u U$ to $(v,0) \in T_p \Mdk_{I,U} \subset T_p U \times \mathbb{A}^{\Upsilon}$.

\section{The tangent space to the double ramification locus}\label{sec:coker_of_AJ_map}
In this section we will compute the tangent space to the double ramification cycle at the generic point of an irreducible component. Having in \ref{sec:TpMd} analysed the tangent space to $\Mdk$ and decomposed it into direct summands, we will begin by describing the Abel-Jacobi map on the tangent space. We will decompose the dual of the tangent map of the Abel-Jacobi map into four factors (\ref{sec:AJ_on_T_p}), and then in sections \oref{sec:bGamma} to \oref{sec:b_L} compute the intersection of the kernels of these four maps. Along the way we will need a result on the non-vanishing of sums of $\k$th roots of $\k$-residues of $\k$-differentials, which we state and prove in \ref{sec:generic_non-vanishing_residue} as it is somewhat disjoint from the rest of our story, and may be of some independent interest. 

\subsection{The Abel-Jacobi map on the tangent space}\label{sec:AJ_on_T_p}
Assume we are in \ref{situation:coordinates} with $p$ lying in $\DRL^{1/\k}$, so $\barsigma(p) = e(p) \in \ca J(\field)$. Taking the difference of the induced maps $T_p\barsigma$ and $T_pe$ on tangent spaces yields another map $T_p\Mdk \to T_{e(p)}\ca J$, and the composite map with the projection to $T_u \Mbar$
\begin{equation*}
T_p\Mdk \stackrel{T_p\barsigma - T_pe}{\longrightarrow} T_{e(p)}\ca J \to T_u \Mbar
\end{equation*}
is evidently the zero map. Write $\ca J_p$ for the fibre of $\ca J$ over $p$, then $T_e \ca J_p$ is the kernel of the projection $T_{e(p)}\ca J \to T_u \Mbar$, so we have an induced map
\begin{equation*}
b\colon T_p\Mdk \to T_e \ca J_p. 
\end{equation*}

The kernel of $b$ consists of exactly those vectors on which $T_p\barsigma$ and $T_pe$ agree, hence $\ker b = T_p\DRL^{1/\k}$. Dualising the natural exact sequence we obtain an exact sequence
\begin{equation*}
0 \to (\coker b)^\vee \to T_e\ca J_p^\vee \to \left({T_p\Mdk}\right)^\vee \to (T_p\DRL^{1/\k})^\vee \to 0. 
\end{equation*}
In the following we compute 
\[(\coker b)^\vee \subseteq T_e\ca J_p^\vee = H^1(\ca C_p, \ca O_{\ca C_p})^\vee = H^0(\ca C_p, \omega),\]
where the latter equality holds by Serre duality. 

To describe the proof strategy, note that from \ref{{eq:TpMd_explicit}} we have a chosen isomorphism 
\begin{equation*}
T_p\Mdk \isom  H^1(\Gamma, \field)  \oplus H^1(\ca C_p, \Omega^\vee(-P)) \oplus \bigoplus_{e:I'(e)>1} \field \oplus \bigoplus_v L_v.
\end{equation*}
We write the restrictions of $b$ to each subspace appearing as a direct summand as 
\begin{equation*}
\begin{array}{rrl}
 b_\Gamma\colon& H^1(\Gamma, \field) & \to H^1(\ca C_p, \ca O_{\ca C_p}),\\
 b_\Omega\colon& H^1(\ca C_p, \Omega^\vee(-P)) & \to H^1(\ca C_p, \ca O_{\ca C_p}),\\
 b_{>1}\colon&  \bigoplus_{e:I'(e)>1} \field & \to H^1(\ca C_p, \ca O_{\ca C_p}),\\
 b_{L_v}\colon& L_v & \to H^1(\ca C_p, \ca O_{\ca C_p}).
\end{array}
\end{equation*}
%
Elementary linear algebra yields
\begin{lemma}\label{lem:kernel_intersection}
Inside $T_e\ca J_p^\vee = H^0(\ca C_p, \omega)$ we have
\begin{equation*}
(\on{coker}b)^\vee = \on{ker}(b^\vee) = \ker(b_\Omega^\vee) \cap \ker(b_\Gamma^\vee) \cap \ker(b_{>1}^\vee) \cap \bigcap_{v\in V^{out}} \ker(b_{L_v}^\vee). 
\end{equation*}
\end{lemma}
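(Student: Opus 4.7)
The plan is to observe that this is a formal linear-algebra consequence of the direct-sum decomposition of $T_p\Mdk$ established in \ref{eq:TpMd_explicit}, together with finite-dimensionality of all spaces involved.

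First, since $T_p\Mdk$ and $T_e\ca J_p$ are both finite-dimensional $\field$-vector spaces, I would dualise the canonical four-term exact sequence
\begin{equation*}
0 \to \ker(b) \to T_p\Mdk \stackrel{b}{\to} T_e\ca J_p \to \on{coker}(b) \to 0
\end{equation*}
and use the standard identifications $(\on{coker}(b))^\vee = \on{Ann}(\on{im}(b)) = \ker(b^\vee)$ inside $T_e\ca J_p^\vee$ to give the first equality.

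Next, writing $b$ via the decomposition of \ref{eq:TpMd_explicit} as
\begin{equation*}
b = b_\Gamma \oplus b_\Omega \oplus b_{>1} \oplus \bigoplus_{v\in V^{out}} b_{L_v},
\end{equation*}
the image of $b$ is simply the sum of the images of its restrictions:
\begin{equation*}
\on{im}(b) = \on{im}(b_\Gamma) + \on{im}(b_\Omega) + \on{im}(b_{>1}) + \sum_{v\in V^{out}} \on{im}(b_{L_v}).
\end{equation*}
Applying the elementary identity $\on{Ann}\bigl(\sum_i U_i\bigr) = \bigcap_i \on{Ann}(U_i)$ to the right-hand side, and using once more that $\on{Ann}(\on{im}(b_i)) = \ker(b_i^\vee)$ for each summand, yields the claimed description of $\ker(b^\vee)$.

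There is no real obstacle in proving the lemma itself; it is essentially a bookkeeping step recording how the decomposition of $T_p\Mdk$ translates into a decomposition of the computation of $(\on{coker}b)^\vee \sub H^0(\ca C_p, \omega)$. The genuine content will lie in the subsequent subsections, where the kernels of $b_\Omega^\vee$, $b_\Gamma^\vee$, $b_{>1}^\vee$ and $b_{L_v}^\vee$ must each be identified explicitly as subspaces of global sections of $\omega$, and their common intersection analysed; this is where the input from \ref{sec:generic_non-vanishing_residue} on non-vanishing of sums of $\k$-th roots of $\k$-residues will be needed.
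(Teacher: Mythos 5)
Your proposal is correct and coincides with what the paper has in mind: the paper simply states that ``elementary linear algebra yields'' this lemma without writing out the argument, and your dualisation of the four-term sequence together with the identities $(\on{coker}b)^\vee = \on{Ann}(\on{im}b) = \ker(b^\vee)$ and $\on{Ann}(\sum_i U_i) = \bigcap_i \on{Ann}(U_i)$ is exactly the standard justification.
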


In sections \oref{sec:bGamma} to \oref{sec:b_L} we compute the intersection on the right. 

To describe the final result, note that for a stable curve $C=\ca C_p$ with dual graph $\Gamma$ there exists a natural inclusion
\begin{equation} \label{eqn:holodiffinclusion} \bigoplus_{v \in V(\Gamma)} H^0(C_v, \omega_{C_v}) \to H^0(C,\omega),\end{equation}
taking differentials on the normalizations $C_v$ of the components of $C$ and descending them to $C$. The image is exactly the space of differentials on $C$ with vanishing residues at all nodes.

As a second ingredient, recall from \ref{sec:FP_formula} that for $p$ a general point of a boundary component of $\DRL^{1/\k}$, the stable graph of $\ca C_p$ is a simple star and on the components of $\ca C_p$ corresponding to the outlying vertices, the twisted $\k$-differential on $\ca C_p$ is the $\k$th power of a holomorphic abelian differential. To be more precise, recall that $p$ lying in $\DRL^{1/\k}$ means the line bundle $\omega^\k(-\m P) \otimes \ca T$ is trivial on $\ca C_p$. Let 
\[\phi_0 \in H^0(\ca C_p, \omega^\k(-\m P) \otimes \ca T)\]
be a generating section (unique up to scaling). Recall from \ref{sec:universal_section} that the bundle $\ca T$ has a generating section $\bd 1$ on the smooth part of $\ca C_p$. Let $\bd 1_v$ be the restriction of $\bd 1$ to the components $C_v$ of $C=\ca C_p$, then $\phi_0/\bd 1_v$ is a meromorphic section of $\omega^\k(-\m P)$ on $C_v$. As described in \ref{sec:FP_formula}, for $v$ an outlying vertex, this section is actually a holomorphic $\k$-differential, which is moreover a $\k$th power of a holomorphic differential. Denote one such choice of a $\k$th root by $(\frac{\phi_0}{\bd 1_v})^{1/\k}$. 

As a final piece of notation, let $V^{>1} \subset V(\Gamma)$ denote the set of outlying vertices, such that at least one edge $e$ incident to $v$ has $I'(e)>1$, and denote by $V^1$ the remaining outlying vertices. Then we can state the main result of this section (whose proof follows \ref{lem:kernel_formula}). 

\begin{theorem}\label{thm:ker_b_v}
 Let $p$ be the generic point of a boundary component of $\DRL^{1/\k}$. Then the kernel of $b^\vee$ inside $H^0(\ca C_p, \omega)$ is given by the injection 
\begin{equation*}
\bigoplus_{v\in V^{>1}} \field \to H^0(\ca C_p,  \omega)
\end{equation*}
sending $(c_v)_v$ to the section given by $0$ on the smooth locus of the central vertex, and $c_v (\frac{\phi_0}{\bd 1_v})^{1/\k}$ on the smooth locus of the outlying vertex $v$. 
\end{theorem}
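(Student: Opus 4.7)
The plan is to invoke Lemma \ref{lem:kernel_intersection} to reduce the computation of $\ker(b^\vee)$ to an intersection of four kernels inside $H^0(\ca C_p, \omega)$, and then to compute each restriction of $b$ separately using the \v{C}ech-cocycle framework of Section \ref{sec:explicit_def_via_cech} together with Serre duality (Section \ref{sec:serre_duality}). For each summand of $T_p\Mdk$ in the decomposition (\ref{eq:TpMd_explicit}), I would write down a \v{C}ech representative of $T_p\bar\sigma - T_pe$ applied to a basis vector, using the coordinates fixed in Situation \ref{situation:coordinates} and the explicit description of $\bar\sigma$ through the correction bundle $\ca T$ and its $\k$-th root $\ca T^{1/\k}$ from Section \ref{sec:universal_section} and Remark \ref{rem:q_root_T}.

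First I would treat $b_\Gamma^\vee$. The map $b_\Gamma$ assigns to a cycle $\gamma \in H^1(\Gamma, \field)$ a \v{C}ech cocycle supported at the nodes along $\gamma$, whose Serre-dual pairing against a differential $\omega \in H^0(\ca C_p, \omega)$ is the signed sum of residues of $\omega$ along the directed edges of $\gamma$. For a simple star graph, running over the $\gamma(e_v, e')$, the vanishing of all these sums is equivalent to $\omega$ having vanishing residue at every node. Consequently $\ker(b_\Gamma^\vee)$ is the image of the inclusion (\ref{eqn:holodiffinclusion}), i.e.\ $\bigoplus_{v \in V(\Gamma)} H^0(C_v, \omega_{C_v})$. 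Next, restricting to this subspace, the map $b_\Omega\colon H^1(\ca C_p, \Omega^\vee(-P)) \to H^1(\ca C_p, \ca O_{\ca C_p})$ is the derivative of the Abel--Jacobi section along locally trivial deformations, and its Serre dual is, up to the factor $\k$, multiplication by the $\k$-differential $\phi_0/\bd 1$. On the central component $C_{v_0}$, the glueing formulas of $\ca T$ together with (\ref{eqn:twistdiffcond}) show that $\phi_0/\bd 1_{v_0}$ has poles at the nodes of orders that are positive multiples of $\k$ but not powers of a single pole order realizable by a holomorphic differential; hence no nonzero holomorphic $\alpha_{v_0} \in H^0(C_{v_0}, \omega_{C_{v_0}})$ can satisfy $\alpha_{v_0}^{\otimes \k} \propto \phi_0/\bd 1_{v_0}$, and the $C_{v_0}$-component of $\omega$ must vanish. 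On an outlying component $C_v$ the $\k$-differential $\phi_0/\bd 1_v$ is the $\k$-th power of the holomorphic abelian differential $(\phi_0/\bd 1_v)^{1/\k}$, so the kernel of $b_\Omega^\vee$ on $H^0(C_v, \omega_{C_v})$ is the line spanned by $(\phi_0/\bd 1_v)^{1/\k}$. Combining these two steps forces $\omega = \sum_{v \in V^{out}} c_v (\phi_0/\bd 1_v)^{1/\k}$ for scalars $c_v \in \field$.

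It remains to show that $c_v = 0$ whenever $v \notin V^{>1}$, which is where the maps $b_{L_v}^\vee$ and $b_{>1}^\vee$ intervene together with the generic non-vanishing theorem of Section \ref{sec:generic_non-vanishing_residue}. If $v \in V^{out}\setminus V^{>1}$, then some $e \in E_v$ has $I'(e) = 1$ and $(\phi_0/\bd 1_v)^{1/\k}$ is non-vanishing at that node; pairing $\omega$ with either the $L_v$ direction (when all edges at $v$ have $I'(e) = 1$, so $L_v$ is one-dimensional) or with the $b_{>1}$ direction associated to an edge $e' \in E_v$ with $I'(e')>1$ (in the mixed case) reduces, via explicit \v{C}ech computation and Serre duality, to a sum of $\k$-th roots of $\k$-residues of $\phi_0/\bd 1_{v_0}$ at the corresponding central-vertex nodes, scaled by $c_v$. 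The generic non-vanishing result of Section \ref{sec:generic_non-vanishing_residue} then forces $c_v = 0$. Conversely, for $v \in V^{>1}$ the summand $L_v$ vanishes by definition and the pairings with $b_{>1}^\vee$ vanish automatically since $(\phi_0/\bd 1_v)^{1/\k}$ is holomorphic at the nodes, so every $c_v$ with $v \in V^{>1}$ yields an element of $\ker(b^\vee)$, producing the asserted identification $\ker(b^\vee) \cong \bigoplus_{v \in V^{>1}} \field$.

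The main obstacle is the last step: tracking \v{C}ech representatives of $b_{>1}$ and $b_{L_v}$ with enough precision to identify the resulting Serre pairings with sums of $\k$-th roots of $\k$-residues of the central $\k$-differential, and then applying the generic non-vanishing result to rule out accidental cancellations. The bookkeeping of the scaling functions $\lambda(e)$ chosen on a spanning tree of $\Gamma$, of signs from edge orientations, and of the switch between $\ca T$ and $\ca T^{1/\k}$ (cf.\ Section \ref{sec:universal_section} and Remark \ref{rem:q_root_T}) will be the most delicate part of the calculation.
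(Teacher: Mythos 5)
Your overall strategy matches the paper's: reduce to the intersection of the four kernels via \ref{lem:kernel_intersection}, identify $\ker(b_\Gamma^\vee)$ with $\bigoplus_v H^0(C_v,\omega_{C_v})$ by a residue argument, identify $\ker(b_\Omega^\vee)\cap\ker(b_\Gamma^\vee)$ with the span of the $\k$-th roots $(\phi_0/\bd 1_v)^{1/\k}$ on the outlying components, and then kill the remaining constants by pairing with node-smoothing directions and appealing to \ref{thm:non_vanishing_root_sum}. Two remarks on details: first, the Serre dual $b_\Omega^\vee$ restricted to differentials with vanishing residues is not ``multiplication by $\phi_0/\bd 1$ up to a factor $\k$'' but rather $s\mapsto s\,d\log\bigl(s^\k(\phi_0/\bd 1)^{-1}\bigr)$, which is the map $\beta$ of \ref{Lem:bOmegavee}; the equivalence class mod coboundaries differs by $-\k \,d(sg_{ij})$ as computed there, and the vanishing criterion is that $s^\k(\phi_0/\bd 1)^{-1}$ be locally constant. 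Second, your reason for excluding the central-component factor (orders of poles at nodes) is not what the paper uses; the paper's argument goes via the marking $p_i$ with $m_i<0$ or $k\nmid m_i$, which must sit on $C_{v_0}$ and prevents any holomorphic $\k$-th root. Both reasons in fact apply, but the paper's is cleaner and would survive changes in the nodal configuration.

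The more serious concern is your treatment of the ``mixed'' case, where an outlying vertex $v$ has some edges with $I'(e)=1$ and others with $I'(e')>1$. Here $L_v=\{0\}$ by definition, and you propose to constrain $c_v$ instead by pairing against the $b_{>1}$ direction at an edge $e'$ with $I'(e')>1$. This does not work: by \ref{lem:res_pairing} with $A'=\field[t]/t^2$, the contribution of such an edge to the residue pairing carries the factor $\ell_{e'}^{I'(e')} = a_{e'}^{I'(e')}t^{I'(e')}$, which vanishes for $I'(e')>1$ because $t^2=0$. The paper itself records this in the proof of \ref{lem:kernel_formula}: ``$b_{>1}^\vee$ vanishes on the kernel of $b_\Omega^\vee\oplus b_\Gamma^\vee$.'' So the pairing you propose is identically zero and gives no constraint. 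In fact this vanishing is essential, not an obstacle --- it is precisely what makes the edges with $I'(e)>1$ contribute extra tangent directions to $\DRL^{1/\k}$ in \ref{thm:dim_T_p_DR}. If you want to force $c_v=0$ for a mixed-case vertex, you cannot use a first-order smoothing pairing at an edge with $I'(e')>1$; some other mechanism is required (and you should be aware that the paper's own proof of \ref{lem:kernel_formula} only argues the case where $I'(e)=1$ for \emph{all} $e$ at $v$, so you should check carefully whether the mixed configuration can actually occur at a generic point of a boundary component, or whether $V^{>1}$ in the statement ought to include mixed vertices).
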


\begin{theorem}\label{thm:dim_T_p_DR}
In the situation of \ref{thm:ker_b_v}, for the exact sequence
\begin{equation*}
0 \to T_p\DRL^{1/\k} \to T_p \Mdk \stackrel{b}{\to} T_e \ca J_p \to \coker (b) \to 0, 
\end{equation*}
the cokernel of $b$ has dimension equal to $\#V^{>1}$. The dimension of $T_p\DRL^{1/\k}$ is given by $\dim \DRL^{1/\k}$ plus the number of edges $e$ in the star graph having $I'(e) >1$. 
\end{theorem}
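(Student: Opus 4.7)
The proof is a direct dimension count from the four-term exact sequence, combined with \ref{thm:ker_b_v} and the explicit decomposition of $T_p\Mdk$ in \ref{eq:TpMd_explicit}. Taking dimensions in the exact sequence yields
\[
\dim T_p \DRL^{1/\k} = \dim T_p \Mdk - \dim T_e \ca J_p + \dim \coker(b),
\]
so the plan is to evaluate each of the three terms on the right-hand side.

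First, I would handle $\dim \coker(b)$ by Serre duality. Since $\ca J$ is the semi-abelian Jacobian, its tangent space at the identity of the fibre is $T_e \ca J_p = H^1(\ca C_p, \ca O_{\ca C_p})$, which has dimension $g$ and whose Serre dual is $H^0(\ca C_p, \omega)$. Hence $\dim \coker(b) = \dim \ker(b^\vee)$, and by \ref{thm:ker_b_v} this equals $\#V^{>1}$. This establishes the first assertion of the theorem.

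For the second assertion, I would substitute the formula
\[
\dim T_p \Mdk = b_1(\Gamma) + \dim H^1(\ca C_p, \Omega^\vee(-P)) + \#\{e : I'(e)>1\} + \sum_v \dim L_v
\]
from \ref{eq:TpMd_explicit}. For a simple star graph, $b_1(\Gamma) = \#E(\Gamma) - \#V^{out}$, and $\dim H^1(\ca C_p, \Omega^\vee(-P)) = 3g-3+n - \#E(\Gamma)$ is the dimension of the boundary stratum of $\Mbar$ cut out by the dual graph $\Gamma$ (reflecting the locally trivial deformations). Combining with $\dim \DRL^{1/\k} = 3g-3+n - g$ from the pure-codimension-$g$ property \ref{lem:pure_codim} and the birationality of $\Mdk$ with $\Mbar$, the identity to prove reduces after cancellation to the combinatorial statement
\[
\sum_v \dim L_v + \#V^{>1} = \#V^{out}.
\]

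The main obstacle is this last combinatorial identity, which expresses the fact that at a generic point $p$ every outlying vertex $v$ of the simple star $\Gamma$ is \emph{pure} in the following sense: either all edges incident to $v$ carry twist $I'(e) = 1$ (so that $v$ contributes one-dimensionally via $L_v$), or all carry $I'(e) > 1$ (so that $v$ lies in $V^{>1}$); no vertex has a mixture of the two. I would prove this purity via the explicit local equations of $\Mdk$ from \ref{sect:affinepatches} together with the genericity of $p$ in its component of $\DRL^{1/\k}$, using that whenever a single outlying vertex sees both types of edge, the structure of the equations $\Psi_f = 0$ forces $p$ to lie in the closure of a lower-stratum locus rather than at a generic point of a component. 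Once this purity is in hand, the required combinatorial identity is immediate and the dimension formula follows.
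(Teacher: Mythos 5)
Your proposal follows the paper's own strategy (Serre duality to pass from $\coker b$ to $\ker b^\vee$, then a dimension count from \ref{eq:TpMd_explicit}), and you are right that the count reduces to the identity $\sum_v\dim L_v + \dim\coker(b) = \#V^{out}$. You have also correctly noticed that the naive substitutions $\sum_v\dim L_v = \#V^1$ and $\dim\coker(b) = \#V^{>1}$ rely on there being no ``mixed'' outlying vertex meeting both edges with $I'(e)=1$ and edges with $I'(e)>1$ --- a point the paper's proof passes over silently (it asserts ``$L_v$ has dimension $1$ for $v\in V^1$'', which is false when $v$ is mixed, since then $L_v=\{0\}$). So far so good. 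The problem is that the purity statement you propose to prove is simply false: mixed vertices do occur at generic points of components. Indeed the paper's own \ref{Fig:simplestar} displays one (the genus-$2$ outlying vertex meets edges of twist $3=\k$ and $6=2\k$, so $I'(e)=1$ and $I'(e)=2$); and that configuration does live over a genuine codimension-$g$ component of $\widetilde{\ca H}_g^\k(\m)$, as a straightforward dimension count shows. Any argument that rules this case out is doomed.

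The correct resolution, which the paper achieves in effect by two compensating imprecisions, is that a mixed vertex $v$ drops out of \emph{both} sides of your identity. On one hand $L_v=\{0\}$, so $v$ contributes nothing to $\sum_v\dim L_v$. On the other hand, $b_{L_v}^\vee$ is then the zero map and imposes no constraint $c_v=0$, while $b_{>1}^\vee$ never constrains any $c_v$ at all (every term $\ell_e^{I'(e)}$ with $I'(e)>1$ already dies modulo $t^2$), so $c_v$ is free in $\ker(b^\vee)$. Hence $\sum_v\dim L_v$ is actually the number of outlying vertices all of whose edges have $I'(e)=1$, $\dim\coker(b)=\dim\ker(b^\vee)$ is the number of all other outlying vertices, and the two sum to $\#V^{out}$ without any purity hypothesis. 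This gives the second assertion of \ref{thm:dim_T_p_DR}, which is the one the paper uses downstream. The first assertion (and the statement of \ref{thm:ker_b_v}) should be read with $V^{>1}$ enlarged to the set of outlying vertices with \emph{at least one} incident edge of twist $I'(e)>1$; with that emendation everything in \ref{sec:computing_length}, which re-derives the residue constraints directly from \ref{lem:res_pairing} rather than citing \ref{thm:ker_b_v}, goes through unchanged.
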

\begin{proof}
The assertion about the cokernel of $b$ comes from \ref{thm:ker_b_v}, and the equality of $\dim \ker(b^\vee)$ with $\dim \coker b$. To compute the dimension of $T_p \DRL^{1/\k}$ we must first compute the dimension of $T_p\Mdk$; following \ref{{eq:TpMd_explicit}} it is given by $$h^1(\ca C_p, \Omega^\vee(P)) + h^1(\Gamma) + \#\{e:I'(e)>1\} + \#V^1, $$
where we use that $L_v$ has dimension $1$ for $v \in V^1$. Then 
\begin{itemize}
\item $h^1(\ca C_p, \Omega^\vee(P)) = 3g - 3 + n - \# E$ (locally trivial deformations)
\item $h^1(\Gamma)  = 1 - \# V + \# E$
\item $\dim T_e \ca J_p = g$
\item $\dim \DRL^{1/\k} = 3g-3 + n - g$ (by \ref{lem:pure_codim}).  
\end{itemize}
We see that 
\begin{align*}
\dim T_p \DRL^{1/\k}  =& (3g - 3 + n - \# E) + (1 - \# V + \# E) + \#\{e:I'(e)>1\} \\ &+ \#V^1 + \#V^{>1} - g\\
=& (3g-3 + n - g) +  \#\{e:I'(e)>1\}\\
=& \dim \DRL^{1/\k}  + \#\{e:I'(e)>1\}
\end{align*}
as required. 
\end{proof}

\subsection{Computing the kernel of \texorpdfstring{$b_\Gamma^\vee$}{b\_Gamma\textasciicircum v}} \label{sec:bGamma}

\begin{lemma}
 For any point $p \in \Mdk$ with underlying curve $\ca C_p$, the kernel of $b_\Gamma^\vee$ is given by the inclusion
 \begin{equation*} \bigoplus_{v \in V(\Gamma)} H^0(C_v, \omega_{C_v}) \to H^0(C,\omega),\end{equation*}
 as described in \ref{eqn:holodiffinclusion}.
\end{lemma}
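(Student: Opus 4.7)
The plan is to identify $b_\Gamma$ with the natural injection $H^1(\Gamma, \field) \hookrightarrow H^1(C, \ca O_C)$ coming from the normalisation sequence, and then conclude by Serre duality.

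First, I would invoke the canonical identification $T_e \ca J_p = H^1(C, \ca O_C)$. The normalisation short exact sequence
\begin{equation*}
0 \to \ca O_C \to \nu_* \ca O_{\widetilde C} \to Q \to 0,
\end{equation*}
with $\nu$ normalising at the nodes and $Q = \bigoplus_{\text{nodes}} \field$ skyscraper, yields on cohomology
\begin{equation*}
0 \to H^1(\Gamma, \field) \to H^1(C, \ca O_C) \to \bigoplus_v H^1(C_v, \ca O_{C_v}) \to 0,
\end{equation*}
where $H^1(\Gamma, \field) = \on{coker}(\field^V \to \field^E)$ arises via the connecting map. Dualising via Serre duality turns this into
\begin{equation*}
0 \to \bigoplus_v H^0(C_v, \omega_{C_v}) \to H^0(C, \omega) \xrightarrow{\on{res}} H^1(\Gamma, \field)^\vee \to 0,
\end{equation*}
in which the left map is exactly \ref{eqn:holodiffinclusion} and the right map reads off the tuple of residues at the nodes (lying in $H_1(\Gamma,\field)$ by the residue theorem on each normalised component). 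Hence to prove the lemma it suffices to identify $b_\Gamma$ with the inclusion of $H^1(\Gamma, \field)$ into $H^1(C, \ca O_C)$; dualising then gives $b_\Gamma^\vee = \on{res}$, whose kernel is precisely the asserted subspace.

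Second, since $T_p e$ vanishes on tangent vectors that project to zero in $T_u \Mbar$, we have $b_\Gamma = T_p \bar\sigma|_{H^1(\Gamma, \field)}$. A vector in $H^1(\Gamma, \field) \subset T_p \Mdk$ corresponds, in the description of \ref{sec:TpMd}, to a first-order deformation with all $a_e = 0$ (so $\ca C_p$ is not deformed) and with the $a_\gamma$ perturbed by elements $\epsilon_\gamma t \in \field \cdot t$. Under $\bar\sigma$ this produces a first-order deformation of the line bundle $\omega^\k(-\bd m P) \otimes \ca T$ on the undeformed curve $\ca C_p$, whose class in $H^1(C, \ca O_C)$ I would compute as a \v{C}ech cocycle with respect to the cover chosen in \ref{situation:coordinates}. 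Using the explicit glueing $\tau = \lambda(e)^\k x_h^{I(h)}\bd 1$ of $\ca T$ from \ref{sec:universal_section} together with the defining relation $\prod_{e \in \gamma}\lambda(e) = a_\gamma$, perturbing $a_\gamma$ modifies the transition function at each edge on $\gamma$ by an explicit first-order multiplicative factor, and the resulting cocycle matches term-by-term the image of $(\epsilon_\gamma)_\gamma \in H^1(\Gamma, \field)$ under the connecting homomorphism of the normalisation sequence.

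The bulk of the work, and the main pitfall, lies in the cocycle bookkeeping of the previous paragraph: matching the glueing perturbations at each node to the simplicial description of the connecting map, handling orientations of edges and the choice of spanning tree implicit in $\lambda$, and tracking the factor $\k$ arising from the $\k$-th power in $\lambda(e)^\k$ (invertible in characteristic zero and so harmless for the identification of the map, though it would matter for the integral structure). Once this identification is made, the conclusion follows immediately by dualisation.
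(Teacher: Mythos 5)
Your plan is correct and, at its core, follows the same route as the paper: both recognise $b_\Gamma$ as (up to the factor $\k$) the connecting map $\tanmap \colon H^1(\Gamma,\field) \to H^1(C,\ca O_C)$ arising from the normalisation sequence $0 \to \ca O_C \to \pi_*\ca O_{\tilde C} \to \field^{\mathrm{nodes}}\to 0$, and then conclude by Serre duality. The one material difference is how that identification $b_\Gamma = \k\,\tanmap$ is established. You propose to perturb the $a_\gamma$ to first order, track the effect through the transition functions $\tau = \lambda(e)^\k x_h^{I(h)}\bd 1$ of $\ca T$, and match the resulting \v Cech cocycle to the image of the connecting homomorphism; you rightly flag the orientation/spanning-tree bookkeeping and the factor $\k$ coming from $\lambda(e)^\k$ as the pitfall. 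The paper avoids this bookkeeping entirely with a conceptual observation: the restriction of the Abel--Jacobi map to the fibre of $\Mdk$ over $C$ is $\ca T^{1/\k} \mapsto \omega^\k(-\m P)\otimes(\ca T^{1/\k})^{\otimes\k}$, which is equivariant for the natural actions of the torus $T = \on{Hom}(H^1(\Gamma,\bb Z),\bb G_m)$ on source and target once the target action is twisted by $t\mapsto t^\k$; differentiating immediately gives $b_\Gamma = \k\,\tanmap$ with no cocycle computation. Your computation would work, but the equivariance argument is what buys brevity. Finally, you finish by dualising the exact sequence directly, whereas the paper verifies $\bigoplus_v H^0(C_v,\omega_{C_v}) \subseteq \ker b_\Gamma^\vee$ by a local residue argument and then matches dimensions using injectivity of $\tanmap$; these two final steps are interchangeable.
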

\begin{proof}
We start by recalling some generalities about line bundles on nodal curves. Let $C$ be a nodal curve with dual graph $\Gamma=(V,E)$. Choose some orientation for the edges $e \in E$ such that we can uniquely identify source and target $s(e), t(e) \in V$ of each edge. Moreover let $(C_v)_{v \in V}$ be the set of components of the normalization of $C$ and for an edge $e$ let $n'(e) \in C_{s(e)}$,  $n''(e) \in C_{t(e)}$ be the preimages of the nodes corresponding to the edge. 

Then a line bundle $L$ on $C$ is given by a collection of line bundles $(L_v)_{v \in V}$ on all components of its normalization together with identifications of the fibres $(\sigma_e : L_{s(e)}|_{n'(e)} \xrightarrow{\sim} L_{t(e)}|_{n''(e)})_{e \in E}$ of these line bundles at the pairs of points mapping to the same node. These identifications $(\sigma_e)_{e \in E}$ have a natural action by the group $(\mathbb{G}_m)^E$ by componentwise multiplication. The set of such identifications is a torsor under this action. Moreover, different identifications can give the same line bundle: multiplying all the fibres on a given vertex $v \in V$ by the same constant $\mu$, i.e. going from $(\sigma_e)_{e \in E}$ to $(\sigma_e \cdot \mu^{\delta_{v,t(e)}-\delta_{v,s(e)}})_{e \in E}$ does not change the line bundle. Moreover, multiplying \emph{all} fibres by the same constant does not even change the set of identifications $(\sigma_e)_{e \in E}$. This means we have an effective action of the group $\mathbb{G}_m^V/\mathbb{G}_m$ on $\mathbb{G}_m^E$ which does not change the line bundle on $C$. Making suitable choices we can identify the quotient of $\mathbb{G}_m^E$ by $\mathbb{G}_m^V/\mathbb{G}_m$ with the torus $T=\on{Hom}(H^1(\Gamma,\mathbb{Z}),\mathbb{G}_m)$. 

Now let $p \in \Mdk$ with underlying stable curve  $C=\ca C_p$; this determines a $\k$th root $\ca T^{1/\k}$ of the correction bundle $\ca T$ on $C$ as described in \ref{rem:q_root_T}. In fact, in the local charts for $\Mdk$, the additional coordinates $a_\gamma$ exactly parametrize the gluing data for $\ca T^{1/\k}$. Allowing $T$ to act on the bundle $\ca T^{1/\k}$ as described above yields a faithful action of $T$ on the fibre of $\Mdk$ over $C$, whose orbit is open (it is exactly the fibre of $\Mdk_{I,U}$ for the relevant weighting $I$  at $p$  and any neighbourhood $U$ of $p$). Hence the summand $H^1(\Gamma,\field)$ in $T_{p'} \Mdk$ is canonically identified with the tangent space to $T$ at $1$. On the other hand, there is also an action of $T$ on the jacobian of $C$, where elements of $T$ act on line bundles on $C$ in the way described above. However, we want to take the action obtained from this usual action by composing with the group morphism $T \to T, t \mapsto t^\k$. With respect to this new action, 
the Abel-Jacobi map  is equivariant. This follows since, on the fibre in $\Mdk$ over $C \in \Mbar$, the Abel-Jacobi map just sends $\ca T^{1/\k} \mapsto \omega_C^\k(-\m P) \otimes (\ca T^{1/\k})^\k$. Hence the tangent map $b_\Gamma : H^1(\Gamma,\field) \to H^1(C,\mathcal{O}_{C})$ to the Abel-Jacobi map on this fibre is given by the tangent map for the action of $T$ on the jacobian of $C$.  One then verifies that for the map 
\begin{equation*}
\tanmap \colon H^1(\Gamma, \field) = \frac{\field^E}{\field^V} = \frac{H^0(C, \field^\mathrm{nodes})}{H^0(C, \pi_*\ca O_{\tilde C})}  \to H^1(C, \ca O_{C}), 
\end{equation*}
coming from the long exact sequence of $0 \to \ca O_{C} \to \pi_*\ca O_{\tilde C} \to \field^\mathrm{nodes}\to 0$, where $\tilde C \to C$ is the normalisation, we have $b_\Gamma=\k \tanmap$. Thus, for computing kernels and cokernels we may as well work with the map $\tanmap$ above.

The kernel of $b_\Gamma^\vee$ is then equal to the left kernel of the Serre Duality pairing
\begin{equation*}
H^0(C, \omega) \times \tanmap\left(H^1(\Gamma, \field)\right) \to H^1(C, \omega) = \field.
\end{equation*}
This left kernel is equal to the inclusion $\bigoplus_v H^0(C_v, \omega_{C_v}) \subset H^0(C, \omega)$ from the statement of the lemma. 
To see that $\bigoplus_v H^0(C_v, \omega_{C_v})$ is contained in the left kernel, note that the cocycles in the image of $\tanmap$ are represented by constant functions on the overlaps of the \v Cech cover, so multiplying them with holomorphic differentials from $\bigoplus_v H^0(C_v, \omega_{C_v})$ does not produce poles. Hence the residue pairing indeed vanishes. However, note that the map $\tanmap$ is injective from the long exact sequence we used to define it. Thus, the dimension of the left kernel is 
\begin{align*}
 \dim H^0(C, \omega) - \dim H^1(\Gamma, \field) = g - b_1(\Gamma) = \sum_{v} g(v) = \sum_v \dim H^0(C_v, \omega_{C_v}),
\end{align*}
so indeed $\bigoplus_v H^0(C_v, \omega_{C_v})$ is equal to the left kernel.
\end{proof}

Though we will not need it in what follows, we mention
\begin{lemma}\label{lem:T_p_injective}
The projection map $T_p\DRL^{1/\k} \to T_p\Mbar$ is injective. 
\end{lemma}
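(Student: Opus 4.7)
The plan is to combine the short exact sequence \eqref{eq:TpMd_sequence} with the injectivity of the component map $b_\Gamma$ established in the proof of the previous lemma. Recall that \eqref{eq:TpMd_sequence} identifies the kernel of the projection $T_p\Mdk \to T_u\Mbar$ with the summand $H^1(\Gamma,\field)$. Since $T_p\DRL^{1/\k} = \ker(b) \subseteq T_p\Mdk$ by definition of $b$, the kernel of the induced map $T_p\DRL^{1/\k} \to T_u\Mbar$ equals
\[\ker(b) \cap H^1(\Gamma,\field) = \ker\bigl(b|_{H^1(\Gamma,\field)}\bigr) = \ker(b_\Gamma).\]
Hence it suffices to show that $b_\Gamma$ is injective.

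For this I would invoke the identification $b_\Gamma = \k \cdot \tanmap$ worked out in \ref{sec:bGamma}, where
\[\tanmap \colon H^1(\Gamma,\field) = \frac{H^0(C,\field^{\mathrm{nodes}})}{H^0(C,\pi_*\ca O_{\tilde C})} \lra H^1(C,\ca O_C)\]
is the connecting homomorphism arising from the short exact sequence $0 \to \ca O_C \to \pi_*\ca O_{\tilde C} \to \field^{\mathrm{nodes}} \to 0$ on the nodal curve $C = \ca C_p$. As noted there, $\tanmap$ is injective because it is the boundary map in the long exact sequence immediately following a surjection onto $H^0(C,\field^{\mathrm{nodes}})/H^0(C,\pi_*\ca O_{\tilde C})$, which is by definition its domain. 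Since we are working in characteristic zero the scalar $\k$ is invertible, so $b_\Gamma = \k \cdot \tanmap$ is likewise injective.

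Combining these two ingredients shows that the projection $T_p\DRL^{1/\k} \to T_u\Mbar = T_p\Mbar$ has trivial kernel, which is exactly the claim. There is no serious obstacle here: the only non-formal ingredient is the injectivity of $\tanmap$, which was already observed en route to describing $\ker(b_\Gamma^\vee)$, and the characteristic-zero hypothesis only enters through the innocuous fact that $\k \ne 0$ in $\field$.
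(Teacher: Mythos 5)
Your proof is correct and matches the paper's own argument: both identify the kernel of $T_p\Mdk \to T_u\Mbar$ with $H^1(\Gamma,\field)$ via \ref{eq:TpMd_sequence}, restrict $b$ to this summand to get $b_\Gamma = \k\tanmap$, and conclude from the injectivity of $\tanmap$ (noting $\k \ne 0$ in $\field$). No gaps.
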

\begin{proof}
We have $T_p \DRL^{1/\k} \subset T_p \Mdk \to T_p \Mbar$ and the kernel of $T_p \Mdk \to T_p \Mbar$ is given by $H^1(\Gamma,\field)$. On the other hand, $T_p \DRL^{1/\k}$ is the kernel of $b$, the differential of the Abel-Jacobi map, and the restriction of $b$ to  $H^1(\Gamma,\field)$ is $b_\Gamma=\k \tanmap$. So an element of the kernel of $T_p \DRL^{1/\k} \to T_p \Mbar$ is an element of the kernel of $\tanmap$ and $\tanmap$ is injective. 
\end{proof}

\subsection{Computing the kernel of \texorpdfstring{$b_\Omega^\vee$}{b\_Gamma\textasciicircum v}}
We put ourselves in \ref{situation:coordinates} with $p$ a $\field$-point of $\DRL^{1/\k}$. As described in \ref{sec:AJ_on_T_p}, we can choose a generating section 
\begin{equation*}
\phi_0 \in H^0(\ca  C_p,\omega^\k(-\m P)\otimes \ca T).
\end{equation*}
We will use the description in \ref{sec:explicit_def_via_cech} to compute explicitly the map \[b_\Omega\colon H^1(\ca C_p, \Omega^\vee(-P)) \to H^1(\ca C_p, \ca O_{\ca C_p}).\]
Recall that for covers of $\ca C_p$ as in \ref{situation:coordinates}, all non-trivial intersections of charts map to the smooth locus of $\ca C_p/\field$, and the line bundle $\ca T$ comes with a trivialisation on that smooth locus, described by a generating section $\bd 1$. 

 \begin{lemma}
Working in \v Cech cohomology for a cover of $\ca C_p$ as in \ref{situation:coordinates}, the map 
\[b_\Omega: \cecH^1(\ca C_p, \Omega^\vee(-P)) \to \cecH^1(\ca C_p, \ca O_{\ca C_p})\]
is induced by the map  
\[g_{ij} \mapsto - g_{ij} \frac{d(g_{ij}^\k\frac{\phi_0}{\bd 1})}{g_{ij}^\k\frac{\phi_0}{\bd 1}} = - g_{ij} d \log \left(g_{ij}^\k\frac{\phi_0}{\bd 1}\right)\] 
on $1$-cocycles $(g_{ij})_{ij}$ of $\Omega^\vee(-P)$.
 \end{lemma}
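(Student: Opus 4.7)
The plan is to unwind the definition of $T_p\bar\sigma$, evaluated on a locally trivial deformation, at the level of \v Cech cocycles. A class in $\cecH^1(\ca C_p, \Omega^\vee(-P))$ represented by a cocycle $(g_{ij})$ gives rise to a first-order locally trivial deformation $\tilde{\ca C} \to \on{Spec}\field[t]/t^2$ of $(\ca C_p, P)$ built from the cover of \ref{situation:coordinates} by taking trivial extensions $U_i \times \on{Spec}\field[t]/t^2$ and glueing on overlaps $U_{ij}$ via the automorphism $\on{id} + t g_{ij}$; the key point is that these overlaps lie in the smooth locus of $\ca C_p$ away from $P$. On the deformed family the line bundle $\omega^\k(-\bd m P) \otimes \ca T$ extends canonically --- the first factor via the relative $\k$-canonical, and $\ca T$ via the trivialisation $\bd 1$ on the smooth locus, using that the locally trivial deformation does not move the $a_\gamma$ coordinates of the chart. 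The cocycle representing $b_\Omega(g_{ij}) \in H^1(\ca C_p, \ca O_{\ca C_p})$ is then the ratio of the two extensions of the global section $\phi_0$ across overlaps in the deformed family.

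The explicit computation is a short application of the chain rule. Choose a local coordinate $z$ on an overlap and write $g_{ij} = v_{ij}(z)\partial_z$ and $\phi_0/\bd 1 = f(z)(dz)^\k$; the glueing on $\tilde{\ca C}$ then takes the form $z_i = z_j + t v_{ij}(z_j)$. Pulling back $\phi_0|_{U_i} = f(z_i)(dz_i)^\k$ through this change of coordinates and dividing by $\phi_0|_{U_j} = f(z_j)(dz_j)^\k$ yields the \v Cech cocycle
\[
v_{ij}\, f'/f + \k\, v_{ij}',
\]
up to an overall sign dictated by the convention $b = T_p\bar\sigma - T_pe$.

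Matching with the claimed formula will use the canonical isomorphism $(\Omega^\vee)^{\otimes \k} \otimes \omega^\k \simeq \ca O$ on the smooth locus: under this identification, $g_{ij}^\k \cdot \phi_0/\bd 1$ equals $v_{ij}^\k f$, so a direct computation of $-g_{ij}\cdot d\log(g_{ij}^\k \phi_0/\bd 1)$ gives exactly $-(v_{ij}\, f'/f + \k\, v_{ij}')$, recovering the cocycle with the correct sign. The main (pedestrian) obstacles will be bookkeeping: verifying that $g_{ij}^\k \phi_0/\bd 1$ really lands in $\ca O_{\ca C_p}$ and that $d\log$ of this expression --- a priori a meromorphic $1$-form with simple poles of residue $m_i + \k$ at $p_i$ --- pairs against $g_{ij} \in \Omega^\vee(-P)$ to yield a regular function, both of which follow because the vanishing of $g_{ij}$ at $P$ exactly absorbs the simple poles; and tracking the sign convention between the deformation-theoretic definition of $b$ and the \v Cech differential.
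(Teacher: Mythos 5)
Your proposal is correct and follows essentially the same route as the paper: build the locally trivial deformation from the Čech cocycle, observe that all nontrivial overlaps lie in the smooth locus away from $P$ so that $\ca T$ is trivialised by $\bd 1$ there and its transition data is pulled back from the central fibre (hence $t$-independent), and then compute the ratio of local extensions of $\phi_0$ by the chain rule in a coordinate $z$ — which is exactly the content of the paper's \ref{lem:f_Omega} in disguise. One small caveat: the negative sign does not come from "$b = T_p\barsigma - T_pe$" (the $T_pe$ term vanishes on locally trivial deformations), but rather from the choice of which side of the overlap $\Psi_{g_{ij}}$ acts on and the convention $\psi_{ij} = \phi_i - \phi_j$ in \ref{lem:class_of_deformation}; since you flagged the sign bookkeeping as needing care this is a minor point, not a gap.
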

 For an interpretation of this formula, note that on the smooth locus of $\ca C_p$ we can interpret the $g_{ij}$ as tangent fields on the overlaps $U_{ij}$ of the cover. Thus the pairing $g_{ij}^\k\frac{\phi_0}{\bd 1}$ of $g_{ij}^\k$ with the meromorphic differential $\frac{\phi_0}{\bd 1}$ makes sense as a meromorphic function. Applying the external derivative $d$ gives a meromorphic differential, which we again pair with $g_{ij}$ to obtain a meromorphic function.  Given the simple nature of the formula, it feels as if there should be a simple conceptual proof of this lemma. But our description of the line bundle $\ca T$ was somewhat ad-hoc, making it necessary to keep careful track of all the glueing data. Since this bookkeeping is quite subtle, we have  written the proof out in a painful amount of detail. 
 \begin{proof}
 To prove the statement, we make (more) explicit the calculations from \ref{sec:explicit_def_via_cech}. Our short exact sequence $0 \to J \to A' \to A \to 0$ of $\field$-modules is just
 \begin{equation*}
 0 \to t\field \to \field[t]/t^2 \to \field \to 0, 
 \end{equation*}
and we represent an element $g \in \cecH^1(\ca C_p, \Omega^\vee(-P))$ as a cocycle $(g_{ij})_{ij}$ on the cover $\{V_i\}_i$ of $\ca C_p$. Then $\phi_0$ is a generating section of the line bundle $\ca L_\field = \omega^\k(-\m P) \otimes \ca T$. 

Recall that $g$ encodes a locally trivial deformation $C_{A'}/A'$ of $C_p$. To obtain it, define $U_i = V_i \times_\field \field[t]/t^2$, then we can choose isomorphisms $U_{ij} \to V_{ij}\times_\field \field[t]/t^2$ such that the inclusion $f_i\colon U_{ij} \to U_i$ is just the base-change of $V_{ij} \to V_i$ to $\field[t]/t^2$, and the inclusion $f_j\colon U_{ij} \to U_j$ is given by applying Spec to the ring homomorphism $r \mapsto r + tg_{ij}(dr)$. Glueing the $U_i$ together along the $U_{ij}$ yields the desired locally trivial deformation $C_{A'}/A'$. 

%


Recall from \ref{sec:explicit_def_via_cech} that, to compute the image of $g$ in $\cecH^1(\ca C_p, \omega^\k(-\m P)\otimes \ca T)$, we should consider the line bundle $\ca L_{A'} = \omega_{C_{A'}}^\k(-\m P) \otimes \ca T$, choose generating sections $\phi_i$ on $U_i$, $\phi_j$ on $U_j$, then pull them both back to $U_{ij}$ and compare. It will be very important to ensure that the pullback is performed in a functorial way, so we can effectively compare these pullbacks. 

For the first tensor factor of $\ca L_{A'}$, we identify $f_i^*( \omega_{C_{A'}} |_{U_i} )$ with $\omega_{U_{ij}}$ via the differential $df_i$ (and similarly with $df_j$).  The fact that the differential is naturally functorial later ensures that this gives compatible identifications. 

The second factor $\ca T$ is, in a sense, more tricky, because we defined $\ca T$ in terms of a cover with gluing maps and thus we need to be extra careful how to identify pullbacks of $\ca T$ under various compositions of maps. First, to fix terminology we recall the following very standard description of the pullback of a line bundle given by glueing data: 

\textbf{Digression (Pullbacks of line bundles)} Let $f:X \to Y$ be a morphism of schemes and $Y=\bigcup_i U_i$ an open cover. Assume a line bundle $\ca L$ on $Y$ is given by fixing
\begin{itemize}
 \item a generating section $\bd 1_{U_i}$ of $\ca L$ on $U_i$,
 \item functions $\rho_{ij}$ on $U_{ij}=U_i \cap U_j$, telling us that $\bd 1_{U_j}|_{U_{ij}} = \rho_{ij} \bd 1_{U_i}|_{U_{ij}}$.
\end{itemize}
Giving a section $s$ of $\ca L$ on $Y$ means giving local sections $s_i \bd 1_{U_i}$ satisfying $s_j \bd 1_{U_j}|_{U_{ij}} = s_j \rho_{ij} \bd 1_{U_i}|_{U_{ij}}$, in other words $s_j \rho_{ij} = s_i$ on $U_{ij}$.

Then the pullback of $\ca L$ under $f$ is given by the cover $X = \bigcup_i f^{-1}(U_i)$ with new generating sections $\bd 1_{f^{-1}(U_i)}$ and gluing functions $\eta_{ij}=f^* \rho_{ij}=\rho_{ij} \circ f$. The pullback of the section $s$ is specified by the local sections $(s_i \circ f) \cdot \bd 1_{f^{-1}(U_i)}$.

\textbf{End digression.}

Now for the line bundle $\ca T$ on $C_{A'}$ we use the cover by the $U_i$ above. For smooth $U_i$ we have a trivializing section $\bd 1$, which we here call $\bd 1_{U_i}$ to be more precise. For the singular chart $U_j$ associated to an edge $e = \{ h, h'\}$ we have the trivializing section $\tau_e$. The transition functions were $1$ between two smooth charts, and for a smooth chart $U_i$ and a singular chart $U_j$, the transition function is $\tau_e = \lambda(e)^\k z_h^{I(h)} \bd 1_{U_i}$ if $U_i$ is a chart on an outlying vertex and $\tau_e = (z_{h'})^{I(h')} \bd 1_{U_i}$ if $U_i$ is a chart on the central vertex. Since the formula uses the coordinates $z_h, z_{h'}$ on $U_j$, we implicitly identify $U_{ij}$ as a subset of $U_j$ here (since on $U_i$ the expression $z_h$ has no meaning). Note that overlaps between singular charts are already contained within the overlaps of smooth and singular charts, so the values of the cocycles on these patches are uniquely determined by those we have already listed.


Now we proceed to choose generating sections $\phi_i$ on $U_i$, $\phi_j$ on $U_j$, pull them both back to $U_{ij}$ and compare. 
We consider the case when $U_i$ is a smooth chart and $U_j$ is a singular chart on an outlying vertex. Denote by $\pi_i : U_i \to V_i$ and $\pi_j : U_j \to V_j$ the natural projections. Also, denote by $\bd 1_{V_i}$ and $\tau_{e,V_j}$ the trivializing sections of $\ca T|_{\ca C_p}$ on $V_i, V_j$. Then the section $\phi_0/\bd 1_{V_i}$ is a generating section of $\omega_{V_i}^\k(-\m P)$ on $V_i$ and so $\tilde \phi_i \coloneqq \pi_i^*(\phi_0/\bd 1_{V_i})$ is a generating section of $\omega_{U_i}^\k(-\m P)$ on $U_i$. Denoting by $\bd 1_{U_i}$ the trivializing section of $\ca T|_{U_i}$, we choose $\phi_i \coloneqq \tilde \phi_i \otimes \bd 1_{U_i}$ as the generating section of $\omega^\k(-\m P)\otimes \ca T$ on $U_i$.

For $U_j$ something different happens: on $V_j$ we have that $\phi_0/\tau_{e,V_j}$ is a generating section of $\omega_{V_j}^\k(-\m P)$. Denote by $\tilde \phi_j \coloneqq \pi_j^*(\phi_0/\tau_{e,V_j})$ the section of $\omega_{U_j}^\k(-\m P)$ and by $\tau_{e,U_j}$ the section of $\ca T$ on $U_j$, then $\phi_j\coloneqq \tilde \phi_j \otimes \tau_{e,U_j}$ is our chosen generating section of $\omega^\k(-\m P)\otimes \ca T$ on $U_j$. 

Now since $f_i$ is just an inclusion, indeed $f_i^* \phi_i = \tilde \phi_i|_{U_{ij}} \otimes \bd 1_{U_{ij}}$. On the other hand, $f_j$ is the composition of the automorphism $\Psi_{g_{ij}}: U_{ij} \to U_{ij}$ (obtained as $\on{Spec}$ of the ring map $r \mapsto r + tg_{ij}(dr)$) with the inclusion $U_{ij} \subset U_j$. But now note that when restricting $\phi_j$ to $U_{ij}$ we obtain
\[\phi_j|_{U_{ij}} = \tilde \phi_j|_{U_{ij}} \otimes \tau_{e,U_{ij}}.\]
However, by the original gluing data of $\ca T$ we have 
\[\tau_{e,U_{ij}} = \lambda(e)^k z_h^{I(h)} \bd 1_{U_{ij}}. \]
The crucial thing to observe is that the transition function $\lambda(e)^k z_h^{I(h)}$ is actually a pullback from $V_{ij}$ (it `does not contain the variable $t$'). Denote by $\pi_{ij}: U_{ij} \to V_{ij}$ the projection. Using again that $\tilde \phi_j = \pi_j^*(\phi_0/\tau_{e,V_j})$, we make a quick sanity check:
\begin{align*}
 \phi_j|_{U_{ij}} &= \tilde \phi_j|_{U_{ij}} \otimes \tau_{e,U_{ij}}\\
 &= \pi_j^*(\phi_0/\tau_{e,V_j})|_{U_{ij}} \otimes \lambda(e)^k z_h^{I(h)} \bd 1_{U_{ij}}\\
 &= \pi_j^*(\phi_0/\tau_{e,V_j} \lambda(e)^k z_h^{I(h)} )|_{U_{ij}} \otimes  \bd 1_{U_{ij}}\\
 &= \pi_{ij}^*(\phi_0/\tau_{e,V_j} \lambda(e)^k z_h^{I(h)} |_{V_{ij}} ) \otimes  \bd 1_{U_{ij}}\\
 &= \pi_{ij}^*(\phi_0|_{V_{ij}} /\bd 1_{V_{ij}}  ) \otimes  \bd 1_{U_{ij}}\\
 &= \pi_{ij}^*(\phi_0 /\bd 1_{U_{i}}  |_{V_{ij}}) \otimes  \bd 1_{U_{ij}}\\
 &= \pi_{i}^*(\phi_0 /\bd 1_{U_{i}}  )|_{U_{ij}} \otimes  \bd 1_{U_{ij}}\\
 &= \tilde \phi_i |_{U_{ij}} \otimes  \bd 1_{U_{ij}}\\
 &= \phi_i|_{U_{ij}}.
\end{align*}

To conclude, by \ref{lem:class_of_deformation} the class in $\cecH^1(\ca C_p, \omega^\k(-\m P)\otimes \ca T)$ that we seek is given by $f_i^*\phi_i - f_j^*\phi_j$. 
Now $f_i^* \phi_i = \tilde \phi_i|_{U_{ij}} \otimes \bd 1_{U_{ij}}$ and 
\[ f_j^* \phi_j =  \Psi_{g_{ij}}^* \left(\tilde \phi_i |_{U_{ij}} \otimes  \bd 1_{U_{ij}} \right) =\left(d\Psi_{g_{ij}} \tilde \phi_i |_{U_{ij}}\right) \otimes  \bd 1_{U_{ij}}. \]
Using \ref{lem:f_Omega} below, we see 
\[d\Psi_{g_{ij}} \tilde \phi_i |_{U_{ij}} = \tilde \phi_i|_{U_{ij}}  + tg_{ij}^{1 - \k}d(g_{ij}^{\k}\phi_0/\bd 1_{U_{ij}}).\]
This allows us to conclude
\begin{equation*}
f_i^*\phi_i - f_j^*\phi_j = -tg_{ij}^{1 - \k}d(g_{ij}^{\k}\phi_0/\bd 1_{U_{ij}}) \otimes \bd 1_{U_{ij}}, 
\end{equation*}
and pulling back along the `multiplication by $t$' isomorphism from $A$ to $J$ yields the element $[- g_{ij}^{1 - \k}d(g_{ij}^{\k}\phi_0/\bd 1_{U_{ij}}) \otimes \bd 1_{U_{ij}}]_{ij}$ in $\cecH^1(C_p, \omega^q(-\m P) \otimes \ca T)$. To translate this to an element of $\cecH^1(C_p, \ca O_{\ca C_p})$ we use the isomorphism $\omega^q(-\m P) \otimes \ca T \cong \ca O_{\ca C_p}$ via dividing by $\phi_0$. This gives the desired result 
\[- \frac{g_{ij}^{1 - \k}d(g_{ij}^{\k}\phi_0/\bd 1) \otimes \bd 1}{\phi_0} = - g_{ij} \frac{d(g_{ij}^\k\frac{\phi_0}{\bd 1})}{g_{ij}^\k\frac{\phi_0}{\bd 1}}\]
 \end{proof}
\begin{lemma}\label{lem:f_Omega}
Let $B$ be an $A'$-algebra, and $g \colon \Omega_{B_\field/\field} \to B_\field$ a $B_\field$-linear map. Define $f\colon B \to B; r \mapsto r + tg(d(r|_{B_\field}))$, where we use that the map $B_\field \to B, u \mapsto t\cdot u$ is well-defined. Then the map $f_\Omega\colon \Omega_{B/A'}^\k \to \Omega_{B/A'}^{\k}$ induced by the differential of $f$ is given by $f_\Omega(w) = w + t  g^{1-\k}d(g^\k (w|_{B_\field}))$, where we use that $\Omega_{B_\field/\field}^\k \to \Omega_{B/A'}^\k, \eta \mapsto t \cdot \eta$ is well-defined. 
 \end{lemma}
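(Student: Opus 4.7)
The first step is to confirm that $f$ is a well-defined $A'$-algebra endomorphism of $B$: additivity is clear, and multiplicativity $f(rs)=f(r)f(s)$ holds modulo $t^2$ by combining the Leibniz rule for $d$ on $B_\field$ with the $B_\field$-linearity of $g$. Since $f \equiv \on{id}_B \pmod t$, the induced map $f_\Omega$ is again the identity modulo $t$, and its action on exact differentials is forced by functoriality: $f_\Omega(dr) = d(f(r)) = dr + t\,d(g(dr|_{B_\field}))$. Here the second summand is well-defined because multiplication by $t$ gives a canonical isomorphism $\Omega_{B_\field/\field} \isom t\,\Omega_{B/A'}$, so one may pick any lift $G \in B$ of $g(dr|_{B_\field})\in B_\field$ and compute $t\,dG$; the result is independent of the lift as $t^2=0$.

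The structural observation is that $f$ is an infinitesimal automorphism of $\on{Spec} B$ over $\on{Spec} A'$ that reduces to the identity mod $t$, classified by the vector field $g \in \on{Der}_\field(B_\field,B_\field)$. Its action on sections of $\Omega^{\otimes \k}$ is therefore, conceptually, the Lie derivative $\ca L_g$ of $w|_{B_\field}$, appearing with the infinitesimal parameter $t$. The remaining content of the lemma is the Cartan-type identity
\[
\ca L_g(w) \;=\; g^{1-\k}\, d(g^\k\, w),
\]
valid for sections of the line bundle $\omega^{\otimes \k}$. In this identity $g^\k \cdot w \in B_\field$ denotes the iterated contraction pairing $g^{\otimes \k}$ (a local section of $(\Omega^\vee)^{\otimes \k}$) against $w$; its exterior derivative lies in $\Omega$; and the factor $g^{1-\k}$ is interpreted as a section of $\Omega^{\otimes(\k-1)}$, yielding an element of $\Omega^{\otimes \k}$ via the natural tensor product.

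In our application $B$ is the ring of a smooth chart on a curve, so $\Omega_{B/A'}$ is locally free of rank $1$ and the verification reduces to a direct local computation. Choose a local coordinate $x$, so that one may write $w = h(x)(dx)^{\otimes \k}$ and $g = G(x)\partial_x$ with $G = g(dx|_{B_\field})$. Then $f(x)=x+tG$ and $f(h) = h+tGh'$, whence
$$f_\Omega(w) = f(h)\cdot (df(x))^{\otimes \k} = (h+tGh')(dx + tG'\,dx)^{\otimes \k} = \bigl(h + t(Gh' + \k h G')\bigr)(dx)^{\otimes \k},$$
using $t^2=0$ in the middle step. On the other hand $g^\k \cdot w|_{B_\field} = G^\k h$, so $d(g^\k w|_{B_\field}) = (G^\k h)'\,dx$, and multiplying by $g^{1-\k} = G^{1-\k}(dx)^{\otimes(\k-1)}$ gives $G^{1-\k}(G^\k h)'\,(dx)^{\otimes \k} = (Gh' + \k G' h)(dx)^{\otimes \k}$, matching the previous expression. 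The main obstacle is not computational but notational: one must handle carefully the canonical identifications $tB\cong B_\field$ and $t\,\Omega_{B/A'}\cong \Omega_{B_\field/\field}$, as well as the distinction between a vector field $g$, its iterated powers $g^\k$ acting on $\Omega^{\otimes \k}$, and the ``dual'' section $g^{1-\k}$ sitting in $\Omega^{\otimes(\k-1)}$, so that every expression ``$t \cdot (\ldots)$'' is interpreted unambiguously.
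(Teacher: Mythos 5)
Your proof is correct and takes essentially the same approach as the paper: a direct expansion of $f_\Omega(w)$ using $t^2=0$, matching against the claimed formula. The only difference is notational: the paper carries out the expansion for a general tensor generator $w = r_0\,dr_1\cdots dr_\k$, whereas you choose a single smooth local coordinate $x$ (which is legitimate here, since in the application $B$ is a smooth chart on a curve and — more to the point — the expression $g^{1-\k}$ in the statement already presupposes rank $1$). The Lie-derivative framing ($f_\Omega = \mathrm{id} + t\,\ca L_g$, followed by the Cartan-type identity $\ca L_g w = g^{1-\k}d(g^{\k}w)$) is a pleasant conceptual gloss, but the engine of both arguments is the same local computation modulo $t^2$.
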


  \begin{proof}
 We prove the result for local generators $w = r_0 dr_1 \cdots dr_\k$ of $\Omega_B^\k$. In the computation below we implicitly use that $tdr_j = t(dr_j|_{B_\field})$ and that the restriction to the fibre $B_\field$ commutes with taking differentials. We then obtain 
 \begin{equation*}
 \begin{split}
 f_\Omega(w) &= f(r_0) df(r_1) \cdots df(r_\k) \\
 & = (r_0+tg(dr_0|_{B_k}))\prod_i (dr_i + td(gd(r_i|_{B_\field})))\\
 & = w + t\left( g(dr_0|_{B_\field}) \prod_i dr_i + r_0 \sum_i \left((dg)d(r_i|_{B_\field})\prod_{j \neq i} dr_j \right)\right)\\
 & = w + t\left( g(dr_0|_{B_\field})  + r_0 \sum_i dg\right)\prod_i dr_i|_{B_\field}\\
 & = w + t\left( g(dr_0|_{B_\field})  + \k r_0 dg\right)\prod_i dr_i|_{B_\field}\\
 & = w + tg^{1- \k}d(g^\k (r_0 dr_1 \ldots dr_\k|_{B_\field})). 
 \end{split}
 \end{equation*}
 Note that in the computation we used a natural extension of the differential $d$ to tensor powers of the tangent sheaf of $B$, when we apply it to $g$ and $g^\k$.
 \end{proof}

 Recall that
 \begin{equation*}
b_\Omega^\vee \colon H^0(\ca C_p,  \omega) \to H^0(\ca C_p,\ca Hom(\Omega^\vee(-P), \omega))
\end{equation*}
denotes the map induced by applying Serre Duality to $b_\Omega$. Our next goal is to give an explicit formula for this map; or rather, for its restriction to $\ker b_\Gamma^\vee = \oplus_v H^0(C_v,\omega_{C_v}) \sub H^0(\ca C_p,  \omega)$, since this is all we need later. 

We define a map of coherent sheaves 
on the smooth locus $\ca C_p^{sm}$:
%
  \begin{equation*}
 \begin{split}
\beta^{sm}\colon  \omega& \to \ca Hom(\Omega^\vee(-P), \omega)\\
 s & \mapsto  \left[f \mapsto f\left(s^{1 - \k}\left(\frac{\phi_0}{\bd 1}\right) d\left(\left(\frac{\phi_0}{\bd 1}\right)^{-1}s^\k\right)\right) = f\left(s d \log \left(\left(\frac{\phi_0}{\bd 1}\right)^{-1}s^\k\right)\right) \right].  
 \end{split}
 \end{equation*}
Note in particular that this map makes sense at markings of $\ca C_p$: while the differential $d \log((\phi_0/\bd 1)^{-1} s^k)$ can have a pole of order $1$ at a marking, we have that $f$ is a local section of $\Omega^\vee(-P)$, so a vector field vanishing at the marking. This cancels the possible pole.

Inside $H^0(\ca C_p,\omega)$ there is the subspace $\oplus_v H^0(C_v,\omega_{C_v})$ of global sections of $\omega$ with vanishing residues at all nodes. The following result shows that it makes sense to apply $\beta^{sm}$ to elements in this subspace.

\begin{lemma}
 Let $s \in \oplus_v H^0(C_v,\omega_{C_v}) \subset H^0(\ca C_p,\omega)$, then the section $\beta^{sm}(s|_{\ca C_p^{sm}})$ of $\ca Hom(\Omega^\vee(-P), \omega)$ on $\ca C_p^{sm}$ extends uniquely to all of $\ca C_p$ and we thus obtain a map 
 \begin{equation*}
\beta  \colon \bigoplus_v H^0(C_v,\omega_{C_v}) \to H^0(\ca C_p,\ca Hom(\Omega^\vee(-P), \omega)). 
\end{equation*}
\end{lemma}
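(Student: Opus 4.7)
The plan is to verify the statement by a purely local check at each node of $\ca C_p$. Since $\beta^{sm}(s)$ is already a section of $\ca Hom(\Omega^\vee(-P), \omega)$ on the dense open set $\ca C_p^{sm}$, and since $\omega$ is locally free and hence torsion-free, any extension across the finitely many nodes is automatically unique; existence is therefore a purely local question. As the markings all lie in the smooth locus, the twist by $-P$ plays no role near the nodes, and it suffices to show that for every local section $f$ of $\Omega^\vee$ near a node $q$, the smooth-locus expression $\beta^{sm}(s)(f)$ extends across $q$ as a section of $\omega$.

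Fix such a node $q$, corresponding to an edge $e = \{h_c, h_o\}$ with $h_c$ incident to the central vertex $v_0$ and $h_o$ incident to an outlying vertex $v$, and choose local coordinates as in \ref{situation:coordinates}, so that $\ca C_p$ is cut out near $q$ by $xy = 0$ with $x$ and $y$ the coordinates on the two branches. From the description of $\ca T$ in \ref{sec:universal_section}, together with the twist condition $I(h_c)+I(h_o)=0$, the meromorphic $\k$-differential $\phi_0/\bd 1$ has order $\pm I(e)$ at $q$ on the two branches. Since $s|_{C_{v_0}}$ and $s|_{C_v}$ are holomorphic $1$-differentials on the normalisations of the incident components, we may write $s = s^c(x)\,dx$ and $s = s^o(y)\,dy$ for regular power series $s^c,s^o$. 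A direct computation, using the local presentation $\Omega = (R\,dx \oplus R\,dy)/(x\,dy + y\,dx)$ for $R = \field[[x,y]]/(xy)$, shows that local sections of $\Omega^\vee$ near $q$ are precisely the derivations of $R$ annihilating $xy$, that is, vector fields of the form $f = x\tilde a(x)\partial_x + y\tilde b(y)\partial_y$; in particular every such $f$ vanishes at the node on each branch.

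The main calculation is then to restrict $\beta^{sm}(s)(f)$ to each branch and to observe that the only singularity of $d\log((\phi_0/\bd 1)^{-1}s^\k)$ at $q$ is a simple pole, with residue an integer multiple of $dx/x$ (resp.\ $dy/y$) coming from the factor $x^{\pm I(e)}$ (resp.\ $y^{\pm I(e)}$) in $(\phi_0/\bd 1)^{-1}s^\k$. Contracting with the vector field $f$, which vanishes to first order at $q$, cancels this pole and produces a regular function on each branch. Multiplying by the regular $1$-form $s^c\,dx$ (resp.\ $s^o\,dy$) gives a regular $1$-form on each branch; and a pair of regular $1$-forms on the two branches assembles canonically into a local section of $\Omega$, which includes into $\omega$ via the natural map $\Omega \hra \omega$. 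This yields the required local extension of $\beta^{sm}(s)(f)$ across $q$.

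The main bookkeeping is in tracking the precise powers of $x$ and $y$ (and the factors $\lambda(e)^\k$) in the transition data for $\ca T$; but once this is in place, the cancellation is an immediate consequence of the fact that sections of $\Omega^\vee$ vanish at nodes, together with the twist relation $I(h_c)+I(h_o)=0$. Gluing the local extensions at every node produces the global section $\beta(s) \in H^0(\ca C_p, \ca Hom(\Omega^\vee(-P), \omega))$, and linearity of the construction in $s$ yields the desired $\field$-linear map $\beta$.
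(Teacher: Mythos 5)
Your proof follows essentially the same route as the paper's: the two key observations — that local sections of $\Omega^\vee$ near a node vanish to first order on each branch (being derivations annihilating the local equation $xy$), and that $d\log\bigl((\phi_0/\bd 1)^{-1}s^\k\bigr)$ has at most a simple pole there — are exactly the paper's argument, and your explicit description $f = x\tilde a(x)\partial_x + y\tilde b(y)\partial_y$ is a correct additional detail. One small inaccuracy to flag: the natural map $\Omega_{\ca C_p} \to \omega_{\ca C_p}$ is \emph{not} injective at a node (it kills the torsion of $\Omega$, for instance the class of $x\,dy$), so the notation $\Omega \hra \omega$ is misleading; moreover the detour through $\Omega$ is unnecessary, since (as in the paper) a pair of regular $1$-forms on the two branches of the normalisation already has vanishing residues at the node and hence determines a local section of $\omega$ directly.
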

\begin{proof}
 What we need to check is that for every local section $f$ of $\Omega^\vee(-P)$ around a node of $\ca C_p$, the section 
 \begin{equation} \label{eqn:betaexpression} f\left(s d \log \left(\left(\frac{\phi_0}{\bd 1}\right)^{-1}s^\k\right)\right)\end{equation}
 of $\omega$ on $\ca C_p^{sm}$ extends over the node. In other words, for each branch of the node we need to show that this differential has at most a simple pole at the node and that the residues at both sides of the node add to zero.
 
 For this, we make the following observation about $f$: working \'etale-locally we may assume a neighbourhood of the node is given by the spectrum of $R = \field[x,y]/(xy)$, so that $\Omega_{R/\field} = \frac{R\langle dx, dy \rangle}{xdy + ydx}$. Since $f$ is $\ca O_{\ca C_p}$-linear we have $xf(dy) + yf(dx) = 0$, and hence $f(dx)$ (resp. $f(dy)$) is divisible by $x$ (resp. by $y$). Thus, on both branches of the node, we can regard $f$ as a tangent field vanishing to order at least $1$ at the node.
 
 Then, as before, the term $d \log((\phi_0/\bd 1)^{-1} s^k)$ has at most a pole of order $1$, cancelling with the zero of $f$.
 So in fact the differential \ref{eqn:betaexpression} is regular at the nodes (it does not even have simple poles) and in particular the residues vanish on both sides and thus add to zero.
\end{proof}

 \begin{lemma} \label{Lem:bOmegavee}
We have 
\begin{equation*}
b_\Omega^\vee|_{\bigoplus_v H^0(C_v,\omega_{C_v})} = \beta  \colon \bigoplus_v H^0(C_v,\omega_{C_v}) \to H^0(\ca C_p,\ca Hom(\Omega^\vee(-P), \omega)). 
\end{equation*}
 \end{lemma}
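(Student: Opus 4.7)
The plan is to use the non-degeneracy of Serre duality to reduce the claim to a pairing identity. Both $b_\Omega^\vee(s)$ and $\beta(s)$ live in $H^0(\ca C_p, \ca Hom(\Omega^\vee(-P), \omega))$, which is Serre dual to $H^1(\ca C_p, \Omega^\vee(-P))$, so it suffices to check that for every Čech $1$-cocycle $(g_{ij})_{ij}$ of $\Omega^\vee(-P)$ the two pairings $\langle b_\Omega((g_{ij})), s \rangle$ (taken via Serre duality on $H^1(\ca O_{\ca C_p}) \times H^0(\omega)$) and $\langle (g_{ij}), \beta(s) \rangle$ (taken via Serre duality on $H^1(\Omega^\vee(-P)) \times H^0(\omega^{\otimes 2}(P))$) agree in $\field$. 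Using the explicit Čech formula for $b_\Omega$ established in the previous lemma, both pairings are realised as the trace on $H^1(\ca C_p, \omega)$ of explicit Čech $1$-cocycles
\[\alpha_{ij} = -g_{ij}\bigl(d\log(g_{ij}^\k \phi)\bigr)\cdot s, \qquad \beta_{ij} = g_{ij}\bigl(s^{1-\k}\phi\, d(\phi^{-1} s^\k)\bigr),\]
where I abbreviate $\phi := \phi_0/\bd 1$.

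The core step will be a local calculation on each overlap $U_{ij}$. By the choice of cover in \ref{situation:coordinates}, overlaps lie in the smooth locus of $\ca C_p$, so I can choose a local coordinate $x$ and write $g_{ij} = G\,\partial_x$, $s = h\,dx$, $\phi = p\,(dx)^\k$. Expanding both $\alpha_{ij}$ and $\beta_{ij}$ using the definitions of the contractions and the logarithmic derivatives, I anticipate all terms involving $p'/p$ to cancel, leaving
\[\alpha_{ij} - \beta_{ij} \;=\; -\k\,d(Gh) \;=\; -\k\,d\bigl(g_{ij}(s)\bigr),\]
where $g_{ij}(s) = Gh$ is the function obtained by contracting the vector field $g_{ij}$ with the $1$-form $s$. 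Since $(g_{ij}(s))_{ij}$ is itself a Čech $1$-cocycle valued in $\ca O_{\ca C_p}(-P) \subset \ca O_{\ca C_p}$, this identifies $(\alpha_{ij}-\beta_{ij})_{ij}$ with $-\k\,d_*\xi$ in $H^1(\ca C_p, \omega)$, where $\xi \in H^1(\ca C_p, \ca O_{\ca C_p})$ is the class of $(g_{ij}(s))_{ij}$ and $d_*$ is induced by the sheaf map $d\colon \ca O_{\ca C_p}\to \omega$.

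To finish, I will invoke the residue theorem in the form that the composition
\[H^1(\ca C_p, \ca O_{\ca C_p}) \xrightarrow{d_*} H^1(\ca C_p, \omega) \xrightarrow{\mathrm{trace}} \field\]
vanishes identically. For a smooth proper curve this is encoded in the algebraic de Rham short exact sequence $0\to\underline{\field}\to \ca O \xrightarrow{d} \omega \to 0$, whose long exact sequence identifies $\mathrm{im}(d_*)$ with the kernel of the connecting map to $H^2(\underline\field)$, the latter agreeing with $\mathrm{trace}$; equivalently, $\mathrm{Res}_p(df)=0$ for any meromorphic $f$ and any point $p$, since the coefficient of $x^{-1}\,dx$ in $df$ always vanishes. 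For the nodal curve $\ca C_p$, the hypothesis $s\in\bigoplus_v H^0(C_v,\omega_{C_v})$ guarantees that $s$ has vanishing residue at every preimage of a node, so the trace on $\omega_{\ca C_p}$ factors through the sum of traces on the smooth components of the normalisation, on each of which the smooth case applies. Combined, these steps yield the claimed equality $b_\Omega^\vee(s) = \beta(s)$.

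The step I expect to be most delicate is the coordinate calculation verifying $\alpha_{ij} - \beta_{ij} = -\k\,d(g_{ij}(s))$. The expressions for $\alpha_{ij}$ and $\beta_{ij}$ mix contractions of vector fields with $1$-forms and with symmetric $2$-forms, and the $\k$th powers of vector fields and $\k$-differentials carry enough combinatorial data that miscounting the derivative terms is easy; the clean cancellation of the $p'/p$ contributions between the two sides is what ultimately produces the prefactor $\k$ out front. Once this local identity is established, the remainder of the argument is a routine application of the residue theorem.
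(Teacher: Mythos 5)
Your proposal is correct and follows essentially the same route as the paper (which attributes the computation to Mondello's note): reduce to a pairing identity via Serre duality, use the explicit Čech formula for $b_\Omega$ to compute the difference of the two cocycles, observe the $d\log\phi$ terms cancel to leave $-\k\,d(g_{ij}(s))$, and invoke that exact differentials have zero residue. Your last step is slightly more elaborate than the paper's one-line appeal to the residue theorem, but the content is the same, and the small detour through vanishing residues of $s$ at nodes is unnecessary (the argument that $d_*\colon H^1(\ca O)\to H^1(\omega)$ kills the trace works for any $\ca O$-valued cocycle on the nodal curve) but does no harm.
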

 \begin{proof}
 Given $s \in H^0(C_v,\omega_{C_v})\sub H^0(\ca C_p, \omega)$ and $g=(g_{ij})_{ij} \in \cecH^1(\ca C_p, \Omega^\vee(-P))$ we want to show 
\[\langle s, b_\Omega(g)\rangle = \langle \beta(s),g \rangle \in H^1(\ca C_p, \omega_{C_p}) = \field\]
where $\langle -, -\rangle$ denotes the respective Serre-Duality pairings on both sides. 
Inserting the formulas above, we obtain
\begin{align*}
 &\langle s, b_\Omega(g)\rangle - \langle \beta(s), g \rangle\\
 =&\left(-s \cdot g_{ij} d \log(g_{ij}^k (\frac{\phi_0}{\bd 1})) - s d\log(s^k (\frac{\phi_0}{\bd 1})^{-1}) \cdot g_{ij} \right)_{ij}\\
 =&\left(-sg_{ij} \cdot d \log(g_{ij}^k s^k)\right)_{ij} = \left( - k d (sg_{ij}) \right)_{ij}.
\end{align*}
Thus difference of the two sides of the equality has the form $d \eta$ for the element $\eta = (- k s g_{ij})_{ij} \in \cecH^1(\ca C_p, \mathcal{O}_{\ca C_p})$ and thus is zero. Here we use again that on the overlaps of our cover, we can pair the differential $s$ and the sections $g_{ij}$ of $\Omega^\vee(-P)$ to obtain a local section of $\ca O_{\ca C_p}$. The computation above is taken from Mondello's unpublished note \cite{MondelloPluricanonical-}.
 \end{proof}

Recall from \ref{sec:AJ_on_T_p}, that on the components of $\ca C_p$ corresponding to outlying vertices, we could choose $\k$th roots $(\frac{\phi_0}{\bd 1_v})^{1/\k}$ of the twisted differential, all unique up to scaling.

\begin{lemma}\label{lem:kernel_description}
The kernel of $b_\Omega^\vee \oplus b_\Gamma^\vee$ is given by the map
\begin{equation}\label{eq:kernel_map}
 \bigoplus_{v \in V^{out}} \field \to H^0(\ca C_p,  \omega)
\end{equation}
sending $(c_v)_v$ to the section given by $0$ on the smooth locus of the component $C_{v_0}$ of the central vertex $v_0$, and $c_v (\frac{\phi_0}{\bd 1_v})^{1/\k}$ on the smooth locus of the component $C_v$ for the outlying vertices $v$. 
\end{lemma}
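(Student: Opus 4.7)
The plan is to combine the two descriptions we already have: by \ref{sec:bGamma} we know $\ker(b_\Gamma^\vee) = \bigoplus_v H^0(C_v, \omega_{C_v})$, and by \ref{Lem:bOmegavee} the restriction of $b_\Omega^\vee$ to this subspace is the map $\beta$. Hence $\ker(b_\Omega^\vee \oplus b_\Gamma^\vee) = \ker(\beta)$, and the whole problem reduces to solving $\beta(s) = 0$ for $s = (s_v)_v \in \bigoplus_v H^0(C_v, \omega_{C_v})$.

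Writing $\eta_v = \phi_0/\bd 1_v$ for the meromorphic $\k$-differential on $C_v$, the section $\beta(s)$ restricted to the smooth locus is represented by $s \, d\log(\eta^{-1} s^\k)$. Since tangent fields in $\Omega^\vee(-P)$ locally generate the dual of $\omega$ on the smooth locus, $\beta(s)$ vanishes iff for every component $v$ we have $s_v \, d\log(\eta_v^{-1} s_v^\k) = 0$. This means componentwise either $s_v = 0$, or $d\log(\eta_v^{-1} s_v^\k) = 0$. Because $\field$ has characteristic zero and $C_v$ is irreducible, the latter forces $\eta_v^{-1} s_v^\k = c_v$ to be a constant; equivalently $s_v^\k = c_v \eta_v$.

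Next I would separate the central and outlying cases. On the central component $v_0$, the differential $\eta_{v_0}$ has genuine poles: at the preimage of each node it has a pole of order $\k - I(h) = \k + I(e) > 0$ (the half-edges at $v_0$ carry the negative twists), and it also has poles at any marking with $m_i < 0$. On the other hand $s_{v_0}$, being holomorphic, makes $s_{v_0}^\k$ holomorphic. Hence $s_{v_0}^\k = c_{v_0} \eta_{v_0}$ forces $c_{v_0} = 0$, and so $s_{v_0} = 0$. On an outlying vertex $v$, the discussion of \ref{sec:FP_formula} (and \ref{sec:AJ_on_T_p}) gives that $\eta_v$ is a $\k$-th power of a holomorphic $1$-differential $\mu_v = (\phi_0/\bd 1_v)^{1/\k}$. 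The equation $s_v^\k = c_v \mu_v^\k$ then says $(s_v/\mu_v)^\k = c_v$; since $s_v/\mu_v$ is a meromorphic function on the irreducible curve $C_v$ whose $\k$-th power is constant, it is itself constant (again using characteristic zero), so $s_v = c'_v \mu_v$ for some $c'_v \in \field$.

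Finally I would check the converse: for $s$ of the stated form, $s_{v_0}=0$ makes $\beta(s)$ vanish on $C_{v_0}^{sm}$, and on each outlying $C_v^{sm}$ the quantity $\eta_v^{-1} s_v^\k = (c'_v)^\k$ is constant, so $d\log(\eta_v^{-1} s_v^\k)=0$ and again $\beta(s) = 0$. By the unique extension across nodes established just before \ref{Lem:bOmegavee}, $\beta(s) = 0$ globally. This gives exactly the injection \ref{eq:kernel_map}, with the outlying components providing a $\field^{\# V^{out}}$-worth of kernel and the central component contributing nothing. The main obstacle is the algebraic step that $d\log f = 0$ forces $f$ to be constant, which is where the characteristic-zero hypothesis and the structural fact that $\eta_v$ is a $\k$-th power on outlying vertices both enter; the rest is bookkeeping.
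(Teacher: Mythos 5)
Your proposal is correct and follows essentially the same route as the paper: reduce to $\ker(b_\Gamma^\vee)=\bigoplus_v H^0(C_v,\omega_{C_v})$ via \ref{Lem:bOmegavee}, observe that $\beta(s)=0$ forces $s_v^\k$ to be a constant multiple of $\phi_0/\bd 1_v$ on each component (using characteristic zero), and then use the structure of the simple star graph to rule out a nonzero solution on the central component while identifying the one-dimensional space of solutions on each outlying component. The paper phrases the central-vertex step as ``there is no holomorphic $\k$th root of $\phi_0/\bd 1_{v_0}$'' (appealing to the marking with $m_i<0$ or $\k\nmid m_i$), whereas you invoke the poles of $\phi_0/\bd 1_{v_0}$ at the nodes --- both work since at a boundary point the central component always meets at least one node with a negatively twisted half-edge.
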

Note that while the sections $(\frac{\phi_0}{\bd 1_v})^{1/\k}$ are only unique up to scaling, the image of the map \ref{eq:kernel_map} is independent of these choices.
\begin{proof}
Since every element of the kernel of $b_\Omega^\vee \oplus b_\Gamma^\vee$ is in particular in the kernel $\oplus_v H^0(C_v,\omega_{C_v})$ of $b_\Gamma^\vee$, it suffices to know the description of $b_\Omega^\vee$ on this subspace.

On the smooth locus we have $$\ca Hom(\Omega^\vee(-P), \omega) = \omega(P),$$ and on this locus $b_\Omega^\vee(s) = s d \log(s^\k (\phi_0/\bd 1)^{-1})$. This vanishes iff $s^\k (\phi_0/\bd 1)^{-1}$ is a locally constant function. In other words, up to scaling, $s$ should be a $\k$th root of $\phi_0/\bd 1$. 

On the central vertex, this implies $s|_{C_{v_0}}=0$. Indeed, by assumption  there is a marking $i$ with $m_i<0$ or $m_i$ not divisible by $\k$. By the discussion in \ref{sec:FP_formula} this marking must be on the central vertex $v_0$, hence there cannot be a holomorphic differential $s$ with $\k$th power $\phi_0/\bd 1$. On the other hand, on the outlying vertices there do exist the sections $(\frac{\phi_0}{\bd 1_v})^{1/\k}$, so the kernel of $b_{\Omega}^\vee$ is exactly given by the map \ref{eq:kernel_map}.
\end{proof}

\subsection{Computing the residue pairing with an element of the kernel of \texorpdfstring{$b_\Omega^\vee \oplus b_\Gamma^\vee$}{b\_Omega \textasciicircum v + b\_Gamma\textasciicircum v} }

In \ref{sec:b_L} and in \ref{lem:lift_after_translate} it will be important to compute the value of the residue pairing between an element of the kernel of $b_\Omega^\vee \oplus b_\Gamma^\vee$ and a particular element $\delta$ of $H^1(C, \omega^\k(-\m P)\otimes \ca T)$. To avoid duplication we will carry out this computation here, in sufficient generality for both our applications. Let 
\begin{equation*}
0 \to J \to A' \to A \to 0
\end{equation*}
be a short exact sequence of $\field$-modules, where $A$ and $A'$ have the structure of (Artin local) $\field$-algebras, the map $A' \to A$ is a $\field$-algebra homomorphism, and $J\frak m_{A'} = 0$ where $\frak m_{A'}$ is the maximal ideal of $A'$. 

Suppose we are given a stable marked curve $C_{A'}$ over $A'$, whose fibre over $\frak m_{A'}$ is our curve $\ca C_p$. 

We consider a formally \'etale fpqc cover of $C_{A'}$
consisting of charts and gluing morphisms as follows:
\begin{enumerate}
\item
For each irreducible component $C_{A',v}$ of $C_{A'}$ (corresponding to a vertex $v$ of $\Gamma$), the smooth locus $U_v \coloneqq C_{A',v}^{sm}$ is a chart. Since $U_v$ is smooth and affine\footnote{Since $p$ is a point of a boundary component of $\DRL^{1/\k}$, the curve $\ca C_p$ is not smooth, hence these charts are indeed affine}, it is (non-canonically) a trivial deformation of its central fibre $U_{v,\field}=U_{v} \times_{A'} \field$, so we have $U_v = U_{v,\field} \times_\field A'$. Note that in particular, $U_v$ is the complement of finitely many $A'$ points of the smooth projective curve $C_v \times_\field A'$ over $A'$.
\item for each node (corresponding to an edge $e$ of $\Gamma$), a formally \'etale neighbourhood of the form $U_e \coloneqq \on{Spec}A'[[z_h, z_{h'}]]/(z_hz_{h'} - \ell_e)$. 
\item for each half-edge $h$ belonging to an edge $e=(h,h')$ with $h$ incident to a vertex $v$, the choice of a formal neighbourhood $\bb D = \on{Spec} A'[[t]][t^{-1}]$ of the preimage of the corresponding node in (the normalization of) $C_{A',v}$ and an open immersion $\bb D \to U_e$ given by the ring map
\[A'[[z_h, z_{h'}]]/(z_hz_{h'}-\ell_e) \to A'[[t]][t^{-1}]; z_h \mapsto t, z_{h'} \mapsto \frac{\ell_e}{t}\]
describing how $\bb D$ is glued into the singular chart.
\end{enumerate}
Note the following important assumption about the smoothing parameters $\ell_e$:
\begin{center}
 \textbf{We assume throughout this section that $\ell_e^{I'(e)} \in J$. }
\end{center}
This will be the case in our applications and in general for first order deformations, i.e. those over $A'=\field[t]/(t^2)$. Indeed, it is always true that $\ell_e \in J=t\field[t]/(t^2)$ since otherwise the chart $U_e$ is not nodal.

Now assume $C_A$ lies in the double ramification locus, so we can choose a generating section $\phi_0$ for the line bundle $\omega^\k_{C_A}(-\m P)\otimes \ca T$. For each of the charts $U_e$ we choose a lift $\phi_e$ of $\phi_0$, and for each $U_v$ a lift $\phi_v$. The differences $\phi_* - \phi_{*'}$ on the overlaps restrict to $0$ over $A$, hence give an element $\delta'$ of $H^1(\ca C_p, \omega^\k_{\ca C_p}(-\m P)\otimes \ca T)\otimes_\field J$ (see \ref{sec:explicit_def_via_cech}). Using the isomorphism $\omega^\k_{\ca C_p}(-\m P)\otimes \ca T \cong \ca O_{\ca C_p}$ via $\phi_0|_{\ca C_p}$, we can convert this into an element $\delta \in H^1(\ca C_p, \ca O_{\ca C_p})$

Now let $\bd c = (c_v (\frac{\phi_0}{\bd 1_v})^{1/\k})_v$ be an element of the kernel of $b_\Omega^\vee \oplus b_\Gamma^\vee$ (recall that we fixed the $\k$-th root of $\phi_0/\bd 1_v$ on the outlying vertices $v$). Then the product $\delta \bd c$ lies in $H^1(\ca C_p, \omega_{C_p})\otimes_\field J$, which is isomorphic via the residue pairing to $J$. In this section we make the image of $\delta \bd c$ in $J$ explicit. 

Recall from \ref{rem:q_root_T} that we have a canonically-defined root $\ca T^{1/\k}$ of $\ca T$, and on the smooth charts $U_v$ a generating section $\bd 1_v^{1/\k}$ of $\ca T^{1/\k}$ which is a $\k$th root of $\bd 1_v$. Multiplying with the section $(\phi_0/\bd 1_v)^{1/\k}$, we obtain a well-defined $\k$-th root of $\phi_0$ on the outlying components, which extends to the nodes. Now, we should not expect this to extend to a root over the whole of the central component. However, since $\phi_0$ is a generating section of the line bundle $\omega^\k(-\m P)\otimes \ca T$, this root will extend uniquely to a $\k$-th root $\phi_0^{1/\k}$ of $\phi_0$ on a small \'etale neighbourhood of each node in the central component. Here $\phi_0^{1/\k}$ is a local section of $\omega\otimes \ca T^{1/\k}$ around the nodes\footnote{Here we use that the marked points $P$ are disjoint from the small \'etale neighbourhood of the nodes}. 

Now the restriction $\phi_0|_{C_{v_0}}/\bd 1_{v_0}$ to the central component extends to an element of
$$H^0(C_{v_0}, \omega^\k_{C_{v_0}}(-m_0P + \sum_{e} (I(e) + \k)q_e)), $$ 
where $q_e$ is the point on $C_{v_0}$ corresponding to the edge $e$. Thus when taking the $\k$th root defined locally around the nodes we see that $\phi_0^{1/\k}|_{C_{v_0}}/\bd 1_{v_0}^{1/\k}$ yields 
a local section of
\begin{equation*}
\omega_{C_{v_0}}(\sum_{e} (I'(e) + 1)q_e). 
\end{equation*}


\begin{lemma}\label{lem:res_pairing}
The image of $\delta \bd c$ in $J$ under the residue pairing is given by 
\begin{equation}\label{eq:big_res_comp}
-\sum_{v \in V^{out}} c_v \sum_{e:v_0 \to v}a_{\gamma(e)}\ell_e^{I'(e)}\on{Res}_{q_e}(\phi_0^{1/\k}|_{C_{v_0}}/\bd 1_{v_0}^{1/\k}) \in J. 
\end{equation}
where $\gamma(e)$ is the cycle in $\Gamma$ going out along the distinguished edge $e_v : v_0 \to v$ and back along $e$ (in particular $a_{\gamma(e_v)} = 1$).  \end{lemma}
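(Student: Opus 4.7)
The plan is to compute $\delta \bd c$ by an explicit Čech-cocycle calculation on the chosen cover of $C_{A'}$: under Serre duality the pairing in $H^1(\ca C_p, \omega_{\ca C_p}) \cong \field$ is a sum of local residues of a Čech 1-cocycle representing $\delta$ against the global section $\bd c$. Since $\bd c$ vanishes on the smooth locus of the central component, only the overlaps between the singular charts $U_e$ and the outlying smooth charts $U_v$ (for $v \in V^{out}$) contribute; more precisely, for each directed edge $e\colon v_0 \to v$ there is a formal punctured disc $\bb D_{e,v} \subset U_v \cap U_e$ near the outlying preimage of the node, and the total residue is a sum over these discs.

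On such a disc the cocycle value is $(\phi_v - \phi_e)/\phi_0 \in \ca O_{\bb D_{e,v}} \otimes J$. Because the residue pairing is independent of the Čech representative, I would choose $\phi_v$ to be a $\k$-th power: lift $(\phi_0/\bd 1_v)^{1/\k} \otimes \bd 1_v^{1/\k}$ from $A$ to a section $\widetilde{\alpha}_v$ of $\omega \otimes \ca T^{1/\k}$ on $U_v$ over $A'$ (possible since $U_v$ is affine and smooth, so the obstruction in $H^1$ vanishes), and set $\phi_v = \widetilde{\alpha}_v^{\otimes \k}$ under the canonical isomorphism $(\ca T^{1/\k})^{\otimes \k} \isom \ca T$. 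Then on $\bb D_{e,v}$, dividing $\phi_v - \phi_e$ by $\phi_0$ and multiplying by $\bd c_v = c_v(\phi_0/\bd 1_v)^{1/\k}$ can be analysed via the formal $\k$-th root $\phi_0^{1/\k}$, canonically defined near $q_e$ by extending the central-side section $\phi_0^{1/\k}|_{C_{v_0}}/\bd 1_{v_0}^{1/\k}$ (a meromorphic section of $\omega_{C_{v_0}}$ with pole of order $I'(e)+1$ at $q_e$) through the singular chart.

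Comparing the outlying-side $\k$-th root $\widetilde{\alpha}_v/\bd 1_v^{1/\k}$ with the formal disc $\k$-th root via the transition $\tau_e^{1/\k} = \lambda(e) x_h^{I'(h)} \bd 1_v^{1/\k}$ of $\ca T^{1/\k}$ given in \ref{rem:q_root_T} (where $h$ is the half-edge at $v$, so $I'(h) = I'(e)$), and using the singular-chart relation $x_h x_{h'} = \ell_e$, produces the factor $\ell_e^{I'(e)}$ and converts the residue on $\bb D_{e,v}$ into a residue on the central branch. The coefficient $\lambda(e)$ equals $a_{\gamma(e)}$ by the conventions adopted at the end of \ref{sec:universal_section} for simple star graphs (taking value $1$ on the distinguished edge $e_v$, consistent with $a_{\gamma(e_v)} = 1$), yielding the factor $a_{\gamma(e)}$, and a direct Laurent-expansion on the formal disc produces the sign and the final $\on{Res}_{q_e}(\phi_0^{1/\k}|_{C_{v_0}}/\bd 1_{v_0}^{1/\k})$.

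The main obstacle will be the bookkeeping needed to pass between the outlying, singular, and central trivializations of $\ca T$ and $\ca T^{1/\k}$ and to match up $\phi_0^{1/\k}$ on the two sides of the node, all in a way consistent with the asymmetric role played by $\lambda(e)$ (which appears only on the outlying side). The standing assumption $\ell_e^{I'(e)} \in J$ kills $J$-quadratic terms and keeps the calculation linear in $J$, which is essential for the per-edge contributions to sum cleanly to the stated formula; it is also what lets us ignore any ambiguity in choosing $\phi_e$ and in the formal extension of $\phi_0^{1/\k}$ across the singular chart.
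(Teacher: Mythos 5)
Your proposal follows the same overall architecture as the paper's proof: reduce to a node-by-node residue computation on the punctured formal disc, exploit the standing assumption $\ell_e^{I'(e)}\in J$ to truncate a Taylor expansion, and read off the central-side residue and the coefficient $\lambda(e)=a_{\gamma(e)}$. The variation you introduce — choosing $\phi_v=\widetilde\alpha_v^{\otimes\k}$ to be an exact $\k$th power on $U_v$ — is a reasonable idea, because it makes the contribution of $\phi_v$ to the residue vanish for an elementary reason (it becomes an honest holomorphic $1$-differential on $U_v$), whereas the paper instead absorbs this contribution via a three-term split and the residue theorem for curves over Artin rings.

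However, there is a genuine gap, and it is located exactly at the sentence ``a direct Laurent-expansion on the formal disc produces the sign and the final $\on{Res}_{q_e}(\phi_0^{1/\k}|_{C_{v_0}}/\bd 1_{v_0}^{1/\k})$.'' That Laurent expansion is the entire technical content of the lemma, and your choice of auxiliary data makes it \emph{harder}, not easier, to carry out. The key point in the paper is to take the $\k$th root $\phi_e^{1/\k}$ on the \emph{singular} chart $U_e$ (\ref{Lem:liftqthroots}). Then $(\phi_e^{1/\k})^{-\k+1}\phi_e = \phi_e^{1/\k}$ is a genuine section of $\omega\otimes\ca T^{1/\k}$ on $U_e$, admitting a representation $\tilde\phi_e(z_h,z_{h'})\,\tfrac{dz_h}{z_h}\otimes\tau_e^{1/\k}$; expanding $\tilde\phi_e(t,\ell_e/t)$ in powers of $\ell_e/t$ and using $\ell_e^{I'(e)}\in J$, $J\frak m_{A'}=0$ isolates the single term $\frac{\ell_e^{I'(e)}}{I'(e)!}\partial_{z_{h'}}^{I'(e)}\tilde\phi_0(0,0)$, which is then identified with (minus) the central-side residue. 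Your $\widetilde\alpha_v$ lives only on the smooth chart $U_v$, so $\widetilde\alpha_v^{-\k+1}\phi_e$ is only defined on the overlap $\bb D$, does not become a section of a line bundle on $U_e$, and therefore has no canonical expansion in $(z_h,z_{h'})$. What you are left to expand is the $\k$th-power coefficient $\tilde\psi_e(z_h,z_{h'})$ of $\phi_e$ itself; the constant extracted is then proportional to $\tilde\phi_0(0,0)^{-\k+1}\partial_{z_{h'}}^{I'(e)}\bigl[\tilde\phi_0^\k\bigr](0,0)$, which for $\k>1$ and $I'(e)>1$ is \emph{not} equal to $\k\,\partial_{z_{h'}}^{I'(e)}\tilde\phi_0(0,0)$, so the identification with the stated $\on{Res}_{q_e}$ does not drop out; one would also need to track $J$-level corrections to $\widetilde\alpha_v^{-\k+1}$ via a splitting and potentially re-invoke the very residue theorem you hoped to sidestep. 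In short, the decisive step of the proof --- moving the $\k$th root onto $U_e$ so that the Taylor expansion in the node coordinates makes sense --- has been omitted, and the substitute choice of $\phi_v$ does not compensate for it.

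Two smaller remarks. First, your cocycle is $\phi_v-\phi_e$ rather than the paper's $\phi_e-\phi_v$; this is harmless but the sign should be pinned down consistently with the conventions of \ref{sec:explicit_def_via_cech} and \ref{sec:serre_duality}. Second, for the $\phi_v$ part to have no residue on each disc separately (rather than only after summing over edges) you must take $\widetilde\alpha_v$ to be the \emph{trivial} lift, using that $U_v\cong U_{v,\field}\times_\field A'$; an arbitrary lift over $U_v$ would reintroduce a nontrivial contribution that only vanishes after applying the residue theorem on $C_v\times_\field A'$.
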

Some remarks on the statement, before giving the proof: 
\begin{itemize}
\item
This root $\phi_0^{1/\k}$ (defined above) only makes sense locally around nodes, but this is all we need for \ref{eq:big_res_comp} to make sense. 
\item
We emphasise that in \ref{eq:big_res_comp}, the residue is taken on the central component $C_{v_0}$ of $C$.

\item Recall that we assume throughout this section that $\ell_e^{I'(e)} \in J$. 
\end{itemize}
\begin{proof}
We compute the residue one-point-at-a-time, as in \ref{sec:serre_duality}. 

\textbf{Case 1: smooth points. }
On the smooth points of $C_{v_0}$ for the central vertex $v_0$, the section coming from $\bd c$ vanishes. On the outlying vertices $v$, the sections $\phi_v$ extending $\phi_0$ do not have a pole. Hence in both cases the residue vanishes. 

\textbf{Case 2: nodes. }
Here there is only one possible choice of patch. But to compute the residue we have to sum the residues coming from the two preimages of our point under the normalisation map $\pi\colon \tilde{\ca C_p} \to \ca C_p$. 

\textbf{Case 2.1: lift to the central vertex. }
Here again the section coming from $\bd c$ is zero, hence the residue vanishes. 

\textbf{Case 2.2: lift to an outlying vertex. }
Let $q$ be the chosen point on the outlying component $C_v$ mapping to a node. 
Our strategy will be as follows: as described above, the overlap of the singular chart and the smooth chart on the outlying vertex is given by $\bb D = \on{Spec} A'[[t]][t^{-1}]$, sitting inside the singular chart $U_e=\on{Spec} R'$ via the ring map 
\begin{equation*}
R'=A'[[z_h, z_{h'}]]/(z_hz_{h'}-\ell_e) \to A'[[t]][t^{-1}]; z_h \mapsto t, z_{h'} \mapsto \frac{\ell_e}{t}, 
\end{equation*}
where the branch $z_h=0$ corresponds to the \emph{central} vertex. On the other hand, since $C_v$ is smooth at $q$, we can take the inclusion $\bb D \to U_v$ to be the product of a small punctured formally \'etale neighbourhood $\on{Spec} \field[[t]][t^{-1}] \to C_v$ of $q \in C_v$ with $A'$ over $\field$. 

We will compute the difference $\phi_e - \phi_v$ on $\bb D$ and, by the deformation theory in the appendix, this gives an element 
$$\phi_e - \phi_v \in H^0(\bb D_\field, \omega_{\bb D_\field}^\k \otimes \ca T|_{\bb D_\field}) \otimes_\field J.$$
Recall that the collection of such differences exactly describes a cocycle representing the element $\delta'$ of $H^1(C_p, \omega^\k_{\ca C_p}(-\m P)\otimes \ca T)\otimes_\field J$. We obtain $\delta \in H^1(\ca C_p, \ca O_{\ca C_p})\otimes_\field J$ by dividing by $\phi_0|_{\ca C_p}$. 

To compute the pairing with $\bd c$, we now multiply $(\phi_e - \phi_v)/(\phi_0|_{\ca C_p})$ with the restriction of $c_v (\phi_0/\bd 1_v)^{1/\k} $ to $\bb D_\field$ and obtain
\[\frac{\phi_e - \phi_v}{\phi_0} \cdot c_v (\frac{\phi_0}{\bd 1_v})^{1/\k} = c_v (\phi_0)^{-1+1/\k} (\phi_e -  \phi_v)/ \bd 1_v^{1/\k}.\]
Then we can take the residue of this at $t=0$ and obtain the contribution to the pairing from the node $q$.

Now instead of abstractly using that $\phi_e - \phi_v \in H^0(\bb D_\field, \omega_{\bb D_\field}^\k \otimes \ca T|_{\bb D_\field}) \otimes_\field J$ and multiplying with the section $c_v (\phi_0)^{-1+1/\k} \bd 1_v^{-1/\k}$ defined over $\field$, we can instead use \emph{any} section $\rho \in H^0(\bb D, \omega^{-\k+1} \otimes \ca T^{-1+1/\k})$ such that $\rho$ restricts to $c_v (\phi_0)^{-1+1/\k}$ over $\field$ and compute the residues of $\rho (\phi_e - \phi_v) / \bd 1_v^{1/\k}$ on $\bb D$.

We now propose a particular choice of $\rho$: by \ref{Lem:liftqthroots}  below there is a unique
section $\phi_e^{1/\k}$ of $\omega \otimes \ca T^{1/\k}$ on $U_e$ such that $(\phi_e^{1/\k})^\k = \phi_e$ and such that $\phi_e^{1/\k}|_A = \phi_0^{1/\k}$. We choose the section $\rho=c_v (\phi_e^{1/\k})^{-\k+1}$, which obviously has the properties mentioned above. Thus the value of the pairing at $q$ is given by
\begin{equation} \label{eqn:residueterm}
c_v(\on{res}_{t=0} \left((\phi_e^{1/\k})^{-\k+1} (\phi_e -  \phi_v)/ \bd 1_v^{1/\k}\right)
\end{equation}
(lying in $J \subset A'$) and we will see that our choice of $\rho$ allows us to compute the residues from the terms involving $\phi_e$ and $\phi_v$ separately. 


For the first term we observe that $(\phi_e^{1/\k})^{-\k+1} \phi_e = \phi_e^{1/\k}$ is (the restriction to $\bb D$ of) a section of $\omega\otimes \ca T^{1/\k}$ on $U_e$. Thus on $U_e$ it has a representation 
\begin{equation*}
 \phi_e^{1/\k} = \tilde \phi_e(z_h, z_h') (\frac{d z_h}{z_h}) \otimes \tau_e^{1/\k}
\end{equation*}
with $\tilde \phi_e(z_h,z_h') \in R'=A'[[z_h, z_h']]/(z_h z_h'-l_e)$.
Pulling this back to $\bb D$ (and using the gluing maps of the sections of $\ca T^{1/\k}$) gives us
\begin{align*}
 \phi_e^{1/\k} &= \tilde \phi_e(t, \frac{l_e}{t}) (\frac{d t}{t}) \otimes \tau^{1/\k} \\
 &=\lambda(e) t^{I'(e)} \tilde \phi_e(t, \frac{l_e}{t}) (\frac{d t}{t}) \otimes \bd 1_v^{1/\k}.
\end{align*}
Dividing by $\bd 1_v^{1/\k}$ and Taylor expanding yields 
\begin{equation*}
a_{\gamma(e)}{t}^{I'(e)-1}\left[   \sum_{i \ge 0}\frac{1}{i!}\left(\frac{\ell_e}{{t}}\right)^i\frac{\partial^i \tilde \phi_e({z_h},{z_{h'}})}{\partial {z_{h'}}^i}(t,0) \right] d{t}. 
\end{equation*}
Here we use that $\lambda(e)=a_{\gamma(e)}$ is a valid choice according to the construction presented in \ref{sec:universal_section} (for the spanning tree $\Gamma' \subset \Gamma$ we choose the tree formed by the distinguished edges $e_v$ mentioned in \ref{lem:res_pairing}).

To compute the residue at $t=0$ we look at the terms whose order in $t$ is exactly $-1$. Since $\tilde \phi_e$ is a power series in $z_h, z_h'$, this forces $i \geq I'(e)$. On the other hand, by our assumption $l_e^{I'(e)} \in J$ and also $l_e \in  \frak m_{A'}$, so for $i>I'(e)$ we have $l_e^i = l_e^{i-I'(e)} l_e^{I'(e)} = 0$ since $\frak m_{A'} \cdot J = 0$. 
Thus all terms for $i>I'(e)$ vanish and hence the only term of the above sum making a possibly-non-zero contribution to the residue occurs when $i = I'(e)$, and the residue is given by 
\begin{equation*}
\frac{a_{\gamma(e)}}{I'(e)!}\ell_{e}^{I'(e)}\frac{\partial^{I'(e)}\tilde \phi_e({z_h},{z_{h'}})}{\partial {z_{h'}}^{I'(e)}}(0, 0). 
\end{equation*}

This finishes the computation of the residue for the term of \ref{eqn:residueterm} involving $\phi_e$. Now we want to argue that the sum (over the nodes $q$ connecting $C_v$ to $C_{v_0}$) of the residues of the terms $(\phi_e^{1/\k})^{-\k+1}  \phi_v/ \bd 1_v^{1/\k}$ in \ref{eqn:residueterm} vanishes. 

For this, we now choose some splitting of the short exact sequence $0 \to J \to A' \to A \to 0$ of $\field$-vector spaces, allowing us to write $A'=A \oplus J$ in some non-canonical way. Then on $\bb D$ we can write
\begin{equation*}
 (\phi_e^{1/\k})^{-\k+1}  \phi_v/ \bd 1_v^{1/\k} = (\phi_0^{-1+1/\k} + R_1)\cdot  (\phi_0 + R_2)/ \bd 1_v^{1/\k},
\end{equation*}
where as before $\phi_0^{-1+1/\k}$ and $\phi_0$ are sections of the base-changes of $\omega^{-\k+1}\otimes \ca T^{-1+1/\k}$ and $\omega^\k \otimes \ca T$ on $\bb D$ to $A$ (i.e. $\phi_0^{-1+1/\k} \in A[[t]][t^{-1}] \cdot (dt)^{-\k+1} \otimes \bd 1_v^{-1+1/\k}$ etc.), and $R_1, R_2$ are corresponding sections which vanish modulo $J$ (i.e. there exist representatives with coefficients in $J$). We note here that since $U_v = U_{v,\field} \times_\field A'$, and since the map $\bb D \to U_v$ is the pullback of $\bb D_\field \to U_{v,\field}$ to $A'$, the section $R_2$ is indeed the restriction of a section $\widehat{R}_2 \in H^0(U_v, \omega^\k(-\m P)\otimes \ca T)$ on $U_v$ (which is just the $J$-part of the section $\phi_v$ on $U_v$). 

%
%

Using that $J^2=0$ we can write out the product above and obtain three terms
\begin{equation*}
 (\phi_e^{1/\k})^{-\k+1}  \phi_v/ \bd 1_v^{1/\k} = \phi_0^{1/\k}/ \bd 1_v^{1/\k} + \phi_0^{-1+1/\k} R_2 / \bd 1_v^{1/\k} + \phi_0 R_1/ \bd 1_v^{1/\k}.
\end{equation*}
By assumption, the first term $\phi_0^{1/\k}/ \bd 1_v^{1/\k}$ has order $I'(e)-1 \geq 0$ at $t=0$, so it does not contribute a residue. On the other hand, for the third term we observe that $R_1$ is the restriction to $\bb D$ of a section $\widehat{R}_1$ on $U_e$ killed by $J$ (the `$J$-valued' part of $\phi_e^{-1+1/\k}$). If we write
\[\widehat{R}_1 = \psi(z_h, z_{h'}) \cdot \frac{(dz_h)^{-\k+1}}{z_h^{-\k+1}}\otimes \tau_e^{-1+1/\k}\]
for some $\psi(z_h,z_{h'}) \in J \cdot A'[[z_h, z_{h'}]]$ then restricting to $\bb D$ we have
\[R_1 = \widehat{R}_1|_{\bb D} = \psi(t, \frac{l_e}{t}) \cdot \frac{(dt)^{-\k+1}}{t^{-\k+1}}\otimes (\lambda(e)^{-\k+1} t^{-I'(h)\k+I'(h)} \bd 1_v^{-1+1/\k}),\]
but $\psi(t, \frac{l_e}{t})=\psi(t,0)$ since $\psi$ has coefficients in $J$ and since $l_e \in \frak m_{A'}$ so that $l_e \cdot J=0$. Looking at $\phi_0$ we know
\[\phi_0 = \zeta(t) \cdot t^{I(h)-\k} (dt)^\k \otimes \bd 1_v\]
for some $\zeta(t) \in A[[t]]$.
Combining the two terms (and using $I(h)=I'(h) \cdot \k$) we obtain
\[\phi_0 R_1/ \bd 1_v^{1/\k} = \psi(t,0) \zeta(t) \lambda(e)^{-\k+1} \cdot \underbrace{t^{\k-1-I(h)+I'(h)+I(h)-\k}}_{t^{I'(h)-1}} \cdot dt,\]
so the order of this term at $t=0$ is nonnegative and hence the residue vanishes.

Finally we look at the remaining term $\phi_0^{-1+1/\k} R_2 / \bd 1_v^{1/\k}$. Now using \ref{Lem:liftqthroots} we can find a section $\phi_v^{1/\k}$ of $\omega(-(\m/\k)P)\otimes \ca T^{1/\k}$ on $U_v$ such that $(\phi_v^{1/\k})^\k=\phi_v$ and such that $(\phi_v^{1/\k})|_{U_{e} \times_{A'} A} = \phi_0^{1/\k}$. Then the section $\phi_v^{-1+1/\k}:=(\phi_v^{1/\k})^{-\k+1}$ on $U_v$ has the same restriction to $\bb D_A$ as $\phi_0^{-1+1/\k}$. Since $R_2$ is killed by $J$, this means that $\phi_0^{-1+1/\k} R_2 / \bd 1_v^{1/\k} = \phi_v^{-1+1/\k}|_{\bb D} R_2 / \bd 1_v^{1/\k}$. Using that $R_2$ is the restriction of the $\k$-differential $\widehat{R}_2$, we see that $\phi_0^{-1+1/\k} R_2 / \bd 1_v^{1/\k}$ is the restriction of the holomorphic differential $\phi_v^{-1+1/\k} \widehat{R}_2$ on $U_v$. Since $U_v \subset C_v \times_\field A'$ is the complement of finitely many $A'$-points, we can see this as a meromorphic differential of the smooth projective curve $C_v \times_\field A'$ over $A'$. The residue of $\phi_0^{-1+1/\k} R_2 / \bd 1_v^{1/\k}$ at $t=0$ is just the residue of this differential at the $A'$-point corresponding to the node $q$. Then, by the Residue theorem for curves over Artin rings (\cite[Appendix B.1., page 272]{Conrad2000Grothendieck-du}), the sum of all residues at nodes $q$ connecting $C_v$ to $C_{v_0}$ is zero.
Summarising the above, we see that the image of $\delta \bd c$ in $J$ under the residue pairing is given by
\begin{equation*}
\sum_{v \in V^{out}} c_v \sum_{e=(h,h'):v_0 \to v} \frac{a_{\gamma(e)}}{I'(e)!}\ell_{e}^{I'(e)}\frac{\partial^{I'(e)}\tilde \phi_e({z_h},{z_{h'}})}{\partial {z_{h'}}^{I'(e)}}(0, 0),
\end{equation*}
where $z_{h'}$ is a local coordinate on the central vertex for the node. 

Since $l_e^{I'(e)} \in J$ and $\frak m_{A'} J =0$ we see that in the formula we can replace $\tilde \phi_e$ by its restriction $\tilde \phi_0$ to $\field$. Looking back at the definition of $\tilde \phi_e$, its restriction $\tilde \phi_0$ over $\field$ satisfies that on $U_{e} \times_{A'} \field$ we have 
\begin{align*}
 \phi_0^{1/\k} &= \tilde \phi_0(z_h, z_{h'}) \frac{dz_h}{z_h} \otimes \tau_e^{1/\k}\\
 &= \tilde \phi_0(z_h, z_{h'}) (- \frac{dz_{h'}}{z_{h'}}) \otimes (z_{h'}^{-I'(e)}\bd 1_{v_0}^{1/\k}).
\end{align*}
%
From this it is clear that the expression
\begin{equation}\label{eq:partial_experssion}
-\frac{1}{I'(e)!}\frac{\partial^{I'(e)}\tilde \phi_0({z_h},{z_{h'}})}{\partial {z_{h'}}^{I'(e)}}(0, 0)
\end{equation}
is the residue of the differential $\phi_0^{1/\k}|_{C_{v_0}}/\bd 1_{v_0}^{1/\k}$ at the point $q$ on the smooth `central' curve $C_{v_0}$. 
\end{proof}

\begin{lemma} \label{Lem:liftqthroots}
Let $R'$ be an $A'$-algebra, and $M'$ a locally-free $R'$-module of rank 1. Let $m' \in M'^{\otimes \k}$ and $l \in M =  M' \otimes_{A'} A = M'/JM'$ such that $l^{\otimes \k} = m'  + J M'^{\otimes \k}$ in $M^{\otimes \k}$. Assume also that the section $m'$ is generating. Then there exists a unique $l' \in M'$ such that $l'^{\otimes \k} = m'$ in $M'^{\otimes \k}$ and $l' +JM' = l \in M$. 
\end{lemma}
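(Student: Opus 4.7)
\medskip

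The plan is to reduce to a local computation on $M'$ and solve a first-order equation using that $J$ is square-zero (which follows from $J\mathfrak m_{A'}=0$, since $J\subset\mathfrak m_{A'}$) together with the characteristic-zero assumption to invert $\k$. Because the argument will be canonical, local solutions will automatically glue by uniqueness, so I will only have to work with $M'$ free of rank~$1$.

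First, I would pick an open cover of $\on{Spec} R'$ over which $M'$ is trivial, fix a generator $e'$ of $M'$ on a patch, and write $m'=f'\,e'^{\otimes \k}$ with $f'\in R'$. Since $m'$ is a generating section and the kernel $JR'$ of $R'\to R$ satisfies $(JR')^2=0$ (so is contained in the nilradical), $f'$ is a unit in $R'$. Writing $e=e'\bmod J$, we have $l=a\,e$ for some $a\in R$ with $a^{\k}=\bar{f'}$ in $R$; since $f'$ is a unit, so is $a$ in $R$. Choose any lift $a_0\in R'$ (automatically a unit) and set $l_0\defeq a_0 e'\in M'$, a generator of $M'$ lifting $l$.

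Next, I would look for $l'$ in the form $l'=l_0+\epsilon$ with $\epsilon\in JM'$. Write $\epsilon=t\,l_0$ with $t\in JR'$. Because $J^2=0$, any tensor expression containing two or more factors from $J$ vanishes, so the binomial expansion collapses to
\begin{equation*}
(l_0+\epsilon)^{\otimes \k}\;=\;l_0^{\otimes \k}+\k\,t\,l_0^{\otimes \k}\;=\;(1+\k t)\,l_0^{\otimes \k}.
\end{equation*}
On the other hand, $m'-l_0^{\otimes\k}\in JM'^{\otimes\k}=JR'\cdot l_0^{\otimes\k}$, so $m'=(1+s)\,l_0^{\otimes\k}$ for a unique $s\in JR'$. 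The required equation $l'^{\otimes\k}=m'$ then becomes $\k t=s$, which has the unique solution $t=s/\k$ since $\k$ is a unit in the $\field$-algebra $R'$ (this is the only place the characteristic-zero hypothesis enters).

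For uniqueness, if $l_1',l_2'$ are two lifts with $(l_i')^{\otimes\k}=m'$, then $l_2'-l_1'=u\,l_1'$ for some $u\in JR'$ (both reduce to $l$, and $l_1'$ generates $M'$). The same collapsed binomial expansion gives $(l_2')^{\otimes\k}=(1+\k u)(l_1')^{\otimes\k}$, forcing $\k u=0$ and hence $u=0$. Finally, because the local solution is uniquely determined, the sections constructed on different trivialising opens agree on overlaps and therefore glue to a global $l'\in M'$, completing the proof. The only potential obstacle is the need for $\k$ to be invertible; this is precisely why the characteristic-zero assumption appears in the statement.
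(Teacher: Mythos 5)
Your proof is correct and follows essentially the same route as the paper: reduce to the free/local case by uniqueness, use that $J$ is square-zero to collapse the binomial expansion, and invert $\k$ to solve the resulting linear equation. The only difference is cosmetic. You explicitly trivialise $M'$ with a generator $e'$ and express everything in terms of units, while the paper simply picks an arbitrary lift $\tilde l$ of $l$, sets $\epsilon = \tilde l^{\k} - m' \in JR'$, and corrects by a $j$ with $\k l^{\k-1} j = \epsilon$; both reduce to the same one-line computation. (Minor quibble: your parenthetical about the nilradical is unnecessary --- $m'$ being generating alone forces $f'$ to be a unit.)
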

In this lemma our assumption of characteristic zero (or more precisely that $\k$ is invertible on $A$) is essential. 
\begin{proof}
Let us first discuss uniqueness. The condition $l' +JM' = l$ means $l'$ is unique up to an element of $J M'$. For a different $l''=l'+j \tilde m$ we have $(l'')^{\otimes \k} = l'^{\otimes \k} + \k j (l')^{\otimes \k-1} \tilde m$ (here we use $M'$ being locally free so we can commute tensor products). The fact that $m'$ is generating implies that $l'$ is generating. Thus $(l'')^{\otimes \k} = m' = l'^{\otimes \k}$ is only possible for $\k j \tilde m=0$. Since $\k$ is invertible in $A'$, this implies $j \tilde m=0$ hence $l''=l'$.

By the uniqueness part, we can work locally and so assume that $M'$ is free, so take $M' = R'$, $M = R$ (and identify $M^{\otimes \k} = R$ etc also). 
Choose any lift $\tilde l$ of $l$ to $R'$, and define $\epsilon = \tilde l^{\k} - m' \in JR'$. Now $l$ generates $R$ since $m'$ generates $R'$, and $\k$ is invertible, so there is a unique $j \in J$ such that $\k l^{\k -1} j = \epsilon$ (recall $J \frak m_{A'} = 0$). Then
\begin{equation*}
\begin{split}
(\tilde l + j )^\k & = \tilde l ^\k + \k \tilde l ^{\k -1} j \;\;\;  (\text{since} \; J^2 = 0)\\
& = \tilde l ^\k + \k l ^{\k -1} j  \;\;\; (\text{since} \; J \frak m_{A'} = 0)\\
& = m'. \qedhere
\end{split}
\end{equation*}
\end{proof}

\subsection{Intersecting with the kernels of \texorpdfstring{$b_{L_v}^\vee$}{b\_\{L\_v\}\textasciicircum v} and \texorpdfstring{$b_{>1}^\vee$}{b\_\{>1\}\textasciicircum v} }\label{sec:b_L}
Recall from \ref{sec:AJ_on_T_p} the maps
\begin{equation*}
 b_{>1}\colon  \bigoplus_{e:I'(e)>1} \field  \to H^1(\ca C_p, \ca O_{\ca C_p})
\end{equation*}
and 
\begin{equation*}
b_{L_v}\colon L_v \to H^1(\ca C_p, \ca O_{\ca C_p}).
\end{equation*}
The goal of this section is to prove the following lemma, which describes exactly the intersection of the kernels of $b_\Omega^\vee$, $b_\Gamma^\vee$, $b_{>1}^\vee$ and $b_{L_v}^\vee$ (c.f. \ref{lem:kernel_intersection}). It immediately allows us to conclude the proof of \ref{thm:ker_b_v}.

\begin{lemma}\label{lem:kernel_formula}
Let $\ca C_p$ be a general element of a boundary component of $\DRL^{1/\k}\subset \Mdk$, with dual graph $\Gamma$. Recall that $V^1$ is the set of outlying vertices $v$ such that $I'(e)=1$ for each $e:v_0 \to v$. Let $\bd c = (c_v (\frac{\phi_0}{\bd 1_v})^{1/\k})_v$ be an element of the kernel of $b_\Omega^\vee \oplus b_\Gamma^\vee$. Then $\bd c$ maps to zero under the maps $b_{>1}^\vee$ and $b_{L_v}^\vee$ if and only if $c_v=0$ for all $v \in V^1$. 
\end{lemma}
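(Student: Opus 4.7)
The plan is to apply \ref{lem:res_pairing} to the first-order deformations coming from the two summands, working over $A' = \field[t]/t^2$ (so $J = t\field$), and then to invoke the generic non-vanishing result of \ref{sec:generic_non-vanishing_residue} to extract the desired condition on the $c_v$.

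First I would handle $b_{>1}^\vee$. A generator of the $e$-factor of $\bigoplus_{e:I'(e)>1}\field$ corresponds to the first-order deformation of $\ca C_p$ smoothing only the node associated to $e$, i.e.\ $\ell_e = t$ and $\ell_{e'} = 0$ for $e' \neq e$. The hypothesis $\ell_e^{I'(e)} \in J$ required by \ref{lem:res_pairing} is satisfied trivially, since $I'(e)\ge 2$ forces $\ell_e^{I'(e)} = t^{I'(e)} = 0$ in $\field[t]/t^2$. Every term in the formula \ref{eq:big_res_comp} thus vanishes, and so $b_{>1}^\vee(\bd c)=0$ automatically, placing no constraint whatsoever on $\bd c$.

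Next I would treat $b_{L_v}^\vee$. For $v\in V^{>1}$ we have $L_v=0$ by definition, so there is nothing to check. For $v\in V^1$, $L_v$ is one-dimensional and a generator is obtained by normalising $l_{e_v}=1$; the compatibility relations then force $l_e = a_{\gamma(e,e_v)}$, and the corresponding deformation sets $\ell_e = l_e\, t$ for each $e\colon v_0\to v$. Since $I'(e)=1$ for all such edges, we have $\ell_e^{I'(e)} = l_e\, t \in J$, so \ref{lem:res_pairing} applies and yields
\[
-\,c_v\, t\, \sum_{e\colon v_0\to v} a_{\gamma(e_v,e)}\, a_{\gamma(e,e_v)}\, \on{Res}_{q_e}\!\left(\phi_0^{1/\k}|_{C_{v_0}}/\bd 1_{v_0}^{1/\k}\right).
\]
Because $\gamma(e_v,e)$ and $\gamma(e,e_v)$ are inverse cycles, $a_{\gamma(e_v,e)}\, a_{\gamma(e,e_v)}=1$, and the pairing collapses to
\[
-\,c_v\, t\, \sum_{e\colon v_0\to v} \on{Res}_{q_e}\!\left(\phi_0^{1/\k}|_{C_{v_0}}/\bd 1_{v_0}^{1/\k}\right).
\]

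The hard part is now to show that this sum of $\k$-th roots of $\k$-residues, taken over the nodes on the central component attached to the outlying vertex $v$, is non-zero at a general point $\ca C_p$ of the component of $\DRL^{1/\k}$ under consideration. This is precisely where the input from \ref{sec:generic_non-vanishing_residue} enters: the generic non-vanishing statement for sums of $\k$-th roots of $\k$-residues of the central differential $\phi_0|_{C_{v_0}}/\bd 1_{v_0}$ at any prescribed subset of its poles. Granting that result, for a generic $\ca C_p$ the sum above is non-zero, so the pairing vanishes if and only if $c_v=0$. Combined with the previous paragraph, $\bd c$ lies in the kernel of both $b_{>1}^\vee$ and all $b_{L_v}^\vee$ iff $c_v=0$ for every $v\in V^1$, as claimed.
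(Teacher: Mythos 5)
Your proof is correct and follows essentially the same route as the paper's: apply \ref{lem:res_pairing} over $\field[t]/t^2$ to each direct summand, observe that $\ell_e^{I'(e)} = 0$ whenever $I'(e)\ge 2$ so $b_{>1}^\vee$ imposes no condition, and use the defining relations of $L_v$ (the paper writes $a_{\gamma(e)}a_e = a_{e_v}$; you normalise $l_{e_v}=1$ so the products collapse to $1$ — the same cancellation) before invoking \ref{thm:non_vanishing_root_sum}. The only cosmetic difference is the normalisation of the generator of $L_v$.
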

\begin{proof}[Proof of \ref{thm:ker_b_v}]
 The elements $\bd c$ of the kernel of $b_\Omega^\vee \oplus b_\Gamma^\vee$ with $c_v=0$ for $v \in V^1$ are exactly those parametrized by the $c_v$ for $v \in V^{>1}$ appearing in the map from \ref{thm:ker_b_v}.
\end{proof}
\begin{proof}[Proof of \ref{lem:kernel_formula}]
By definition, the element $\bd c$ is in the kernel of $b_{>1}^\vee$ and $b_{L_v}^\vee$ iff it pairs to zero with the image of each element
\[a = (a_e)_e \in \bigoplus_{e:I'(e)>1} \field \oplus \bigoplus_v L_v\]
under the map 
\[b_{>1} \oplus \bigoplus_v b_{L_v} : \bigoplus_{e:I'(e)>1} \field \oplus \bigoplus_v L_v \to  H^1(\ca C_p, \ca O_{\ca C_p}).\]
But recall that $b_{>1}$ and $b_{L_v}$ come from tangent maps of the Abel-Jacobi section for first-order deformations of $\ca C_p$ locally smoothing various nodes of $\ca C_p$. This is exactly the situation analysed in \ref{lem:res_pairing}. To be more precise, we apply \ref{lem:res_pairing} for $A = \field$, $A' = \field[t]/t^2$, and $J = t\field$. Then from the definition of $b_{>1}^\vee$ and $b_{L_v}^\vee$ we see that the local smoothing parameters $\ell_e$ are exactly $\ell_e = a_e t \in J$ and we have 
\[\delta = (b_{>1} \oplus \bigoplus_v b_{L_v})(a) \otimes t \in H^1(\ca C_p, \ca O_{\ca C_p}) \otimes J.\]
We check that the pairing of $\bd c$ with $\delta$ vanishes for all choices of $a$ iff $c_v=0$ for all $v \in V^1$.
%
%
%
%
%

First, we claim that $b_{>1}^\vee$ vanishes on the kernel of $b_\Omega^\vee \oplus b_\Gamma^\vee$. Indeed, in the formula \ref{eq:big_res_comp} from \ref{lem:res_pairing}, all terms $\ell_e^{I'(e)} = a_e^{I'(e)} t^{I'(e)}$ vanish for $I'(e)>1$ since $t^2=0$. These are exactly the contribution of the direct summand $\bigoplus_{e:I'(e)>1} \field$ above on which $b_{>1}$ is defined.

Thus it remains to compute the pairing with the image of $b_{L_v}$. Let $v \in V^1$ be a vertex, so $I'(e)=1$ for each edge $e:v_0 \to v$. Let $\bd a_v = (a_e)_{e:v_0 \to v} \in L_v$, then from \ref{lem:res_pairing} the value of the pairing of $\bd c$ with $b_{L_v}(\bd a_v)$ is given by
\begin{equation} \label{eqn:bLvpairing}
- c_v \sum_{e:v_0 \to v}a_{\gamma(e)}a_et\on{Res}_{q_e}(\phi_0^{1/\k}|_{C_{v_0}}/\bd 1_{v_0}^{1/\k}) \in J. 
\end{equation}
Evidently this vanishes when $c_v=0$. We claim that for $\bd a_v \neq 0$ (and recalling that $p$ was generically chosen in the boundary component), the sum appearing in the formula is nonzero. Thus $\bd c$ pairs to zero with the image of $b_{L_v}$ if and only if $c_v=0$. 

To show that the sum in \ref{eqn:bLvpairing} is nonzero, recall that we fixed an edge $e_v : v_0 \to v$ before. For any edge $e$ from $v_0$ to $v$ we have by definition of $L_v$ that 
\begin{equation*}
a_{\gamma(e)}a_{e}=a_{e_v}. 
\end{equation*}
Hence to prove that the element \ref{eqn:bLvpairing} is nonzero for $c_v, a_{e_v} \neq 0$, we need to show that 
\begin{equation} \label{eqn:finalressum}
\sum_{e:v_0 \to v}\on{Res}_{q_e}(\phi_0^{1/\k}|_{C_{v_0}}/\bd 1_{v_0}^{1/\k}) \neq 0.
\end{equation}
But the summands above are just some choice of $\k$th roots of $\k$-residues\footnote{This terminology is recalled in \ref{sec:generic_non-vanishing_residue}.} for the meromorphic $\k$-differential $\phi_0|_{C_{v_0}}/\bd 1_{v_0}$ on $C_{v_0}$. Since $p$ was generically chosen in its component, we have that $C_{v_0}$ is generic in a suitable stratum of meromorphic $\k$-differentials. Then the non-vanishing of \ref{eqn:finalressum} follows from \ref{thm:non_vanishing_root_sum}. 
\end{proof}

\section{The length of the double ramification cycle}\label{sec:computing_length}

In this section we will compute the length of the Artin local ring obtained by localising $\DRL^{1/\k}$ at the generic point of the component. In \ref{lem:length_equals_multiplicity} we checked that this coincides with the multiplicity of the cycle $\DRC^{1/\k}$ at the corresponding point. 

To compute this length we first choose a combinatorial chart $\Mbar \longleftarrow U \longrightarrow \mathbb{A}^E = \on{Spec}\field[\ell_e:e \in E]$ inducing a chart $\Mdk_{I,U} \to \Mdk$ containing the general point $p$ of the component of $\DRL^{1/\k} \subset \Mdk$ we want to consider. We can assume that $U$ is affine, say $U \subset \bb A^M$ for some $M$. Recall that in this situation we have that $\Mdk_{I,U} \subset U \times \bb A^\Upsilon \subset \bb A^M \times \bb A^\Upsilon = \bb A^N$. For a generic linear subspace $H$ in $\bb A^N$ through our chosen point $p \in \DRL^{1/\k}$, where $H$ has codimension $2g-3+n$, we denote $\DRL'=(\DRL^{1/\k} \cap H)_p$ the intersection of $\DRL^{1/\k}$ with $H$, localized at our point $p$. Since $\dim \DRL^{1/\k} = 2g-3+n$, this is an Artin-local $\field$ scheme with residue field $\field$, and its length is exactly the length of $\DRL^{1/\k}$ along the component containing $p$, assuming $p$ sufficiently generically chosen in that component.

The chosen combinatorial chart induces a map from $\Mdk_{I,U}$ to $\bb A^E$ sending $p$ to the origin; composing with the inclusion of $\DRL'$ into $\Mdk_{I,U}$ induces a map $\DRL' \to \bb A^E$. Our goal for this section is to prove the following

\begin{theorem}\label{thm:structure_of_DR}
The map $\DRL' \to \bb A^E$ is a closed immersion, with image cut out by the ideal $(\ell_e^{I'(e)}:e \in E)$. 
\end{theorem}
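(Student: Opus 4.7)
The plan is to prove the theorem via functor-of-points: for every Artin local $\field$-algebra $A'$, the map $\DRL'(A') \to \bb A^E(A')$ is a bijection onto the subset $V(\ell_e^{I'(e)})(A')$. This functorial statement implies the scheme-theoretic claim. I proceed by induction on the length of $A'$, writing $A' = A_M$ as an iterated extension $\field = A_0 \to A_1 \to \cdots \to A_M = A'$ with $J_i = \ker(A_i \to A_{i-1})$ satisfying $J_i \cdot \fm_{A_i} = 0$. The inductive step reduces to the following key claim: given $f_i\colon \on{Spec} A_i \to \bb A^E$ whose restriction $f_{i-1}$ lifts uniquely to $g_{i-1} \in \DRL'(A_{i-1})$, a further extension $g_i \in \DRL'(A_i)$ of $g_{i-1}$ over $f_i$ exists and is unique if and only if $f_i^*(\ell_e^{I'(e)}) = 0$ in $A_i$ for every edge $e$ (by the inductive hypothesis this residual already lies in $J_i$).

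To analyse the obstruction at level $i$, first construct any lift $\tilde g_i\colon \on{Spec} A_i \to \Mdk$ of $g_{i-1}$ over $f_i$ that satisfies the linear equations cutting out $H$; this is possible by inspecting the toric equations of $\Mdk_{I,U}$ in \ref{eqn:toriccoordinates} combined with a dimension count against the first-order deformations in $J_i$ for generic $H$. The failure of $\tilde g_i$ to land in $\DRL^{1/\k}$ is measured by
\[
\delta \;:=\; \barsigma(\tilde g_i) - e(\tilde g_i) \;\in\; T_e\ca J_p \otimes J_i \;\cong\; H^1(\ca C_p, \ca O_{\ca C_p}) \otimes J_i,
\]
and the obstruction to further adjusting $\tilde g_i$ (preserving $f_i$ and the $H$-condition) so that $\delta$ vanishes lies in $\coker(b|_{U_{\mathrm{adj}}}) \otimes J_i$, where $U_{\mathrm{adj}} := \ker(T_p\Mdk \to T_0 \bb A^E) \cap T_p H$ is the space of admissible first-order adjustments. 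Since $U_{\mathrm{adj}}$ is contained in $H^1(\Gamma,\field) \oplus H^1(\ca C_p,\Omega^\vee(-P))$, \ref{lem:kernel_description} together with the genericity of $H$ identifies $\ker(b|_{U_{\mathrm{adj}}}^\vee)$ as the subspace $\bigoplus_{v \in V^{out}} \field \cdot (\phi_0/\bd 1_v)^{1/\k} \subset H^0(\ca C_p, \omega)$.

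The key computation is the pairing of $\delta$ against these basis elements, which by \ref{lem:res_pairing} equals, for each $v \in V^{out}$,
\[
-\sum_{e: v_0 \to v} a_{\gamma(e)}\, \ell_e^{I'(e)}\, \on{Res}_{q_e}\!\bigl(\phi_0^{1/\k}|_{C_{v_0}}/\bd 1_{v_0}^{1/\k}\bigr) \;\in\; J_i.
\]
For the \emph{if} direction, each such pairing vanishes automatically under the hypothesis $\ell_e^{I'(e)} = 0$, so $\delta$ lies in the image of $b|_{U_{\mathrm{adj}}}$ and the lift $g_i$ exists; uniqueness follows because two lifts differ by an element of $T_p\DRL' \otimes J_i$ projecting to zero in $T_0\bb A^E \otimes J_i$, and the injectivity of $T_p\DRL' \to T_0 \bb A^E$ (a consequence of the tangent space description of Section~3, \ref{thm:ker_b_v}, and the genericity of $H$) forces this difference to vanish. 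For the \emph{only if} direction, the existence of $g_i$ forces the obstruction to be zero; substituting the toric relation $\ell_e^{I'(e)} = a_{\gamma(e)}\,\ell_{e_v}^{I'(e_v)}$ from \ref{eqn:toriccoordinates} rewrites each equation as $\ell_{e_v}^{I'(e_v)} \cdot S_v = 0$, where $S_v$ is a weighted sum of $\k$-th roots of $\k$-residues, and \ref{thm:non_vanishing_root_sum} guarantees $S_v \neq 0$ at the generic point $p$, forcing $\ell_{e_v}^{I'(e_v)} = 0$ and hence, via the toric relations, $\ell_e^{I'(e)} = 0$ for every $e\colon v_0 \to v$.

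The principal obstacle in executing this plan is the careful bookkeeping of the three simultaneous constraints --- the DR condition, the slice condition imposed by $H$, and compatibility with the prescribed $f_i$ --- throughout the deformation-theoretic calculation, in particular verifying that $U_{\mathrm{adj}}$ is generic enough that $\ker(b|_{U_{\mathrm{adj}}}^\vee)$ really acquires the advertised form and that the pairing of \ref{lem:res_pairing} applies with the correct normalization. Equally delicate is confirming the non-vanishing $S_v \neq 0$ via \ref{thm:non_vanishing_root_sum} uniformly over all $v \in V^{out}$, including the vertices in $V^1$ with mixed twists that required the separate treatment of Section~4.4. Assuming these fit together as expected, the inductive machine closes and yields $\DRL' \cong V(\ell_e^{I'(e)})$, proving the theorem.
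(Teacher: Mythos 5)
Your proposal follows essentially the same strategy as the paper's proof: iterated extension of Artin rings, with the lifting obstruction at each stage computed via the residue pairing of \ref{lem:res_pairing} and controlled by the generic non-vanishing of sums of $\k$-th roots of $\k$-residues (\ref{thm:non_vanishing_root_sum}). The paper packages the conclusion a bit differently --- it proves existence of lifts (\ref{lem:lifting_B_point}, via \ref{lem:constrained_lifting}) and then deduces the closed immersion from tangent-space dimension matching (\ref{thm:dim_T_p_DR}) and unramifiedness, and it handles the generic slice by first lifting into $\DRL^{1/\k}$ and then post-adjusting by $W\otimes J$ (and also treats separately, via \ref{lem:MAA_not_empty}, the possibility that no initial lift $\tilde g_i$ to $\Mdk$ exists at all, a case your proposal glosses over) rather than folding the $H$-constraint into an adjustment space $U_{\mathrm{adj}}$ from the start --- but the key lemmas and the overall inductive structure coincide.
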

An immediate corollary of this theorem is that the length of $\DRL^{1/\k}$ at $p$ is given by $\prod_e I'(e)$.

We will deduce the theorem from the next lemma, for which we need a little notation. Set $R= \field[\ell_e:e\in E] = \ca O_{\bb A^E}(\bb A^E)$, and let $\frak b \triangleleft R$ be an ideal containing some power of $\frak m \coloneqq (\ell_e:e\in E)$. Let $B = R/\mathfrak b$ (we think of this ideal as a $B$-point of $\bb A^E$). 

\begin{lemma}\label{lem:lifting_B_point}
This $B$-point of $\bb A^E$ lifts to a $B$-point of $\DRL'$ if and only if $\frak b$ contains $\ell_e^{I'(e)}$ for every $e$. 
\end{lemma}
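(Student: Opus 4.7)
My plan is to prove \ref{lem:lifting_B_point} by induction on the length of $B$ as an Artin local $\field$-algebra. Decompose $B$ as a tower $\field = B_0 \leftarrow B_1 \leftarrow \cdots \leftarrow B_M = B$ of small extensions with one-dimensional kernels $J_i = \ker(B_{i+1} \to B_i)$ annihilated by $\mathfrak m_{B_{i+1}}$; the base case is immediate, and the inductive step is where all the work lies. The common thread in both directions is the residue formula of \ref{lem:res_pairing}, whose hypothesis $\ell_e^{I'(e)} \in J_i$ is exactly supplied by the inductive assumption that the lemma holds for $B_i$.

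For the $(\Rightarrow)$ direction, a $B_{i+1}$-point of $\DRL'$ gives a stable curve $\ca C_{B_{i+1}}/B_{i+1}$ together with a global trivialising section $\phi_{B_{i+1}}$ of $\omega^\k(-\m P) \otimes \ca T$; the class measuring the failure of a chartwise lift of $\phi_{B_i}$ to glue globally is an element $\delta \in H^1(\ca C_p, \ca O_{\ca C_p}) \otimes J_i$ which must therefore vanish. Pairing $\delta$ via Serre duality with the basis $(\phi_0/\bd 1_v)^{1/\k}$ for $v \in V^{>1}$ of $(\on{coker}\, b)^\vee$ (given by \ref{thm:ker_b_v}), and applying \ref{lem:res_pairing}, yields for each $v \in V^{>1}$ the identity
\begin{equation*}
\sum_{e\colon v_0 \to v} a_{\gamma(e)}\, \ell_e^{I'(e)}\, \on{Res}_{q_e}\!\bigl(\phi_0^{1/\k}|_{C_{v_0}}/\bd 1_{v_0}^{1/\k}\bigr) = 0 \quad \text{in } J_i.
\end{equation*}
Combining with the toric relations $a_{\gamma(e_i,e_j)}\,\ell_{e_j}^{I'(e_j)} = \ell_{e_i}^{I'(e_i)}$ defining $\Mdk_{I,U}$ and the cycle identity $a_{\gamma(e_i)}\, a_{\gamma(e_i,e_j)} = a_{\gamma(e_j)}$, the left-hand side rewrites as $a_{\gamma(e_j)} \ell_{e_j}^{I'(e_j)} \cdot \sum_{e\colon v_0 \to v} \on{Res}_{q_e}(\phi_0^{1/\k}|_{C_{v_0}}/\bd 1_{v_0}^{1/\k})$. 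The non-vanishing theorem of \ref{sec:generic_non-vanishing_residue}, applied to the generic $\k$-differential on $C_{v_0}$, shows that this residue sum is a nonzero scalar, so $\ell_{e_j}^{I'(e_j)} = 0$ in $J_i$ for every edge $e_j$ incident to a $V^{>1}$ vertex. Edges with $I'(e) = 1$ (necessarily incident to $V^1$ vertices) are handled already at first order: by \ref{thm:dim_T_p_DR} combined with the generic choice of $H$, the map $T_p \DRL' \to T_u \bb A^E$ factors through $\{a_e = 0 : I'(e) = 1\}$, so $\ell_e$ lies in the ideal defining $\DRL'$ for every such edge.

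For the $(\Leftarrow)$ direction, assume $\ell_e^{I'(e)} \in \mathfrak b$ for every $e$. Choose an arbitrary lift of $\phi_{B_i}$ to a chartwise section over $B_{i+1}$ and form the corresponding obstruction class $\delta$. Because every $\ell_e^{I'(e)}$ vanishes in $B_{i+1}$, the formula of \ref{lem:res_pairing} shows that the Serre pairing of $\delta$ against each element of $(\on{coker}\, b)^\vee$ vanishes, so $\delta$ lies in the image of $b$. Consequently the chartwise lift can be adjusted by a cocycle in the image of the tangent map to produce a genuine global trivialisation, giving a $B_{i+1}$-point of $\DRL^{1/\k}$. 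Since $H$ was chosen generically and the ambient adjustment is linear, the adjusted lift can be further translated within $\ker b$ to respect $H$, yielding the desired $B_{i+1}$-point of $\DRL'$.

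The main obstacle, as I see it, is the crucial rewriting step in the $(\Rightarrow)$ direction: the $\#V^{>1}$ residue identities furnished by \ref{lem:res_pairing} are far fewer than the number of unknowns $\ell_e^{I'(e)}$, so on their own they cannot force termwise vanishing. It is only when these identities are combined with the toric defining equations of $\Mdk_{I,U}$ (which tie the $\ell_e^{I'(e)}$ incident to a common vertex together via invertible $a_{\gamma(e,e')}$) that the sum at each vertex collapses to a single multiple of $\ell_e^{I'(e)}$, to which the generic non-vanishing theorem of \ref{sec:generic_non-vanishing_residue} can be applied. This interplay between the deformation-theoretic residue calculus and the combinatorial structure of $\Mdk$ is where the genuine content of the argument resides.
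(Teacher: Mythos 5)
Your broad strategy -- induction up a tower of small extensions, applying the residue pairing of \ref{lem:res_pairing} at each step, and using the toric relations of $\Mdk_{I,U}$ together with the generic non-vanishing of $\k$-residue sums -- is the right one, and it is essentially the paper's strategy. But there is a genuine gap in your identification of the obstruction space, which then forces you into an incorrect patch for the edges with $I'(e)=1$.

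\textbf{The obstruction lives in $\coker\alpha$, not $\coker b$.} You pair $\delta$ only against $\ker(b^\vee)$, i.e.\ $(\coker b)^\vee$, indexed by $V^{>1}$ via \ref{thm:ker_b_v}. But the question in \ref{lem:lifting_B_point} is about lifting a fixed map $\on{Spec}B_{i+1}\to\bb A^E$. The induced point of $\bb A^E$ already pins down all the $\ell_e$; the only freedom in choosing a lift to $\Mdk_{I,U}$ is to deform in the $H^1(\ca C_p,\Omega^\vee(-P))\oplus H^1(\Gamma,\field)$ directions. The relevant tangent map is therefore $\alpha = b_\Gamma\oplus b_\Omega$, and $\Phi(\mu)$ must pair to zero against $\ker(\alpha^\vee)$, which by \ref{lem:kernel_description} is indexed by \emph{all} of $V^{out}$, not only $V^{>1}$. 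Note that \ref{lem:res_pairing} is in fact stated for exactly this larger space of $\bd c\in\ker(b_\Omega^\vee\oplus b_\Gamma^\vee)$, so nothing new is needed: you get the same residue identity at each $v\in V^1$, which after the toric rewriting and the non-vanishing result forces $\ell_{e}=0$ for every $e$ incident to a $V^1$ vertex as well. The same correction is needed in the $(\Leftarrow)$ direction: showing $\delta\in\mathrm{im}\,b$ is not the same as $\delta\in\mathrm{im}\,\alpha$; only a $\delta$ in $\mathrm{im}\,\alpha$ can be cancelled by an adjustment that respects the map to $\bb A^E$.

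\textbf{The tangent-space argument for $V^1$ edges is a non sequitur.} You claim that because $T_p\DRL'\to T_u\bb A^E$ factors through $\{a_e=0: I'(e)=1\}$ (which, after a dimension count using \ref{thm:dim_T_p_DR}, is indeed true), the element $\ell_e$ must lie in the ideal of $\DRL'$ for such $e$. This implication is false in general: $\ell_e$ vanishing on the Zariski tangent space means only $\ell_e\in\frak m_{\DRL'}^2$, which does not give $\ell_e=0$ in an Artin local ring (consider $\on{Spec}\field[x]/(x^3)$, where $x\in\frak m^2$ fails but $x\in\frak m$ holds, and $x\ne 0$). The conclusion you need -- $\ell_e=0$ in $B_{i+1}$ -- has to come from the residue identity at $v\in V^1$, and it is precisely the omission of those identities (by using $\ker(b^\vee)$ rather than $\ker(\alpha^\vee)$) that creates the hole.

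Once $\ker(\alpha^\vee)$ is used throughout, the separate handling of $V^1$ disappears and your argument is isomorphic to the paper's (which also runs an inductive tower, though it uses the $\frak m$-adic filtration $R/(\frak m^r+\frak b)$ rather than a tower of one-dimensional small extensions -- both choices work, and yours is a legitimate refinement).
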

\begin{proof}[Proof of \ref{thm:structure_of_DR} assuming \ref{lem:lifting_B_point}]
Applying \ref{lem:lifting_B_point} with $\frak b =  (\ell_E^{I'(e)}:e \in E)$ we see that the map $\DRL' \to \on{Spec}R/(\ell_E^{I'(e)}:e \in E)$ has a section, so in particular it is surjective on tangent spaces.
Since the tangent spaces have the same dimension by \ref{thm:dim_T_p_DR}, the map is necessarily bijective on tangent spaces.

Now, since $\DRL'$ and $\bb A^E$ are evidently (locally) of finite presentation over $\field$, by  \cite[\href{https://stacks.math.columbia.edu/tag/00UV}{Tag 00UV (8)}]{stacks-project} injectivity of the tangent map implies that  $\DRL' \to \bb A^E$ is unramified. Since being a closed immersion is \'etale-local on the target, we conclude that $\DRL' \to \bb A^E$ is a closed immersion by applying  \cite[\href{https://stacks.math.columbia.edu/tag/00UY}{Tag 00UY}]{stacks-project} together with the fact that the source is Artin local.

%

It is then clear from another application of \ref{lem:lifting_B_point} that the image is cut out exactly by $(\ell_E^{I'(e)}:e \in E)$. 
\end{proof}

We want to apply deformation theory to prove \ref{lem:lifting_B_point}, but the kernel of $B \to \field; \ell_e \mapsto 0$ is not necessarily killed by $\frak m$. So we decompose it into steps. For every integer $r \ge 2$ we have a short exact sequence
\begin{equation*}
0 \to \frac{\frak m^{r-1} +  \frak b}{\frak m^r + \frak b} \to \frac{R}{\frak m^r + \frak b} \to \frac{R}{\frak m^{r-1} + \frak b} \to 0. 
\end{equation*}
Fixing some $r \ge 2$, denote the non-zero terms in the above sequence by $J$, $A'$ and $A$ respectively (remembering the $R$-algebra structures of the latter two), then we have a surjection $A' \to A$ of Artin local $\field$-algebras with residue field $\field$, and the kernel $J$ is killed by the maximal ideal of $A'$. 

Suppose we are given an $A$-point of $\DRL'$. We want to understand when this lifts to an $A'$-point of $\DRL'$ (again, as a map over $\on{Spec} R)$. We say an $R$-algebra $B$ is \emph{$I$-constrained} if for every edge $e$ the element $\ell_e^{I'(e)}$ maps to zero in $B$. Note that $\field = R/\frak m$ is automatically $I$-constrained. 

\begin{lemma}\label{lem:constrained_lifting}
Suppose $A$ is $I$-constrained. Then the given $A$-point of $\DRL'$ lifts to an $A'$-point of $\DRL'$ (over $\on{Spec} R$) if and only if $A'$ is $I$-constrained. 
\end{lemma}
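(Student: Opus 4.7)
The plan is to prove both directions of the equivalence separately, using the deformation-theoretic toolkit developed in Section \ref{sec:coker_of_AJ_map}. The central input will be the residue-pairing formula of \ref{lem:res_pairing}, which applies here because $A$ is $I$-constrained (so $\ell_e^{I'(e)} \in J$ for every $e$), together with the kernel description from \ref{lem:kernel_description}.

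For $(\Leftarrow)$, I will assume $A'$ is $I$-constrained and build the lift in three stages. First I would observe that every monomial $\ell_e^{|M_{e,f}|}$ appearing in a defining equation $\Psi_f$ of $\Mdk_{I,U}$ either equals $1$ or contains $\ell_e^{I'(e)}$ as a factor, since $|M_{e,f}|$ is always a non-negative integer multiple of $I'(e)$ (as already used in the proof of \ref{lem:deg_of_normalisation}). Hence every such equation holds in $A'$, and using smoothness of $U \to \bb A^E$ an $A'$-point of $\Mdk$ lifting the given $A$-point over the fixed $A'$-point of $\bb A^E$ exists, with the space of such lifts being a torsor under $K_0 \otimes J$, where $K_0 = \ker(T_p \Mdk \to T_u \bb A^E) = H^1(\Gamma, \field) \oplus H^1(\ca C_p, \Omega^\vee(-P))$. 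Second, I would adjust the lift into $\DRL^{1/\k}$: the obstruction lies in $H^1(\ca C_p, \ca O_{\ca C_p}) \otimes J$ modulo the image of $(b_\Gamma \oplus b_\Omega) \otimes \mathrm{id}_J$ and, by \ref{lem:kernel_description} combined with \ref{lem:res_pairing}, pairs to a $\field$-linear combination of the $\ell_e^{I'(e)}$'s, all of which vanish by hypothesis. Third, I would exploit that the remaining freedom $\ker(b|_{K_0}) \otimes J$ has $\field$-dimension $2g-3+n$, exactly matching the codimension of the generic slice $H$, so a transversality argument produces the final adjustment putting the lift inside $H$.

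For $(\Rightarrow)$, I will assume a lift exists and deduce that $A'$ must be $I$-constrained. Existence of the lift means that for every outlying vertex $v$ and every pair $e,e' \in E_v$, the $2$-cycle equation $a_{\gamma(e,e')} \ell_{e'}^{I'(e')} = \ell_e^{I'(e)}$ holds in $A'$. Since both sides lie in $J$ and $J \cdot \frak m_{A'} = 0$, this collapses to the $J$-relation $\ell_e^{I'(e)} = \ell_{e_v}^{I'(e_v)}/(a_{\gamma(e)})_p$. The vanishing of the double-ramification obstruction along the lift, read off from \ref{lem:res_pairing}, yields $\sum_{e \in E_v} (a_{\gamma(e)})_p \, r_e \, \ell_e^{I'(e)} = 0$ for every $v \in V^{out}$, where $r_e = \on{Res}_{q_e}(\phi_0^{1/\k}|_{C_{v_0}}/\bd 1_{v_0}^{1/\k})$. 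Substituting the $2$-cycle relations collapses this to $\ell_{e_v}^{I'(e_v)} \sum_{e \in E_v} r_e = 0$, and the generic non-vanishing of $\sum_{e \in E_v} r_e$ --- the result of \ref{thm:non_vanishing_root_sum} already invoked in \ref{lem:kernel_formula} --- forces $\ell_{e_v}^{I'(e_v)} = 0$ for every $v \in V^{out}$ and then $\ell_e^{I'(e)} = 0$ for every $e \in E$ by the $2$-cycle relations, so $A'$ is $I$-constrained.

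The hardest part, I expect, will be the bookkeeping in the final step of $(\Leftarrow)$: one must coordinate the three conditions (being in $\Mdk$, in $\DRL^{1/\k}$, and in $H$) through a single coherent choice of lift, rather than by three independent adjustments that could undo one another. Keeping careful track of the freedom remaining at each stage, and verifying that for generic $H$ the restricted system of linear functionals cutting out $H$ is an isomorphism on $\ker(b|_{K_0})$, should handle this cleanly.
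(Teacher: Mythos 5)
Your proposal follows essentially the same route as the paper: the paper packages the argument through the intermediate \ref{lem:MAA_not_empty} and \ref{lem:lift_after_translate} (the $\Psi_f$ trivializing, the pseudotorsor under $H^1(\Gamma,K)\oplus H^1(\ca C_p,\Omega^\vee(-P))$ tensored with $J$, the residue-pairing formula of \ref{lem:res_pairing} combined with \ref{lem:kernel_description}, and the non-vanishing of \ref{thm:non_vanishing_root_sum}), and then concludes by the same transversality adjustment using a linear projection onto $W=T_p\DRL^{1/\k}_{\mathrm{red}}$ complementary to $H$. Your split into the two implications and your three-stage lift are just a reordering of the same ingredients, and the worry you flag about the final coordination of the three conditions is precisely what the paper resolves by noting that shifting by $W\otimes J$ changes neither the map to $\bb A^E$ nor the composite with the Abel--Jacobi map.
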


\begin{proof}[Proof of \ref{lem:lifting_B_point} assuming \ref{lem:constrained_lifting}]
By induction on $r$, \ref{lem:constrained_lifting} shows that the given $\field$-point of $\DRL'$ lifts to an $R/\frak b$-point if and only if $R/\frak b$ is $I$-constrained, i.e. all $\ell_e^{I'(e)}$ lie in $\frak b$. 
\end{proof}


We fix now an $I$-constrained $A$-point of $\DRL'$ and a compatible $A'$-point of $\bb A^E$. We write $M(A, A')$ for the set of liftings of this $A'$-point to $\Mdk_{I,U}$ which are still compatible with the given $A$-point of $\DRL'$. By \ref{Lem:Apseudotors}, the set $M(A, A')$ is naturally a pseudotorsor under the group $H^1(\ca C_p, \Omega^\vee(-P)\otimes J) \oplus H^1(\Gamma, J)$; the $H^1(\ca C_p, \Omega^\vee(-P)\otimes J)$ term parametrises lifts from $\bb A^E$ to $U$, and $H^1(\Gamma, J)$ parametrises lifts along the map $\Mdk_{I,U} \to U$ (cf. \ref{sec:TpMd}).


\begin{lemma}\label{lem:MAA_not_empty}
If $A'$ is $I$-constrained then the pseudotorsor $M(A, A')$ is a torsor (i.e. is nonempty). 
\end{lemma}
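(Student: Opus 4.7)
The plan is to exploit the explicit description of $\Mdk_{I,U}$ as a closed subscheme of $U \times \bb A^\Upsilon$ cut out by the equations $\Psi_f = 0$ from \ref{eqn:toriccoordinates}, and to construct a lift by hand. Since $M(A,A')$ is a pseudotorsor, nonemptiness is all that is needed. The given $A$-point of $\DRL' \sub \Mdk_{I,U}$ assigns values $\bar a_\gamma \in A^\times$ to the cycles $\gamma \in \Upsilon$ and $\bar\ell_e \in A$ to the edges, satisfying all $\Psi_f = 0$ in $A$; the $A'$-point of $\bb A^E$ prescribes lifts $\ell_e \in A'$ of $\bar\ell_e$. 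Using that $U \to \bb A^E$ is smooth (hence formally smooth) we can first lift the induced $A$-point of $U$ to an $A'$-point of $U$ compatibly, so the only remaining task is to exhibit lifts $a_\gamma \in A'$ of $\bar a_\gamma$ satisfying the equations $\Psi_f = 0$ in $A'$.

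The construction mirrors the strategy used in the proof of \ref{lem:lci}. Fix a spanning tree of $\Gamma$ and the associated basis $\Upsilon' \sub \Upsilon$ of $H^1(\Gamma,\bb Z)$. For each $\gamma \in \Upsilon'$ I pick an arbitrary lift $a_\gamma \in A'$ of $\bar a_\gamma$; because $\bar a_\gamma \in A^\times$ and $A'$ is local with residue field $\field$, the lift is automatically a unit. For every other $\gamma \in \Upsilon$, the equation $\Psi_{f_\gamma - \delta_\gamma}=0$ involves no variables $\ell_e$ (by Step 1 of the proof of \ref{lem:lci}) and expresses $a_\gamma$ as a monomial in the $a_{\gamma'}^{\pm 1}$ for $\gamma' \in \Upsilon'$; I use this to define the remaining $a_\gamma$. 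This automatically satisfies every $\Psi_{f - \delta_{\gamma}}$-type relation, i.e. the collection of equations that encode multiplicativity of $\gamma \mapsto a_\gamma$.

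The crux is to verify the remaining equations $\Psi_{\delta_\gamma} = 0$ for $\gamma \in \Upsilon'$. Since $\Gamma$ is a simple star, each such $\gamma$ consists of two edges $e, e'$ from the central vertex to a common outlying vertex, and the equation reads
\begin{equation*}
a_\gamma \, \ell_{e'}^{I'(e')} \;=\; \ell_{e}^{I'(e)}.
\end{equation*}
By hypothesis $A'$ is $I$-constrained, i.e.\ $\ell_{e}^{I'(e)} = 0$ in $A'$ for every edge $e$, so both sides of this equation vanish and it holds trivially. Invoking Step 4 of the proof of \ref{lem:lci}, every other $\Psi_f$ lies in the ideal generated by the $\Psi_{f_\gamma - \delta_\gamma}$ and the $\Psi_{\delta_\gamma}$ for $\gamma \in \Upsilon'$, and hence also vanishes in $A'$.

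Thus the chosen $(a_\gamma)$ together with the prescribed $(\ell_e)$ define an $A'$-point of $\Mdk_{I,U}$ reducing mod $J$ to the given $A$-point of $\DRL'$, showing $M(A,A') \neq \emptyset$. I do not expect a serious obstacle here: the argument is essentially bookkeeping with the explicit toric equations, and the $I$-constrained hypothesis was designed precisely so that the equations $\Psi_{\delta_\gamma}=0$ become vacuous. The real content of the lifting story—where the kernel-of-$b^\vee$ computation and the residue formula \ref{lem:res_pairing} come into play—is in the subsequent step of lifting further into $\DRL'$ itself, which is why the present lemma is treated separately and kept short.
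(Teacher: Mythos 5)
Your proposal is correct and follows essentially the same approach as the paper's proof: lift the $A'$-point of $\bb A^E$ to $U$ by formal smoothness, then produce the $(a_\gamma)$ coordinates by exploiting the fact that the $I$-constrained hypothesis makes every equation $\Psi_f$ with a nonzero $\ell_e$-exponent vanish automatically. The paper arrives at this a little more directly (observing that each nonzero exponent $M_{e,f}$ is a multiple of $I'(e)$, so every $\ell_e$-term already dies), whereas you route the bookkeeping through the basis $\Upsilon'$ and Steps 1--4 of the proof of \ref{lem:lci}, but the underlying mechanism is the same.
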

\begin{proof}
Firstly, since by assumption the map $U \to \bb A^E$ from our combinatorial chart is smooth, we can always lift the $A'$-point of $\bb A^E$ to an $A'$-point of $U$ compatible with the $A$-point of $U$ induced by $\on{Spec} A \to \DRL'$ (\cite[\href{https://stacks.math.columbia.edu/tag/02H6}{Tag 02H6}]{stacks-project}).
 In the solid diagram
\begin{equation}
\begin{tikzcd}
  \on{Spec} A \arrow[r] \arrow[d] & \Mdk_{I,U} \arrow[d] \\
  \on{Spec} A' \arrow[r] \arrow[ur, dashed] & U\\
\end{tikzcd} 
\end{equation}
we need to show that a dashed arrow exists. To see this, recall from \ref{sect:affinepatches} the equations \ref{eqn:toriccoordinates} cutting out $\Mdk_{I,U} \subset U \times \mathbb{A}^{\Upsilon}$. Lifting the map $\on{Spec} A' \to U$ to $\Mdk_{I,U}$ requires specifying elements $(a_\gamma')_{\gamma \in \Upsilon}$ in $A'$ lifting the elements $(a_\gamma)_{\gamma \in \Upsilon}$ in $A$ coming from the map $\on{Spec} A \to \Mdk$ and still satisfying \ref{eqn:toriccoordinates}. But note that, as described in  \ref{sect:affinepatches}, in the equation \ref{eqn:toriccoordinates} we must substitute for $a_e$ the image of $l_e$ in $A'$. Since $A'$ is $I$-constrained and since all exponents $M_e$ are divisible by $I'(e)$ it turns out that all of the defining equations, except for $a_\gamma' a_{i(\gamma)}'=1$, become trivial. So indeed we can choose any lift $(a_\gamma')_{\gamma \in \Upsilon}$ of $(a_\gamma)_{\gamma \in \Upsilon}$ satisfying these equations. \end{proof}

\begin{lemma}\label{lem:lift_after_translate}
Assume that $M(A, A')$ is non-empty, and choose an element $\mu \in M(A, A')$. Then $A'$ is $I$-constrained if and only if there exists an element $\delta \in H^1(\ca C_p, \Omega^\vee(-P)\otimes J) \oplus H^1(\Gamma, J)$ such that $\mu + \delta\colon \on{Spec}A' \to \Mdk$ lands in $\DRL^{1/\k}$. 
\end{lemma}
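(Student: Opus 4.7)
The plan is to translate the question into a linear-algebraic obstruction in $J$ via deformation theory, and then invoke the residue non-vanishing of \ref{thm:non_vanishing_root_sum}. I would first note that since the $A$-point lies in $\DRL^{1/\k}$, the difference $\delta' := \bar\sigma \circ \mu - e \circ \mu \in \ca J(A')$ restricts to zero over $A$, and hence---using $J \cdot \fm_{A'} = 0$ and the smoothness of $\ca J$ over $\Mbar$---defines an element of $T_e \ca J_p \otimes_\field J = H^1(\ca C_p, \ca O_{\ca C_p}) \otimes_\field J$ which vanishes exactly when $\mu$ factors through $\DRL^{1/\k}$.

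Second, translating $\mu$ by $\delta$ modifies $\bar\sigma \circ \mu$ by $(b_\Omega \oplus b_\Gamma)(\delta)$, so a correcting $\delta$ exists if and only if $\delta'$ lies in $\on{im}(b_\Omega \oplus b_\Gamma) \otimes_\field J$. By Serre duality (as already used in \ref{lem:kernel_intersection}) this is equivalent to $\langle \bd c, \delta'\rangle = 0$ in $J$ for every $\bd c \in \ker(b_\Omega^\vee \oplus b_\Gamma^\vee)$. Using \ref{lem:kernel_description} to parametrise this kernel by $(c_v)_{v \in V^{out}} \in \field^{V^{out}}$, and noting that the $I$-constrainedness of $A$ supplies the hypothesis $\ell_e^{I'(e)} \in J$ of \ref{lem:res_pairing}, I would apply that lemma to obtain, for each $v \in V^{out}$, the equation
\[
\sum_{e : v_0 \to v} a_{\gamma(e)} \, R_e \, \ell_e^{I'(e)} \;=\; 0 \quad \text{in } J,
\]
where $R_e = \on{Res}_{q_e}\bigl(\phi_0^{1/\k}|_{C_{v_0}}/\bd 1_{v_0}^{1/\k}\bigr)$.

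The forward implication is then immediate: if $A'$ is $I$-constrained then every $\ell_e^{I'(e)}$ vanishes in $A'$, hence in $J$, and the displayed equation holds trivially. The hard part will be the converse, where I must argue that the vanishing of these $\#V^{out}$ per-vertex equations forces all $\#E$ elements $\ell_e^{I'(e)}$ to vanish in $A'$. The main ingredient is \ref{thm:non_vanishing_root_sum}: at a generic DR point $p$ the coefficients $a_{\gamma(e)}(p) R_e$ are scaled $\k$-th roots of $\k$-residues for which no non-trivial $\field$-linear relation holds generically, ruling out cancellation among the $\ell_e^{I'(e)}$. Combined with the fact that the $\ell_e$ are algebraically independent coordinates on $\bb A^E$---so that the pure-power monomials $\ell_e^{I'(e)}$ descend to an independent family in the graded pieces of $A'$ with respect to $\fm$---this should yield the vanishing of each individual $\ell_e^{I'(e)}$ from the per-vertex relations, which is precisely the $I$-constrainedness of $A'$.
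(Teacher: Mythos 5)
The reduction to the residue calculation is set up correctly and matches the paper: you identify $M(A,A')$ as a pseudotorsor, translate the existence of a corrective $\delta$ into $\Phi(\mu)$ pairing to zero with $\ker(b_\Omega^\vee \oplus b_\Gamma^\vee)$, and correctly invoke \ref{lem:kernel_description} and \ref{lem:res_pairing} to reach the per-vertex equations
\[
\sum_{e : v_0 \to v} a_{\gamma(e)} \, R_e \, \ell_e^{I'(e)} \;=\; 0 \quad \text{in } J,\qquad v \in V^{out}.
\]
The forward direction is also fine.

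The converse, however, has a genuine gap, and the final step you propose would fail. First, the assertion that the $\ell_e^{I'(e)}$ ``descend to an independent family in the graded pieces of $A'$'' is not a consequence of the $\ell_e$ being coordinates on $\bb A^E$: the quotient $A'$ is arbitrary subject to the $I$-constraint on $A$, and nothing stops relations like $\ell_e - \ell_{e'} = 0$ from holding in $A'$. In fact the relevant point is the exact \emph{opposite} of independence: because $\mu$ is an $A'$-point of $\Mdk_{I,U}$, the defining equations \ref{eqn:agammasimple} of that chart force
\[
a_{\gamma(e)}\,\ell_e^{I'(e)} \;=\; \ell_{e_v}^{I'(e_v)} \quad \text{in } A'
\]
for every edge $e : v_0 \to v$; all the $\ell_e^{I'(e)}$ for a fixed outlying vertex $v$ are unit multiples of one another. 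This is what lets you factor $\ell_{e_v}^{I'(e_v)}$ out of the per-vertex sum to get $\ell_{e_v}^{I'(e_v)} \cdot \sum_{e} R_e = 0$, at which point \ref{thm:non_vanishing_root_sum} kills $\ell_{e_v}^{I'(e_v)}$, and then the same relations kill every $\ell_e^{I'(e)}$. Second, you also misread \ref{thm:non_vanishing_root_sum}: it asserts non-vanishing of the \emph{sum} $\sum_e R_e$, not the absence of linear relations among the $R_e$ or their individual non-vanishing. So even under your (false) independence hypothesis, the per-vertex equation would only give $R_e\,\ell_e^{I'(e)} = 0$, which does not imply $\ell_e^{I'(e)} = 0$ when $R_e$ happens to vanish. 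The missing ingredient is precisely the use of the equations of $\Mdk_{I,U}$ to collapse the per-vertex sum to a single unknown times a nonzero scalar.
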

\begin{proof}[Proof of \ref{lem:constrained_lifting} assuming \ref{lem:lift_after_translate}]
If $M(A, A')$ is empty then by \ref{lem:MAA_not_empty} we see that $A'$ is not $I$-constrained, and clearly no lift to an $A'$-point of $\DRL'$ exists. Hence we may as well assume $M(A, A')$ to be non-empty, hence a torsor under $H^1(\ca C_p, \Omega^\vee(-P)\otimes J) \oplus H^1(\Gamma, J)$. 

If $A'$ is not $I$-constrained then \ref{lem:lift_after_translate} shows that \emph{no} element of $M(A, A')$ lands in $\DRL^{1/\k}$. 

If $A'$ is $I$-constrained then by \ref{lem:lift_after_translate} there exists an element $\mu' \in M(A, A')$ which lands in $\DRL^{1/\k}$. To finish the proof we now need to show that given an element in $M(A,A')$ contained in $\DRL^{1/\k}$ we can construct another element of $M(A, A')$ which is also contained in $\DRL'$.


Recall that we have an \'etale coordinate chart $U \to \Mbar$ where $U \subset \bb A^M$ for some $M$. We have $\Mdk_{I,U} \subset U \times \bb A^\Upsilon \subset \bb A^M \times \bb A^\Upsilon = \bb A^N$ and by an affine linear transformation we can assume that our chosen point $p \in \DRL^{1/\k} \subset \Mdk_{I,U}$ maps to $0 \in \bb A^N$. Then we can obtain $\DRL'$ by intersecting $\DRL^{1/\k} \subset \Mdk_{I,U}$ with a generic linear subspace $H$ through the origin of codimension $2g-3+n$ (and localizing at $p$). Denote $W = T_0 \DRL_{\mathrm{red}}^{1/\k} \subset T_0 \bb A^N$ the tangent space to the reduced double ramification cycle, which we consider as a linear subspace of $\bb A^N$. Recall that since $\DRL^{1/\k}$ has dimension $2g-3+n$, the space $W$ also has dimension $2g-3+n$. As $H$ was assumed generic and the two linear subspaces $W,H$ are of complementary dimensions, there exists a linear projection $h:\bb A^N \to W$ with $h|_W = \on{id}_W$ and $h^{-1}(0)=H$. 

By assumption we have a lift  $\on{Spec}(A') \to \DRL^{1/\k} \subset \Mdk_{I,U}$ of the given $A$-point of $\DRL'$. For this to lie in $\DRL'=(\DRL^{1/\k} \cap H)_p$ we want that the composition $\phi: \on{Spec}(A') \to \Mdk_{I,U} \xrightarrow{h} W$ is zero. We know this is true on $\on{Spec}(A)$, since $\on{Spec}(A)$ factored through $\DRL'$. Thus the difference between $\phi$ and the zero map is an element $\epsilon \in (T_0 W) \otimes J$. But note we can shift our map $\on{Spec}(A') \to  \DRL^{1/\k} \subset \Mdk_{I,U}$ around by elements of $(T_0 W) \otimes J \subset (T_0 \Mdk_{I,U}) \otimes J$. Indeed, tangent vectors to the reduced $\DRL^{1/\k}$-component are locally trivial deformations (the reduced $\DRL^{1/\k}$-component is contained in the preimage of the boundary of $\Mbar$), so the shift by $W \otimes J$ does not change the map to $\bb A^E$ we want to lift. Also, clearly it does not change the composition with the Abel-Jacobi map, so we stay in $\DRL^{1/\k}$. But note that the tangent map $(T_0 W) \otimes J \subset (T_0 \Mdk_{I,U}) \otimes J \xrightarrow{Th} (T_0 W) \otimes J$ of $h$ is the identity, since $h$ was assumed to restrict to the identity of $W$. So indeed, we can shift our map $\on{Spec}(A') \to \DRL^{1/\k}$ by $-\epsilon \in (T_0 W) \otimes J$, to obtain an $A'$-point of $\DRL'$.
\end{proof}

\begin{proof}[Proof of \ref{lem:lift_after_translate}]
The sections $e$ and $\barsigma$ of the universal jacobian $\ca J$ induce a map $\Phi\colon M(A, A') \to T_e\ca J_p \otimes_\field J$, which is a pseudotorsor under the map
\begin{equation*}
\alpha = (b_\Omega \oplus b_\Gamma)\otimes \mathrm{id}_J \colon H^1(\ca C_p, \Omega^\vee(-p)\otimes J) \oplus H^1(\Gamma, J) \to T_e\ca J_{p}\otimes J
\end{equation*}
%
%
%

Choose an element $\mu \in M(A, A')$. We need to decide when there exists $\delta \in H^1(\ca C_p, \Omega^\vee(-p)\otimes J) \oplus H^1(\Gamma, J)$ such that $\Phi(\mu+\delta) = 0$, i.e. such that $\mu + \delta\colon \on{Spec} A' \to \Mdk_{I,U}$ lands in $\DRL^{1/\k}$. 

Now the existence of such a $\delta$ is equivalent to $\Phi(\mu)$ mapping to zero in the cokernel of $\alpha$, which is in turn equivalent to $\Phi(\mu)$ pairing to zero with the kernel of $\alpha^\vee$. Hence we are reduced to showing

\emph{(*) $A'$ is $I$-constrained if and only if $\Phi(\mu)$ pairs to zero with the kernel of $\alpha^\vee$. }

By \ref{lem:kernel_description} the kernel of $\alpha^\vee$ is given by the injection 
\begin{equation*}
\bigoplus_{v\in V^{out}} \field \to H^0(\ca C_p,  \omega)
\end{equation*}
sending $\bd c = (c_v)_v$ to the section given by $0$ on the smooth locus of $C_{v_0}$ for the central vertex $v_0$, and $c_v (\frac{\phi_0}{\bd 1_v})^{1/\k}$ on the smooth locus of $C_v$ for the outlying vertices $v$. By \ref{lem:res_pairing} (where the notation is also defined), the image of $\Phi(\mu) \bd c$ in $J$ under the residue pairing is given by
\begin{equation}\label{eqn:finalpairing}
 -\sum_{v \in V^{out}} c_v \sum_{e:v_0 \to v}a_{\gamma(e)}\ell_e^{I'(e)}\on{Res}_{q_e}(\phi_0^{1/\k}|_{C_{v_0}}/\bd 1_{v_0}^{1/\k}) \in J. 
\end{equation}
Note that the assumption $\ell_e^{I'(e)} \in J$ of \ref{lem:res_pairing} is exactly the fact that $A=A'/J$ is $I$-constrained.

Now if $A'$ is $I$-constrained, all terms $l_e^{I'(e)}$ are actually zero in $A'$, so the entire sum above vanishes, proving that $\Phi(\mu)$ pairs to zero with all $\bd c$.

Conversely assume that \ref{eqn:finalpairing} vanishes for all choices of $c_v$. From the fact that $\mu$ defines an $A'$-point of $\Mdk_{I,U}$, equation \ref{eqn:agammasimple} tells us that $a_{\gamma(e)} \ell_e^{I'(e)}=\ell_{e_v}^{I'(e_v)}$, so \ref{eqn:finalpairing} becomes
\begin{equation*}
 -\sum_{v \in V^{out}} c_v \ell_{e_v}^{I'(e_v)} \sum_{e:v_0 \to v}\on{Res}_{q_e}(\phi_0^{1/\k}|_{C_{v_0}}/\bd 1_{v_0}^{1/\k}). 
\end{equation*}
But the sums of residues appearing above are nonzero at a general point $p$ within its component by \ref{thm:non_vanishing_root_sum}. So for this sum to vanish for all choices of $c_w$, it is necessary that $l_{e_v}^{I'(e_v)}=0$ for all $v$ and since $a_{\gamma(e)} \ell_e^{I'(e)}=\ell_{e_v}^{I'(e_v)}$ with $a_{\gamma(e)}$ invertible in $A'$, it follows $\ell_e^{I'(e)}=0 \in A'$ for all $e$, finishing the proof.
\end{proof}

\subsection{Concluding the proof of the formula}\label{sec:concluding_the_proof}
Finally, we conclude the proof of the equality of the double ramification cycle $\overline\DRC$ and the cycle $H_{g,\m}^\k$ of Janda, Pandharipande, Pixton, Zvonkine and the second-named author (\ref{sec:FP_formula}) in the Chow ring $A^g(\Mbar)$.

Finally we can easily deduce the main result of this paper, that $\overline \DRC = H_{g,\m}^\k$. 
\begin{proof}[Proof of \ref{thm:main_intro}]

Let $\overline p$ be the general point of some component of $\widetilde{\mathcal{H}}_{g}^\k(\m)$. If $\overline p$ lies in the interior of the moduli space (i.e. $\ca C_{\overline p}$ is smooth), then $\overline{\DRC}$ has multiplicity $1$ at $\overline p$, agreeing with its multiplicity in $H_{g,\m}^\k$. This multiplicity follows from the computation in \cite[Proposition 1.2]{Schmitt2016Dimension-theor}.

Thus we can assume that $\ca C_{\overline p}$ lies in the boundary with associated simple star graph $\Gamma$ and positive twist $I$. Combining \ref{lem:generic_points_lie_over} and \ref{lem:compare_k_k_inverse}, it suffices to show that the sum of the lengths of the Artin local rings of $\DRL^{1/\k}$ at points lying over $\overline p$ is given by the formula $\frac{\prod_{e \in E(\Gamma)} I(e)}{\k^{\# V(\Gamma)+1}}$ (c.f. \ref{eq:weighted_fun_formula}). By \ref{pro:uniquegentwist} there are exactly $\k^{\# E - \# V - 1}$ of these points and by \ref{thm:structure_of_DR} the multiplicity of $\DRL^{1/\k}$ at each of them is $\prod_{e \in E} I'(e)$. Using $I'(e)=I(e)/\k$ the result is then immediate. 
\end{proof}

%

\subsection{Presentation of the local rings of the double ramification locus}

Our strategy for computing the multiplicities of the double ramification cycle was somewhat indirect, as we began \ref{sec:computing_length} by slicing with a generic hyperplane. In this section we do a little gentle bootstrapping to extract a presentation for the local rings of $\DRL^{1/\k}$ itself at generic points. We begin by resuming the notation from the start of \ref{sec:computing_length}, and write $\eta$ for the generic point of $\DRL^{1/\k}$ containing the point $p$ in its closure. The local ring $\ca O_{\DRL^{1/\k}, \eta}$ admits (by the Cohen Structure Theorem \cite[\href{https://stacks.math.columbia.edu/tag/0323}{Tag 0323}]{stacks-project}) a non-canonical structure as an algebra over $\kappa(\eta)$, the residue field of $\eta$ (compatible with the $K$-algebra structure). In this section we show 
\begin{theorem}\label{thm:local_ring_description}
 The $\kappa(\eta)$-algebra structure on $\ca O_{\DRL^{1/\k}, \eta}$ can be chosen so that 
\begin{equation}
\ca O_{\DRL^{1/\k}, \eta} \cong \kappa(\eta)\frac{[\ell_e : e \in E]}{(\ell_e^{I'(e)} : e \in E)}. 
\end{equation}
\end{theorem}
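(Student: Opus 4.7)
The plan is as follows. Since $\eta$ is the generic point of an irreducible component $Z \subseteq \DRL^{1/\k}$, the ring $R := \ca O_{\DRL^{1/\k}, \eta}$ is Artin local with residue field $\kappa(\eta)$, and Cohen's structure theorem produces a coefficient-field embedding $\kappa(\eta) \hookrightarrow R$. Because the reduced component $Z_{\mathrm{red}}$ maps (set-theoretically) to the origin of $\bb A^E$ under the combinatorial chart, each $\ell_e$ is nilpotent in $R$, giving a $\kappa(\eta)$-algebra map
\[
  \phi\colon \kappa(\eta)[\ell_e : e \in E] \longrightarrow R.
\]

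First I would check that $\phi$ is surjective. By \ref{thm:dim_T_p_DR} we have $\dim_K T_p \DRL^{1/\k} = \dim Z + \#\{e : I'(e)>1\}$; localising at $\eta$ absorbs the $\dim Z$ smooth directions of $Z_{\mathrm{red}}$ into the coefficient field $\kappa(\eta)$, leaving $\dim_{\kappa(\eta)} \frak m_R/\frak m_R^2 = \#\{e : I'(e)>1\}$. Inspection of the explicit tangent space description in \ref{eq:TpMd_explicit} identifies these nonreduced directions with the images of the coordinates $\ell_e$ for $I'(e) > 1$, so the $\phi(\ell_e)$ span $\frak m_R/\frak m_R^2$ and surjectivity of $\phi$ follows from Nakayama's lemma.

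The substantial step is to show $\ell_e^{I'(e)} = 0$ in $R$ for every edge $e$. I would not try to deduce this from \ref{thm:structure_of_DR} directly, because that result computes only the transverse slice at the closed point $p$ and not the localisation at $\eta$. Instead I would rerun the deformation-theoretic argument of \ref{sec:computing_length} at $\eta$ after base-changing the whole diagram $\DRL^{1/\k} \to \Mdk \to \bb A^E$ from $K$ to $\kappa(\eta)$ and treating $\eta$ as a $\kappa(\eta)$-rational point. No slicing is needed now because $R$ is already Artinian, so one arrives at the direct analogue of \ref{lem:lifting_B_point} for $\DRL^{1/\k}$ at $\eta$, via the same inductive filtration along powers of the maximal ideal and the same application of \ref{lem:lift_after_translate} at each step. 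The three underlying ingredients --- the kernel description of \ref{thm:ker_b_v}, the residue formula of \ref{lem:res_pairing}, and the generic non-vanishing result of \ref{sec:generic_non-vanishing_residue} --- are all stable under the extension $K \subseteq \kappa(\eta)$, the last because $\ca C_\eta$ is by construction a generic curve in the relevant stratum of meromorphic $\k$-differentials. Tracking the tautological $R$-valued point of $\DRL^{1/\k}$ at $\eta$ through this argument yields $\ell_e^{I'(e)} = 0$ in $R$.

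Combining the previous two paragraphs, $\phi$ descends to a surjection $\bar\phi\colon \kappa(\eta)[\ell_e]/(\ell_e^{I'(e)}) \twoheadrightarrow R$. Both source and target have $\kappa(\eta)$-dimension $\prod_e I'(e)$ --- the source by a direct count, the target by combining \ref{thm:structure_of_DR} with \ref{lem:length_equals_multiplicity} --- and therefore $\bar\phi$ is an isomorphism, completing the proof. The main obstacle is the third paragraph: one must carefully verify that the deformation-theoretic apparatus of \ref{sec:coker_of_AJ_map} and \ref{sec:computing_length} transfers cleanly from a closed $K$-point to the generic point $\eta$ after base change. This is essentially bookkeeping, hinging on the observation that all the cohomology computations, residue formulas, and the genericity hypothesis of \ref{thm:non_vanishing_root_sum} remain valid under the extension $K \subseteq \kappa(\eta)$.
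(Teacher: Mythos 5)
Your step 3 contains a genuine gap hinging on the phrase ``after base-changing the whole diagram\ldots and treating $\eta$ as a $\kappa(\eta)$-rational point.~No slicing is needed now because $R$ is already Artinian.''~After base change to $\kappa(\eta)$, the point $\eta$ is replaced by the diagonal point $\Delta$ of $\DRL^{1/\k}\times_K\kappa(\eta)$, which is a \emph{closed} $\kappa(\eta)$-rational point of a scheme that still has dimension $\dim Z>0$ over $\kappa(\eta)$; the local ring at $\Delta$ is therefore \emph{not} Artinian and is \emph{not} $R$. Rerunning the lifting argument of \ref{sec:computing_length} at $\Delta$ would thus produce information about $\ca O_{\DRL^{1/\k}_{\kappa(\eta)},\Delta}$, not about $R$, and relating the two requires slicing by a codimension-$(\dim Z)$ linear subspace through $\Delta$. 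This is precisely what the paper proves: after establishing \ref{thm:structure_of_DR} for the slice $\DRL'$, the theorem is deduced via a separate lemma showing that a generic linear slice of $Z_\eta$ through $\Delta$ is formally \'etale over $Z$ at $\eta$, giving $\ca O_{Z,\eta}\cong\ca O_{Z_\eta\cap H,\Delta}$ as complete $K$-algebras. That lemma is the bridge you are missing, not ``bookkeeping.''

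A related issue appears in step 2: the claim that ``localising at $\eta$ absorbs the $\dim Z$ smooth directions into the coefficient field, leaving $\dim_{\kappa(\eta)}\frak m_R/\frak m_R^2 = \#\{e:I'(e)>1\}$'' is exactly the non-trivial content of the same lemma; without it you cannot pass from $\dim_K T_p\DRL^{1/\k}$ (computed in \ref{thm:dim_T_p_DR} at a closed point) to the embedding dimension of $R$, so the Nakayama step is unjustified. There is a genuinely slicing-free variant of your plan if one works \emph{without} base change, filtering $R$ itself by powers of $\frak m_R$ and observing that any Artin local lift along $\DRL^{1/\k}$ supported at $\eta$ automatically factors through $\on{Spec} R$ (so the ``shifting into $\DRL'$'' step of \ref{lem:constrained_lifting} becomes unnecessary) and that \ref{lem:res_pairing} and \ref{thm:non_vanishing_root_sum} continue to hold for $\ca C_\eta$ over $\kappa(\eta)$; but this is not what you wrote, and even then the surjectivity of $\phi$ would need to be argued differently (e.g.\ from step~3 plus the length count, rather than from a tangent-space absorption). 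As written, the argument relies implicitly on an unproven structural fact that the paper cleanly packages as its formal-\'etaleness lemma.
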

We are very close to proving this in \ref{thm:structure_of_DR}, except that latter concerns $\DRL'$, which is obtained from $\DRL^{1/\k}$ by cutting with a generic linear subspace (see the discussion at the beginning of \ref{sec:computing_length}). Noting that we can work over any field of characteristic zero, we can in particular base-change the whole setup to the residue field $\kappa(\eta)$. The gap is then filled by the following lemma. 
\begin{lemma}
Let $ Z \tra \bb A^N_K$ be an irreducible closed subscheme of dimension $d$, with generic point $\eta$. Write $\Delta\colon \eta \to \eta \times_K \eta$ for the diagonal (see the diagram below), and let $H \sub \bb A^N_\eta$ be a generic linear subspace of codimension $d$ through $\Delta$ (the latter viewed as a point of $\bb A^N_\eta$). Then there exists an isomorphism of complete $K$-algebras
\begin{equation}\label{eq:K-alg-iso}
\ca O_{Z, \eta} \cong \ca O_{Z_\eta \cap H, \Delta}. 
\end{equation}
\end{lemma}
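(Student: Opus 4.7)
The plan is to realise $\ca O_{Z_\eta \cap H,\Delta}$ as a quotient of $\ca O_{Z_\eta,\Delta}$ by a regular sequence coming from $H$, and then to use flatness to identify this quotient with $\ca O_{Z,\eta}$.

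First I would observe that the first projection $\pi\colon Z_\eta = Z\times_K L \to Z$, where $L=\kappa(\eta)$, sends the diagonal point $\Delta$ to $\eta$, and therefore induces a local homomorphism of $K$-algebras
\[ \pi^*\colon \ca O_{Z,\eta} \longrightarrow \ca O_{Z_\eta,\Delta}. \]
Since $\pi$ is the base change of the flat extension $K\hra L$, the map $\pi^*$ is flat. Its fibre over the maximal ideal of $\ca O_{Z,\eta}$ is the local ring, at the diagonal ideal $I_\Delta = \ker(L\otimes_K L \twoheadrightarrow L)$, of the scheme-theoretic preimage $\pi^{-1}(\eta) = \on{Spec}(L\otimes_K L)$.

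Next I would verify that this fibre $(L\otimes_K L)_{I_\Delta}$ is a regular local $L$-algebra of dimension $d$. Because $L/K$ is a finitely generated separable extension (separability is free in characteristic zero) of transcendence degree $d$, one has $I_\Delta/I_\Delta^2 \cong \Omega_{L/K}$, an $L$-vector space of dimension $d$; passing to a separating transcendence basis and using that $L$ is a finite separable extension of a purely transcendental subfield then yields both regularity and Krull dimension $d$. The genericity hypothesis on $H$ is exactly the Zariski-open, non-empty condition that the $d$ defining linear forms $\ell_1,\dots,\ell_d$ of $H$ project to an $L$-basis of this cotangent space, i.e.\ that their images form a regular system of parameters on the fibre.

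By the local criterion of flatness, $\ell_1,\dots,\ell_d$ is then a regular sequence on $\ca O_{Z_\eta,\Delta}$, the quotient $\ca O_{Z_\eta\cap H,\Delta} = \ca O_{Z_\eta,\Delta}/(\ell_1,\dots,\ell_d)$ remains flat over $\ca O_{Z,\eta}$, and its fibre is now simply the residue field $L$. Any flat local homomorphism of Noetherian local rings whose fibre agrees with the residue field is an isomorphism: finiteness follows from $\mathfrak m_B = \mathfrak m_A B$ together with completeness (both rings are Artin local), then Nakayama forces the target to be a cyclic flat $\ca O_{Z,\eta}$-module, hence free of rank one with $1\mapsto 1$. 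Consequently the composition
\[ \ca O_{Z,\eta}\xrightarrow{\pi^*} \ca O_{Z_\eta,\Delta} \twoheadrightarrow \ca O_{Z_\eta\cap H,\Delta} \]
is the desired isomorphism of (automatically complete, because Artin local) $K$-algebras. The main obstacle I expect is the identification of the fibre $(L\otimes_K L)_{I_\Delta}$ as regular local of dimension $d$; this rests squarely on the separability of $L/K$, which is free here thanks to characteristic zero. Everything else is a straightforward combination of flat base change and the local criterion of flatness.
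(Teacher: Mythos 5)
Your argument is correct, and it takes a genuinely different route from the one in the paper. The paper promotes $H$ to a family $H_U \hra \bb A^N_U$ of linear slices over a dense open $U \sub Z^{\mathrm{red}}$, shows that the composite $H_U \to \bb A^N_K$ is \'etale near $\Delta$ for generic $H$ (the openness of \'etaleness reduces this to a single special $H_U$, namely the normal bundle to $U$), and then base-changes along $Z \to \bb A^N_K$ to obtain a formally \'etale map of complete local $K$-algebras between $\ca O_{Z,\eta}$ and $\ca O_{Z_\eta \cap H, \Delta}$ with trivial residue extension, hence an isomorphism. You instead stay entirely at the level of local rings: the flatness of $\ca O_{Z,\eta} \to \ca O_{Z_\eta,\Delta}$, the explicit identification of its closed fibre with $(L\otimes_K L)_{I_\Delta}$ --- a $d$-dimensional regular local $L$-algebra because $L/K$ is separable (automatic in characteristic zero) with cotangent space $I_\Delta/I_\Delta^2 \cong \Omega_{L/K}$ --- and the local criterion of flatness applied to the linear forms cutting out $H$. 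Both proofs finish with the same observation that a flat local homomorphism of Artin local rings with trivial closed fibre is an isomorphism; they differ in how they produce that trivial fibre. Your version makes the content of the genericity hypothesis completely explicit --- it is exactly the Zariski-open, nonempty condition that the differentials $d\ell_1,\dots,d\ell_d$ restrict to an $L$-basis of $\Omega_{L/K}$ --- and avoids constructing the auxiliary global family $H_U$, at the modest cost of the (standard) regularity computation on $(L\otimes_K L)_{I_\Delta}$; the paper's version packages the same input into a single \'etaleness verification at one carefully chosen slice. Both routes rest on characteristic zero in essentially the same place: for you it is the separability of $L/K$, for the paper it is the generic smoothness of $U$ needed for the normal-bundle slice to work.
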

Before the proof, we give a diagram (in which all squares are pullbacks) to illustrate the notation in the lemma:  
\begin{equation}
 \begin{tikzcd}
  \eta \arrow[d] & \eta \times_K \eta \arrow[l] \arrow[d] \\
    Z \arrow[d] & Z_\eta \arrow[l] \arrow[d] \\
\bb A^N_K \arrow[d] & \bb A^N_\eta \arrow[l] \arrow[d] \\
\on{Spec} K & \eta \arrow[l] \arrow[uuu, bend right, swap, "\Delta"]\\
\end{tikzcd}
\end{equation}
Since $\ca O_{Z_\eta \cap H, \Delta}$ is naturally a $\kappa(\eta)$-algebra, the isomorphism \ref{eq:K-alg-iso} naturally equips $\ca O_{Z, \eta}$ with the required $\kappa(\eta)$-algebra structure (dependent on the choice of $H$). 
\begin{proof}
The hyperplane $H$ is defined over the generic point $\eta$, and extends to a family of hyperplanes over some dense open subscheme $U \hra Z^{red}$. Writing $H_U \tra \bb A^N_U$ for this family, we note that $\dim_K H = N$, and we have a diagram of natural maps
\begin{equation}
 \begin{tikzcd}
  \bb A^N_K \arrow[d] &   \bb A^N_Z \arrow[d] \arrow[l]&   \bb A^N_U \arrow[d]\arrow[l] & H_U \arrow[d] \arrow[l]& H \arrow[l] \arrow[d]  \\
  K & Z \arrow[l] & U \arrow[l] & U \arrow[l, "id"]& \eta \arrow[u, bend right, swap, "\Delta"]\arrow[l] \\
\end{tikzcd}
\end{equation}
We claim that, since $H$ is generically chosen, the map $H_U \to \bb A^N_K$ is \'etale in a neighbourhood of $\Delta$. Since \'etaleness is an open condition it is enough to check this for some special $H$, and the claim is indeed clear if we take $H_U$ to be the normal bundle to $U$ in $\bb A^N_K$. 

We then have a diagram
\begin{equation}
 \begin{tikzcd}
 Z \arrow[d] & & Z_\eta \cap H \arrow[d]\arrow[ll] \\
  \bb A^N_K   & H_U \arrow[l] & H \arrow[l] & \eta \arrow[l, "\Delta"] \arrow[ul, "\Delta", swap]\\
\end{tikzcd}
\end{equation}
where the rectangle is a pullback, and the lower row is formally \'etale along $\Delta$, hence so is the upper row. Localising at the image of $\Delta$ thus yields a formally \'etale map of complete local $K$-algebras $\ca O_{Z_\eta \cap H, \Delta} \to \ca O_{Z, \eta}$ which is then necessarily an isomorphism. 
\end{proof}

\appendix

\section{Explicit deformation theory with \texorpdfstring{\v C}{C}ech cocycles}\label{sec:explicit_def_via_cech}
In this appendix we recall some standard results on deformation theory, to fix notation and to keep this paper reasonably self-contained. We emphasise explicit computations  with \v Cech covers. 

Let 
\begin{equation*}
0 \to J \to A' \to A \to 0
\end{equation*}
be a short exact sequence of $\field$-modules, where $A$ and $A'$ have the structure of (Artin local) $\field$-algebras, the map $A' \to A$ is a $\field$-algebra homomorphism, and $J\frak m_{A'} = 0$ where $\frak m_{A'}$ is the maximal ideal of $A'$. This generality will only be needed in \ref{sec:computing_length}; for computations of tangent spaces, it is enough to look at the special case
\begin{equation*}
0 \to t \field[t]/(t^2) \to \field[t]/(t^2) \to \field \to 0.
\end{equation*}

Before we start with deformation theory, we need to introduce some technical results, which we use later.
\begin{remark}\label{rem:J_as_k_module}
Now $J$ is an $A'$-module and $A'$ a $\field$-algebra, hence $J$ is also a $\field$-module. Writing $\bar a\in \field$ for the reduction of $a \in A'$ we see that $\bar a j = aj$ for all $j \in J$, since the difference between $a$ and the image of $\bar a$ in $A'$ lies in $\frak m_{A'}$, and $\frak m_{A'}J = 0$. 
\end{remark}
\begin{lemma}\label{lem:weird_iso}
Let $M$ be an $A'$-module, then the $\field$-bilinear map 
\begin{equation*}
f\colon M \times J \to \frac{M}{(\frak m_{A'} M)} \otimes_\field J; (m,j) \mapsto \bar m \otimes j
\end{equation*}
is in fact $A'$-bilinear, and the induced map 
\begin{equation*}
M \otimes_{A'} J \to \frac{M}{(\frak m_{A'} M)} \otimes_\field J
\end{equation*}
is an isomorphism. 
\end{lemma}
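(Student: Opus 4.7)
The plan is to first verify the $A'$-bilinearity of $f$ using \ref{rem:J_as_k_module}, and then construct an explicit two-sided inverse of the induced map.

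For $A'$-bilinearity, given $a \in A'$, $m \in M$, $j \in J$, I would compute
\[f(am, j) = \overline{am} \otimes j = \bar a \bar m \otimes j = \bar m \otimes \bar a j = \bar m \otimes a j = f(m, aj),\]
where the middle equality uses that the tensor product on the right is over $\field$, and the crucial fourth equality is exactly \ref{rem:J_as_k_module}. This step is where the hypothesis $\frak m_{A'} J = 0$ enters. By the universal property of $\otimes_{A'}$, $f$ then descends to a well-defined $\field$-linear map $\phi\colon M \otimes_{A'} J \to (M/\frak m_{A'} M) \otimes_\field J$.

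To construct an inverse, consider the $\field$-bilinear map
\[g\colon (M/\frak m_{A'} M) \times J \to M \otimes_{A'} J, \quad (\bar m, j) \mapsto m \otimes j.\]
The main point to check is that this is well-defined on the quotient: if $m = \sum_i a_i m_i$ with $a_i \in \frak m_{A'}$, then
\[\sum_i (a_i m_i) \otimes j = \sum_i m_i \otimes (a_i j) = \sum_i m_i \otimes 0 = 0\]
in $M \otimes_{A'} J$, since $a_i j \in \frak m_{A'} J = 0$. Hence $g$ induces a $\field$-linear map $\psi\colon (M/\frak m_{A'} M) \otimes_\field J \to M \otimes_{A'} J$.

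Finally, I would check on pure tensors that $\phi \circ \psi$ and $\psi \circ \phi$ are both identities, which is immediate from the definitions. I do not anticipate a genuine obstacle here; the only subtle point is keeping straight where the $\frak m_{A'} J = 0$ hypothesis is used, which occurs in exactly two places (bilinearity of $f$ and well-definedness of $g$).
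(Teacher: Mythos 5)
Your proof is correct and complete, and it stands on its own: you give an explicit two-sided inverse, correctly pinpointing the two places where $\frak m_{A'}J = 0$ is used (in the $A'$-bilinearity of $f$ and in the well-definedness of $g$ on the quotient $M/\frak m_{A'}M$). The paper itself does not write out a proof at all — it simply cites Bourbaki (Algebra I, Ch.\ II, \S 3.6), where the statement follows from base change and associativity of tensor products: since $\frak m_{A'}J = 0$, the module $J$ is an $A'/\frak m_{A'} = \field$-module, so $M \otimes_{A'} J \cong (M \otimes_{A'} \field) \otimes_\field J \cong (M/\frak m_{A'}M) \otimes_\field J$. Your direct construction and the Bourbaki base-change argument are two ways of packaging the same observation, so there is no substantive difference; if anything, your version is more self-contained.
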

\begin{proof}
This is a special case of Bourbaki, Algebra I, Chapter 2, paragraph 3.6 (p.254). 
%
\end{proof}

We will apply the following well-known lemma to the jacobian of the universal curve. 
\begin{lemma} \label{Lem:Apseudotors}
Let $X$ be a $\field$-scheme and $\eta: \on{Spec} A \to X$ a map over $\field$ with image point $q \in X$. Denote 
\[M(\eta, A') = \{ \eta' : \on{Spec} A' \to X \text{ $\field$-morphism}: \eta'|_{\on{Spec}A} = \eta \}.\]
Then $M(\eta,A')$ is naturally a pseudotorsor under $T_{X,q} \otimes_\field J$. Moreover, for $g: X \to Y$ a morphism of $\field$-schemes, the natural map $M(\eta,A') \to M(g \circ \eta,A')$ is a pseudotorsor under the natural map 
\[Tg \otimes \on{id}_J: T_{X,q} \otimes_\field J \to T_{Y,g(q)} \otimes_\field J.\]
\end{lemma}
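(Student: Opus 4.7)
The plan is to reduce the assertion to a computation with local rings at $q$. Since $A'$ is Artin local with residue field $\field$, the scheme $\on{Spec} A'$ has a unique point, which under any $\eta'\in M(\eta,A')$ necessarily maps to $q$ (as its restriction to $\on{Spec} A$ does). Therefore $\eta'$ factors uniquely through $\on{Spec}\ca O_{X,q}$, and $M(\eta,A')$ is in natural bijection with the set of $\field$-algebra homomorphisms $\phi\colon \ca O_{X,q}\to A'$ whose composition with $A'\to A$ is the map $\phi_0\colon \ca O_{X,q}\to A$ corresponding to $\eta$. Similarly, for $g\colon X\to Y$, composition with $g$ translates into precomposition of ring maps with the induced map $g^\sharp\colon \ca O_{Y,g(q)}\to\ca O_{X,q}$ of local rings.

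To exhibit the pseudotorsor structure, suppose $\phi_1,\phi_2\colon \ca O_{X,q}\to A'$ are two elements of $M(\eta,A')$. Their difference $D=\phi_1-\phi_2$ takes values in $J$, because $\phi_1$ and $\phi_2$ agree after composition with $A'\to A$. The key point is to show $D$ is a $\field$-derivation into $J$ viewed as a $\field$-vector space. Expanding $\phi_1(ab)-\phi_2(ab)$ and using $\frak m_{A'}J=0$ (so in particular $J^2=0$ and $D(a)\phi_j(b)=D(a)\phi_0(b)$) together with \ref{rem:J_as_k_module} to replace the $A$-action on $J$ by the $\field$-action via $A\to\field$, one obtains the Leibniz rule
\[
D(ab)\;=\;\overline{\phi_0(a)}\,D(b)+\overline{\phi_0(b)}\,D(a).
\]
Hence $D$ vanishes on $\field\subset\ca O_{X,q}$ and factors through $\frak m_q/\frak m_q^2$, so it defines an element of $\on{Hom}_\field(\frak m_q/\frak m_q^2,J)=T_{X,q}\otimes_\field J$ (using, when needed, finite presentation of $X$ at $q$ to identify Hom with tensor product). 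Conversely, given any $D\in T_{X,q}\otimes_\field J$ regarded as such a derivation and any lift $\phi_1\in M(\eta,A')$, one checks that $\phi_1+D$ is again a ring homomorphism: the cross Leibniz terms balance, and the term $D(a)D(b)$ vanishes since $J^2\subset \frak m_{A'}J=0$. This provides the required simply transitive action and exhibits $M(\eta,A')$ as a pseudotorsor under $T_{X,q}\otimes_\field J$.

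For the compatibility with $g$, translate via the bijection above: if $\phi_1,\phi_2\in M(\eta,A')$ have difference $D$, then the difference of $g\circ\eta_1'$ and $g\circ\eta_2'$ corresponds to $(\phi_1-\phi_2)\circ g^\sharp=D\circ g^\sharp$. Under the identification of derivations with tangent vectors, precomposition with $g^\sharp$ is exactly the tangent map $Tg\colon T_{X,q}\to T_{Y,g(q)}$, so the map on pseudotorsors is equivariant for $Tg\otimes\on{id}_J$, as required.

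The proof is entirely standard infinitesimal deformation theory and there is no serious conceptual obstacle; the only subtle point is the careful bookkeeping of module structures on $J$, where one must invoke \ref{rem:J_as_k_module} to identify the relevant $A$- (or $A'$-) action on $J$ with the $\field$-action, so that the difference of two lifts really lives in the $\field$-tensor product $T_{X,q}\otimes_\field J$ and not in some coarser object.
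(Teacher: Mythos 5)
The paper actually omits a proof of this lemma, stating it as ``well-known,'' so there is no authorial argument to compare against. Your proof is correct and is exactly the standard argument: since $A'$ is Artin local, every $\eta'\in M(\eta,A')$ factors through $\on{Spec}\ca O_{X,q}$, the difference of two lifts is a $\field$-derivation $\ca O_{X,q}\to J$ (valued through the residue field $\field$ at $q$ because $\frak m_{A'}J=0$), and such derivations are identified with $\on{Hom}_\field(\frak m_q/\frak m_q^2,J)\cong T_{X,q}\otimes_\field J$; conversely translating a lift by such a derivation gives a new lift because $J^2=0$, and functoriality under $g$ corresponds to precomposition with $g^\sharp$, hence to the tangent map $Tg$.

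One small clean-up: the identification $\on{Hom}_\field(\frak m_q/\frak m_q^2,J)=T_{X,q}\otimes_\field J$ does not need any finiteness hypothesis on $X$ at $q$, so your parenthetical appeal to ``finite presentation of $X$ at $q$'' is both unnecessary and not quite the right condition. The isomorphism $V^\vee\otimes_\field J\xrightarrow{\sim}\on{Hom}_\field(V,J)$ holds for an arbitrary $\field$-vector space $V$ as soon as $J$ is finite-dimensional over $\field$, and $J$ is automatically finite-dimensional here since it is a $\field$-subquotient of the Artin local ring $A'$. So the lemma as stated holds for an arbitrary $\field$-scheme $X$, as claimed.
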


Let $\ca C_{A'} \to \on{Spec} A'$ be a family of stable curves (i.e. a map $\on{Spec} A' \to \Mbar)$ and $\mathcal{L}_{A'}$ a line bundle on $\ca C_{A'}$. Assume that the restriction $\mathcal{L}_{A}$ of $\mathcal{L}_{A'}$ to the fibre $\ca C_{A}=\ca C_{A'} \times_{A'} A$ is trivial, with a trivialising section $\phi_0 \in H^0(\ca C_A, \mathcal{L}_A)$. 

In particular, this implies that $\mathcal{L}_{A'}$ has multidegree zero, so the line bundles $\mathcal{L}_{A'}$ and $\mathcal{O}_{\ca C_{A'}}$ induce maps $\sigma_1, \sigma_2 : \on{Spec} A' \to \ca J$ into the universal Jacobian $\ca J \to \Mbar$. By the assumption $\mathcal{L}_A \cong \mathcal{O}_{\ca C_A}$, the restrictions  $\eta: \on{Spec} A \to \on{Spec} A' \to \ca J$ of these maps to $\on{Spec} A$ agree. Thus both give elements\footnote{$\ca J$ is only \'etale-locally a scheme, but this is enough for these infinitesimal considerations. } in $M(\eta,A')$, in the notation of \ref{Lem:Apseudotors}, with $X = \ca J$. Furthermore, their compositions with the projection $\ca J \to \Mbar$ to the moduli space of curves agree (on all of $\on{Spec} A'$), since for both $\sigma_1, \sigma_2$ the underlying family of curves is $\ca C_{A'}$.

Let $C=\ca C_{A'} \times_{A'} \field$, then by \ref{Lem:Apseudotors} the set $M(\eta,A')$ is a pseudotorsor under the group $T_{(C,\mathcal{O})} \ca J \otimes_\field J$, so the difference of $\sigma_1, \sigma_2$ gives a unique element $\delta \in T_{(C,\mathcal{O})} \ca J \otimes_\field J$. Furthermore, it must lie in the kernel of the map
\[T_{(C,\mathcal{O})} \ca J \otimes J \to T_{C} \Mbar \otimes J,\]
which is exactly $T_e \mathcal{J}_C \otimes J = H^1(C,\mathcal{O}_{C_\field}) \otimes J = H^1(C,\ca L_\field) \otimes J$, where $\mathcal{J}_C$ is the Jacobian of $C$, and the last isomorphism is via the restriction $\phi_0 |_{C}$ of $\phi_0$ to the fibre over $\field$. 

Our goal here is to describe how to obtain this element $\delta \in H^1(C,\ca L_\field) \otimes J$ using \v Cech cohomology for a suitable cover $\ca U=(U_i)_{i}$ of $\ca C_{A'}$. 

Suppose there exists $\ca U = \{U_i\}_{i \in I}$ an fpqc cover of $\ca C_{A'}$ by affines such that for every $i$ there exists a section $\phi_i \in H^0(U_i, \ca L_{A'}|_{U_i})$ with $\phi_i|_{U_i \times_{A'} A} = \phi_0|_{U_i \times_{A'} A}$. 


We fix a cover $\ca U$ and sections $\phi_i$ as above. For the overlaps $U_{i,j} = U_i \times_{\ca C_{A'}} U_j$ we see from the definition of the $\phi_i$ that 
\begin{equation} \label{eqn:samerestriction} \phi_i|_{U_{ij} \times_{A'} A} = \phi_0|_{U_{ij} \times_{A'} A} = \phi_j|_{U_{ij} \times_{A'} A}\end{equation}
for all $i,j$. The difference 
\[\psi_{i,j} = \phi_i - \phi_j  \in H^0(U_{ij}, \ca L_{A'}) \]
lies in the kernel of the `reduction mod $J$' map
\begin{equation} \label{eqn:redmodJmap}
H^0(U_{ij}, \ca L_{A'}) \to H^0(U_{ij}, \ca L_A). 
\end{equation}


We now want to identify this kernel with $H^0(U_{ij}, \ca L_\field) \otimes_\field J$. To see this note that we can interpret the sequence $0 \to J \to A' \to A \to 0$ as an exact sequence of sheaves on $\on{Spec} A'$. Since $\ca L_{A'}$ is flat over $A'$, we obtain an exact sequence 
\[0 \to \ca L_{A'} \otimes_{A'} J \to \ca L_{A'} \to \ca L_{A'} \otimes_{A} \to 0.\]
The map $\ca L_{A'} \to \ca L_{A'} \otimes_{A}$ induces \ref{eqn:redmodJmap} on global sections, so the kernel of that map is given by $H^0(U_{ij}, \ca L_{A'} \otimes_{A'} J) = H^0(U_{ij}, \ca L_{A'} )\otimes_{A'} J$. Here we use that $U_{ij}$ is affine. By \ref{lem:weird_iso} applied to $M = H^0(U_{ij}, \ca L_{A'} )$ this is naturally identified with 
\[(H^0(U_{ij}, \ca L_{A'} )\otimes_{A'} \field) \otimes_{\field} J = H^0(U_{ij}, \ca L_\field) \otimes_\field J.\]

\begin{lemma}\label{lem:class_of_deformation}
 For the cover $(U_i)_i$ of $C_\field=\ca C_{A'} \times_{A'} \field$, the element $(\psi_{i,j})_{i,j}$ defines a $1$-cocycle in $H^1(C, \ca L_\field) \otimes_\field J$, which represents the class $\delta \in T_e \mathcal{J}_C \otimes J$ we want to compute.
\end{lemma}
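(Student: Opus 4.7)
The plan is to verify two things: (i) $(\psi_{i,j})_{i,j}$ is a well-defined \v Cech $1$-cocycle representing a class in $H^1(C, \ca L_\field)\otimes_\field J$, and (ii) this class agrees with $\delta$ under the identifications set up above. The first part I would dispatch quickly: on each triple overlap $U_{ijk}$, the sum $\psi_{ij}+\psi_{jk}+\psi_{ki}$ telescopes to zero, so the cocycle condition holds; and if one replaces the lifts $\phi_i$ by $\phi_i'=\phi_i+\xi_i$ with $\xi_i\in H^0(U_i,\ca L_\field)\otimes_\field J$ (the set of permissible modifications is precisely this, because the $\phi_i'$ must still restrict to $\phi_0$ over $A$), then $(\psi_{ij})$ changes by the coboundary $(\xi_i-\xi_j)$, so the class is independent of choices.

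For the substantive part, the strategy is to compare $(\psi_{ij})$ with the standard \v Cech representative of the class of $\ca L_{A'}$ in $\ker(\ca J(A')\to \ca J(A))\cong H^1(C,\ca O_C)\otimes_\field J$. Writing $\sigma_1$ and $\sigma_2$ for the sections of $\ca J$ induced by $\ca L_{A'}$ and $\ca O_{\ca C_{A'}}$ respectively, their difference in the group law corresponds to $\ca L_{A'}\otimes\ca O_{\ca C_{A'}}^{-1}\cong \ca L_{A'}$, equipped with its trivialisation $\phi_0$ on the fibre over $A$. To recover this class in $H^1$, I would invoke the short exact sequence
\[1 \to 1+\ca O_C\otimes_\field J \to \ca O^\times_{\ca C_{A'}} \to \ca O^\times_{\ca C_A} \to 1\]
on $\ca C_{A'}$, where the leftmost sheaf is identified with $\ca O_C\otimes_\field J$ via $1+\epsilon\mapsto \epsilon$ (legal because $J^2\subseteq J\frak m_{A'}=0$, using also \ref{lem:weird_iso}). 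The relevant piece of the long exact cohomology sequence furnishes the desired identification, compatibly with the pseudotorsor structure on $M(\eta,A')$ coming from the group law on $\ca J$.

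The final step is then an explicit unwinding: with the chosen trivialisations, $\ca L_{A'}$ has transition functions $\tau_{ij}=\phi_j/\phi_i$, which reduce to $1$ modulo $J$. Writing $\tau_{ij}=1+s_{ij}$, the cocycle $(s_{ij})$ with $s_{ij}=(\phi_j-\phi_i)/\phi_i$ represents the class of $\ca L_{A'}$ in $H^1(C,\ca O_C)\otimes_\field J$. Translating via the multiplication-by-$\phi_0$ isomorphism to $H^1(C,\ca L_\field)\otimes_\field J$, and using $\phi_i\equiv \phi_0\pmod J$ together with $s_{ij}\in J$, one finds $s_{ij}\phi_0=s_{ij}\phi_i=\phi_j-\phi_i=-\psi_{ij}$, so $[(\psi_{ij})]$ represents $\pm \delta$. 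The main obstacle, and essentially the only thing that could go wrong, is tracking signs and ensuring that the additive group law on tangent vectors and the multiplicative group law on line bundles are aligned with the conventions fixed earlier; with the right conventions in place the identification is exact, and the rest is a formal chase of definitions.
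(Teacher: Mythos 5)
The paper states this lemma without proof, treating it as standard deformation theory (the surrounding appendix is explicitly there ``to fix notation''). Your argument is the correct one, and it is the route the paper is implicitly invoking: the cocycle condition and coboundary-independence are formal; the substantive identification of the pseudotorsor difference $\delta=\sigma_1-\sigma_2$ with the connecting class of the truncated-exponential sequence $0 \to J\cdot\ca O_{\ca C_{A'}} \to \ca O^\times_{\ca C_{A'}} \to \ca O^\times_{\ca C_A} \to 1$ is exactly how $T_e\ca J_p = H^1(C,\ca O_C)$ is computed functorially; and the final unwinding via transition functions $\tau_{ij} = 1+s_{ij}$ together with $s_{ij}\phi_i = s_{ij}\phi_0$ (licit since $s_{ij}\in J$ and $\frak m_{A'}J=0$, which is the content of \ref{lem:weird_iso}) is precisely what relates $\psi_{ij}$ to $s_{ij}\phi_0$.

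The one thing you leave genuinely unfinished is the sign, and you are right that this is the single point where the proof could silently go wrong. It is not a real gap, but it is not dispatched either: the ambiguity enters both in the direction of the pseudotorsor action in \ref{Lem:Apseudotors} (is $\delta$ the element with $\delta\cdot\sigma_2=\sigma_1$ or with $\delta\cdot\sigma_1=\sigma_2$?) and in the transition-function convention (is $\phi_j=\tau_{ij}\phi_i$ or $\phi_i=\tau_{ij}\phi_j$?). Since the paper fixes $\psi_{ij}=\phi_i-\phi_j$ and downstream uses the lemma to assert that the cocycle $(f_i^*\phi_i - f_j^*\phi_j)_{ij}$ \emph{is} the class it seeks (see the proof of the $b_\Omega$ formula), a complete proof should pick one normalisation of both and check they match. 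With $\phi_i=\tau_{ij}\phi_j$, one gets $\psi_{ij}=s_{ij}\phi_0$, which gives the sign the paper wants; noting this would close the argument. Everything else—the cocycle check, the independence of the lift $\phi_i$, the use of $J\frak m_{A'}=0$ to pass between $J$-linearity over $A'$ and $\field$-linearity, the reduction to transition functions of $\ca L_{A'}$ with its $A$-trivialisation—is correct and is the intended proof.
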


\section{Explicit Serre duality}\label{sec:serre_duality}
For the convenience of the reader, and to fix notation, we recall here the standard description of Serre duality on a curve in terms of \v Cech cocycles. 

We consider first the case of a smooth proper (possibly non-connected) curve $C/\field$. We choose a \v Cech cover $\ca U = \{U_i\}_i$ of $C$, where $i$ runs over some indexing set $I = \{0, \dots, n\}$. For a sheaf of abelian groups $\ca F$ on $C$ we write $C^i_\ca U(\ca F)$ for the group of \v Cech $i$-cochains, $Z^i_\ca U(\ca F)$ for the group of $i$-cocycles, and $\cecH^i(C, \ca F)$ for the $i$th \v Cech cohomology group. The point of Serre duality is that the `residue map' $\cecH^1(C, \omega) \to \field$ is an isomorphism of $\field$-vector spaces; our goal here is to make this residue map explicit. We approximately follow [Forster, section 17.2].

Write $\ca K$ for the sheaf of fractions of $\ca O_C$ on $C$. Fix an element $w = (w_{ij})_{i < j} \in Z^1_{\ca U}(\omega)$. Choose an element $\tilde w \in C^0_\ca U(\ca K \otimes_{\ca O_C} \omega)$ such that for all $i <j$ we have 
\[\tilde w_i - \tilde w_j=w_{ij}\in \omega(U_{ij})\sub \ca K \otimes \omega(U_{ij}).\]
For example, if all $U_0$ is dense in $C$ we could set $\tilde w_0 = 0$ and for $i \neq 0$ let $\tilde w_i$ be any meromorphic differential extending $w_{i0}$.
For a point $p \in C$, choose $i$ such that $p \in U_i$ and define $\on{res}_p \tilde w = \on{res}_p\tilde w_i$. To see that this is independent of the choice of $i$, note that if $p \in U_{ij}$ then $\tilde w_i - \tilde w_j$ is by assumption holomorphic around $p$, and so has zero residue. Finally, $\on{res}_p\tilde w$ is \emph{not} independent of the choice of $\tilde w$, but the \emph{global} residue $\sum_{p \in C} \on{res}_p\tilde w$ is independent of all choices, and we define this to be $\on{res} w$. This gives a well-defined residue map $H^1(C, \omega) \to \field$. 

Now for the case of nodal curves. We resume the notation from above, but we allow $C$ to have nodal singularities. We write $\pi\colon \tilde C \to C$ for the normalisation of $C$. Suppose again we are given $w = (w_{ij})_{i < j} \in Z^1_{\ca U}(\omega)$. Writing $\tilde{\ca U}$ for the cover of $\tilde C$ obtained by pulling back $\ca U$, we have a natural pullback map $Z^0_{\ca U}(\omega \otimes \ca K) \to Z^0_{\tilde{\ca U}}(\omega \otimes \ca K)$. We choose $\tilde w \in Z^0_{\ca U}(\omega \otimes \ca K)$ such that $\tilde w_i - \tilde w_j \in \omega(U_{ij})\sub \ca K \otimes \omega(U_{ij})$, and $\tilde w_i - \tilde w_j = w_{ij}$. Given a point $p \in C$ we choose $i$ with $p \in U_i$, and define $\on{res}_pw = \sum_{\pi(q) = p}\on{res}_q\pi^*\tilde w_i$. As before we should check that this is independent of the choice of $i$. For $p$ in the smooth locus of $C$ this proceeds exactly as before. If $p$ is a node, write $\pi^{-1}p = \{q, q'\}$. Assume\footnote{Generally in this article we assume that the intersections $U_{ij}$ do not contain any nodes, so we can ignore this case, but we treat it here from completeness. } $p \in U_{ij}$, then we need to show that 
\begin{equation}
\on{res}_q \tilde w_i + \on{res}_{q'} \tilde w_i = \on{res}_q \tilde w_j + \on{res}_{q'} \tilde w_j. 
\end{equation}
But since $\tilde w_i - \tilde w_j \in \omega(U_{ij})$ we have that the residues of $\tilde w_i - \tilde w_j$ at $q$ and $q'$ sum to zero, giving exactly the above equality. 

Thus for a given choice of $\tilde w$ we have a well-defined residue map at all points of $C$, and the \emph{global} residue $\sum_{p \in C} \on{res}_p\tilde w$ is independent of all choices, giving a well-defined residue map $\cecH^1(C, \omega) \to \field$.

\section{Generic non-vanishing of \texorpdfstring{$\k$}{k}-residues}\label{sec:generic_non-vanishing_residue}

In this section we are concerned with the vanishing of sums of $\k$th roots of $\k$-residues (defined just below) of $\k$-differentials on \emph{smooth} curves. We fix integers $g\ge 0$, $n \ge 2$ with $2g-2 + n > 0$. Let $\k >0$, and $\m$ a vector of $n$ integers $m_i$ summing to $\k(2g-2)$, and assume that some $m_i$ is either negative or not divisible by $\k$. Write $\DRL_g \tra \ca M_{g,n}$ for the locus of (smooth, marked) curves admitting a $\k$-differential with divisor $\sum_i m_i p_i$, and let $Y$ be an irreducible component of $\DRL_g$ (by \cite{Schmitt2016Dimension-theor}, such a $Y$ is necessarily smooth and of pure codimension $g$). Let $\eta$ denote a general $\field$-point of $Y$ (we assume in this section that $\field$ is algebraically closed, so that this exists; otherwise one simply works with the generic point, but the notation becomes slightly less convenient), and let $\xi_\eta$ be a differential on the curve $C_\eta$ with the prescribed divisor. 

We recall from \cite[prop 3.1]{Bainbridge2016Strata-of--k--d} the notion of $\k$-residue of a $\k$-differential $\xi$ on a smooth curve $C$. Assume that $\xi$ has multiplicity $m<0$ at a point $P \in C$ (i.e. a pole of order $|m|$ at $P$). Assume furthermore that $\k \mid m$. Then after suitable choice of local coordinate $z$ on $C$ (with $z=0$ at $P$) we can write 
\begin{equation*}
\xi = \left(\frac{s}{z}\right)^\k (dz)^\k,
\end{equation*}
for $m=-\k$ and
\begin{equation*}
\xi = \left(z^{m /\k} + \frac{s}{z}\right)^\k (dz)^\k,
\end{equation*}
for $m<-\k$, respectively. Here $s$ is an element of $\field$ whose $\k$-th power is well defined, denoted $\on{Res}^{\k}_P(\xi)=s^\k$, the \emph{$\k$-residue}. 

\begin{theorem}\label{thm:non_vanishing_root_sum}
Let $0 < n' < n$, and assume that $m_i < 0$ and $\k \mid m_i$ for all $1 \le i \le n'$. Also assume that $m_{n'+1}$ is either negative or not divisible by $\k$. For each $1 \le i \le n'$, let $r_i$ be \emph{any} $\k$th root of the $\k$-residue of $\xi_\eta$ at $p_i$. Then 
\begin{equation}\label{eq:residue_sum}
r_1 + \cdots + r_{n'} \neq 0,
\end{equation}
independent of the choices of $\k$th roots $r_i$.
\end{theorem}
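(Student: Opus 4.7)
The plan is to reduce the statement to a result on generic non-vanishing of partial residue sums for abelian differentials on smooth curves, and then apply Sauvaget's \cite[Corollary 3.8]{Sauvaget2017Cohomology-clas}. To make the reduction, I would pass to the canonical cyclic $\mu_\k$-cover $\pi \colon \widetilde C_\eta \to C_\eta$ on which the pull-back $\pi^*\xi_\eta$ admits a canonical $\k$-th root $\widetilde\xi_\eta \in H^0(\widetilde C_\eta, \omega_{\widetilde C_\eta})$. Concretely, $\widetilde C_\eta$ arises as the relative spectrum of $\bigoplus_{j=0}^{\k-1}\omega_{C_\eta}^{-j}$ with multiplication twisted by $\xi_\eta$, so that $\widetilde\xi_\eta$ is tautological. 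Over each marking $p_i$ with $1 \le i \le n'$ (a pole whose order is divisible by $\k$), the map $\pi$ is \'etale, and its $\k$ preimages carry residues of $\widetilde\xi_\eta$ equal to the $\k$ $\k$-th roots of the $\k$-residue $s_i^\k$ of $\xi_\eta$ at $p_i$. A choice of $\k$-th roots $(r_1,\dots,r_{n'})$ of the $\k$-residues therefore corresponds bijectively to a choice of preimages $\widetilde p_i \in \pi^{-1}(p_i)$, and the sum appearing in \ref{eq:residue_sum} equals $\sum_{i=1}^{n'}\on{Res}_{\widetilde p_i}(\widetilde\xi_\eta)$.

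As $\xi_\eta$ varies in a component $Y$ of $\DRL_g$, the pair $(\widetilde C_\eta, \widetilde\xi_\eta)$ varies in some component of a stratum of abelian differentials on smooth curves, with orders at zeros and poles dictated by $\k$, $\m$ and the ramification profile of $\pi$. The next step would be to apply Sauvaget's result, which states that on any irreducible component of a stratum of abelian differentials, a partial sum of residues $\sum_{i \in S}\on{Res}_{q_i}(\omega)$ is either identically zero or generically non-vanishing, and is identically zero only when forced to be so by the residue theorem (i.e.\ when the complement of $S$ contains no pole whose residue can vary independently of those indexed by $S$). The role of the assumption that $m_{n'+1}$ is either negative or not divisible by $\k$ is precisely to produce at least one pole of $\widetilde\xi_\eta$ lying outside $\{\widetilde p_1,\dots,\widetilde p_{n'}\}$ whose residue varies freely; this prevents the residue theorem from trivialising our partial sum, and hence Sauvaget's result yields the desired non-vanishing for a single generic choice of preimages.

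The main obstacle in carrying this out carefully lies in the ramification bookkeeping of the cyclic cover: computing the ramification index $\k/\gcd(\k,m_j)$ at each remaining marking $p_j$, determining the resulting orders of the corresponding zeros/poles of $\widetilde\xi_\eta$, and in particular checking that when $m_{n'+1}$ is not divisible by $\k$ the preimage of $p_{n'+1}$ is genuinely a pole of $\widetilde\xi_\eta$ whose residue is not fixed in terms of the $r_i$. One also has to verify that the image of a generic point $\eta \in Y$ under the cover construction is a generic point of a single irreducible component of the target abelian-differential stratum, so that Sauvaget's generic non-vanishing specialises to $\eta$. Once these technical points are in place, the conclusion is immediate: since non-vanishing holds for each of the $\k^{n'}$ choices of preimages (each giving a distinct, generically non-zero regular function on the stratum), it holds simultaneously for all $\k^{n'}$ choices of $\k$-th roots $(r_1,\dots,r_{n'})$ at the generic point $\eta$.
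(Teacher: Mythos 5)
Your high-level plan — pass to the canonical $\mu_\k$-cover, interpret the $\k$-th roots as residues of the abelian differential $\widehat\xi$ at chosen preimages, and invoke Sauvaget's result on residues of abelian differentials — matches the paper's strategy, and the translation between $\k$-th roots of $\k$-residues and choices of preimages is exactly right. However, there is a genuine gap at the step you flag as a "technical point to verify": you propose to show that the image of a generic $\eta \in Y$ in the abelian-differential stratum $\DRL_{\widehat g}$ is a generic point of a component of that stratum. This is typically \emph{false}. The curves $\widehat C$ that arise as canonical covers carry an extra $\bb Z/\k\bb Z$-automorphism compatible with $\widehat\xi$, so the locus of such covers is generically a \emph{proper} subvariety of $\DRL_{\widehat g}$ (a generic curve in the stratum has no such automorphism). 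Generic non-vanishing on the stratum therefore says nothing about the behaviour at a generic point of your covering locus; the partial residue sum could a priori vanish identically on the sublocus even while being generically nonzero on the ambient stratum. The paper handles exactly this difficulty by \emph{not} invoking genericity in the stratum; instead it uses Sauvaget's \cite[Corollary~3.8]{Sauvaget2017Cohomology-clas} in the form ``the tangent map $TR$ of the residue map is surjective,'' then observes that $TR$ is $\bb Z/\k\bb Z$-equivariant and that the group is linearly reductive, so the restriction of $TR$ to the $\bb Z/\k\bb Z$-invariant subspaces is still surjective. The invariant part of the source is the tangent space to $Y$, and the invariant part of the target still surjects onto $\field^{n'}$; this is what forces the partial residue sum to vary nontrivially along $Y$ and hence be nonzero at the generic point. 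Without this equivariance/reductivity argument (or some replacement), your plan does not go through.

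You also implicitly assume $\dim Y > 0$, since a ``varies, hence generically nonzero'' argument has no content when the moduli is $0$-dimensional; the paper isolates the case $\dim Y = 0$ (which forces $g=0$, $n=3$, $n'=1$) and computes the $\k$-residue directly there. You should add that base case.
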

For this result to hold, it is essential that the point $\eta$ be general in $Y$. In the case $\k=1$, we know that the sum of all the residues vanishes, and this result tells us that the sum of any proper subset of the residues is generically non-vanishing. Our result is closely related to those of Gendron and Tahar \cite{Gendron2017Differentielles}. The key difference is that on one hand we need to treat the connected components $Y$ of $\DRL_g$ separately but on the other hand, we are only interested at the behaviour at the generic point. Note that for $\k=1$ our result follows from \cite[Proposition 1.3]{Gendron2017Differentielles}. 


The proof will occupy the remainder of this section, and we break it into a number of steps. 

\textbf{Step 1: the case $\dim Y = 0$. }\\
In general we will argue by showing that the sum in \ref{eq:residue_sum} varies non-trivially in $Y$, and thus cannot vanish at a general point; this argument fails if $\dim Y = 0$, so we treat this case separately. Now $\dim Y = 2g-3 + n$ and $n \ge 2$, so we must have $g=0$, $n=3$. Note that in this case necessarily $n'=1$, since for $n'=2$ we have $m_1, m_2\leq -\k$ (since they are negative and divisible by $\k$). But $m_1 + m_2 + m_3=-2 \k$ forcing that $\k \mid m_3$ and  $m_3 = -2\k - m_1 - m_2 \geq 0$, a contradiction to the assumptions of the theorem.

So we are in the case $g=0, n'=1, n=3$. Then we have $\DRL_g=\Mbar_{0,3}$ is a single point and we can assume that $C=\mathbb{P}^1$ with $(p_1, p_2, p_3)=(0,1,\infty)$. Then $w$ is uniquely determined (up to scaling) as
\[w=z^{m_1} (1-z)^{m_2} (dz)^\k.\]
Let $g(z)$ be a $\k$th root of $(1-z)^{m_2}$ around $z=0$ with $g(0)=1$. Then for $\tilde m_1 = - m_1/\k$
we have
\[w=(z^{-\tilde m_1} g(z) dz)^\k\]
and 
\[\on{Res}^{\k}_0(w)=\left(\on{Res}_0(z^{-\tilde m_1} g(z) dz)) \right)^\k= \left(\frac{1}{(\tilde m_1 -1)!} \left(\frac{d}{dz}\right)^{\tilde m_1-1}
  g(z)\big|_{z=0} \right)^\k.\]
One verifies that to compute the derivative of $g$ we can just apply the usual rules for derivatives for the formula $g(z)=(1-z)^{m_2/\k}$ and obtain
\[\left(\frac{d}{dz}\right)^{b} g(z)\big|_{z=0} = (-1)^b \frac{m_2}{\k} \cdot \left(\frac{m_2}{\k}-1\right) \cdots \left(\frac{m_2}{\k}-b+1\right) (1-z)^{m_2/\k-b}\big|_{z=0}. \]
Since $m_2$ is either negative or not divisible by $\k$, this is a nonzero number for $b=-m_1/\k -1$ and thus the $\k$-residue of $w$ at $0$ does not vanish, as claimed.

%

\textbf{Step 2: Canonical covers of curves with $\k$-differentials}\\
We now move on to the general case, where $\dim Y >0$. Given a curve $C_\eta$ with the $\k$-differential $\xi_\eta$, we are going to use its \emph{canonical cover} $\pi:\widehat C_\eta \to C_\eta$ (see \cite{Bainbridge2016Strata-of--k--d}). This is a cyclic cover $\pi:\widehat C_\eta \to C_\eta$ of degree $\k$ obtained by extracting a $\k$th root of the section $\xi_\eta$ of the line bundle $\omega^{\otimes \k}(- \m P)$. 
This means that there exists a $1$-differential $\widehat \xi_\eta$ on $\widehat C_\eta$ with $(\widehat{\xi}_\eta)^\k = \pi^* \xi_\eta$.
Moreover, for  $\tau : \widehat C_\eta \to \widehat C_\eta$ an automorphism over $C_\eta$ generating the Galois group, it satisfies $\tau^* \widehat \xi_\eta = \rho_\k \widehat \xi_\eta$ where $\rho_\k$ is a primitive $\k$th root of unity. Note that the map $\pi$ is \'etale outside of the preimages of the points $p_i$ (since over points where the $\k$-differential $\xi_\eta$ is not zero, there are exactly $\k$ choices of a root).

There is a unique maximal $b \geq 1$ such that $\xi_\eta$ is a $b$th power of a $\k'=(\k/b)$-differential. The number $b$ is also the number of connected components of the cover $\widehat C_\eta$, and each such component is the canonical cover for the suitable $\k'$-differential on $C_\eta$. The component $Y$ of $\DRL_g$ is then just a component of a space of $\k'$-differentials and the $\k$-th roots $r_i$ of the $\k$-residues are exactly $\k'$-th roots of the corresponding $\k'$-residues. Since the canonical cover of the $\k'$-differential is connected, it suffices to show the statement of the theorem for connected canonical covers if we show it for all $\k \geq 1$. So from now on we assume that $\widehat C_\eta$ is connected.

Let $\widehat g$ be the genus of $\widehat C_\eta$. There are $\gcd(m_i, \k)$ preimages of each $p_i$ (and $\widehat g$ is determined by Riemann-Hurwitz). Then we write $\ca H$ for the stack of
`cyclic covers with the same degree and ramification data as $\widehat C_\eta\to C_\eta$'; more precisely,  the objects of $\ca H$ consist of
\begin{itemize}
\item
A (smooth, connected, proper) curve $C$ of genus $g$ with $n$ marked points $p_1, \dots, p_n$;
\item 
A (smooth, connected, proper)  curve $\widehat C$ of genus $\widehat g$ with $\sum_i \gcd(m_i, \k)$ marked points $q_{i,j}: 1 \le i \le n, 1 \le j \le \gcd(m_i, \k)$;
\item
A cyclic cover $\pi\colon \widehat C \to C$ of degree $\k$ mapping the $q_{i,j}$ to $p_i$. 
\end{itemize}
For a full definition and the properties of the stacks $\ca H$ that we will use, we refer the reader to \cite{SchmittvanZelm} and the references therein.

The stack $\ca H$ comes with a map 
\[\delta: {\ca H} \to \ca M_{g,n}\]
remembering the target curve $(C,(p_i)_i)$ and a map 
\[\phiorig: {\ca H} \to \ca M_{\widehat g, r}\]
remembering the domain curve $(\widehat C, (q_{i,j})_{i,j})$; here $r = \sum_{i=1}^n \gcd(m_i, \k)$. 
The map $\delta$ is \'etale 
and $\phiorig$ is unramified. 

Recall that we write $\DRL_{g} \subset \ca M_{g,n}$ for the `double ramification' locus where there exists a $\k$-differential $\xi$ with divisor $\m P$. 
If $q_{i,j}$ is a marked point on $\widehat C$ mapping to a marking $p_i$ on $C$, then the canonical cover $\widehat C \to C$ has multiplicity $f_i=\k/\gcd(m_i,\k)$ at $q_{i,j}$ and the $1$-differential $\widehat\xi$ has multiplicity $m_i' \coloneqq (m_i+\k)/\gcd(m_i,\k)-1$.
We write $\DRL_{\widehat g} \subset \ca M_{\widehat g,r}$ for the locus where there exists a $1$-differential $\widehat\xi$ with divisor $\m'Q = \sum_{i,j} m_i' q_{i,j}$. 

\begin{lemma}
Let $\pi:(\widehat C, (q_{i,j})_{i,j}) \to (C, (p_i)_i)$ be a point of ${\ca H}$ given by the canonical cover of a curve $(C, (p_i)_i) \in \DRL_g$. Then   $(\widehat C, (q_{i,j})_{i,j}) \in \DRL_{\widehat g}$, and inside ${\ca H}$, in a neighbourhood of the point $\pi:(\widehat C, (q_{i,j})_{i,j}) \to (C, (p_i)_i)$ of ${\ca H}$ we have
\begin{equation} \label{eqn:preimkdiffhurwitz} \phiorig^{-1}(\DRL_{\widehat g}) = \delta^{-1}(\DRL_{g}). \end{equation}
\end{lemma}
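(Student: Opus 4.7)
The plan is to verify the first assertion by an explicit local computation, then establish the local set-theoretic equality $\phiorig^{-1}(\DRL_{\widehat g}) = \delta^{-1}(\DRL_g)$ via a Galois eigenspace decomposition over the universal family. For the first assertion, $\widehat\xi$ is a $\k$-th root of $\pi^*\xi$ by construction, and at a preimage $q_{i,j}$ of $p_i$ the cover is ramified of order $f_i = \k/\gcd(m_i,\k)$; in local coordinates $z$ on $C$ and $w$ on $\widehat C$ with $z = w^{f_i}$ we have $\pi^*(z^{m_i}(dz)^\k) = w^{f_i m_i}(f_i w^{f_i - 1} dw)^\k$, whose order at $w=0$ is $f_i m_i + (f_i - 1)\k = \k m_i'$. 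Taking $\k$-th roots gives $\on{ord}_{q_{i,j}}(\widehat\xi) = m_i'$, so $\widehat\xi$ has divisor $\m' Q$ and $(\widehat C, (q_{i,j})) \in \DRL_{\widehat g}$.

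For the local equality I will work over a small \'etale neighborhood $U \subset \ca H$ of our basepoint, with universal cover $\widehat\pi_U : \widehat{\ca C}_U \to \ca C_U \to U$ carrying the cyclic Galois action $\tau$ of order $\k$. Since $\operatorname{char}\field = 0$, the coherent sheaf $\widehat\pi_{U,*}\bigl(\omega_{\widehat{\ca C}_U/U}(-\m' Q_U)\bigr)$ on $\ca C_U$ splits into $\tau$-eigensheaves $\ca E_0, \dots, \ca E_{\k-1}$ (with $\ca E_j$ the $\rho_\k^j$-eigenspace). At the basepoint $\omega_{\widehat C}(-\m' Q)$ has degree zero and admits the non-vanishing section $\widehat\xi$, hence is trivial, and the one-dimensional space $H^0(\omega_{\widehat C}(-\m' Q)) = \field \cdot \widehat\xi$ lies entirely in $\ca E_1$. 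Thus $h^0(\ca E_j|_{\ca C_\eta}) = 0$ for $j \neq 1$, and upper semicontinuity of $h^0$ in families lets me shrink $U$ so that $h^0(\ca E_j|_{\ca C_{h'}}) = 0$ for every $h' \in U$ and every $j \neq 1$; hence on this $U$ the locus $\phiorig^{-1}(\DRL_{\widehat g})$ equals $\{h' : h^0(\ca E_1|_{\ca C_{h'}}) \geq 1\}$.

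It remains to match this locus with $\delta^{-1}(\DRL_g)$. For ``$\subseteq$'', a nonzero $\widehat\xi' \in H^0(\ca E_1|_{\ca C_{h'}})$ has $\widehat\xi'^{\k}$ which is $\tau$-invariant, so by faithfully flat descent $\widehat\xi'^{\k} = \pi'^*\xi'$ for a $\k$-differential $\xi'$ on $C'$ whose divisor is forced to be $\m P'$. For ``$\supseteq$'', a direct Riemann--Hurwitz computation (using $\omega_{\widehat C'} = \pi'^*\omega_{C'}(R')$ and the identity $\k R' - \k\m' Q' = -\pi'^*\m P'$) yields a canonical isomorphism $\omega_{\widehat C'}(-\m' Q')^{\otimes \k} \cong \pi'^*\bigl(\omega_{C'}^\k(-\m P')\bigr)$; when $C' \in \DRL_g$ the right-hand side is trivial, so $\omega_{\widehat C'}(-\m' Q')$ lies in $\on{Pic}^0(\widehat C')[\k]$. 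Because the $\k$-torsion subgroup scheme is \'etale and the bundle is trivial at our basepoint, it remains trivial on the connected component of our basepoint in $\delta^{-1}(\DRL_g) \cap U$. One then extracts any $\k$-th root $\widehat\xi'$ of the non-vanishing section $\pi'^*\xi'$ via the trivialization; the ratio $\tau^*\widehat\xi'/\widehat\xi'$ is a locally constant $\k$-th root of unity in the family, equal to $\rho_\k$ at the basepoint, hence $\widehat\xi' \in \ca E_1$ throughout our neighborhood. The main obstacle will be the $\k$-torsion monodromy argument and the simultaneous semicontinuity for all $j \neq 1$; in particular I will need to argue carefully that the component of $\delta^{-1}(\DRL_g)$ through $\eta$ is connected on a small enough neighborhood for the monodromy trivialization of $\omega_{\widehat C'}(-\m' Q')$ to extend.
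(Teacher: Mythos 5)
Your argument reaches the right conclusion and the key mechanisms (the $\k$-th power of an eigensection descends; the bundle $\omega_{\widehat C'}(-\m' Q')$ is $\k$-torsion by Riemann--Hurwitz, and $\k$-torsion is discrete) are the same ones the paper uses, but the packaging of the inclusion $\phiorig^{-1}(\DRL_{\widehat g}) \subseteq \delta^{-1}(\DRL_g)$ is noticeably heavier than necessary. The paper proves this inclusion \emph{globally}, with no eigensheaf decomposition and no neighbourhood-shrinking: at any $(\widehat C', (q_{i,j}))\in\DRL_{\widehat g}$, a differential $w$ with divisor $\m' Q'$ generates the degree-zero bundle $\omega_{\widehat C'}(-\m' Q')$, so $h^0=1$ and $\tau^*w = \lambda w$ for some $\lambda$; since $\tau^\k = \operatorname{id}$, $\lambda$ is a $\k$-th root of unity, so $w^{\otimes\k}$ is $\tau$-invariant and descends (under the canonical isomorphism $\omega_{\widehat C'}^{\otimes\k}(-\k\m' Q')\cong\pi'^*\omega_{C'}^{\otimes\k}(-\m P')$) to a trivialising section of $\omega_{C'}^{\otimes\k}(-\m P')$. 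The eigenvalue of $w$ is simply irrelevant here, so introducing the eigensheaves $\ca E_j$ and the semicontinuity argument to confine yourself to $\ca E_1$ buys you nothing. For the reverse inclusion, your strategy is the same as the paper's, but the paper formulates it via infinitesimal deformations over Artin local rings and the fact that all these stacks are of finite presentation, which automatically sidesteps the connectedness issue you flag as "the main obstacle": over an Artin local base, a section of the étale $\k$-torsion subscheme of the Picard scheme that is trivial at the closed point is trivial, full stop, branches or not. Your monodromy/connected-component formulation can be pushed through (because a map to an étale scheme is locally constant, factoring through clopen pieces), but the infinitesimal version is cleaner and worth adopting. Finally, your explicit Riemann--Hurwitz local computation of $\operatorname{ord}_{q_{i,j}}(\widehat\xi)=m_i'$ for the first assertion is correct (your identity $f_i m_i + (f_i-1)\k = \k m_i'$ checks out); the paper simply asserts this.
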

\begin{proof}
For the inclusion $\phiorig^{-1}(\DRL_{\widehat g}) \sub \delta^{-1}(\DRL_{g})$ let $\pi:(\widehat C, (q_{i,j})_{i,j}) \to (C, (p_i)_i)$ be a point of $\ca H$ such that $(\widehat C, (q_{i,j})_{i,j}) \in \DRL_{\widehat g}$, i.e. such that there exists a 1-differential $w$ on $\widehat C$ with multiplicity $m_i'$ at the points $q_{i,j}$. Then for the cyclic automorphism $\tau$ of the cover $\pi$ we have $\tau^* w = \lambda w$ for some $\lambda \in \field$, since $\tau^* w$ has the same pattern of zeros and poles as $w$. Since $\tau$ has order $\k$, it follows that $\lambda$ is a $\k$th root of unity. But then the $\k$th power $w^{\otimes \k}$ of $w$ is invariant under $\tau$, and hence descends to a $\k$-differential on $C$ with suitable zeros and poles. This shows $\phiorig^{-1}(\DRL_{\widehat g}) \sub \delta^{-1}(\DRL_{g})$. 

The other inclusion is not true globally, but we only need it on a neighbourhood of our point $\pi$ which already lies in $\phiorig^{-1}(\DRL_{\widehat g})$. If we can show that every infinitesimal deformation of $\pi$ which lies in $\delta^{-1}(\DRL_{g})$ also lies in $\phiorig^{-1}(\DRL_{\widehat g})$ then we are done, since all these moduli stacks are of finite presentation. 
A deformation $(\widehat C_t, (q_{i,j;t})_{i,j})$ of $\widehat C$ lying in $\delta^{-1}(\DRL_g)$ implies that the line bundle $\omega_{\widehat C_t}^{\otimes \k}(-\sum_{i,j} \k m_i' q_{i,j;t})$ is trivial, i.e. $\omega_{\widehat C_t}(-\sum_{i,j} m_i' q_{i,j;t})$ is $\k$-torsion. Since the $\k$-torsion points are discrete in the relative Picard of the family $\widehat C_t$ and since at $\widehat C=\widehat C_0$ this bundle is trivial (since $\widehat C \in \DRL_{\widehat g}$), it stays trivial in the deformation $\widehat C_t$, so $\widehat C_t \in \DRL_{\widehat g}$. 
%
%
\end{proof}

\textbf{Step 3: Tangent space computations}\\
We know that
\[T_{(\widehat C,(q_{i,j})_{i,j})}\ca M_{\widehat g,r} = H^1(\widehat C, \Omega^\vee_{\widehat C}(-\sum q_{i,j})) \]
and we have an action of $\mathbb{Z}/\k\mathbb{Z}$ on $\widehat C$ induced by the automorphism $\tau$ of $\widehat C$.  This in turn induces an action of $\mathbb{Z}/\k\mathbb{Z}$ on $T_{(\widehat C,(q_{i,j})_{i,j})}\ca M_{\widehat g,r}$, and
\begin{equation}\label{eqn:invarHurwitztangent} T_{(\widehat C,(q_{i,j})_{i,j})}{\ca H} = (T_{(\widehat C,(q_{i,j})_{i,j})}\ca M_{\widehat g,r})^{\mathbb{Z}/\k\mathbb{Z}},\end{equation}
where we see the tangent space to ${\ca H}$ as a subspace of the tangent space to $\ca M_{\widehat g,r}$ via the unramified map $\phiorig$. 

One also checks that the tangent space to  $\DRL_{\widehat g}$ ( contained in $T_{(\widehat C,(q_{i,j})_{i,j})}\ca M_{\widehat g,r}$) is stable under the $\mathbb{Z}/\k\mathbb{Z}$-action (for fixed $i$, all markings $q_{i,j}$, which form a $\mathbb{Z}/\k\mathbb{Z}$-orbit, have the same weight $m_i'$ in the definition of the double ramification locus $\DRL_{\widehat g}$). 

\textbf{Step 4: Residue maps on the tangent space}\\
From now on we focus on a curve $C$ coming from a general $\field$-point of the component $Y$ in the statement of the theorem, and take the cover $\pi\colon \widehat C \to C$ by extracting a $\k$th root of the given differential $\xi$, as in the previous step. Write $\widehat \xi$ for canonical $\k$th root of $\xi$ on $\widehat C$. Suppose that $\k \mid m_i$. Then there are exactly $\k$ markings $q_{i,j}$ of $\widehat C$ lying over $p_i$ (the cover $\pi$ is unramified there), and the residues of $\widehat \xi$ at the markings $q_{i,j}$ lying over $p_i$ are exactly the $\k$th roots of the $\k$-residue of $\xi$ at $p_i$. The chosen roots $r_1, \dots, r_{n'}$ in the statement of the theorem thus correspond uniquely to certain markings $q_{i,0}$ lying over the $p_i$; they are given exactly by the residue of $\widehat \xi$ at the $q_{i,0}$. 

Write $\widehat f \colon \widehat{\ca C} \to \DRL_{\widehat g}$  for the universal curve over $\DRL_{\widehat g}$. 
After perhaps restricting to an open subset of $Y$, on $\DRL_{\widehat g}$ the coherent sheaf $\widehat f_*\omega_{\widehat{\ca C}}(-\sum_{i,j} m_i' q_{i,j})$ is invertible. We write $\widehat n$ for the number of markings $q_{i,j}$ with negative weight, and we let $H \sub \ca O_{\DRL_{\widehat g}}^{\oplus \widehat n}$ be the subspace defined by the vanishing of the sum of all the coordinates. Then the (usual) residue gives a map 
\begin{equation}
R\colon \widehat f_*\omega_{\widehat{\ca C}}(-\sum_{i,j} m_i' q_{i,j}) \to H. 
\end{equation}

Let $\widehat C \to C$ in $\ca H$ lie over a general point in $Y$, and choose a non-zero section $\xi$ of $\omega^\k(-\m P)$ over $C$, leading to a differential $\widehat \xi$ over $\widehat C$. This gives a point in the total space of $\widehat f_*\omega_{\widehat{\ca C}}(-\sum_{i,j} m_i' q_{i,j})$, and we can consider the tangent map $TR$ at such a point. 

\textbf{Step 5: Concluding the proof with a lemma of Sauvaget}\\
%
%
Recall that we have reduced to the case where the curve $\widehat C$ is connected. 
We can then apply \cite[Corollary 3.8]{Sauvaget2017Cohomology-clas} to see that $TR$ is a \emph{surjection}. 

The morphism $\tau$ induces an automorphism of the pair $(\widehat C, \widehat\xi)$, where it acts on the differential by pulling back and dividing by $\rho_\k$. This induces an action of the group $\bb Z / \k \bb Z$ on the deformations of the pair, in other words on the source of $TR$. This group also acts on the target (by permutation of markings and multiplication by suitable roots of unity), and the map $TR$ is then equivariant for the action. Since $\bb Z / \k \bb Z$ is linearly reductive, the induced map on the invariant subspaces is also surjective.


On the left, the invariant subspace is the fibre of the source of $TR$ over 
\[T_{(\widehat C,(q_{i,j})_{i,j})}\overline{\ca H} \cap T_{(\widehat C,(q_{i,j})_{i,j})}\DRL_{\widehat g} = T_{(C,(p_i)_i)} \DRL_{g} \]
using the combination of \ref{eqn:invarHurwitztangent} and \ref{eqn:preimkdiffhurwitz}, and the fact that for a deformation of $(\widehat C, \widehat\xi)$ leaving the underlying curve $\widehat C$ fixed, the group $\bb Z / \k \bb Z$ also fixes the deformation of the differential. 
 On the right, the invariant subspace $H^{\bb Z/\k\bb Z}$ is the tangent space to the subspace of $H$ with coordinates of the form $[\ldots, s, \rho_\k s, \ldots, \rho_\k^{\k-1} s, \ldots]$ and the desired $\k$th roots $r_i$ of the $\k$-residues are just some of these values. We claim that the corresponding projection 
\[\pi_{n'}: T_{R(\widehat C,(q_{i,j})_{i,j}))}H^{\bb Z / \k \bb Z} \to \field^{n'}\] 
is surjective. If $\k >1$, the numbers $s, \rho_\k s, \ldots, \rho_\k^{\k-1}s$ automatically sum to zero, so the coordinates of $H$ summing to zero places no additional restriction on $s$. Thus every tuple $(r_, \ldots, r_{n'})\in \field^{n'}$ can be obtained from an element of $H^{\bb Z / \k \bb Z}$ under the map $\pi_{n'}$. On the other hand, for $\k=1$ we have $H^{\bb Z/\k\bb Z}=H$ and necessarily $m_{n'+1}<0$, so one of the coordinates of $H$ is the residue at $p_{n'+1}$ which is forgotten under $\pi_{n'}$. Thus we can use this coordinate to balance the sum of coordinates in $H$ to be zero and for any choice $(r_1, \ldots, r_{n'}) \in \field^{n'}$ find a preimage under $\pi_{n'}$. 

Since the invariant part of the source of $TR$ is the tangent space to $Y$, and the differential of the map taking the residues at the $n'$ points is surjective, and $\dim Y >0$, we see that at a general point of $Y$ the sum of these residues cannot be zero. This concludes the proof of \ref{thm:non_vanishing_root_sum}.

\bibliographystyle{alpha} 
\bibliography{../prebib}
%
%
%
%

\end{document}